\newtheorem{theorem}{Theorem}[subsection]
\newtheorem{theorem_intr}{Theorem}
\newtheorem{lemma}[theorem]{Lemma}
\newtheorem{proposition}[theorem]{Proposition}
\newtheorem{corollary}[theorem]{Corollary}
\theoremstyle{definition}
\newtheorem{definition}[theorem]{Definition}
\newtheorem{expos}[theorem]{}
\theoremstyle{remark}
\newtheorem{remark}[theorem]{Remark}
\numberwithin{equation}{theorem}
\newcommand{\mc}[1]{\mathcal{#1}}
\newcommand{\mr}[1]{\mathrm{#1}}
\newcommand{\OGr}{{\mathrm{Gr}}}
\newcommand{\Hom}{{\mathrm{Hom}}}
\newcommand{\Z}{{\mathbb{Z}}}
\newcommand{\AAA}{\mathbb{A}}
\newcommand{\PP}{\mathbb{P}}
\newcommand{\NN}{\mathbb{N}}
\newcommand{\rank}{\operatorname{rank}}
\newcommand{\id}{\operatorname{id}}
\newcommand{\FF}{\mathbb{F}}
\newcommand{\struct}{\mathcal{O}}
\newcommand{\Gmm}{{\mathbb{G}_m}}
\newcommand{\Spec}{\operatorname{Spec}}
\newcommand{\Pic}{\operatorname{Pic}}
\newcommand{\chark}{\operatorname{char}}
\newcommand{\Smk}{\mathcal{S}m_k}
\newcommand{\SmF}{\mathcal{S}m_F}
\newcommand{\Ch}{\mathrm{Ch}}
\newcommand{\CH}{\mathrm{CH}}
\newcommand{\Res}{\operatorname{Res}}
\newcommand{\Image}{\operatorname{Im}}
\newcommand{\Sp}{\operatorname{\mathrm{Sp}}}
\newcommand{\Spin}{\operatorname{\mathrm{Spin}}}
\newcommand{\SU}{\operatorname{\mathrm{SU}}}
\newcommand{\SL}{\operatorname{\mathrm{SL}}}
\newcommand{\Gal}{\operatorname{\mathrm{Gal}}}
\newcommand{\mut}[2]{
	\mathchoice{\raisebox{-1.5pt}{$\displaystyle {}^{#1}\mkern-2mu$}}
	{\raisebox{-1.5pt}{${}^{#1}\mkern-2mu$}}
	{\raisebox{-1.5pt}{$\scriptstyle {}^{#1}\mkern-3mu$}}
	{\raisebox{-0.2pt}{$\scriptscriptstyle {}^{#1}\mkern-2mu$}}
	\mu_{#2}}
\newcommand{\Att}{{}^{2}\mkern-2mu \mr{A}}
\newcommand{\refbr}[1]{{\hyperref[#1]{(\ref*{#1})}}}
\newcommand{\bigslant}[2]{{\left.\raisebox{.2em}{$#1$}\middle/\raisebox{-.2em}{$#2$}\right.}}
\newcounter{counter:enumi}
\newcommand{\Addresses}{{% additional braces for segregating \footnotesize
		\bigskip
		\footnotesize
		
		Alexey~Ananyevskiy, Mathematisches Institut der Universit\"at M\"unchen, Theresienstr. 39, D-80333 M\"unchen, Germany. \textit{E-mail address: alseang@gmail.com}
		\medskip
		
		Nikita~Geldhauser, Mathematisches Institut der Universit\"at M\"unchen, Theresienstr. 39, D-80333 M\"unchen, Germany. \textit{E-mail address: geldhauser@math.lmu.de}
}}
\begin{document}
	
	\title{\bf Chow rings of quasi-split geometrically almost simple algebraic groups}
	
	\author{Alexey Ananyevskiy and Nikita Geldhauser}
	
	\date{}
	
	\maketitle
	
	\epigraph{
		\hspace*{\fill}Dedicated to Professor Nikolai Vavilov (1952 -- 2023)\\
		\hspace*{\fill}who taught us the numerology of algebraic groups.\\
		\hspace*{\fill}\textgreek{\Cs{v}Hjos \s{a}njr\A{w}p\iS{w} da\A{i}mwn.}
	}
	%\Cs{h}jos
	%Ἦθος ἀνθρώπῳ δαίμων.}

\thanks{}

\begin{abstract}
	We compute the Chow ring $\mathrm{Ch}^*(G)$ of a quasi-split geometrically almost simple algebraic group $G$ assuming the coefficients to be a field. This extends the classical computation for split groups done by Kac to the non-split quasi-split case. For the proof we introduce and study equivariant conormed Chow rings, which are well adapted to the study of quasi-split groups and their homogeneous varieties.
\end{abstract}

\tableofcontents

\section{Introduction}

Given a linear algebraic group $G$ one can associate with it two rings of cohomological origin, $\CH^*(BG)$ and $\CH^*(G)$, both of which carrying important information about possible twisted forms of $G$, its torsors, homogeneous varieties and related motives. Classical topological analogues of these rings are the singular cohomology ring $\mathrm{H}^*(B\mathcal{G})$ of the classifying space of a Lie group $\mathcal{G}$ and the singular cohomology ring $\mathrm{H}^*(\mathcal{G})$ of the Lie group itself, and these rings are intimately related by the Leray--Serre spectral sequence for the fibration $E\mathcal{G}\to B\mathcal{G}$. 

The ring $\CH^*(BG)$ is the Chow ring of algebraic cycles of the classifying space $BG$ introduced by Totaro~\cite{To99}, see also \cite[Section~4]{MV99} for a motivic homotopy point of view, and it hosts the universal characteristic classes of principal homogeneous spaces (torsors) under $G$. This ring is in general rather hard to compute even for a split simple group except the special cases of $\SL_n$ and $\Sp_{2n}$, see \cite{To99,Pa98,MRV06,KM22} for the known answers.

The ring $\CH^*(G)$ is the Chow ring of algebraic cycles of $G$ itself, considered as an algebraic variety forgetting the multiplicative structure.  The problem of computing $\CH^*(G)$ for a split semisimple group $G$, and the closely related problem of computing the singular cohomology of a compact Lie group attracted a lot of attention in 1960s and was done case by case in the works of Miller, Borel, Baum, Browder, Araki, Shikata, Ishitoya, Kono, Toda and others, culminating with the celebrated article \cite{Kac85} of Victor Kac, who summarized the previously known computations and gave a uniform conceptual answer for $\CH^*(G)$ assuming the coefficients to be a finite field (see \cite[Theorem~6]{Kac85} and especially \cite[Table~2]{Kac85}).

%The celebrated article \cite{Kac85} of Victor Kac was a culmination in a long history of computations of the Chow rings of split semisimple algebraic groups $G$ with coefficients in a finite field. Kac gave a uniform conceptual answer summarizing previously known computations notably due to Miller, Borel, Baum, Browder, Araki, Shikata, Ishitoya, Kono, Toda and others (see \cite[Theorem~6]{Kac85} and especially \cite[Table~2]{Kac85}).

The computations by Victor Kac were used  20 years later by Geldhauser, Petrov, and Zainoulline to establish the structure of Chow motives of generically split twisted flag varieties and to introduce a discrete motivic invariant of algebraic groups of inner type, called the $J$-invariant (see \cite{PSZ08, PS10, PS12}).
In the case of quadratic forms an equivalent invariant was introduced previously by Vishik in \cite{Vi05}.
The $J$-invariant was an important tool to solve several long-standing problems, for example, it played an important role in the progress on the Kaplansky problem about possible values of the $u$-invariant of fields by Vishik \cite{Vi07} and in the solution of a problem of Serre about groups of type $\mathrm{E}_8$ and its finite subgroups \cite{S16, GS10}.
%More recently, Petrov and Semenov generalized the J-invariant for groups of inner type to arbitrary oriented cohomology theories in the sense of Levine–Morel [LM] satisfying some axioms (see [PS21]).
%The J-invariant is a discrete invariant which describes the motivic behavior of the variety of Borel subgroups of a semisimple linear algebraic group.
%The J-invariant allowed, in particular, to construct a new cohomological invariant for groups of type E8 and to solve a problem of Serre about groups of type E8 and its finite subgroups (see [GS10] and [S16]).
Besides, Garibaldi, Geldhauser, and Petrov used the $J$-invariant to relate the rationality of some parabolic subgroups of groups of type $\mathrm{E}_7$ with the Rost invariant, proving a conjecture of Rost and solving a question of Springer in \cite{GPS16}.
The concept of the $J$-invariant is built up on the first three columns of \cite[Table~2]{Kac85} encoding the Chow rings of split semisimple algebraic groups.
In fact, an important step in the construction of the $J$-invariant was an observation that the parameters in the Kac's table coincide in certain cases with the parameters related to the generalized Rost motives, which were used by Rost and Voevodsky in the proofs of the Milnor and Bloch--Kato conjectures. The last column of Kac's table also has a motivic interpretation in terms of $G$-equivariant motives given in \cite{PS17}.

%Moreover, further development of motivic ideas allowed also to interpret the last column of \cite[Table~2]{Kac85} 
In order to generalize the $J$-invariant to arbitrary linear algebraic groups (i.e. possibly of outer type) one has to study the Chow rings of non-split quasi-split semisimple groups. Progress in this direction was recently obtained in \cite{SZ22}. However, there are several substantial difficulties when dealing with non-split quasi-split groups. First
of all, quasi-split groups $G$ are equipped with a finite Galois field extension $K/k$, which is the minimal splitting field of $G$, i.e., come together with some arithmetic information. A priori it is not clear whether their Chow rings depend on $K$, and we show in the current article that it is not the case. Besides, contrary to the split situation, our computations show that the Chow rings of non-split quasi-split groups do not have, in general, the structure of Hopf algebras. For example, the lacunary relation appearing additionally to the nilpotency of generators in the non-split quasi-split adjoint group of type ${}^2\mr{D}_{2r}$ (case 2 with $l = 2$ of Theorem~\ref{thm:main} below) is not possible in any Hopf algebra over a finite field. Moreover, contrary to the split situation, the characteristic sequence of Grothendieck (see \cite[Remark~$2^\circ$]{Gro58}), which is a useful tool, is not right exact in general for quasi-split groups.

To overcome these difficulties we develop a theory of equivariant conormed Chow rings and apply it to quasi-split groups. The non-equivariant version of these, i.e. Chow rings modulo pushforwards (norms) coming from a fixed field extension, have already been used in the study of varieties homogeneous under an action of a quasi-split group in e.g. \cite{Ka12, KZ13, KM22} and were studied explicitly in \cite{Fi19,SZ22}. We would also like to note that the category of conormed Chow motives shares some properties with the category of isotropic motives introduced and studied by Vishik \cite{Vi24}, in particular, in the latter category motives of anisotropic varieties are zero while in the former category the motive of an irreducible $k$-variety is zero when the variety becomes reducible over the (Galois) field extension $K/k$.
%Further investigation of the interrelations between these two categories is subject to future research.

The main result of this article is the following theorem:

\begin{theorem_intr} \label{thm:main}
	Let $G$ be a quasi-split geometrically almost simple group over a field $k$. For a field $\FF$ put $p:=\chark \FF$ and $\Ch^*(-):=\CH^*(-)\otimes \FF$. Then, depending on the type $\Delta(G)$, the fundamental group $\pi_1(G)$ (see Sections~\ref{sec:multiplicative_groups} and \ref{sec:quasi-split_simple} for the notation) and~$p$ the following holds.
	\begin{enumerate}
		\item If the triple $(\Delta(G),\pi_1(G),p)$ is in the Table~\ref{tab:main}, then
		\[
		\Ch^*(G) \cong \FF[e_1,e_2,\hdots,e_s]/(e^{p^{k_1}}_1,e^{p^{k_2}}_2,\hdots, e^{p^{k_s}}_s),\quad \deg e_i=d_i,
		\]
		with the parameters $s,d_i,k_i$ given in the table.
		\begin{center}
			\def\arraystretch{1.5}
			\begin{longtable}{l|l|l|l|l|l}
				\caption{$\Ch^*(G)$ for a quasi-split simple algebraic group $G$}
				\label{tab:main}\\
				$\Delta(G)$ & $\pi_1(G)$ & $p$ & $s$ & $d_i$, $i=1,2,\ldots, s$ & $k_i$, $i=1,2,\ldots, s$ \\
				\hline
				${}^{\hphantom{2}}\mr{A}_{n}$ & $\mut{\hphantom{2}}{l}$, $l\,|\, (n+1)$ & $p\mid l$ & $1$ & $1$ & $v_p(n+1)$ \\
				
				$\Att_{n}$, $n\ge 2$ & $\mut{2}{l}$, $l\,|\, (n+1)$, $l$ is odd & $2$ & $[\frac{n}{2}]$ & $2i+1$ & $1$ \\
				
				$\Att_{2r-1}$, $r\ge 2$ & $\mut{2}{2m}$, $m\mid r$, $m$ is odd & $2$ & $r$ & $2i-1$ & $v_2(r)+1$, $i=1$ \\
				& & & & & $1$, $i\ge 2$ \\
				
				$\Att_{2r-1}$, $r\ge 2$ & $\mut{2}{2m}$, $m\mid r$, $m$ is even & $2$ & $r+1$ & $2$, $i=1$ & $v_2(r)$, $i=1$ \\
				& & & & $2i-3$, $i\ge 2$ & $1$, $i\ge 2$ \\		
				
				\hline
				${}^{\hphantom{2}}\mr{B}_{n}$ & $1$ & $2$ & $[\frac{n-1}{2}]$ & $2i+1$ & $[\log_2 \frac{2n}{2i+1}]$ \\
				
				& $\mu_2$& $2$ & $[\frac{n+1}{2}]$ & $2i-1$ & $[\log_2 \frac{2n}{2i-1}]$ \\
				\hline
				${}^{\hphantom{2}}\mr{C}_{n}$ & $\mu_2$ & $2$ & $1$ & $1$ & $v_2(n)+1$ \\
				\hline
				${}^{\hphantom{2}}\mr{D}_{n}$, $n\ge 3$ & $1$ & $2$ & $[\frac{n}{2}]-1$ & $2i+1$ & $[\log_2 \frac{2n-1}{2i+1}]$ \\						
				
				& $\mu_2^{so}$ & $2$ & $[\frac{n}{2}]$ & $2i-1$ & $[\log_2 \frac{2n-1}{2i-1}]$ \\

				& $\mu_4$ or $\mu_2\times \mu_2$ & $2$ & $[\frac{n}{2}]+1$ & $1$, $i=1$& $v_2(n)$, $i=1$\\						
				&  & & & $2i-3$, $i\ge 2$ & $[\log_2 \frac{2n-1}{2i-3}]$, $i\ge 2$ \\						
				
				${}^{\hphantom{2}}\mr{D}_{2r}$, $r\ge 2$	& $\mu_2^{hs}$ & $2$ & $r$ & $1$, $i=1$ & $v_2(r)+1$, $i=1$ \\			
				&  & & & $2i-1$, $i\ge 2$ & $[\log_2 \frac{4r-1}{2i-1}]$, $i\ge 2$ \\					
				${}^2\mr{D}_{n}$, $n\ge 3$ & $1$ & $2$ & $[\frac{n+1}{2}]-1$ & $2i+1$ & $[\log_2 \frac{2n}{2i+1}]$ \\						
				
				& $\mu_2$ & $2$ & $[\frac{n+1}{2}]$ & $1$, $i=1$ & $[\log_2 n]+1$ \\						
				&  &  &  & $2i-1$, $i\ge 2$ & $[\log_2 \frac{2n}{2i-1}]$ \\								
				
				${}^2\mr{D}_{2r+1}$, $r\ge 1$	& $\mut{2}{4}$ & $2$ & $r+2$ & $1$, $i=1$ & $1$, $i=1$ \\			
				& & & & $2$, $i=2$ & $[\log_2 (2r+1)]$, $i=2$ \\					
				&  & & & $2i-3$, $i\ge 3$ & $[\log_2 \frac{4r+2}{2i-3}]$, $i\ge 3$ 		\\
				${}^3\mr{D}_4$, ${}^6\mr{D}_4$ & $1$ & $2$ & $1$ & $3$ & $1$\\
				
				& $1$, $\mut{3}{2,2}$, $\mut{6}{2,2}$& $3$ & $1$ & $4$ & $1$\\

				\hline
				${}^{\hphantom{2}}\mr{E}_6$ & $1,\mu_3$& $2$ & $1$ & $3$ & $1$ \\			
				& $1$ & $3$ & $1$ & $4$ & $1$ \\	
				& $\mu_3$ & $3$ & $2$ & $1,4$ & $2,1$ \\	
				${}^2\mr{E}_6$ & $1,\,\mut{2}{3}$& $2$ & $3$ & $3,5,9$ & $1,1,1$    \\
				& $1$ & $3$ & $1$ & $4$ & $1$    \\
				\hline
				${}^{\hphantom{2}}\mr{E}_7$ & $1$ & $2$ & $3$ & $3,5,9$ & $1,1,1$ \\
				
				& $\mu_2$ & $2$ & $4$ & $1,3,5,9$ & $1,1,1,1$ \\
				
				& $1,\mu_2$ & $3$ & $1$ & $4$ & $1$ \\
				\hline
				${}^{\hphantom{2}}\mr{E}_8$& $1$  & $2$ & $4$ & $3,5,9,15$ & $3,2,1,1$  \\
				
				& & $3$ & $2$ & $4,10$ & $1,1$ \\
				
				& & $5$ & $1$ & $6$ & $1$ \\
				\hline
				${}^{\hphantom{2}}\mr{F}_4$ & $1$ &  $2$ & $1$ & $3$ & $1$ \\
				& &  $3$ & $1$ & $4$ & $1$ \\			
				\hline
				
				${}^{\hphantom{2}}\mr{G}_2$ & $1$ & $2$ & $1$ & $3$ & $1$ \\
				
			\end{longtable}
		\end{center}
		Here and below $v_p$ is the $p$-adic valuation.
		
		\item If $(\Delta(G),\pi_1(G),p)=({}^2\mr{D}_{2r},\mut{2}{2,2},2)$, then 
		\[
		\Ch^*(G) \cong
		\begin{cases}
			\FF[e_1,\hdots,e_s]/\left(e_1^{2^{k_1}}, \hdots, e_s^{2^{k_s}}, e_1\cdot \left(\sum_{j=0}^{k_1-1} e_1^{2^{k_1}-2^{j+1}} \cdot e_2^{2^{j}}\right)\right), & l=2,\\
			\FF[e_1,\hdots,e_s]/(e_1^{2^{k_1}}, \hdots, e_s^{2^{k_s}}, e_1\cdot e_l^{2^{k_l-1}}), & l\ge 3,
		\end{cases}
		\]
		where $l\in \NN$ is such that $2r=2^m(2l-3)$ and
		\begin{gather*}
			s=r+1,\, \deg e_1=1,\, \deg e_2=2,\, k_1=v_2(2r), \, k_2=[\log_2 2r],\\
			\deg e_i=2i-3,\, k_i=\left[\log_2 \frac{4r}{2i-3}\right],\, 3\le i\le s.
		\end{gather*}
		\item
		If $(\Delta(G),\pi_1(G),p)$ is in the table below, then $\Ch^*(G)$ is as in the table.
		\begin{center}
			\def\arraystretch{1.5}
			\begin{longtable*}{l|l|l|l|l}
				$\Delta(G)$ & $\pi_1(G)$ & $p$ & $\Ch^*(G)$ & $\deg e_i$ \\
				\hline
				$\Att_n$  & $\mut{2}{l}$, $l\mid n+1$ & $2\neq p\mid l$ & $\FF[e_1]/(e_1^m)$, $m=\frac{p^{v_p(n+1)}+1}{2}$ & $2$\\
				${}^3\mr{D}_4$  & $\mut{3}{2,2}$ & $2$ & $\FF[e_1,e_2,e_3,e_4]/(e_1^4,e_2^2,e_3^2,e_4^2,e_1e_2,e_1e_3,e_1^3+e_2e_3)$ & $2,3,3,3$\\
				${}^6\mr{D}_4$  & $\mut{6}{2,2}$ & $2$ & $\FF[e_1,e_2,e_3]/(e_1^4,e_2^2,e_1e_2,e_3^2)$ & $2,3,3$\\
				${}^2\mr{E}_6$ & $\mut{2}{3}$ & $3$ & $\FF[e_1,e_2]/(e_1^5,e_2^3)$ & $1,4$ 
			\end{longtable*}
		\end{center}        
		
		\item For all the remaining cases $(\Delta(G),\pi_1(G),p)$ one has $\Ch^*(G)\cong \FF$. 
	\end{enumerate}
	
\end{theorem_intr}
\noindent
It is clear that one can assume $\FF$ to be a prime field, that is $\mathbb{Q}$ or a finite field $\FF_p$. If $\FF=\mathbb{Q}$, then it is well-known and goes back to \cite[Remark~2 on p.~21]{Gro58} that $\Ch^*(G)\cong \mathbb{Q}$ (see also Section~\ref{expos:Chow_rational} for the details). The case of a split group $G$, i.e. $\Delta(G)=\mr{A}_n$, $\mr{B}_n$, $\mr{C}_n$, $\mr{D}_n$, $\mr{E}_6$, $\mr{E}_7$, $\mr{E}_8$, $\mr{F}_4$, or $\mr{G}_2$, is also classical \cite[Theorem~6]{Kac85} (see also Section~\ref{expos:Chow_ring_split} for a recollection).
All the remaining cases, that is when $G$ is a non-split quasi-split simple group, are, to the best of our knowledge, completely new. Theorem~\ref{thm:main} is obtained as a combination of the above classical results and Theorems~\ref{thm:D2r_ad_answer},~\ref{thm:answer_at_p},~\ref{thm:answer_away_p}, and~\ref{thm:6D4} of the present article.

%   It is well-known that for split semisimple algebraic groups their topological counterpart are compact Lie groups and the Chow rings of split semisimple algebraic groups coincide with the polynomial part of the singular cohomology of the respective compact Lie groups. For non-split quasi-split groups we are not aware of the respective counterparts in topology. 

\textbf{Outline of the proof.} We assume $\FF$ to be a prime field. Furthermore, as it was said above, using the classical results (see Section~\ref{sec:Chow_ring_classic} for references) we can assume that $G$ is a non-split quasi-split geometrically almost simple group. Let $K/k$ be the splitting field of $G$. The cases are naturally divided into three groups: (1) $p=[K:k]$, (2) $p$ is coprime to $[K:k]$ and (3) $\Delta(G)={}^6\mr{D}_4$ with $p=2$ or $3$. The first case is the most interesting one, while the latter two are rather straightforward.

\textbf{Case (1)}, $p=[K:k]$. Let $\pi\colon G_K\to G$ be the projection and consider the exact sequence
\begin{equation} \label{eq:intro_conormed}
	\Ch^*(G_K)\xrightarrow{\pi_*} \Ch^*(G) \xrightarrow{q} \CH^*_K(G) \to 0 \tag{*}
\end{equation}
with the last algebra being the cokernel of $\pi_*$. First we analyse the algebra $\CH^*_K(G)$, which fits into the setting of \textit{conormed Chow groups} $\CH^*_K(-)$, i.e. Chow groups modulo the image of the norm map. Such groups proved to be a valuable invariant in the study of quasi-split homogeneous varieties and implicitly occurred in e.g. \cite{Ka12, KZ13, KM22} and were studied explicitly in \cite{Fi19,SZ22}. As it was shown in \cite{SZ22} one can define $\CH^*_K(-)$ for an arbitrary separable field extension $K/k$ (see Definition~\ref{def:conormed}), yielding an oriented cohomology theory on $\Smk$ in the sense of \cite{LM07} whose basic properties we study in Section~\ref{sec:conormed}. The most important property is that $\CH^*_K(X)=0$ for a connected variety $X$ that becomes disconnected over an intermediate Galois extension $K/L/k$ (see Corollary~\ref{cor:triv_normed}). This, in particular, means that Artin--Tate motives (motives of quasi-split projective homogeneous varieties are among the examples \cite[Lemma~29]{CM06}) behave like Tate motives from the conormed point of view.

Following the approach by Totaro \cite{To99} and Edidin-Graham \cite{EG98} for the ordinary Chow groups we extend the conormed Chow groups to the equivariant setting (Section~\ref{sec:conormed_equiv}) defining $\CH^*_{K,T}(X)$ for an algebraic group $T$ over $k$ and a $T$-variety $X$. If $T$ is a torus which is (a) \textit{quasi-trivial} (also known as \textit{induced}), i.e. $T\cong R_{L_1/k} \Gmm \times \hdots R_{L_r/k} \Gmm$ for Weil restrictions of split tori, and such that (b) $T_K$ is split, then $\CH^*_{K,T}(-)$ behave very much like equivariant Chow groups for split tori, in particular, one can compute $\CH^*_{K,T}(X)$ for a smooth variety $X$ with the trivial $T$-acton (see Proposition~\ref{prop:classifying_quasi-trivial}) as
\[
\CH^*_{K,T}(X)\cong \CH^{*}_K(X)[b_1,b_2,\hdots,b_r],\quad \deg b_i=[L_i:k], 
\]
and for a smooth variety $X$ with an arbitrary $T$-action one has an isomorphism (Corollary~\ref{cor:EM_qt})
\begin{equation} \label{eq:intro}
	\CH_{K,T}^*(X) \otimes_{\CH_{K,T}^*(\Spec k)} \CH_{K}^*(\Spec k)\cong \CH_{K}^*(X). \tag{**}
\end{equation}
A similar result was also obtained in \cite[Theorem~3.1]{Sal22}, but in a slightly different language.

Now let $T\le B\le G$ be a maximal torus and a Borel subgroup, and $K$ be the splitting field of $G$ as before. If $T$ is quasi-trivial, which is the case if $G$ is simply connected or adjoint (Lemma~\ref{lem:quasi-split_quasi-trivial}), then the isomorphism~\refbr{eq:intro} for $X:=G$ yields that the characteristic sequence (Definition~\ref{def:characteristic_map})
\[
\CH^*_{K,T}(\Spec k) \xrightarrow{c} \CH_K^*(G/B) \xrightarrow{\phi^*} \CH^*_K(G) \to 0
\]
is an exact sequence of graded algebras in the sense that $\phi^*$ is surjective and $c(\CH^{>0}_{K,T}(\Spec k))$ generates the kernel of $\phi^*$ as an ideal. This is the conormed version of the classical characteristic sequence from \cite{Gro58} which since its introduction has been one of the main tools for the computations related to the Chow rings of split reductive groups. The cokernel of the conormed characteristic map
\[
\mc{C}^*_K(G):= \CH^*_K(G/B) /c(\CH^{>0}_{K,T}(\Spec k))\cdot \CH^*_K(G/B)
\]
was computed in the adjoint case in \cite[Section~8]{SZ22}. This gives an answer for $\CH^*_K(G)$ in the adjoint case, since the exactness of the characteristic sequence yields $\CH^*_K(G)\cong \mc{C}^*_K(G)$.

If $G$ is neither adjoint nor simply connected, then in general the characteristic sequence is not exact, since $\phi^*$ fails to be surjective already for $\CH^1_K(-)$, i.e. for the conormed Picard groups. In order to circumvent this issue we construct an \textit{extended characteristic sequence} in the following way. Let $\tilde{G}\to G$ be the simply connected cover and $\tilde{T}\le \tilde{B}\le \tilde{G}$ be the maximal torus and the Borel subgroup over $T\le B$. Then $\tilde{T}$ acts on $G$ with $\pi_1(G)$ being the kernel of the action and we have isomorphisms
\[
\CH^*_{K,\tilde{T}}(G) \cong \CH^*_{K,\pi_1(G)}(\tilde{G}/\tilde{T})  \cong \CH^*_{K}(\tilde{G}/\tilde{B}) \otimes \CH^*_{K,\pi_1(G)}(\Spec k),
\]
where the first one arises from the span of torsors $G\leftarrow \tilde{G} \to \tilde{G}/\tilde{T}$ and the second one is a combination of a K\"unneth property (Lemma~\ref{lem:Kunneth_for_pi1}) and homotopy invariance for $\tilde{G}/\tilde{T}\to \tilde{G}/\tilde{B}$. Using this and the isomorphism~\refbr{eq:intro} for $X:=G$ and the quasi-trivial torus $\tilde{T}$ we obtain an extended characteristic sequence
\[
\CH^*_{K,\tilde{T}}(\Spec k) \xrightarrow{\hat{c}} \CH_K^*(\tilde{G}/\tilde{B}) \otimes \CH^*_{K,\pi_1(G)}(\Spec k) \to \CH^*_K(G) \to 0
\]
which is an exact sequence of graded algebras (Theorem~\ref{thm:char_qs}). The algebras $\CH^*_{K,\pi_1(G)}(\Spec k)$ arising here can be computed (Proposition~\ref{prop:equivariant_coef_nl}) using resolutions by quasi-trivial tori, yielding an explicit presentation of $\CH^*_K(G)$ as a quotient of $\CH_K^*(\tilde{G}/\tilde{B})$ or $\CH_K^*(\tilde{G}/\tilde{B})[x]$ (Theorem~\ref{thm:group_via_cover}). In particular, this gives a precise relation between $\CH^*_K(G)$ and already known $\CH^*_K(\bar{G})$ for the adjoint quotient $G\to \bar{G}$, allowing us to compute $\CH^*_K(G)$ (Theorem~\ref{thm:conormed_group_answer}).

We also give a presentation of $\CH^*_K(G)$ as a quotient of $\mc{C}^*_K(G)[x]$ for the cokernel $\mc{C}^*_K(G)$ of the ordinary characteristic map (Theorem~\ref{thm:cokernel}), and use it to compute $\mc{C}^*_K(G)$ explicitly (Theorem~\ref{thm:cokernel_answer}). Moreover, as a by-product of the isomorphism~\refbr{eq:intro} we show that $G$ enjoys the conormed K\"unneth property (Proposition~\ref{prop:Kunneth_conormed}), endowing $\CH^*_K(G)$ with a natural structure of a Hopf algebra, which in turn explains the structure of the answer for $\CH^*_K(G)$ obtained in Theorem~\ref{thm:conormed_group_answer}. Note that in general the K\"unneth homomorphism
\[
\CH^*(X)\otimes \CH^*(G) \to \CH^*(X\times G)
\]
fails to be an isomorphism for a quasi-split group $G$, which could be seen from Table~\ref{tab:main} taking $X:=\Spec K$ with $K$ being the splitting field.

Having computed $\CH^*_K(G)$ in the sequence~\refbr{eq:intro_conormed} we turn to the the study of $\pi_*$. It turns out (Proposition~\ref{prop:norm_zero}) that $\pi_*=0$ for a simple group $G$ except when $G$ is an adjoint group of type $\Delta(G)={}^2\mr{D}_{2r}$. This is proved on a case-by-case basis. The case of $\Delta(G)={}^3\mr{D}_4$ is immediate, since the algebra $\CH^*(G_K)$ is trivial. The case of $\Delta(G)=\Att_n$ is also straightforward, since $\CH^*(G_K)$ is rather simple. For $\Delta(G)={}^2\mr{D}_n$ we compute the algebra 
\[
\CH^*(\OGr(n-1;q)) \cong \CH^*(\OGr(n-1;q)_K)^{\Gal(K/k)}
\]
for a quasi-split submaximal isotropic Grassmannian (Proposition~\ref{prop:OGr_Chow_ring}) using the answer for the ring ${\CH^*(\OGr(n-1;q)_K)}$ given by \cite{Vi07} and \cite[Section~86]{EKM08} and calculating explicitly the action of the Galois group. Since the pullback
\[
\CH^*(\OGr(n-1;q)_K) \to \CH^*(G_K)
\]
is well-known to be surjective, this allows us to show that $\pi_*=0$ when $\Delta(G)={}^2\mr{D}_n$ and $G$ is not an adjoint group of type $\Delta(G)={}^2\mr{D}_{2r}$. In the last case $\Delta(G)={}^2\mr{E}_6$ we have $\Ch^*(G_K)\cong \FF_2[e_1]/(e_1^2)$ with $\deg e =3$ by \cite[Theorem~6]{Kac85}, so we need only to show $\pi_*(e_1)=0$. For this we lift the problem to $G/B$, showing that for an explicit Schubert cycle $Z\in \Ch^3((G/B)_K)$ which goes to $e_1$ on $G_K$ its pushforward $\rho_*(Z)\in \Ch^*(G/B)$ is in the image of the characteristic map. This is done via explicit computer-assisted calculations with Schubert cycles.

The vanishing of $\pi_*$ in sequence~\refbr{eq:intro_conormed} yields an answer for $\CH^*(G)$ (Theorem~\ref{thm:answer_at_p}), since we already know $\CH^*_K(G)$. In the last remaining case of the adjoint group of type $\Delta(G)={}^2\mr{D}_{2r}$ a more detailed analysis of $\CH^*(\OGr(2r-1;q))$ gives an answer for $\CH^*(G)$ (Theorem~\ref{thm:D2r_ad_answer}), with a crucial ingredient being $c_1(\mc{L}(\bar{\varpi}_{2r-1}+\bar{\varpi}_{2r}))^{2^{v_2(2r)}}=0$ for the first Chern class as in the split case (Proposition~\ref{prop:OGr_Chow_ring}, see also Section~\ref{expos:line_over_group} for the notation).

\textbf{Case (2)}, $p$ is coprime to $[K:k]$. In this case we show that $\Ch^*(G)=\Ch^*(G_K)^{\Gal(K/k)}$ (Lemma~\ref{lem:ch_away_from_p}). If $\Delta(G)\neq {}^2\mr{E}_6$, then a straightforward description of the $\Gal(K/k)$-action on $\Pic(G_K)$ provides enough information to compute $\Ch^*(G_K)^{\Gal(K/k)}$, while in the case of $\Delta(G)= {}^2\mr{E}_6$ we again resort to explicit computer-assisted computations with Schubert cycles (Theorem~\ref{thm:answer_away_p}).

\textbf{Case (3)}, $\Delta(G)={}^6\mr{D}_4$ with $p=2$ or $3$ (Theorem~\ref{thm:6D4}). This case is treated via a mixture of the methods used in the first two cases. In order to compute $\Ch^*(G)$ for $p=2$ we partially split the group via a degree $3$ subextension $K/L/k$, obtaining $\Delta(G_L)={}^2\mr{D}_4$. We already know $\Ch^*(G_L)$ by the case (1), and we further analyse which cycles descent to $\Ch^*(G)$ computing the $\Gal(K/k)$-action on $\Ch^*(G_K)$ by computer-assisted calculations with Schubert cycles. The case of $p=3$ is treated similarly, using the degree $2$ subextension $K/L/k$ such that $\Delta(G_L)={}^3\mr{D}_4$ and again computer-assisted calculations with Schubert cycles.

\medskip

\textbf{Acknowledgements.} The work of the authors is supported by the DFG research grant AN 1545/4-1, the work of the first author is additionally supported by DFG Heisenberg grant AN 1545/1-1 and the work of the second author is additionally supported by DFG research grant SE 1721/4-1. The authors would like to thank Philippe Gille, Fabien Morel, Andrei Lavrenov, Burt Totaro, Alois Wohlschlager, Maksim Zhykhovich and Egor Zolotarev for valuable discussions.

\medskip

\textbf{Notation.}
Throughout the article we employ the following assumptions and notations.
\begin{itemize} \itemsep0pt
	\item A \textit{variety} over a field $k$ is a reduced scheme separated and of finite type over $k$.
	\item  An \textit{algebraic group} over a field $k$ is an algebraic group in the sense of \cite[Definition~1.1]{Mi17}, i.e. a group object in the category of schemes of finite type over $k$, in particular, an algebraic group is not necessary reduced.
	\item A \textit{group action} is a left group action. At the same time, somehow inconsistently, for a group $G$ acting on $X$ we denote by $X/G$ the quotient.
	
	\item A \textit{simple group} is a geometrically almost-simple group in the sense of \cite[Definition~19.7]{Mi17}.
	
	\item By {\it the splitting field} of a group $G$ we always mean the minimal splitting field. Note that the minimal splitting field is always a Galois extension.

	\item We use some standard terminology and notation related to quasi-split algebraic groups and algebraic groups in general, and also to vector bundles on (projective) homogeneous varieties, see Appendix~\ref{sec:appendix} for a partial recollection.
\end{itemize}		 

\begin{tabular}{l|l}
	$v_p$ & $p$-adic valuation\\		
	$k$ & a field \\
	$k^{sep}$ & separable closure of $k$ \\
	$\FF_p$ & finite field with $p$ elements\\
	$\Ch^*(-)$ & $\CH^*(-)\otimes \FF$ for a field $\FF$\\
	$\Smk$ & the category of smooth varieties over $k$\\
	$X_F$ & $X\times_{\Spec k} \Spec F$ for a field extension $F/k$ and $X\in \Smk$\\
	$\mc{X}^*(S)$ & the $\Gal(k^{sep}/k)$-module $\Hom (S_{k^{sep}}, \Gmm)$ of characters of an algebraic group $S$ over $k$\\
	$\pi_1(G)$ & the fundamental group of $G$\\
	$\Delta(G)$ & Dynkin diagram of a quasi-split semisimple group $G$ with the action of $\Gal(k^{sep}/k)$\\ 
	${}^{\hphantom{2}}\mu_l$ & the algebraic group of $l$-th roots of unity \\
	$\mut{2}{l}$ & a non-trivial (if $l\ge 3$) form of $\mu_l$ split by a quadratic Galois extension \\
	$\mut{2}{2,2}$, $\mut{3}{2,2}$ & a non-trivial form of $\mu_2\times \mu_2$ split by a degree $2$ (degree $3$) Galois extension \\
	$\mut{6}{2,2}$ & a form of $\mu_2\times \mu_2$ with the splitting field $K/k$ such that $\Gal(K/k)\cong S_3$ \\
	$\Smk^T$ & the category of smooth $T$-varieties over $k$ for an algebraic group $T$ over $k$ \\
	$\CH_{K,T}^*(X)$ & equivariant conormed Chow ring of $X\in\Smk^T$ (Definition~\ref{def:eq_conormed}) \\
	$\Phi_\rho, \Phi_T, \Phi_T^c$ & restriction homomorphisms on equivariant conormed Chow rings (Definition~\ref{def:restriction})\\
	$\Res_\rho$ & restriction homomorphism $\Res_\rho \colon \mathrm{Rep}(T)\to \mathrm{Rep}(S)$ on the representation rings for\\
	& a homomorphism $\rho\colon S\to T$ of algebraic groups over a field $k$\\
	
\end{tabular}

\section{Conormed Chow ring of a quasi-split simple group}

In this part of the article we develop the theory of \textit{conormed Chow rings} and its equivariant version, compute some equivariant conormed Chow rings of a point and compute the conormed Chow rings of quasi-split simple groups. Conormed Chow rings form an oriented cohomology theory, which is especially well adapted to the study of cohomological properties of quasi-split algebraic groups and their homogeneous varieties. It turns out that from the conormed point of view many non-split algebraic groups (e.g. quasi-trivial tori, twisted forms of $\mu_l$ and others) behave very much like their split counterparts.

\subsection{Conormed Chow groups} \label{sec:conormed}
In this section we recall the definition of conormed Chow rings and explore their basic properties. In particular, we show that for a Galois field extension $K/k$ the conormed Chow group $\CH^*_K(-)$ is generated by irreducible cycles that remain irreducible over $K$.

\begin{definition}[{see also \cite[\S~5]{SZ22} and \cite[\S~2]{Fi19}}] \label{def:conormed}
	Let $K/k$ be a separable extension of fields. For $X\in \Smk$ the \textit{conormed Chow ring of $X$} is
	\[
	\CH^*_{K}(X) := \bigslant{\CH^*(X)}{\Image\left(\bigoplus\limits_{\stackrel{k\subsetneq F\subseteq K}{F/k\,\mathrm{finite}}} \CH^*(X_F)\xrightarrow{(\pi_{F/k})_*} \CH^*(X) \right)},
	\]
	where $\CH^*(X)$ is the Chow ring of $X$, $\pi_{F/k}\colon X_F\to X$ is the projection and $(\pi_{F/k})_*$ is the pushforward homomorphism. By the projection formula the above image is an ideal in $\CH^*(X)$. This ideal is referred to as the \textit{norm ideal}. Note that in the case of $K=k$ one has $\CH^*_{K}(X) = \CH^*(X)$, since there are no nontrivial intermediate field extensions.
	
	One can easily check that since the Chow rings form an oriented cohomology theory on $\Smk$ in the sense of \cite[Definition~1.1.2]{LM07}, the conormed Chow rings $\CH^*_{K}(-)$ for a given separable extension of fields $K/k$ also form an oriented cohomology theory on $\Smk$. In particular, following \cite{Ma68} (see also \cite[Chapter~12]{EKM08}), for a separable extension of fields $K/k$ one can consider the category of conormed Chow motives $\mc{M}_{K}$ built from the category of conormed correspondences, i.e. from the category with the objects given by smooth proper varieties over $k$ and the morphisms between irreducible proper varieties given by $\CH^{\dim Y}_{K}(X\times Y)$. The transformation of oriented cohomology theories $\CH^*(-)\to \CH^*_{K}(-)$ induces a functor $\mc{M} \to\mc{M}_{K}$ from the category of Chow motives to the category of conormed Chow motives.
\end{definition}

\begin{lemma} \label{lem:normed_coef}
	Let $K/k$ be a separable field extension. Suppose that there exists an intermediate field extension $K\supseteq L\supsetneq k$ with $L/k$ being finite Galois. Then the following holds.
	\begin{itemize}
		\item 
		If $[L:k]$ is not a power of a prime, then $\CH^*_{K}(\Spec k)=0$.
		\item
		If $[L:k]=p^n$ for a prime $p$ and for every intermediate extension $K/F/k$ the degree $[F:k]$ is either infinite or a power of $p$, then $\CH^*_{K}(\Spec k)\cong \FF_p$.
	\end{itemize}
\end{lemma}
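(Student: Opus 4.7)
The first reduction is to identify $\CH^*_K(\Spec k)$ with a quotient of the integers. Since $\CH^*(\Spec F) \cong \Z$ concentrated in degree zero for any field $F$, and the pushforward along $\pi_{F/k}\colon \Spec F \to \Spec k$ is multiplication by $[F:k]$, the definition of conormed Chow groups yields
\[
\CH^*_K(\Spec k) \cong \Z/I, \qquad I := \bigl( [F:k] : k \subsetneq F \subseteq K,\ [F:k] < \infty \bigr) \subseteq \Z.
\]
Both bullets therefore reduce to computing the ideal of integers generated by the degrees of finite nontrivial subextensions of $K/k$, and this is governed entirely by the finite group $\Gal(L/k)$.

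For the first bullet, suppose $[L:k]$ has at least two distinct prime factors $p_1,\dots,p_r$ with $r \geq 2$. The plan is to exhibit, for each $i$, a subextension of $L/k$ whose degree is coprime to $p_i$. I would take $P_i \le \Gal(L/k)$ a Sylow $p_i$-subgroup and set $F_i := L^{P_i}$; then $F_i \neq k$ because $P_i$ is proper (as $\Gal(L/k)$ has more than one prime in its order), and $[F_i:k] = [\Gal(L/k):P_i]$ is coprime to $p_i$ and divides $[L:k] = \prod_j p_j^{a_j}$. Consequently the gcd of the integers $[F_i:k] \in I$ is coprime to every prime dividing $[L:k]$, hence equals $1$, giving $I = \Z$ and $\CH^*_K(\Spec k) = 0$.

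For the second bullet, the hypothesis forces every finite subextension of $K/k$ to have $p$-power degree, so every nontrivial generator $[F:k]$ of $I$ is a positive power of $p$ and $I \subseteq (p)$. The remaining task is to show $p \in I$, for which I would invoke the standard fact that a nontrivial finite $p$-group has a subgroup of index $p$ (a maximal subgroup): applied to $\Gal(L/k)$ this yields a subextension of $L/k$ of degree exactly $p$, so $p \in I$ and therefore $I = (p)$ and $\CH^*_K(\Spec k) \cong \Z/p \cong \FF_p$.

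The argument is essentially bookkeeping with Galois theory and Sylow's theorem, so no step is delicate; I do not expect a genuine obstacle beyond correctly identifying which subextensions to norm from in each case.
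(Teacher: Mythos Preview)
Your proposal is correct and follows essentially the same approach as the paper: identify $\CH^*_K(\Spec k)$ with $\Z$ modulo the ideal generated by the degrees $[F:k]$ of finite nontrivial subextensions, then use Sylow subgroups of $\Gal(L/k)$ to produce subextensions of pairwise coprime degree in the first case, and an index-$p$ subgroup of the $p$-group $\Gal(L/k)$ in the second. The only difference is cosmetic (you phrase things in terms of an ideal $I$, the paper in terms of $m = \gcd$).
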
	
\begin{proof}
	It is straightforward to see from the definition that $\CH^*_{K}(\Spec k)\cong \Z/m\Z$ with
	\[
	m:=m(K,k):=\gcd\limits_{\stackrel{k\subsetneq F\subseteq K}{F/k\,\mathrm{finite}}} ([F:k]).
	\]
	Put $\mc{G}:=\Gal(L/k)$. Suppose that $|\mc{G}|=[L:k]=l$ is not a power of a prime. For $p\mid l$ consider a Sylow $p$-subgroup $\mc{G}'_p\le \mc{G}$, then for $F:=L^{\mc{G}'_p}$ one has $[F:k]=l'_p$ for the prime to $p$ part $l'_p$ of $l$. Since the set of all $l_p'$ is coprime, it follows that $m(K,k)=1$ and $\CH^*_{K}(\Spec k)=0$.
	
	If $|\mc{G}|=p^n$ for a prime $p\in \NN$, then there exists a subgroup $\mc{G}'\le \mc{G}$ such that $[\mc{G}:\mc{G}']=p$, thus there is an intermediate field extension $F:=L^{\mc{G}'}$ such that $[F:k]=p$, yielding $m(K,k)\mid p$. On the other hand, it follows from the assumptions that $p\mid m(K,k)$, thus $m(K,k)=p$ and $\CH^*_{K}(\Spec k)\cong \FF_p$.
\end{proof}

\begin{remark}
	If $K/k$ has no intermediate extensions $k\subsetneq L\subseteq K$ with $L/k$ being finite Galois, then $\CH^*_{K}(\Spec k)$ may fail to be a field, e.g. for a separable extension $K$ with $[K:k]=n$ and without nontrivial intermediate extensions one has $\CH^*_{K}(\Spec k)\cong \Z/n\Z$.
\end{remark}

\begin{lemma} \label{lem:trivial_normed_corestriction}
	Let $K/F/k$ be separable field extensions with $F/k$ being finite. Then for $X\in \SmF$ viewed as a variety over $k$ one has $\CH^*_{K}(X)=0$.
\end{lemma}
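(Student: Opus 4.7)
The plan is to show that the single pushforward $(\pi_{F/k})_*\colon \CH^*(X_F) \to \CH^*(X)$ is already surjective, so that all of $\CH^*(X)$ lies inside the norm ideal of Definition~\ref{def:conormed} and hence $\CH^*_K(X) = 0$. We may tacitly assume $F \supsetneq k$, since otherwise $F$ does not contribute to the norm ideal (the claim is vacuous in any application).

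First I would unwind the fiber square. Using the $F$-structure morphism $\alpha\colon X \to \Spec F$, we have
\[
X_F = X \times_{\Spec k} \Spec F \cong X \times_{\Spec F} \Spec(F \otimes_k F).
\]
Because $F/k$ is finite separable, $\Spec F \to \Spec k$ is étale, and the diagonal of an étale morphism is an open-and-closed immersion; equivalently, $F \otimes_k F$ decomposes as an $F$-algebra as $F \times A$ with $A$ an étale $F$-algebra, the first factor being cut out by the multiplication map $F \otimes_k F \to F$. Pulling this decomposition back along $\alpha$ yields a decomposition of $k$-schemes
\[
X_F \cong X \sqcup X',
\]
in which the first summand corresponds to the diagonal factor.

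Next I would observe that under this decomposition the inclusion $X \hookrightarrow X_F$ is precisely the diagonal morphism $(\id_X, \alpha)\colon X \to X \times_{\Spec k} \Spec F$, whose composition with the projection $\pi_{F/k}$ onto the first factor is $\id_X$. Therefore the induced pushforward
\[
(\pi_{F/k})_*\colon \CH^*(X_F) = \CH^*(X) \oplus \CH^*(X') \to \CH^*(X)
\]
restricts to the identity on the first summand, hence is surjective. Since $F$ is a nontrivial finite subextension of $K/k$, it appears in the indexing set of the direct sum in Definition~\ref{def:conormed}, so the image of $(\pi_{F/k})_*$ is contained in the norm ideal; combined with surjectivity this forces $\CH^*_K(X) = 0$.

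The argument is essentially formal and I do not foresee any real obstacle. The only place where the hypotheses are used decisively is the splitting of $F \otimes_k F$, which relies squarely on the separability of $F/k$; without it the projection $\pi_{F/k}$ would be purely finite flat of degree $[F:k]$ and one would only recover multiplication by $[F:k]$ on Chow groups, which is not in general surjective.
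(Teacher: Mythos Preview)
Your proof is correct and follows essentially the same approach as the paper's: both arguments use that the diagonal of the \'etale morphism $\Spec F \to \Spec k$ furnishes a section $s\colon X \to X_F$ of $\pi_{F/k}$, so that $(\pi_{F/k})_* s_* = \id$ and the norm ideal contains $1$. You spell out the open--closed decomposition $X_F \cong X \sqcup X'$ more explicitly, whereas the paper simply notes the existence of the section and pushes forward $1$, but the content is the same.
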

\begin{proof}
	The projection onto the first factor
	\[
	\Spec F \times_{\Spec k} \Spec F \to \Spec F
	\]
	has a section. Taking the base change along the morphism $X\to \Spec F$ we see that the projection 
	\[
	X_F=X \times_{\Spec k} \Spec F \xrightarrow{\pi_{F/k}} X
	\]
	also has a section. Let $s\colon X\to X_F$ be such section which means that $s$ satisfies $\pi_{F/k}\circ s = \id_X$. Then 
	\[
	1=(\id_X)_*1=(\pi_{F/k})_*(s_*1)\in (\pi_{F/k})_*(\CH^*(X_F))\subseteq \CH^*(X)
	\]
	and the norm ideal coincides with $\CH^*(X)$. The claim follows.
\end{proof}

\begin{lemma} \label{lem:relative_field}
	Let $X$ be a normal irreducible variety over a field $k$ and $L/k$ be a finite Galois field extension. Suppose that $X_L$ is reducible. Then there exists an intermediate field extension $L\supseteq F\supsetneq k$ such that the structure morphism $X\to \Spec k$ factors as $X\to \Spec F\to \Spec k$.
\end{lemma}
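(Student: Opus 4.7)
The plan is to exhibit a proper intermediate field $k\subsetneq F\subseteq L$ that sits inside $K(X)$, and then use normality of $X$ to promote this inclusion to $F\subseteq H^0(X,\mathcal{O}_X)$, which yields the desired factorization $X\to \Spec F\to \Spec k$.

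For the first step I would write $L=k(\alpha)$ via the primitive element theorem, with minimal polynomial $f(t)\in k[t]$; since $L/k$ is Galois, $L$ is the splitting field of $f$ over $k$. The generic fibre of the projection $X_L\to X$ at the generic point $\eta\in X$ is $\Spec K(X)[t]/(f)$, so the irreducible components of $X_L$ correspond bijectively to the monic irreducible factors of $f$ over $K(X)$. Reducibility of $X_L$ then provides a monic factor $g\in K(X)[t]$ of $f$ with $0<\deg g<\deg f$. After fixing an embedding $L\hookrightarrow \overline{K(X)}$ over $k$, the roots of $g$ lie in $L$ (being roots of $f$), so the coefficients of $g$, as elementary symmetric polynomials in those roots, belong to $L\cap K(X)$. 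Not all these coefficients can be in $k$, since otherwise $g\in k[t]$ would contradict the irreducibility of $f$. Picking a coefficient $\beta\in L\setminus k$ and setting $F:=k(\beta)$ yields $k\subsetneq F\subseteq L$ with $F\subseteq K(X)$.

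To finish, every element of $F$ is algebraic, hence integral, over $k$, and so integral over each local ring $\mathcal{O}_{X,x}$; normality of $X$ implies $\mathcal{O}_{X,x}$ is integrally closed in $K(X)$, so $F\subseteq \mathcal{O}_{X,x}$ for every $x\in X$ and therefore $F\subseteq \bigcap_{x\in X}\mathcal{O}_{X,x}=H^0(X,\mathcal{O}_X)$. The resulting $k$-algebra map $F\to H^0(X,\mathcal{O}_X)$ induces a morphism $X\to\Spec F$ compatible with the structure morphism.

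The main obstacle is the second step --- ensuring that the coefficients of $g$ genuinely lie in $L$ rather than merely in $\overline{k}$. This is where the Galois hypothesis on $L/k$ is crucial, as it forces $L$ to contain the splitting field of $f$. Normality is indispensable in the last step as well, since without it algebraic elements of the function field need not extend to global regular functions (consider e.g.\ a nodal curve).
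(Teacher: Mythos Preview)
Your proof is correct and takes a more direct route than the paper's. Both arguments reduce to locating a nontrivial intermediate field $k\subsetneq F\subseteq L$ inside $K(X)$, and both then invoke normality of $X$ in exactly the same way to promote $F\subseteq K(X)$ to $F\subseteq H^0(X,\mathcal{O}_X)$. The difference lies in how $F$ is produced. The paper introduces the separable closure $k'$ of $k$ in $K(X)$ and argues by contradiction: if $k'$ shared no nontrivial subfield with $L$, a Galois-theoretic index count gives $[Lk':k]=[L:k]\cdot[k':k]$, whence $k'\otimes_k L$ is a field; since $K(X)$ is primary over $k'$ and $k'\otimes_k L$ is separable over $k'$, one then concludes that $K(X)\otimes_k L$ is a field, contradicting the reducibility of $X_L$. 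Your argument instead constructs $F$ explicitly as $k(\beta)$ for $\beta$ a coefficient, not in $k$, of a proper monic factor of the minimal polynomial of a primitive element of $L/k$, using only that $L$ is the splitting field of this polynomial to force $\beta\in L\cap K(X)$. This is more elementary---no separable closure, no primary extensions---while the paper's route has the mild conceptual advantage of singling out $k'$ as the natural receptacle for all such $F$.
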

\begin{proof}
	Let $k'$ be the separable closure of $k$ in the function field $k(X)$. Since $X$ is normal, it follows that for every open affine subscheme $\Spec A\subseteq X$ we have $k'\subseteq A\subseteq k(X)$, thus $k'\subseteq H^0(X,\mc{O}_X)$ yielding a factorization $X\to \Spec k'\to \Spec k$. 
	
	It suffices to show that there exists a subfield $k' \supseteq F\supsetneq k$ isomorphic to a subfield of $L$. Suppose that no such $F$ exists. We claim that then $k'\otimes_k L$ is a field. Choose embeddings $k'\subseteq k^{sep}$ and $L\subseteq k^{sep}$, and put
	\[
	\mc{G}:=\Gal(k^{sep}/k),\quad \mc{G}_L:=\Gal(k^{sep}/L),\quad \mc{G}_{k'}:=\Gal(k^{sep}/k'), \quad \mc{G}_{Lk'}:=\Gal(k^{sep}/Lk'),
	\]
	where $Lk'$ is the compositum of $L$ and $k'$ inside $k^{sep}$. These groups have the following properties:
	\begin{itemize}
		\item $\mc{G}_L$ is a normal subgroup of $\mc{G}$, since $L/k$ is Galois,
		\item the subgroup $\langle \mc{G}_L,\mc{G}_{k'}  \rangle \le \mc{G}$ generated by $\mc{G}_L$ and $\mc{G}_{k'}$ is $\mc{G}$, since  $k'$ and $L$ have no common subfields except $k$,
		\item $\mc{G}_{Lk'}=\mc{G}_L\cap\mc{G}_{k'}$ since $Lk'$ is the compositum of $L$ and $k'$ inside $k^{sep}$.
	\end{itemize}
	The first two properties yield that $\mc{G} = \mc{G}_{k'}\cdot \mc{G}_L$, thus the monomorphism 
	\[
	\mc{G}_{k'}/(\mc{G}_L\cap\mc{G}_{k'})\to \mc{G}/\mc{G}_L
	\]
	induced by the embedding is an isomorphism. Note that $k'/k$ is an extension of finite degree, being an algebraic subextension of a finitely generated extension $k(X)/k$. Thus we have
	\[
	[Lk':k]=[\mc{G}:\mc{G}_{Lk'}]=[\mc{G}:\mc{G}_L\cap\mc{G}_{k'}]=[\mc{G}:\mc{G}_{k'}]\cdot[\mc{G}_{k'}:\mc{G}_L\cap\mc{G}_{k'}] =[\mc{G}:\mc{G}_{k'}]\cdot[\mc{G}:\mc{G}_L] = [k':k]\cdot [L:k].
	\]
	By the dimension count it follows that the homomorphism $k'\otimes_k L \to Lk'$ is an isomorphism, and $k'\otimes_k L$ is a field. Then $k(X)\otimes_k L\cong k(X)\otimes_{k'} (k' \otimes_k L)$ is also a field, since $k(X)$ is primary over $k'$ (the algebraic closure of $k'$ in $k(X)$ is purely inseparable over $k'$) and $k' \otimes_k L$ is separable over $k'$. Thus $X_L$ is irreducible which contradicts the assumptions.
\end{proof}

\begin{remark}
	Some normality assumptions on $X$ and $L/k$ in Lemma~\ref{lem:relative_field} are indeed necessary, as the following examples show:
	\begin{itemize}
		\item Let $X:=\Spec \mathbb{R}[x,y]/(x^2+y^2)$ and $L/k = \mathbb{C}/\mathbb{R}$. Then $X$ is an irreducible variety over $\mathbb{R}$ and $X_\mathbb{C}$ is reducible being the union of two lines, but the structure morphism $X\to \Spec \mathbb{R}$ does not factor through $\Spec \mathbb{C}$, since $X$ has a closed point  with the residue field $\mathbb{R}$.
		\item Let $K/k$ be a Galois field extension with the Galois group being $\Gal(K/k)=S_4$, and $L_1,L_2\subseteq K$ be the subfields with $\Gal(K/L_1)=\langle (12) \rangle$ and $\Gal(K/L_2)=\langle (1234) \rangle$. Put $X:=\Spec L_1$ and $L:=L_2$. For the compositum $L_1L_2$ we have 
		\[
		\dim_k L_1\otimes_k L_2 = 12 \cdot 6 > 24 = [K:k]\ge [L_1L_2:k],
		\]
		hence $L_1\otimes_k L_2$ is not a field and $X_{L}$ is not connected. Suppose that $X\to \Spec k$ factors through $\Spec F\to \Spec k$ for a subfield $k\subseteq F\subseteq L=L_2$. Then $F$ is isomorphic to a subfield of $L_1$ and there exists $\sigma\in \Gal(K/k)$ such that $F\subseteq \sigma(L_1)$. We have $\Gal(K/\sigma(L_1))\cong C_2$ with the generator being the transposition $\sigma (12)\sigma^{-1}$. A transposition and a $4$-cycle generate $S_4$ yielding $\langle \Gal(K/\sigma(L_1)), \Gal(K/L_2)\rangle = \Gal(K/k)$. It follows that ${\sigma(L_1)\cap L_2=k}$ and $F=k$.
	\end{itemize}
\end{remark}

\begin{corollary} \label{cor:triv_normed}
	Let $X\in \Smk$ and $K/L/k$ be separable field extensions with $L/k$ being a finite Galois extension. Suppose that $X$ is connected and $X_L$ is not connected. Then $		\CH_{K}^*(X)=0$.
\end{corollary}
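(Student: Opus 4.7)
The plan is to deduce the corollary immediately from Lemmas~\ref{lem:relative_field} and~\ref{lem:trivial_normed_corestriction}, with the only non-trivial step being the verification that the hypotheses of Lemma~\ref{lem:relative_field} hold. First I would reduce to the setting of that lemma by noting that $X\in\Smk$ is in particular a normal variety over $k$, and connectedness of a smooth variety implies irreducibility (connected components of a smooth scheme coincide with its irreducible components). Similarly $X_L$ is smooth over $L$, so the assumption that $X_L$ is not connected is equivalent to $X_L$ being reducible.

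With these observations in hand, Lemma~\ref{lem:relative_field} produces an intermediate field extension $L\supseteq F\supsetneq k$ such that the structure morphism $X\to\Spec k$ factors as $X\to \Spec F\to \Spec k$; in particular $X$ can be viewed as a smooth variety over $F$. Since $L/k$ is finite, the subextension $F/k$ is also finite, and $F\subseteq L\subseteq K$ gives separable extensions $K/F/k$ fitting the hypotheses of Lemma~\ref{lem:trivial_normed_corestriction}. Applying that lemma to $X\in\SmF$ yields $\CH^*_K(X)=0$, which is what we want. There is essentially no obstacle here beyond invoking Lemma~\ref{lem:relative_field} correctly; that descent statement is where the real work lives, and the present corollary is a clean packaging of it.
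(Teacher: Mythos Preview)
Your proof is correct and follows exactly the same approach as the paper, which simply states that the corollary follows from Lemmas~\ref{lem:trivial_normed_corestriction} and~\ref{lem:relative_field}. You have merely filled in the straightforward verification of hypotheses (smooth implies normal, smooth connected implies irreducible, etc.) that the paper leaves implicit.
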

\begin{proof}
	Follows from Lemma~\ref{lem:trivial_normed_corestriction} and  Lemma~\ref{lem:relative_field}.
\end{proof}

\begin{remark} \label{rem:conormed_connected}
	Using normalization and an argument similar to the above one can show that for a (not necessary smooth) variety $X$ and a Galois field extension $K/k$ the group
	\[
	\CH_n^{K}(X) := \bigslant{\CH_n(X)}{\Image\left(\bigoplus\limits_{\stackrel{k\subsetneq F\subseteq K}{F/k\,\mathrm{finite}}} \CH_n(X_F)\xrightarrow{(\pi_{F/k})_*} \CH_n(X) \right)},
	\]
	is generated by irreducible cycles that remain to be irreducible over $K$.
\end{remark}

\subsection{Equivariant conormed Chow groups} \label{sec:conormed_equiv}

In this section we use the approach by Totaro \cite{To99} and Edidin-Graham \cite{EG98} to extend the conormed Chow groups to the equivariant setting, obtaining a $T$-equivariant ring cohomology theory $\CH^*_{K,T}(-)$.

\begin{definition} \label{def:eq_conormed}
	Let $T$ be an affine algebraic group over $k$ and $K/k$ be a separable field extension. Following \cite{EG98,To99} we introduce the \textit{$T$-equivariant conormed Chow ring} $\CH^*_{K,T}(X)$ of $X\in\Smk^T$ as follows. Let $n\in \NN_0$, let $V$ be a representation of $T$ and let $U\subseteq V$ be an open subset such that
	\begin{enumerate}
		\item 
		$V\setminus U$ is of codimension greater than $n$ in $V$,
		\item
		$T$ acts freely on $U$,
		\item 
		the quotient $U/T$ exists as a scheme.
	\end{enumerate}
	Such $V$ and $U\subseteq V$ always exist, see e.g. \cite[Lemma~9]{EG98}. We refer to $U$ as an \textit{$n$-th approximation to the universal torsor $ET$} and usually denote such $U$ as $E_nT$. We put 
	\[
	\CH^n_{K,T}(X):= \CH^n_{K}((X \times E_nT)/T),
	\]
	where $(X \times E_nT)/T$ is considered as an algebraic space \cite[Proposition~22]{EG98} and the conormed Chow groups on the right are the Chow groups of an algebraic space as in \cite[Section~6]{EG98} modulo the respective norm ideal. If the quotient $(X \times E_nT)/T$ is represented by a smooth variety, e.g. if $X$ and $T$ satisfy the assumptions of \cite[Proposition~23]{EG98}, then these are the conormed Chow groups of a variety introduced in Definition~\ref{def:conormed}. It is straightforward to check (see \cite[Definition-Proposition~1]{EG98}) that, up to a canonical isomorphism, the group $\CH^n_{K,T}(X)$ does not depend on the choice of $V$ and $U=E_nT$. Moreover, one clearly has
	\[
	\CH^*_{K,T}(X) = \bigslant{\CH^*_T(X)}{\Image\left(\bigoplus\limits_{\stackrel{k\subsetneq F\subseteq K}{F/k\,\mathrm{finite}}} \CH^*_T(X_F)\xrightarrow{(\pi_{F/k})_*} \CH^*_T(X) \right)},
	\]
	where $\CH^*_T(-)$ are the equivariant Chow groups of \cite{EG98}. This gives rise to an oriented $T$-equivariant ring cohomology theory on $\Smk^T$, in particular, $\CH^*_{K,T}(X)$ is a ring for $X\in \Smk^T$ and one has
	\begin{itemize}
		\item
		\textit{pullback} ring homomorphisms
		\[
		f^*\colon \CH^*_{K,T}(X) \to \CH^*_{K,T}(Y)
		\] 
		for a $T$-equivariant morphism $f\colon Y\to X$, $X,Y\in\Smk^T$,
		\item
		\textit{pushforward} homomorphism of $\CH^*_{K,T}(X)$-modules
		\[
		f_*\colon \CH^{*-n}_{K,T}(Y) \to \CH^*_{K,T}(X)
		\]
		for a projective $T$-equivariant morphism $Y\to X$ of codimension $n$ with $X,Y\in\Smk^T$,
		\item
		an exact \textit{localization sequence}
		\[
		\CH^{*-n}_{K,T}(Z) \xrightarrow{i_*} \CH^*_{K,T}(X) \xrightarrow{j^*} \CH^*_{K,T}(X-Z) \to 0
		\]
		for a closed codimension $n$ embedding of smooth $T$-varieties $Z\to X$,
		\item
		\textit{homotopy invariance} isomorphism 
		\[
		f^*\colon \CH^*_{K,T}(X) \xrightarrow{\simeq} \CH^*_{K,T}(\mc{V})
		\]
		for a  $T$-equivariant vector bundle $f\colon \mc{V}\to X$,
		\item
		\textit{Chern class} $c_n(\mc{V})\in \CH^n_{K,T}(X)$ for a $T$-equivariant vector bundle $\mc{V}$ over $X\in\Smk^T$ and $n\in \NN$, in particular, $c_n(V)\in  \CH^n_{K,T}(\Spec k)$ for a representation $V$ of $T$ and $n\in \NN$,
		\item
		\textit{normalization}
		\[
		c_n(\mc{V})=z^*z_*(1)\in \CH^n_{K,T}(X)
		\]
		for the zero section $z\colon X\to \mc{V}$ of a rank $n$ $T$-equivariant vector bundle $\mc{V}\to X$.
	\end{itemize}
	Combining the localization sequence for the zero section $z\colon X\to \mc{V}$ of a rank $n$ $T$-equivariant vector bundle $\mc{V}\to X$, the homotopy invariance isomorphism and the normalization property one obtains the \textit{Gysin exact sequence}
	\[
	\CH^{*-n}_{K,T}(X) \xrightarrow{c_n(\mc{V})} \CH^*_{K,T}(X) \xrightarrow{j^*} \CH^*_{K,T}(\mc{V}-z(X)) \to 0.
	\]
\end{definition}

\begin{lemma} \label{lem:trivial_normed}
	Let $T$ be an affine connected algebraic group over $k$, $X\in \Smk^T$ and let $K/L/k$ be separable extensions of fields with $L/k$ being a finite Galois extension. Suppose that $X_L$ is not connected. Then
	\[
	\CH^*_{K,T}(X)=0.
	\]
\end{lemma}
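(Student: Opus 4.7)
The plan is to reduce this equivariant statement to the non-equivariant Lemma~\ref{lem:trivial_normed_corestriction} by showing that each Borel approximation $(X \times E_nT)/T$ inherits the property of being defined over an intermediate finite field extension.

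First, I would reduce to the case where $X$ is connected. Since $T$ is connected, the induced map $T(\bar k) \to \Aut(\pi_0(X_{\bar k}))$ from a connected group to a discrete group must be trivial, so the $T$-action preserves the connected components of $X$, which are therefore $T$-invariant. By the additivity of conormed Chow groups on disjoint unions, $\CH^*_{K,T}(X)$ splits correspondingly, so one may assume $X$ is connected. As $X$ is smooth it is normal, and a normal connected variety is irreducible.

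By Lemma~\ref{lem:relative_field}, since $X_L$ is disconnected there is an intermediate field extension $L \supseteq F \supsetneq k$ (with $F/k$ finite) such that $X \to \Spec k$ factors as $X \to \Spec F \to \Spec k$. The key observation is that this factorization is $T$-equivariant with respect to the trivial action on $\Spec F$: the $T$-action on $X$ induces an action on $H^0(X,\mc{O}_X)$, and the separable subfield $F \subseteq H^0(X,\mc{O}_X)$ (the algebraic closure of $k$ in $k(X)$, as used in the proof of Lemma~\ref{lem:relative_field}) must be fixed, because the resulting homomorphism $T \to \Aut_k(F)$ lands in a finite discrete group while $T$ is connected. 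Consequently, for every approximation $E_nT$ the composition $X \times E_nT \xrightarrow{\mathrm{pr}_1} X \to \Spec F$ is $T$-invariant and descends to a morphism $(X \times E_nT)/T \to \Spec F$, exhibiting the Borel approximation as an algebraic space over $F$ viewed as one over $k$.

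Finally, Lemma~\ref{lem:trivial_normed_corestriction} (whose proof uses only that the diagonal gives a section of the first projection $\Spec F \times_k \Spec F \to \Spec F$, together with formal pushforward-pullback compatibilities, hence applies equally to algebraic spaces) yields $\CH^*_K((X \times E_nT)/T) = 0$ for every $n$, and therefore $\CH^*_{K,T}(X) = 0$. The only delicate point is the rigidity step that the connected group $T$ acts trivially on the discrete datum $F$; the remaining bookkeeping around the Borel construction and additivity is routine.
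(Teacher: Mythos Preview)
Your proof is correct and follows essentially the same route as the paper: obtain the factorization $X\to\Spec F\to\Spec k$ from Lemma~\ref{lem:relative_field}, use connectedness of $T$ to see that $T$ acts trivially on $F\subseteq H^0(X,\mc{O}_X)$, and then conclude via the argument of Lemma~\ref{lem:trivial_normed_corestriction}. The only difference is cosmetic: the paper phrases the last step as ``the section $X\to X_F$ is $T$-equivariant, so the proof of Lemma~\ref{lem:trivial_normed_corestriction} goes through equivariantly,'' while you descend the map $X\to\Spec F$ to the Borel approximation and apply that lemma non-equivariantly---these are the same thing. One small caveat: your reduction to connected $X$ is not quite right as written, since if $X$ is already disconnected then $X_L$ is trivially disconnected without each component becoming disconnected over $L$; the paper sidesteps this by tacitly treating $X$ as connected (as in all applications), and you should too.
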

\begin{proof}
	Let $L\supseteq F\supsetneq k$ and $X\to \Spec F\to \Spec k$ be a factorization of the structure morphism given by Lemma~\ref{lem:relative_field}. The factorization arises from an embedding $F\subseteq H^0(X,\struct_X)$, thus the projection $X\to \Spec F$ is $T$-equivariant with the $T$-action on $\Spec F$ induced by the one on $H^0(X,\struct_X)$. Since the automorphism group of $F$ over $k$ is finite and $T$ is connected, it follows that the $T$-action on $\Spec F$ is trivial. It follows that the splitting of the projection $X_F\to X$ constructed in the proof of Lemma~\ref{lem:trivial_normed_corestriction} is $T$-equivariant, so the claim follows as in loc. cit.	
\end{proof}

\begin{definition} \label{def:restriction}
	Let $\rho\colon S\to T$ be a homomorphism of affine algebraic groups over a field $k$. For $n\in\NN_0$ choose $n$-th approximations $E_nS$ and $E_nT$ to the universal torsors $ES$ and $ET$ and a morphism $f\colon E_nS\to E_nT$ equivariant with respect to $\rho$, i.e. such that the diagram
	\[
	\xymatrix{
		S\times E_nS \ar[r] \ar[d]_{\rho\times f} & E_nS \ar[d]^{f}\\
		T\times E_nT \ar[r]  & E_nT,
	}
	\]
	commutes. Here the horizontal maps are the respective actions. Then for $X\in \Smk^T$ we have a morphism
	\[
	\bar{f}\colon (X\times E_nS)/S \to (X\times E_nT)/T
	\]
	induced by $\id_X\times f$ and for a separable field extension $K/k$ we obtain a pullback homomorphism
	\[
	\bar{f}^*\colon \CH_{K}^n((X\times E_nT)/T) \to \CH_{K}^n((X\times E_nS)/S).
	\]
	As in \cite[Definition-Proposition~1]{EG98}, one can show that this homomorphism does not depend on the choices of approximations and on $f$ yielding a \textit{restriction homomorphism}
	\[
	\Phi_{\rho} \colon \CH_{K,T}^n (X) \to \CH_{K,S}^n (X)
	\]
	with the action of $S$ on $X$ given by the composition of $\rho$ and the action of $T$. If $S=1$ is the trivial group, then $\CH^*_{K,S} (-) \cong \CH^*_{K} (-)$ and we put
	\[
	\Phi_T:=\Phi_{\rho} \colon \CH_{K,T}^n (X) \to  \CH^n_{K,S} (X) \cong \CH_{K}^n (X).
	\]
	If $T=1$ is the trivial group, then $\CH^*_{K,T} (-) \cong \CH^*_{K} (-)$ and we put
	\[
	\Phi^c_S:=\Phi_{\rho} \colon \CH_{K}^n (X) \cong \CH_{K,T}^n (X) \to  \CH^n_{K,S} (X),
	\]
	where $X\in\Smk$ is equipped with the trivial action of $S$.
\end{definition}

\begin{definition} \label{def:Eilenberg--Moore_homo}
	Let $K/k$ be a separable field extension.
	
	Let $\rho\colon S\to T$ be a homomorphism of affine algebraic groups over a field $k$ and $X\in\Smk^{T}$. For $n\in \NN_0$ choose $n$-th approximations $E_nS$ and $E_nT$ to the universal torsors $ES$ and $ET$ and consider the following cartesian square induced by the projections:
	\[
	\xymatrix{
		(X\times E_nS \times E_nT)/S \ar[r] \ar[d] & (E_nS \times E_nT)/S \ar[d]\\
		(X\times E_nT)/T \ar[r] & E_nT/T
	}
	\]
	Here the actions of $S$ and $T$ on the respective products are the diagonal ones, and the action of $S$ on $E_nT$ is induced by the homomorphism $\rho$. Note that $E_nS \times E_nT$ with the diagonal action of $S$ may also be considered as an $n$-th approximation to the universal torsor $ES$. Thus this gives rise to a homomorphism
	\begin{equation} \label{eq:EM1}
		\CH^*_{K,T} (X) \otimes_{\CH^*_{K,T}(\Spec k)}  \CH^*_{K,S} (\Spec k) \to \CH^*_{K,S} (X),\quad (x,\alpha)\mapsto \alpha \cdot \Phi_{\rho}(x),
	\end{equation}
	which we refer to as \textit{Eilenberg--Moore homomorphism}. One can check that it does not depend on the choice of $E_nS$ and $E_nT$. In particular, for $S=1$ being the trivial group the Eilenberg--Moore homomorphism looks as
	\begin{equation} \label{eq:EM2}
		\CH^*_{K,T} (X) \otimes_{\CH^*_{K,T}(\Spec k)}  \CH^*_{K} (\Spec k) \to \CH^*_{K} (X),\quad (x,\alpha)\mapsto \alpha \cdot \Phi_{T}(x).
	\end{equation}
	
	Let $S\xrightarrow{\rho} T\xrightarrow{\rho'} R$ be a short exact sequence of affine algebraic groups over a field $k$ and $X\in\Smk^{T}$.  For $n\in \NN_0$ choose $n$-th approximations $E_nT$ and $E_nR$ to the universal torsors $ET$ and $ER$ and similarly to the above consider the following cartesian squares:
	\[
	\xymatrix{
		(X\times E_nT \times E_nR)/S \ar[r] \ar[d] & (E_nT \times E_nR)/S \ar[d] \ar[r] & \ar[d] E_nR \\
		(X\times E_nT \times E_nR)/T \ar[r] & (E_nT\times E_nR)/T \ar[r] & E_nR/R
	}
	\]
	Here the action of $S$ on $E_nT$ is restricted from $T$, the action of $S$ on $E_nR$ is trivial and  the action of $T$ on $E_nR$ is induced by the homomorphism $\rho'$. With the specified actions $E_nT\times E_nR$ can be viewed as $n$-th approximations both to the universal torsors $ES$ and $ET$. Thus we obtain the following version of the Eilenberg--Moore homomorphism:
	\begin{equation} \label{eq:EM3}
		\CH^*_{K,T} (X) \otimes_{\CH^*_{K,R}(\Spec k)}  \CH^*_{K} (\Spec k) \to \CH^*_{K,S} (X),\quad (x,\alpha)\mapsto \alpha \cdot \Phi_{\rho}(x),
	\end{equation}
	with the structure of $\CH^*_{K,R}(\Spec k)$-module on $\CH^*_{K,T} (X)$ induced by the restriction homomorphism $\Phi_{\rho'}\colon\CH^*_{K,R}(\Spec k)\to \CH^*_{K,T}(\Spec k)$.
\end{definition}

\begin{remark}
	The name for the homomorphisms in Definition~\ref{def:Eilenberg--Moore_homo} refers to the classical Eilenberg--Moore spectral sequence which computes the singular cohomology groups of a pullback over a fibration.
\end{remark}

\begin{lemma}[{see \cite[Proposition~8(a)]{EG98}}] \label{lem:torsor}
	Let $K/k$ be a separable field extension, let $T$ be an affine algebraic group over $k$, $X\in \Smk^{T}$ and suppose that the action of $T$ on $X$ is free and $X/T$ is represented by a scheme. Then there is a canonical isomorphism
	\[
	\CH^*_{T, K}(X) \cong \CH^*_{K}(X/T).
	\]
\end{lemma}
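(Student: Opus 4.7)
The plan is to reduce to the classical Edidin--Graham isomorphism $\CH^*_T(X)\cong \CH^*(X/T)$ from \cite[Proposition~8(a)]{EG98}, and then show that this isomorphism respects the norm ideals on both sides so that it descends to the conormed quotients.

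First I would fix $n\in\NN_0$, pick an $n$-th approximation $E_nT$ to the universal torsor, and consider the projection
\[
\pi_n\colon (X\times E_nT)/T\to X/T.
\]
Since $T$ acts freely on $X$, the quotient $X\to X/T$ is a $T$-torsor, so $\pi_n$ is étale-locally trivial with fiber $E_nT$, an open subset of a representation $V$ whose complement has codimension $>n$. By homotopy invariance together with the localization sequence applied to the complement $V\setminus E_nT$, the pullback $\pi_n^*\colon \CH^i(X/T)\to \CH^i((X\times E_nT)/T)$ is an isomorphism for $i\le n$; this is the ordinary Edidin--Graham isomorphism in the given range.

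Next I would check compatibility with the norm pushforwards. For any finite intermediate extension $k\subsetneq F\subseteq K$, base change along $\Spec F\to \Spec k$ commutes with the free $T$-quotient (torsors are stable under base change), giving a canonical identification $(X_F\times E_nT)/T\cong ((X\times E_nT)/T)_F$ under which $(\pi_n)_F$ becomes the base change of $\pi_n$ along $\pi_{F/k}\colon (X/T)_F\to X/T$. The resulting square is cartesian with $\pi_n$ smooth and $\pi_{F/k}$ finite, so flat base change gives
\[
\pi_n^*\circ (\pi_{F/k})_* = (\pi_{F/k})_*\circ (\pi_n)_F^*.
\]
This shows $\pi_n^*$ sends the norm ideal of $\CH^i(X/T)$ into the norm ideal of $\CH^i((X\times E_nT)/T)$. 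Conversely, any generator $(\pi_{F/k})_*\beta$ of the norm ideal on the target can, for $i\le n$, be rewritten as $(\pi_{F/k})_*(\pi_n)_F^*\alpha = \pi_n^*(\pi_{F/k})_*\alpha$ using that the base-changed $(\pi_n)_F^*$ is also an isomorphism in the relevant range, so the two norm ideals are identified under $\pi_n^*$.

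Passing to the quotient thus yields isomorphisms $\CH^i_{K,T}(X)\cong \CH^i_{K}(X/T)$ for all $i\le n$; taking $n$ arbitrarily large and checking independence of the choice of approximation exactly as in \cite[Definition-Proposition~1]{EG98} gives the claim in every degree, and functoriality in $F$ makes the isomorphism canonical. The principal technical obstacle is the base-change identification $(X_F\times E_nT)/T\cong ((X\times E_nT)/T)_F$, which relies on the fact that flat base change commutes with free quotients by group schemes; once this is secured, everything else reduces to flat base change and the classical Edidin--Graham argument.
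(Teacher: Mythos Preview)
Your argument is correct, but it takes an unnecessarily roundabout route compared to the paper. The paper observes that since $X\to X/T$ is a $T$-torsor, $(X\times V)/T\to X/T$ is a vector bundle and $(X\times E_nT)/T$ sits inside it as an open subset with complement of codimension $>n$; it then applies homotopy invariance and the localization sequence \emph{directly to the conormed theory} $\CH^*_K$, which it may do because Definition~\ref{def:conormed} and Definition~\ref{def:eq_conormed} already record that $\CH^*_K$ is an oriented cohomology theory with these properties. This yields
\[
\CH^n_{K,T}(X)=\CH^n_K((X\times E_nT)/T)\cong \CH^n_K((X\times V)/T)\cong \CH^n_K(X/T)
\]
in one line, with no separate bookkeeping of norm ideals.

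You instead run the Edidin--Graham argument for ordinary $\CH^*$ and then verify by flat base change that $\pi_n^*$ matches the norm ideals on both sides. This works, and your base-change square and the identity $\pi_n^*\circ(\pi_{F/k})_*=(\pi_{F/k})_*\circ(\pi_n)_F^*$ are exactly what is needed; but this compatibility check is precisely what is packaged once and for all into the statement that $\CH^*_K$ satisfies homotopy invariance and localization. In effect you are re-proving a special case of that package. The paper's approach is shorter and conceptually cleaner; yours has the virtue of making the interaction with the norm maps visible, but it is doing strictly more work than necessary given what has already been set up.
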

\begin{proof}
	The same reasoning as in the proof of \cite[Proposition~8(a)]{EG98} applies, but since we will make computations using this isomorphism later in the article, we provide the construction for a completeness.
	
	Let $n\in \NN_0$ and let $E_nT\subseteq V$ be an $n$-th approximation to the universal torsor $ET$. Since $X\to X/T$ a $T$-torsor, it follows that 
	\[
	(X\times V)/T \to  X/T
	\]
	is a vector bundle with 
	\[
	(X\times E_nT)/T \subseteq (X\times V)/T
	\]
	being an open subset with the complement of codimension greater then $n$. Thus 
	\[
	\CH^n_{T, K}(X)= \CH^n_{K}((X\times E_nT)/T)
	\cong \CH^n_{K}((X\times V )/T)\cong \CH^n_{K}(X/T). \qedhere
	\]
\end{proof}

\begin{lemma} \label{lem:trivial_coprime}
	Let $K/k$ be a separable field extension, let $N$ be a finite group scheme over $k$ and let $X\in\Smk$ be equipped with the trivial action of $N$. Suppose that the order of $N$ is invertible in $\CH^*_{K}(\Spec k)$. Then the restriction homomorphism
	\[
	\Phi_N\colon \CH^*_{K,N}(X)\to \CH^*_{K}(X)
	\]
	is an isomorphism.
\end{lemma}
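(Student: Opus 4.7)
The plan is a transfer argument using the finite flat $N$-torsor relating $E_nN$ and $BN_n := E_nN/N$. Fix $n \in \NN_0$; it suffices to prove that $\Phi_N$ is an isomorphism in degrees $\leq n$. Choose an $n$-th approximation $E_nN \subseteq V$ to the universal torsor $EN$ together with a $k$-point $e_0 \in E_nN$, which can be arranged after enlarging $V$ by a sufficiently large trivial summand if needed. Since $N$ acts trivially on $X$, the diagonal $N$-action on $X \times E_nN$ yields a canonical identification $(X \times E_nN)/N \cong X \times BN_n$ and a well-defined first projection $p\colon X \times BN_n \to X$. Let $q\colon X \times E_nN \to X \times BN_n$ be the quotient morphism and set $i\colon X \to X \times E_nN$, $x \mapsto (x, e_0)$, and $\bar{f} := q\circ i\colon X \to X \times BN_n$. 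By construction of the restriction homomorphism (Definition~\ref{def:restriction}), in degrees $\leq n$ we have $\Phi_N = \bar{f}^* = i^* \circ q^*$ on $\CH^{\leq n}_{K,N}(X) = \CH^{\leq n}_K(X \times BN_n)$, and tautologically $p \circ \bar{f} = \id_X$.

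\textbf{Proof of the isomorphism.} For surjectivity: $p \circ \bar{f} = \id_X$ gives $\Phi_N \circ p^* = \id$ on $\CH^*_K(X)$, so $\Phi_N$ is surjective with right section $p^*$. For injectivity: the morphism $q$ is an $N$-torsor, hence finite flat of constant degree $|N|$, so the projection formula gives $q_* \circ q^* = |N| \cdot \id$ on $\CH^*_K(X \times BN_n)$; since $|N|$ is invertible in $\CH^*_K(\Spec k)$ it acts invertibly on every conormed Chow group, hence $q^*$ is split injective. Meanwhile, homotopy invariance combined with the localization sequence for the open inclusion $X \times E_nN \hookrightarrow X \times V$ (complement of codimension $> n$) shows that $i^*$ is an isomorphism in degrees $\leq n$. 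Therefore $\Phi_N = i^* \circ q^*$ is injective in this range, and together with the surjectivity this shows $\Phi_N$ is an isomorphism in degrees $\leq n$. Since $n$ was arbitrary, the result follows.

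\textbf{Main obstacle.} The technically nontrivial ingredient is the validity of the projection formula $q_* \circ q^* = |N| \cdot \id$ at the level of conormed Chow groups rather than ordinary Chow groups. On ordinary Chow it is the standard formula for the finite flat morphism $q$ of constant degree $|N|$; to descend it to the conormed quotient one verifies that both $q^*$ and $q_*$ preserve the norm ideal, which follows from their compatibility with base change along each intermediate finite subextension $k \subsetneq F \subseteq K$ via the corresponding cartesian squares.
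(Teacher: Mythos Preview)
Your argument is correct and is essentially the same transfer argument as in the paper: both use that the quotient $q\colon X\times E_nN\to X\times BN_n$ is finite flat of degree $|N|$ so that $q_*q^*=|N|\cdot\id$ gives injectivity of $\Phi_N$, and the projection to $X$ gives a section establishing surjectivity. The only cosmetic difference is that the paper takes $E_nN$ itself (with the trivial action of the trivial group) as the approximation $E_n1$, which identifies $\Phi_N$ directly with $q^*$ and avoids the need to produce a $k$-point of $E_nN$. One small correction: your justification for the existence of such a $k$-point---enlarging $V$ by a trivial summand---does not work as stated, since the free locus in $V\oplus W$ is $E_nN\times W$ and acquires no new $k$-points; the claim is nevertheless true by the standard construction of \cite[Lemma~9]{EG98} via an embedding $N\hookrightarrow\GL_m$ and spaces of matrices.
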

\begin{proof} 
	For $n\in \NN_0$ let $E_nN$ be an $n$-th approximation to the universal torsor $EN$. Then 
	\[
	f\colon X\times E_nN\to (X\times E_nN)/N \cong X\times (E_nN)/N
	\]
	is a finite morphism of degree equal to the order of $N$ and the composition 
	\[
	f_*\circ f^*\colon \CH^n_{K}((X\times E_nN)/N) \to \CH^n_{K}(X\times E_nN) \to \CH^n_{K}((X\times E_nN)/N),
	\]
	is the multiplication by the order of $N$ \cite[Example~1.7.4]{Fu98}. It follows that $f_*\circ f^*$ an isomorphism. Recall that
	\[
	\CH^n_{K,N}(X)=\CH^n_{K}((X\times E_nN)/N),\quad \CH^n_{K}(X)=\CH^n_{K}(X\times E_nN),
	\]
	and under this identifications we have $\Phi_{N}=f^*$. Thus the composition
	\[
	\CH^n_{K,N}(X) \xrightarrow{\Phi_{N}} \CH^n_{K}(X) \xrightarrow{f_*} \CH^n_{K,N}(X)
	\]
	is an isomorphism. On the other hand, since $X$ has a trivial action of $N$, we have a homomorphism 
	\[
	\Phi_N^c\colon \CH^n_{K}(X) \to \CH^n_{K,N}(X)
	\]
	induced by the projection $g\colon (X\times E_nN)/N \to X$, and the composition
	\[
	\CH^n_{K}(X) \xrightarrow{\Phi_N^c} \CH^n_{K,N}(X) \xrightarrow{\Phi_N} \CH^n_{K}(X) 
	\]	
	is also an isomorphism. The claim follows.
\end{proof}

\subsection{Eilenberg--Moore isomorphism for quasi-trivial tori}

In this section we investigate the relations between equivariant and non-equivariant conormed Chow groups for quasi-trivial tori, with the main results being the computation of the equivariant conormed Chow ring of the point and a receipt how to recover the non-equivariant conormed Chow groups from the equivariant ones. Here and below we use some standard notation and well-known facts about quasi-trivial tori, see Section~\ref{sec:multiplicative_groups} for a recollection.

\begin{proposition} \label{prop:modified_Gysin}
	Let $K/L/k$ be separable extensions of fields with $L/k$ being finite, let $T$ be a connected algebraic group over $k$, let $R:=R_{L/k} \Gmm$ be the Weil restriction of the one-dimensional split torus and let $\rho\colon T\to R$ be a homomorphism. Suppose that $R_K$ is split. Then for $X\in \Smk^T$ the sequence
	\[
	\CH^{*-n}_{K,T}(X)\xrightarrow{c_n(V_R)}\CH^*_{K,T}(X)\xrightarrow{j^*}\CH^*_{K,T}(X\times R)\to 0
	\]
	is exact. Here
	\begin{itemize}
		\item
		$n=[L:k]$,
		\item 
		the first homomorphism is given by the multiplication with the Chern class $c_n(V_R)$ where the standard vector representation $V_R$ of $R$ is viewed as a representation of $T$ via $\rho$,
		\item
		$T$ acts on $X\times R$ diagonally,
		\item 
		$j\colon X\times R\to X$ is the projection.
	\end{itemize}
\end{proposition}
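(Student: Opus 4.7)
The plan is to begin with the Gysin exact sequence for the zero section $X\hookrightarrow X\times V_R$ (coming from the $T$-equivariant vector bundle $V_R\to \Spec k$ pulled back to $X$, with the $T$-action via $\rho$). Combined with the homotopy invariance isomorphism $\CH^*_{K,T}(X\times V_R)\simeq \CH^*_{K,T}(X)$ and the normalization property $c_n(V_R)=z^*z_*(1)$ from Definition~\ref{def:eq_conormed}, this already yields an exact sequence
\[
\CH^{*-n}_{K,T}(X)\xrightarrow{c_n(V_R)}\CH^*_{K,T}(X)\to \CH^*_{K,T}(X\times(V_R\setminus\{0\}))\to 0.
\]
It thus suffices to show that the open embedding $X\times R\hookrightarrow X\times(V_R\setminus\{0\})$ induces an isomorphism on equivariant conormed Chow groups, or equivalently, by the localization sequence, that $\CH^*_{K,T}(X\times(D\setminus\{0\}))=0$, where $D:=V_R\setminus R$ is the norm-zero hypersurface.

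To establish this vanishing I stratify $V_R$ by $T$-stable closed subvarieties defined over $k$. Let $\tilde L\subseteq K$ denote the Galois closure of $L/k$, over which $R$ splits: there is an isomorphism $V_R\otimes_k\tilde L\cong\AAA^n_{\tilde L}$ with coordinates $x_1,\ldots,x_n$ indexed by the $n$ embeddings of $L$ into $\tilde L$, on which $\Gal(\tilde L/k)$ acts transitively. Setting $V^S:=\{x\mid x_i=0\Leftrightarrow i\in S\}$ for $S\subseteq\{1,\ldots,n\}$, for each Galois orbit $[S]$ the union $\bigcup_{S'\in [S]}V^{S'}$ descends to a connected $T$-stable locally closed subvariety $\Omega_{[S]}\subseteq V_R$ over $k$, and the closed subvarieties $F_i:=\{x\in V_R\mid \text{at least $i$ of the $x_j$'s vanish}\}$ are $T$-stable and defined over $k$, yielding a filtration
\[
D\setminus\{0\}=F_1\setminus\{0\}\supset F_2\setminus\{0\}\supset\cdots\supset F_{n-1}\setminus\{0\}\supset F_n\setminus\{0\}=\emptyset
\]
whose successive open complements $F_i\setminus F_{i+1}$ are the disjoint unions of those $\Omega_{[S]}$ with $|S|=i$. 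For $1\leq |S|\leq n-1$ the stabilizer $\mathrm{Stab}(S)\leq \Gal(\tilde L/k)$ is a proper subgroup (as $\Gal(\tilde L/k)$ acts transitively on singletons), so the orbit $[S]$ has size at least two and the field of constants $L_0:=\tilde L^{\mathrm{Stab}(S)}$ of $\Omega_{[S]}$ strictly contains $k$ while remaining inside $K$.

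The composition $X\times\Omega_{[S]}\to\Omega_{[S]}\to \Spec L_0$ exhibits $X\times\Omega_{[S]}$ as an $L_0$-variety; since $T$ is connected and $\Aut(L_0/k)$ is finite, $T$ acts trivially on $\Spec L_0$, and the argument from the proof of Lemma~\ref{lem:trivial_normed} (building a $T$-equivariant section of $(X\times\Omega_{[S]})_{L_0}\to X\times\Omega_{[S]}$) yields $\CH^*_{K,T}(X\times\Omega_{[S]})=0$. Iteratively applying the localization sequence along the filtration above, starting from $\CH^*_{K,T}(X\times(F_n\setminus\{0\}))=0$, the vanishing at each successive stratum propagates and gives the required $\CH^*_{K,T}(X\times(D\setminus\{0\}))=0$. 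The principal technical obstacle is the non-smoothness of the hypersurface $D\subseteq V_R$, which precludes a direct one-step Gysin argument; the stratification by Galois orbits circumvents this by reducing matters to strata whose conormed equivariant Chow vanishes thanks to the nontrivial field of constants $L_0\supsetneq k$.
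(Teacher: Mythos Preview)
Your approach is essentially the paper's: reduce via the Gysin sequence for $X\hookrightarrow X\times V_R$ to showing that restriction from $V_R\setminus\{0\}$ to $R$ is an isomorphism, then stratify by the number of vanishing split coordinates and kill each intermediate stratum using that it becomes disconnected over the Galois closure $\tilde L\subseteq K$ (the paper invokes Lemma~\ref{lem:trivial_normed} directly rather than naming the field $L_0$, but that is cosmetic). The one point to tighten is your inductive framing: the filtration $F_1\setminus\{0\}\supset F_2\setminus\{0\}\supset\cdots$ runs through \emph{singular} varieties once $n\ge 3$, and the paper's $\CH^*_{K,T}$ together with its localization sequence are only set up for smooth $T$-varieties, so the sequences you invoke inside $D\setminus\{0\}$ are not literally available. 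The paper avoids this by running the induction on the open side: filter the smooth ambient by the smooth opens $V_R\setminus F_i$, so that each step has smooth ambient $V_R\setminus F_{i+1}$ with smooth closed complement $F_i\setminus F_{i+1}=\bigsqcup_{|S|=i}\Omega_{[S]}$, and the vanishing on strata yields directly the chain of isomorphisms $\CH^*_{K,T}(X\times(V_R\setminus\{0\}))\cong\cdots\cong\CH^*_{K,T}(X\times R)$. This minor reformulation is what actually delivers the ``circumvention'' of non-smoothness you announce in your last sentence.
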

\begin{proof}
	We have a canonical open embedding $R\subseteq V_R\setminus \{0\} \subseteqq V_R$ (see~\refbr{expos:tori}) and the Gysin exact sequence for the vector bundle $X\times V_R\to X$ implies that it is sufficient to check that the pullback homomorphism
	\[
	\CH^*_{K,T}(X\times (V_R\setminus \{0\}))\xrightarrow{} \CH^*_{K,T}(X\times R)
	\]
	is an isomorphism. Below we will omit $X$ from the notation.
	
	Since $R_K$ is split by the assumption, it follows that $K$ contains the Galois closure $\bar{L}/k$ of $L/k$ which is the splitting field of $R$. Choose a basis $\{e_1,e_2,\hdots, e_n\}$ for $L$ over $k$ and fix the corresponding isomorphism $V_R\cong \AAA^n_k=\Spec k[x_1,x_2,\hdots, x_n]$. Denote by
	\[
	\mc{H}=Z(x_1e_1+x_2e_2+\hdots + x_ne_n)\leq \AAA^n_{\bar{L}} \cong (V_R)_{\bar{L}}
	\]
	the corresponding hyperplane. Put $\mathcal{G}:=\Gal(\bar{L}/k)$, $\mathcal{G}':=\Gal(\bar{L}/L)$, $\mathcal{S}:=\mathcal{G}/\mathcal{G}'$ and consider the following filtration of $\AAA^n_{\bar{L}} \cong (V_R)_{\bar{L}}$ by closed subsets.
	\[
	\{0\}=\bar{V}_n\leq \bar{V}_{n-1} \leq \hdots \leq \bar{V}_0=\AAA^n_{\bar{L}}\cong (V_R)_{\bar{L}}, \quad \bar{V}_i:=\bigcup_{\stackrel{I\subseteq \mathcal{S}}{|I|=i}} \mc{H}_I, \quad \mc{H}_I:=\bigcap_{\sigma\in I} \sigma(\mc{H}), \quad \mc{H}_\emptyset := \AAA^n_{\bar{L}}.
	\]
	Note that $\mathcal{H}$ is stable under the action of $\mathcal{G}'$, so we may define $\sigma(\mc{H})$ for $\sigma\in \mathcal{S}$ using an arbitrary representative of the corresponding coset. Since for every $i$ the variety $\bar{V}_i$ is clearly $\mc{G}$-stable, it follows that there exist closed subvarieties 
	\[
	\{0\}={V}_n\leq {V}_{n-1} \leq \hdots \leq {V}_0=\AAA^n_k\cong V_R
	\]
	such that $(V_i)_{\bar{L}}= \bar{V}_i$.	For $0\le i\le n-1$ we have
	\[
	\bar{V}_i\setminus \bar{V}_{i+1} = \bigsqcup_{\stackrel{I\subseteq \mathcal{S}}{|I|=i}} \mc{H}_I^o =   \bigsqcup_{\mc{A}\in Orb_\mathcal{G} {\mathcal{S}\choose {i}}} \left(	\bigsqcup_{I \in \mc{A}} 
	\mc{H}_I^o\right), \quad \mc{H}_I^o := \mc{H}_I \setminus \bigcup_{\tau \in \mathcal{S}\setminus I} \left( \mc{H}_I \cap \tau(\mc{H}) \right).
	\]
	Here $Orb_\mathcal{G} {\mathcal{S}\choose {i}}$ is the set of $\mathcal{G}$-orbits under the natural action of $\mathcal{G}$ on the set $\left\{I\subseteq \mathcal{S}\, |\, |I|=i\right\}$ induced by the action of $\mathcal{G}$ on $\mathcal{G}/\mathcal{G}'= \mathcal{S}$. For $0\le i\le n-1$ and $\mc{A}\in Orb_\mathcal{G} {\mathcal{S}\choose {i}}$ put
	\[
	\overline{W}_\mc{A}:=	\bigsqcup_{I \in \mc{A}} 
	\mc{H}_I^o.
	\]
	Note that every $\overline{W}_{\mc{A}}$ is $\mathcal{G}$-stable and smooth being a disjoint union of open subsets of different $\mc{H}_I\cong \AAA^{n-i}_{\bar{L}}$, hence it corresponds to some smooth locally closed subvariety $W_\mc{A}\subseteq \AAA^n_k\cong V_R$ satisfying $(W_\mc{A})_{\bar{L}}=\overline{W}_\mc{A}$. Since $\mathcal{G}$ acts transitively on $\mathcal{A}$, it follows that $W_\mc{A}$ is connected, and since $|\mathcal{A}|>1$ for $1\le i\le n-1$, it follows that $(W_\mc{A})_{\bar{L}}=\overline{W}_\mc{A}$ is not connected for such $i$. 
	Applying Lemma~\ref{lem:trivial_normed} to $W_{\mc{A}}$ and $K/\bar{L}/k$ we get
	\[
	\CH^*_{K,T}(W_{\mc{A}})=0
	\]
	for $1\le i\le n-1$. Successively applying localization sequences
	\[
	\CH^{*-i}_{K,T}(\bigsqcup_{\mc{A}\in Orb_G^{i}} W_\mc{A}) \xrightarrow{} \CH^*_{K,T}(V_R\setminus V_{i+1}) \xrightarrow{} \CH^*_{K,T}(V_R\setminus V_{i}) \to 0
	\]
	for $i=n-1,\hdots,1$ we obtain isomorphisms
	\[
	\CH^*_{K,T}(V_R\setminus V_{n}) \xrightarrow{\simeq} \CH^*_{K,T}(V_R\setminus V_{n-1})  \xrightarrow{\simeq} \hdots  \xrightarrow{\simeq} 
	\CH^*_{K,T}(V_R\setminus V_{1}).
	\]
	Recall that $V_R\setminus V_{n}=V_R\setminus \{0\}$ while recollection~\refbr{expos:tori} yields $V_R\setminus V_{1}=R$. The claim follows.
\end{proof}

\begin{proposition}[{see also \cite[\S~3]{Ka12}}] \label{prop:classifying_quasi-trivial}
	Let $K/k$ be a separable field extension and let $T\cong R_{L_1/k} \Gmm \times \hdots \times R_{L_r/k} \Gmm$ be a quasi-trivial torus over $k$. Suppose that $T_K$ is split. Then for $X\in \Smk$ equipped with the trivial action of $T$ one has an isomorphism
	\[
	\CH^*_{K,T}(X) \cong \CH^*_{K}(X)[b_1,b_2,\dots,b_r],\quad b_i\mapsto c_{n_i}(V_i),\quad n_i :=[L_i:k],
	\]
	where $V_i:=V_{R_i}$ is the standard vector representation of $R_i:=R_{L_i/k} \Gmm$ viewed as a representation of $T$ via the projection $T\to R_i$.
\end{proposition}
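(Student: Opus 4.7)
The plan is to proceed by induction on $r$, the number of Weil restriction factors. The base case $r=0$ is trivial. For $r\geq 1$, I split $T=T'\times R_r$ with $T':=R_{L_1/k}\Gmm\times\cdots\times R_{L_{r-1}/k}\Gmm$, and apply Proposition~\ref{prop:modified_Gysin} to the projection $\rho\colon T\to R_r$. Since $T_K$ is split, each $(R_i)_K$ is split, so both the hypothesis of Proposition~\ref{prop:modified_Gysin} and the inductive hypothesis for $T'$ apply. This produces the right-exact sequence
\[
\CH^{*-n_r}_{K,T}(X)\xrightarrow{\cdot c_{n_r}(V_r)}\CH^*_{K,T}(X)\xrightarrow{j^*}\CH^*_{K,T}(X\times R_r)\to 0.
\]

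First I would identify the right-hand term. Taking approximations $E_nT=E_nT'\times E_nR_r$, the standard untwisting isomorphism $(g,v)\mapsto(g,g^{-1}v)$ on $R_r\times E_nR_r$ turns the diagonal free $R_r$-action into an action on the first factor only, producing an isomorphism
\[
(X\times R_r\times E_nT)/T\;\cong\;X\times B_nT'\times E_nR_r.
\]
Since $E_nR_r$ is an open subset of a $T$-representation with arbitrarily high codimension complement, localization and homotopy invariance yield $\CH^*_{K,T}(X\times R_r)\cong \CH^*_{K,T'}(X)$ in any fixed degree, which by the inductive hypothesis equals $\CH^*_K(X)[b_1,\ldots,b_{r-1}]$. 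I now define the $\CH^*_K(X)$-algebra map $\psi\colon \CH^*_K(X)[b_1,\ldots,b_r]\to\CH^*_{K,T}(X)$ sending $b_i\mapsto c_{n_i}(V_i)$, with $\CH^*_K(X)$ embedded via $\Phi_T^c$. Its reduction modulo $b_r$ factors through the inductive isomorphism; combined with the Gysin surjectivity this gives, via a standard graded cokernel argument (the cokernel $C$ satisfies $C=c_{n_r}(V_r)\cdot C$ with $\deg c_{n_r}(V_r)\geq 1$), that $\psi$ is surjective.

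The main obstacle is injectivity, since Proposition~\ref{prop:modified_Gysin} provides only a right-exact sequence and does not directly imply that $c_{n_r}(V_r)$ is a non-zero-divisor on $\CH^*_{K,T}(X)$. To handle this, I would work explicitly with the Borel approximation $B_nR_r=R_{L_r/k}(\mathbb{P}^{n-1})$. The $T_K$-split hypothesis ensures $K$ contains the splitting field of $R_r$, so base change gives the cellular product $(B_nR_r)_K=(\mathbb{P}^{n-1}_K)^{n_r}$. Stratifying by Galois orbits of products of Schubert cells and applying Corollary~\ref{cor:triv_normed} to kill the contributions of strata that become reducible over $K$ (cf.\ \cite[\S~3]{Ka12}), one computes $\CH^*_K(B_nR_r)$ as a truncated polynomial ring on $c_{n_r}(V_r)$ over $\CH^*_K(\Spec k)$; in the direct limit over $n$ the truncation disappears. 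This establishes the free module structure of $\CH^*_{K,T}(X)$ over $\CH^*_{K,T'}(X)$ with basis $\{c_{n_r}(V_r)^j\}_{j\geq 0}$ and completes the induction.
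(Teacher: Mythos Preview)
Your approach is correct in outline and shares the key ingredients with the paper's proof---the approximations $B_nR_i\cong R_{L_i/k}\PP^n_{L_i}$ and the vanishing of conormed Chow groups for pieces living over nontrivial subfields of $K$---but the organization differs in two respects worth noting. First, the paper does not induct on $r$: it invokes the motivic decomposition of $R_{L_i/k}\PP^n_{L_i}$ from \cite[Proposition~5.6]{Ka00} directly, which after killing the non-$k$ summands via Lemma~\ref{lem:trivial_normed_corestriction} yields a K\"unneth isomorphism $\CH^*_K(X\times\prod_i B_nR_i)\cong\CH^*_K(X)\otimes\bigotimes_i\CH^*_K(B_nR_i)$ in one stroke, reducing everything to the case $r=1$, $X=\Spec k$. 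Second, and more interestingly, for the remaining injectivity (non-nilpotency of $c_{n_i}(V_i)$) the paper does \emph{not} compute $\CH^*_K(B_nR_i)$ by stratification as you propose; instead it restricts along the unit map $\Gmm\to R_i$, under which $V_i$ becomes $V_{\Gmm}^{\oplus n_i}$, so $c_{n_i}(V_i)$ maps to $c_1(\struct(-1))^{n_i}$ in $\CH^*_{K,\Gmm}(\Spec k)\cong\CH^*_K(\Spec k)[x]$, which is visibly non-nilpotent. This is shorter than your stratification computation and avoids any limit argument.

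One small gap in your write-up: the last paragraph computes $\CH^*_K(B_nR_r)$ absolutely, but the conclusion you draw is the \emph{relative} freeness of $\CH^*_{K,T}(X)$ over $\CH^*_{K,T'}(X)$. To bridge this you must run the stratification of $B_nR_r$ fibrewise over $X\times B_nT'$ (the non-$k$ strata then become varieties over a nontrivial subfield $F\subseteq K$ and vanish by Lemma~\ref{lem:trivial_normed_corestriction}, while the $k$-strata are affine bundles), or equivalently appeal to the conormed motivic decomposition of $B_nR_r$ into Tate motives to get K\"unneth. Either fix is routine, but as written the implication is not justified.
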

\begin{proof}
	The Weil restrictions $R_{L_i/k}\PP^n_{L_i}$ are models for the $n$-th approximations $(E_n R_i) /R_i$ to the classifying spaces $(ER_i) /R_i$ by \cite[\S~3]{Ka12} (see also \cite[Lemma~3.1]{KM22}), in particular, one has
	\[
	\CH^*_{K,T}(X)\cong \varprojlim_n \CH^*_{K}(X\times R_{L_1/k}\PP^n_{L_1}\times\hdots \times R_{L_r/k}\PP^n_{L_r}).
	\]
	It follows from \cite[Proposition~5.6]{Ka00} that the (integral) Chow motive $\mc{M}(R_{L_i/k}\PP^n_{L_i})$ splits as a direct sum of motives $\mc{M}(\Spec F,l)$ for some $l\in \NN_0$ and intermediate fields $\bar{L}_i/F/k$ with $\bar{L}_i/k$ being a Galois closure of $L_i/k$. Since $T_K$ is split and $\bar{L}_i$ is the splitting field of $R_i$, it follows that $\bar{L}_i/k$ embeds into $K/k$. Lemma~\ref{lem:trivial_normed_corestriction}  yields that in the conormed motives one has $\mc{M}_{K}(\Spec F,l) =0$ for $K\supseteq F\supsetneq k$, thus 
	\[
	\mc{M}_{K}(R_{L_i/k}\PP^n_{L_i})\cong  \mc{M}_{K}(\Spec k, l_0) \oplus \hdots \oplus \mc{M}_{K}(\Spec k, l_d)
	\]
	for some integers $l_0,\hdots,l_d$. Then the K\"unneth formula
	\[
	\CH^*_{K}(X\times R_{L_1/k}\PP^n_{L_1}\times\hdots \times R_{L_r/k}\PP^n_{L_r}) 
	\cong \CH^*_{K}(X)\otimes \CH^*_{K}(R_{L_1/k}\PP^n_{L_1})\otimes\hdots \otimes \CH^*_{K}(R_{L_r/k}\PP^n_{L_r})
	\]
	holds and it suffices to show that for every $i$ there is an isomorphism
	\[
	\CH^*_{K,R_i}(\Spec k)\cong  \CH^*_{K}(\Spec k) [b_i],\quad b_i\mapsto c_{n_i}(V_i).
	\]
	If $L_i=k$, then
	\[
	\CH^*_{K,R_i}(\Spec k) = \CH^*_{K}(\Spec k) \cong \CH^*_{K}(\Spec k)[x],\quad x\mapsto c_1(V_{\Gmm}),
	\]
	by the usual projective bundle theorem.	So we can assume that $L_i\supsetneq k$. Proposition~\ref{prop:modified_Gysin} together with the isomorphism $\CH^*_{K,R_i}(R_i)\cong \CH^*_{K}(\Spec k)$ of Lemma~\ref{lem:torsor} yields that $\CH^{>0}_{K,R_i}(\Spec k)$ is generated by $c_{n_i}(V_i)$. Since $\CH^0_{K,R_i}(\Spec k)=\CH^0_{K}(\Spec k)$ is either zero or a field by Lemma~\ref{lem:normed_coef}, it follows that $b_i\mapsto c_{n_i}(V_i)$ induces an isomorphism $\CH^*_{K,R_i}(\Spec k)\cong\CH^*_{K}(\Spec k)[b_i]$ or $\CH^*_{K,R_i}(\Spec k)\cong \CH^*_{K}(\Spec k)[b_i]/(b_i^m)$ for some $m\in \NN$. In order to see that $c_{n_i}(V_i)$ is not nilpotent consider the homomorphism $\rho\colon \Gmm\to R_i$ given by the unit of the adjunction between the extension of scalars and the Weil restriction. This gives rise to a homomorphism 
	\[
	\Phi_\rho\colon \CH^*_{K,R_i}(\Spec k) \to \CH^*_{K,\Gmm}(\Spec k) \cong \CH^*_{K}(\Spec k)[x].
	\]
	The representation $V_{i}$ of $R_i$ restricted to $\Gmm$ is isomorphic to a direct sum of $n_i$ copies of the standard linear representation of $\Gmm$. Thus 
	\[
	\Phi_\rho(c_{n_i}(V_i))= c_{n_i}(V_{\Gmm}^{\oplus n_i})=c_1(\struct(-1))^{n_i},
	\]
	so $\Phi_\rho(c_{n_i}(V_i))$ is not nilpotent and it follows that $c_{n_i}(V_i)$ is not nilpotent as well.
\end{proof}

\begin{corollary} \label{cor:EM_qt}
	Let $K/k$ be a separable field extension and let $T$ be a quasi-trivial torus over a field $k$. Suppose that $T_K$ is split. Then for $X\in \Smk^T$ the Eilenberg--Moore homomorphism~\refbr{eq:EM2}
	\[
	\CH_{K,T}^*(X) \otimes_{\CH_{K,T}^*(\Spec k)} \CH_{K}^*(\Spec k)\to \CH_{K}^*(X)
	\]
	is an isomorphism.
\end{corollary}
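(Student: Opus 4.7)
The plan is to combine an iterated application of Proposition~\ref{prop:modified_Gysin} with the torsor identification of Lemma~\ref{lem:torsor}.

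First, I would unpack the Eilenberg--Moore homomorphism. By Proposition~\ref{prop:classifying_quasi-trivial} the coefficient ring $\CH^*_{K,T}(\Spec k)$ is the polynomial algebra $\CH^*_K(\Spec k)[b_1,\ldots,b_r]$ with $b_i=c_{n_i}(V_i)$ and $n_i=[L_i:k]$. Each $V_i$ restricts to the trivial vector bundle on $\Spec k$ after forgetting the $T$-equivariant structure, so the restriction $\Phi_T\colon\CH^*_{K,T}(\Spec k)\to \CH^*_K(\Spec k)$ kills every $b_i$ and realizes $\CH^*_K(\Spec k)$ as the quotient by $(b_1,\ldots,b_r)$. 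Consequently, the Eilenberg--Moore homomorphism~\refbr{eq:EM2} identifies with the map
\[
\CH^*_{K,T}(X)/(c_{n_1}(V_1),\ldots,c_{n_r}(V_r))\cdot \CH^*_{K,T}(X) \to \CH^*_K(X)
\]
induced by $\Phi_T$, so it suffices to show this map is an isomorphism.

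Second, I would fix a decomposition $T\cong R_1\times\cdots\times R_r$ with $R_i=R_{L_i/k}\Gmm$ and set $Y_j:=X\times R_1\times\cdots\times R_j$, viewed as a smooth $T$-variety under the diagonal action in which $T$ acts on $R_i$ via the $i$-th projection $T\to R_i$. Since $T_K$ is split, each $(R_i)_K$ is split, so the hypotheses of Proposition~\ref{prop:modified_Gysin} are satisfied for the projection $T\to R_i$ and the connected group $T$ acting on $Y_{i-1}$, yielding right-exact sequences
\[
\CH^{*-n_i}_{K,T}(Y_{i-1}) \xrightarrow{c_{n_i}(V_i)} \CH^*_{K,T}(Y_{i-1}) \to \CH^*_{K,T}(Y_i)\to 0.
\]
Inducting on $i=1,\ldots,r$ I obtain a canonical isomorphism
\[
\CH^*_{K,T}(X)/(c_{n_1}(V_1),\ldots,c_{n_r}(V_r))\cdot\CH^*_{K,T}(X) \xrightarrow{\simeq} \CH^*_{K,T}(Y_r)=\CH^*_{K,T}(X\times T).
\]
Finally, the diagonal action of $T$ on $X\times T$ is free with quotient canonically isomorphic to $X$ (via $(x,t)\mapsto t^{-1}x$), so Lemma~\ref{lem:torsor} produces an isomorphism $\CH^*_{K,T}(X\times T)\cong \CH^*_K(X)$. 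Composing everything yields the desired isomorphism.

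The main obstacle I anticipate is verifying that the composite constructed above really coincides with the Eilenberg--Moore map $x\otimes \alpha\mapsto \alpha\cdot\Phi_T(x)$, rather than with some twist of it. This should be a naturality check at the level of the Totaro--Edidin--Graham approximations $E_nT$: unwinding Definition~\ref{def:restriction} and tracing through the proof of Lemma~\ref{lem:torsor}, both the iterated Gysin composite and $\Phi_T$ factor through the pullback along the quotient morphism $X\times E_nT\to (X\times E_nT)/T$, with the remaining discrepancy absorbed by the homotopy-invariance isomorphism $\CH^*_K(X\times E_nT)\cong\CH^*_K(X)$ that is built into the identification of Lemma~\ref{lem:torsor}. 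Once this diagram chase is carried out, the corollary follows.
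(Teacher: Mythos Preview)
Your proposal is correct and follows essentially the same route as the paper's proof: iterate Proposition~\ref{prop:modified_Gysin} over the factors $R_i$ to identify $\CH^*_{K,T}(X\times T)$ with $\CH^*_{K,T}(X)/(c_{n_1}(V_1),\ldots,c_{n_r}(V_r))$, then invoke Lemma~\ref{lem:torsor} and Proposition~\ref{prop:classifying_quasi-trivial}. The paper dispatches your ``main obstacle'' with a terse ``so the claim follows''; your naturality check via the approximations $E_nT$ is a reasonable way to fill that in, though the paper does not spell it out.
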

\begin{proof}
	Choose an isomorphism $T\cong R_{L_1/k} \Gmm \times \hdots \times R_{L_r/k} \Gmm$ with $L_i/k$ being finite separable field extensions. Put $n_i:=[L_i:k]$ and $R_i:= R_{L_i/k} \Gmm$. Inductively applying Proposition~\ref{prop:modified_Gysin} we obtain
	\[
	\CH^*_{K,T}(X\times T) \cong \CH^*_{K,T}(X)/(c_{n_1}(V_{R_1}), \hdots, c_{n_r}(V_{R_r}))
	\]
	where $V_{R_i}$ is viewed as a representation of $T$ via the projection $T\to R_i$.	Lemma~\ref{lem:torsor} yields an isomorphism $\CH^*_{K,T}(X\times T)\cong 	\CH^*_{K}(X)$ while Proposition~\ref{prop:classifying_quasi-trivial} yields an isomorphism  $\CH_{K,T}^*(\Spec k)\cong  \CH_{K}^*(\Spec k)[b_1, \hdots, b_r]$, $b_i\mapsto c_{n_i}(V_{R_i})$, so the claim follows.
\end{proof}

\begin{remark}
	A similar result was obtained in \cite[Theorem~3.1]{Sal22} claiming $\CH^*(X)$ to be the quotient of $\CH_{T}^*(X)$ modulo the pushforwards of characteristic classes defined over field extensions $F/k$.
\end{remark}

\begin{definition} \label{def:Kunneth_formula_BN}
	Let $N$ be an affine algebraic group over a field $k$ and let $K/k$ be a separable field extension. We say that $BN$ \textit{satisfies the K\"unneth formula for $K/k$}, if for every $X\in\Smk$ equipped with the trivial action of $N$ the K\"unneth homomorphism
	\[
	\CH_{K}^*(X) \otimes \CH_{K,N}^*(\Spec k) \xrightarrow{} \CH_{K,N}^*(X),\quad (y,\alpha) \mapsto \alpha\cdot \Phi_{N}^c(y),
	\]
	is an isomorphism.
\end{definition}

\begin{proposition} \label{prop:EM_qt_extended}
	Let $K/k$ be a separable field extension, let $T$ be a quasi-trivial torus over $k$, let $N\le T$ be a subgroup and $X\in\Smk^T$. Suppose that $T_K$ is split and that $BN$ satisfies the K\"unneth formula for $K/k$. Then the homomorphism
	\begin{gather*}
		(\CH_{K,T}^*(X) \otimes \CH_{K,N}^*(\Spec k)) \otimes_{\CH_{K,T}^* (\Spec k)} \CH_{K}^* (\Spec k) \xrightarrow{} \CH_{K,N}^* (X),\\
		(x,\alpha,\beta) \mapsto \alpha\cdot\beta\cdot \Phi_{\rho} (x),
	\end{gather*}
	is an isomorphism. Here $\rho\colon N\to T$ is the embedding and the structure of a $\CH_{K,T}^* (\Spec k)$-module on $\CH_{K,T}^*(X) \otimes \CH_{K,N}^*(\Spec k)$ arises from the ring homomorphism
	\begin{gather*}
		\CH_{K,T}^* (\Spec k)\xrightarrow{\Phi_{\nu}} \CH_{K,T\times N}^* (\Spec k) \cong \CH_{K,T}^*(\Spec k) \otimes \CH_{K,N}^*(\Spec k)
	\end{gather*}
	for the homomorphism $\nu \colon T\times N\to T$, $\nu(t,s)=t\cdot s^{-1}$, composed with the componentwise module structure.
\end{proposition}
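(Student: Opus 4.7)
The plan is to mimic the strategy behind Corollary~\ref{cor:EM_qt} in a $T\times N$-equivariant setting. I will consider the $T\times N$-variety $X\times T$ equipped with the twisted action
\[
(t,s)\cdot(x,t') := (tx,\, tt'\rho(s)^{-1}).
\]
The subgroup $T\times\{1\}$ acts freely on the second factor by left multiplication, with quotient $X$ carrying the residual $N$-action given via the inclusion $\rho$. An iterated application of Lemma~\ref{lem:torsor} (first quotienting by $T$, then describing the remaining $N$-equivariant structure on the free quotient) therefore identifies
\[
\CH_{K,T\times N}^*(X\times T)\cong \CH_{K,N}^*(X),
\]
which is the right-hand side of the proposition.

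On the other hand, I will decompose $T\cong R_{L_1/k}\Gmm\times\cdots\times R_{L_r/k}\Gmm$ and strip off one factor $R_i$ at a time via a $T\times N$-equivariant analogue of Proposition~\ref{prop:modified_Gysin}. The $T\times N$-action on the $V_{R_i}$-compactification factors through the homomorphism $\nu_i\colon T\times N\to R_i$, $(t,s)\mapsto t_i\rho_i(s)^{-1}$, i.e.\ through the composition of $\nu$ with the projection $T\to R_i$. Iterating the resulting Gysin sequences I will obtain
\[
\CH_{K,T\times N}^*(X\times T)\cong \CH_{K,T\times N}^*(X)\big/\bigl(c_{n_1}(V_1'),\ldots,c_{n_r}(V_r')\bigr),
\]
where $V_i':=\nu_i^*V_{R_i}$ and where $X$ on the right carries the $T\times N$-action with trivial $N$-component, obtained by restricting the twisted action to the $X$-coordinate. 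Since this $N$-action on $X$ is trivial, the assumed K\"unneth formula for $BN$ applied to the smooth variety $(X\times E_nT)/T$ yields an identification
\[
\CH_{K,T\times N}^*(X)\cong \CH_{K,T}^*(X)\otimes \CH_{K,N}^*(\Spec k).
\]
Under this identification, $c_{n_i}(V_i')$ is precisely $\Phi_\nu(b_i)$ for the polynomial generators $b_i\in \CH_{K,T}^*(\Spec k)$ furnished by Proposition~\ref{prop:classifying_quasi-trivial}, so quotienting out by the ideal $(c_{n_i}(V_i'))$ realizes exactly the operation $\otimes_{\CH_{K,T}^*(\Spec k)}\CH_{K}^*(\Spec k)$ with module structure induced by $\Phi_\nu$, matching the left-hand side of the proposition.

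The hard part will be justifying the $T\times N$-equivariant extension of Proposition~\ref{prop:modified_Gysin} when $N$ is disconnected, since the key vanishing $\CH_{K,T\times N}^*(X'\times W_{\mc A})=0$ on the intermediate strata cannot be obtained directly from Lemma~\ref{lem:trivial_normed}. To handle this I will exploit that the $T\times N$-action on $W_{\mc A}\subseteq V_{R_i}$ factors through the connected torus $R_i$ via $\nu_i$: the factorization $W_{\mc A}\to \Spec F\to \Spec k$ supplied by Lemma~\ref{lem:relative_field} then yields an induced $T\times N$-action on $\Spec F$ that factors through the composition $R_i\to \Aut(F/k)$ from a connected algebraic group into a finite group, hence is trivial. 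The projection $X'\times W_{\mc A}\to \Spec F$ is therefore $T\times N$-equivariant with $T\times N$ acting trivially on the target, and the $T\times N$-equivariant section of the base change to $F$ trivializes the norm ideal exactly as in the proof of Lemma~\ref{lem:trivial_normed_corestriction}. This supplies the vanishing required for the inductive Gysin argument and completes the plan.
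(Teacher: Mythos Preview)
Your plan is correct and takes a genuinely different route from the paper. The paper does not rerun the Gysin stripping; instead it forms the auxiliary $T$-variety $\tilde X:=(X\times E_nT\times E_nN)/N$ for the action $(t,s)\cdot(x,u,v)=(tsx,tsu,sv)$ and applies Corollary~\ref{cor:EM_qt} directly to $\tilde X$. The identification $\CH_K^*(\tilde X)\cong\CH_{K,N}^*(X)$ is immediate (since $E_nT\times E_nN$ with the diagonal $N$-action is again an $n$-th approximation to $EN$), while $\CH_{K,T}^*(\tilde X)$ is handled by recognising $\tilde X/T\cong (X\times E_nT)/T\times E_nN/N$ and invoking the K\"unneth hypothesis; the bulk of the paper's argument is then a diagram chase through several models of $E(T\times N)$ to pin down the $\CH_{K,T}^*(\Spec k)$-module structure as coming from $\Phi_\nu$. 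Your route, by contrast, makes the module structure transparent from the outset (it is literally the ideal generated by the $\Phi_\nu(b_i)$), at the price of having to extend Proposition~\ref{prop:modified_Gysin} to the possibly disconnected group $T\times N$; your observation that the action on each stratum $W_{\mc A}$ factors through the connected torus $R_i$, so that the factorisation $W_{\mc A}\to\Spec F$ from Lemma~\ref{lem:relative_field} is automatically $T\times N$-equivariant with trivial action on the target, is exactly what is needed to make the section argument of Lemma~\ref{lem:trivial_normed_corestriction} go through equivariantly. In short, the paper's approach is more packaged (it reuses Corollary~\ref{cor:EM_qt} as a black box) but pays with a longer bookkeeping step; yours is more hands-on but keeps the algebra visible throughout.
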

\begin{proof}
	Let $n\in \NN_0$ and let $E_nT$ and $E_nN$ be $n$-th approximations to the universal torsors $ET$ and $EN$ respectively and equip $X\times E_nT \times E_nN$ with an action of $T\times N$ given by $(t,s)\cdot (x,u,v) := (tsx,tsu,sv)$. Let $\tilde{X}:=(X\times E_nT \times E_nN)/N$ and equip it with an action of $T$ compatible with the projection ${X\times E_nT \times E_nN\to \tilde{X}}$. Corollary~\ref{cor:EM_qt} applied to $\tilde{X}$ yields an isomorphism
	\[
	\CH_{K,T}^*(\tilde{X}) \otimes_{\CH_{K,T}^*(\Spec k)} \CH_{K}^*(\Spec k)\xrightarrow{\simeq} \CH_{K}^*(\tilde{X}).
	\]
	Since $E_nT$ and $E_nN$ are $n$-th approximations to the universal torsors $ET$ and $EN$, it follows that ${E_nT\times E_nN}$ with the diagonal action of $N$ is also an $n$-th approximation to the universal torsor $EN$, and $\CH_{K}^n(\tilde{X})\cong \CH_{K,N}^n(X)$. We have isomorphisms
	\[
	\tilde{X}/T \cong (X\times E_nT \times E_nN) / (T\times N) \xrightarrow{\simeq} (X\times E_n T)/T \times E_nN/N
	\]
	with the second one induced by the respective projections. Applying further Lemma~\ref{lem:torsor} we obtain isomorphisms
	\begin{equation} \label{eq:isos1}
		\CH_{K,T}^*(\tilde{X}) \cong \CH_{K}^*(\tilde{X}/T) \cong \CH_{K}^*((X\times E_n T)/T \times E_nN/N)
		\cong \CH_{K,T}^*(X)\otimes \CH_{K,N}^*(\Spec k)
	\end{equation}
	in degrees up to $n$. Combining all the above we obtain the desired isomorphism
	\begin{equation}\label{eq:isos2}
		(\CH_{K,T}^*(X) \otimes \CH_{K,N}^*(\Spec k)) \otimes_{\CH_{K,T}^* (\Spec k)} \CH_{K}^* (\Spec k) \xrightarrow{\simeq} \CH_{K,N}^* (X),
	\end{equation}
	It is straightforward to check that the isomorphism is given by $(x,\alpha,\beta) \mapsto \alpha\cdot\beta\cdot \Phi_{\rho} (x)$ as claimed. In order to identify the module structure consider the following diagram.
	\[
	\xymatrix{
		E_nT/T  &  (E_nT \times E_nN \times E_nT)/(T\times N) \ar[l]_(0.65){q_3} \ar[d]^{q_{12}}\\
		(X\times E_nT \times E_nN \times E_nT)/(T\times N) \ar[u]^{p_4} \ar[d]^{(p_{12},p_3)} \ar[ru]^{p_{234}} &  (E_nT \times E_nN)/(T\times N) \ar[d]_{\cong}^{(s_1,s_2)}\\
		(X\times E_nT)/T \times E_nN/N \ar[r]^{(r_{2},\id)} &  E_nT/T \times E_nN/N
	}
	\]
	Here the action in the middle of the left column is given by $(t,s)\cdot (x,u,v,w):= (tsx,tsu,sv,tw)$, in the bottom-left corner $T$ acts diagonally on $X\times E_nT$ and $N$ acts on $E_nN$ in the given way, the action in the top-right corner is given by $(t,s)\cdot (u,v,w):= (tu,sv,ts^{-1}w)$, in the middle of the right column the action is the componentwise one and in the remaining corners it is the standard one. All the numbered morphisms are induced by the projections on the respective factors, e.g. $p_4$ is induced by the projection $X\times E_nT \times E_nN \times E_nT \to E_nT$ on the last $E_nT$, and $p_{234}$ is induced by the projection $X\times E_nT \times E_nN \times E_nT \to E_nT \times E_nN \times E_nT$; it is straightforward to check that these projections descend to the quotients under the given group actions. The diagram clearly commutes. Morphism $(s_1,s_2)$ is an isomorphism, and, moreover, morphisms $(p_{12},p_3)$ and $q_{12}$ induce isomorphisms up to degree $n$ on the respective conormed Chow groups by the codimension reasons and homotopy invariance. Unwinding the definition of the Eilenberg--Moore homomorphism~\refbr{eq:EM2} and the isomorphisms~\refbr{eq:isos1} we see that the $\CH_{K,T}^* (\Spec k)$-module structure on $\CH_{K,T}^*(X) \otimes \CH_{K,N}^*(\Spec k)$ in the isomorphism~\refbr{eq:isos2} up to degree $n$ is induced by the ring homomorphism
	\[
	((p_{12},p_3)^*)^{-1}\circ p_4^*\colon \CH_{K}^* (E_nT/T) \to \CH_{K}^* ((X\times E_nT)/T \times E_nN/N)	\cong \CH_{K,T}^* (X)\otimes  \CH_{K,N}^* (\Spec k).
	\]
	Commutativity of the diagram yields that up to degree $n$ we have
	\[
	((p_{12},p_3)^*)^{-1}\circ p_4^* = (r_2,\id)^* \circ ((s_1,s_2)^*)^{-1} \circ (q_{12}^*)^{-1}\circ q_3^*.
	\]
	The pullback homomorphism $q_3^*$ is a representative (up to degree $n$) for the restriction homomorphism 
	\[
	\Phi_{\nu}\colon \CH^*_{K,T}(\Spec k)\to  \CH^*_{K,T\times N}(\Spec k),
	\]
	and $q_{12}^*$ is the canonical isomorphism between the models of the equivariant conormed Chow groups based on the different approximations to the universal torsor $E(T\times N)$, so the pullback homomorphism $(r_2,\id)^* \circ ((s_1,s_2)^*)^{-1} \circ (q_{12}^*)^{-1}\circ q_3^*$ is the ring homomorphism giving precisely the claimed module structure.
\end{proof}

\subsection{Some equivariant conormed Chow rings of a point}

In this section we study some equivariant conormed Chow rings of a point, in particular, we compute the conormed rings of a point $\CH^*_{K,\pi_1(G)}(\Spec k)$ for the fundamental groups of non-split quasi-split simple groups, assuming that $K$ splits the group in question. From now on we use some further notation related to groups of multiplicative type introduced in Section~\ref{sec:multiplicative_groups}. In particular, for a degree $2$ Galois field extension $L/k$ we have the following groups:
\begin{itemize} \itemsep0em 
	\item ${}^2\Gmm$ is the non-trivial form of $\Gmm$ split by $L$,
	\item $\mut{2}{l}$ is the non-trivial form of $\mu_l$ split by $L$ (if $l\ge 3$), and $\mut{2}{2}=\mu_2$,
	\item $\mut{2}{2,2}$ is the non-trivial form of $\mu_2\times\mu_2$ split by $L$.
\end{itemize}
These groups come with canonical embeddings into the Weil restriction $R:=R_{L/k}\Gmm$ of the split one-dimensional torus. We denote the restrictions of the standard dimension $2$ vector representation $V_R$ of $R$ to these groups by ${}^2V_{\Gmm}$, ${}^2V_{\mu_l}$ and ${}^2V_{\mu_{2,2}}$ respectively. Furthermore, if $l=2m$ is even, then there is also a nontrivial linear representation of $\mut{2}{2m}$ denoted by ${}^2\Lambda^\pm_{\mu_{2m}}$.

\begin{proposition} \label{prop:equivariant_coef_nl}
	Let $K/L/k$ be separable field extensions with $[L:k]=2$ and let $X\in\Smk$. Then we have the following isomorphisms.
	\begin{enumerate}\itemsep0em 
		\item $\CH_{K,{}^2\Gmm}^*(X) \cong\CH_{K}^*(X)[b]$,
		$b\mapsto c_2({}^2V_{\Gmm})$,
		\item $\CH_{K,\mut{2}{2m+1}}^*(X)\cong \CH_{K}^*(X)$,
		\item $\CH_{K,\mut{2}{2m}}^*(X) \cong \CH_{K}^*(X)[x,b]/(x^2+m^2 \cdot b)$, $x\mapsto c_1({}^2\Lambda_{\mu_{2m}}^{\pm}), b\mapsto c_2({}^2V_{\mu_{2m}})$,
		\item $\CH_{K,\mut{2}{2,2}}^*(X) \cong \CH_{K}^*(X)[b]$, $b\mapsto c_2({}^2V_{\mu_{2,2}})$.	
	\end{enumerate}
	Here $X$ is considered as a variety with the trivial action.
\end{proposition}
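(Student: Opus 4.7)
The four cases share a common core. Part~(2) is immediate from Lemma~\ref{lem:trivial_coprime}: the assumption $[L:k]=2$ together with Lemma~\ref{lem:normed_coef} forces $\CH^*_K(\Spec k)\in\{0,\FF_2\}$, in which the odd order $|\mut{2}{2m+1}|=2m+1$ is a unit, so the restriction $\Phi_{\mut{2}{2m+1}}$ is an isomorphism.

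For parts (1), (3), (4) the plan is to apply Proposition~\ref{prop:modified_Gysin} to the closed embedding $\rho\colon T\hookrightarrow R:=R_{L/k}\Gmm$ with $T={}^2\Gmm$, $\mut{2}{2m}$, $\mut{2}{2,2}$ respectively, so that $\rho^*V_R$ equals ${}^2V_{\Gmm}$, ${}^2V_{\mu_{2m}}$, ${}^2V_{\mu_{2,2}}$. The proposition assumes $T$ connected, but its proof uses connectedness only to invoke Lemma~\ref{lem:trivial_normed} for the exceptional strata $W_\mathcal{A}$; in our finite-group cases the action of $T$ on $V_R\setminus\{0\}$ is free when $T=\mut{2}{2m}$, while for $T=\mut{2}{2,2}$ the action on $W_\mathcal{A}\times E_nT$ is free thanks to the free $T$-action on $E_nT$, so in either case Lemma~\ref{lem:torsor} identifies $\CH^*_{K,T}(W_\mathcal{A})$ with the conormed Chow of a $k$-connected quotient that becomes $L$-disconnected, and Corollary~\ref{cor:triv_normed} gives the needed vanishing. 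This yields the exact sequence
\[
\CH^{*-2}_{K,T}(X)\xrightarrow{c_2(\rho^*V_R)}\CH^*_{K,T}(X)\to \CH^*_K(X\times R/T)\to 0,
\]
where the freeness of the $T$-action on $R$ and Lemma~\ref{lem:torsor} have been used to rewrite the last term.

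For parts (1) and (4) a character-lattice computation identifies $R/{}^2\Gmm\cong\Gmm$ via the norm, and $R/\mut{2}{2,2}\cong R$, since $X^*(R/\mut{2}{2,2})\cong (2\Z)^2\cong\Z^2$ as $\Z[\Gal(L/k)]$-modules with the swap action. In case~(1) homotopy invariance gives $\CH^*_K(X\times\Gmm)\cong\CH^*_K(X)$; in case~(4) a further Gysin step for the open $R\subseteq V_R$, combined with Corollary~\ref{cor:triv_normed} applied to the irreducible $k$-conic $V_R\setminus R$ (which becomes a union of two lines over $L$), also gives $\CH^*_K(X\times R)\cong\CH^*_K(X)$. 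We thus obtain a surjection $\CH^*_K(X)[b]\twoheadrightarrow\CH^*_{K,T}(X)$ with $b\mapsto c_2(\rho^*V_R)$; to upgrade it to an isomorphism we verify that $b$ is not nilpotent by restricting further to $\mu_2\subseteq T$ (the center $\{\pm 1\}$ in case~(1) and the Galois-fixed diagonal in case~(4)), under which $\rho^*V_R$ decomposes as $V_{\mu_2}^{\oplus 2}$, so $b\mapsto c_1(V_{\mu_2})^2$. A direct computation with $B\mu_2$-approximations, modeled on the proof of Proposition~\ref{prop:classifying_quasi-trivial} and equivalent to the $m=1$ instance of part~(3) handled as a base case, shows $\CH^*_{K,\mu_2}(\Spec k)\cong\FF_2[c_1(V_{\mu_2})]$, in which $c_1(V_{\mu_2})^2$ is non-nilpotent.

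Part~(3) is the most involved and is the main obstacle: here $R/\mut{2}{2m}$ sits in a generally non-split exact sequence $0\to{}^2\Gmm\to R/\mut{2}{2m}\to\Gmm\to 0$, and the Galois-fixed character $\chi^m$ of $\mut{2}{2m}$ produces the additional generator $x=c_1({}^2\Lambda^\pm_{\mu_{2m}})$. The plan is to complement the above Gysin sequence for $c_2({}^2V_{\mu_{2m}})$ with a second Gysin sequence for the line bundle ${}^2\Lambda^\pm_{\mu_{2m}}$, whose zero-section complement is a copy of $\Gmm$ on which $\mut{2}{2m}$ acts through the quotient $\mut{2}{2m}/\mut{2}{m}\cong\mu_2$; an Eilenberg--Moore reduction via Proposition~\ref{prop:EM_qt_extended} applied to $1\to\mut{2}{m}\to\mut{2}{2m}\to\mu_2\to 1$, together with part~(2) to dispatch the $\mut{2}{m}$-factor when $m$ is odd, identifies the full ring as generated over $\CH^*_K$ by $x$ and $b$. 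The relation $x^2+m^2 b=0$ is verified by decomposing ${}^2V_{\mu_{2m}}\otimes_k L\cong\chi\oplus\chi^{-1}$ and ${}^2\Lambda^\pm_{\mu_{2m}}\otimes_k L\cong\chi^m$, giving $x_L=m\cdot c_1(\chi)$ and $b_L=-c_1(\chi)^2$, hence $x_L^2+m^2 b_L=0$; combined with the torsion identity $2x=0$ from $({}^2\Lambda^\pm_{\mu_{2m}})^{\otimes 2}=\mathbf{1}$ and an injectivity argument on the degree-$2$ piece under base change to $L$, this transfers back to $\CH^*_{K,\mut{2}{2m}}(\Spec k)$. The essential difficulty is the case of even $m$, where the non-split extension blocks a naive K\"unneth-type decomposition of $\CH^*_K(R/\mut{2}{2m})$, and a careful induction on $v_2(m)$ along the tower $\mu_2\subseteq\mut{2}{4}\subseteq\dots\subseteq\mut{2}{2m}$ will be needed to rule out spurious relations beyond $x^2+m^2 b=0$.
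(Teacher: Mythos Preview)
Your treatment of part~(2) matches the paper's. For parts~(1) and~(4) you take a different route: the paper uses the Eilenberg--Moore homomorphism~\refbr{eq:EM3} for the short exact sequences $1\to{}^2\Gmm\to R\to\Gmm\to 1$ and $1\to\mut{2}{2,2}\to R\xrightarrow{(-)^2}R\to 1$, both with quasi-trivial quotient so that Corollary~\ref{cor:EM_qt} applies directly, whereas you attempt a direct Gysin computation. For ${}^2\Gmm$ this is a legitimate alternative since ${}^2\Gmm$ is a connected torus and Proposition~\ref{prop:modified_Gysin} applies as stated (though the step ``homotopy invariance gives $\CH^*_K(X\times\Gmm)\cong\CH^*_K(X)$'' needs localization, not just homotopy invariance). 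For $\mut{2}{2,2}$ your workaround for the connectedness hypothesis is plausible but would need more care than you give it; the paper's Eilenberg--Moore approach avoids this entirely.

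Part~(3) has genuine gaps. First, your appeal to Proposition~\ref{prop:EM_qt_extended} for the sequence $1\to\mut{2}{m}\to\mut{2}{2m}\to\mu_2\to 1$ is a misapplication: that proposition is about a subgroup $N$ of a quasi-trivial torus $T$, and none of the groups in your sequence is a torus. Second, your ``injectivity argument on the degree-$2$ piece under base change to $L$'' cannot work in the conormed setting, since $\CH^*_K(\Spec L)=0$ by Lemma~\ref{lem:trivial_normed_corestriction}, so base change to $L$ kills everything and detects no relation. Third, you yourself leave the even-$m$ case unfinished. The idea you are missing is the paper's resolution
\[
1\longrightarrow\mut{2}{2m}\xrightarrow{\ f\ }{}^2\Gmm\times\Gmm\xrightarrow{\ g\ }R\longrightarrow 1,
\]
Cartier dual to $\hat g(x,y)=(mx-my,x+y)$, $\hat f(x,y)=x+my$. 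Here the quotient $R$ is quasi-trivial, so Corollary~\ref{cor:EM_qt} makes~\refbr{eq:EM3} an isomorphism, and the middle term has $\CH^*_{K,{}^2\Gmm\times\Gmm}(X)\cong\CH^*_K(X)[x,b]$ by part~(1) and Proposition~\ref{prop:classifying_quasi-trivial}. The entire computation then reduces to evaluating $\Phi_g(c_2(V_R))=c_1(V_\Gmm)^2+m^2\,c_2({}^2V_\Gmm)$ via $\Res_g([V_R])=\psi^m[{}^2V_\Gmm]\otimes[V_\Gmm]$, which produces the relation $x^2+m^2b=0$ directly and uniformly for all $m$, with no induction on $v_2(m)$ needed.
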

\begin{proof}
	Put $R:=R_{L/k}\Gmm$ to be the Weil restriction of the split one-dimensional torus and let $V_R$ be its standard vector representation of dimension $2$.
	
	(1) Consider the Eilenberg--Moore homomorphism~\refbr{eq:EM3} 
	\[
	\CH^*_{K,R} (X) \otimes_{\CH^*_{K,\Gmm}(\Spec k)}  \CH^*_{K} (\Spec k) \to \CH^*_{K,{}^2\Gmm} (X)
	\]
	associated with the exact sequence of groups
	\[
	1\to {}^2\Gmm\to R\to \Gmm \to 1.
	\]
	By the construction, this homomorphism arises from a $\Gmm$-torsor
	\[
	(X\times E_nR\times E_n\Gmm)/{}^2\Gmm \to (X\times E_nR\times E_n\Gmm)/R
	\]
	with $E_nR$ and $E_n\Gmm$ being $n$-th approximations to the corresponding universal torsors. Corollary~\ref{cor:EM_qt} applied to $\Gmm$ acting on $(X\times E_nR\times E_n\Gmm)/{}^2\Gmm$ yields that this Eilenberg--Moore homomorphism is an isomorphism. Furthermore, Proposition~\ref{prop:classifying_quasi-trivial} yields isomorphisms
	\[
	\CH^*_{K,R} (X) \cong \CH^*_{K} (X) \otimes \CH^*_{K,R} (\Spec k) \cong  \CH^*_{K} (X) \otimes \CH^*_{K} (\Spec k)[b]
	\]
	with $b\mapsto c_2(V_R)$. The action of $\CH^*_{K,\Gmm}(\Spec k)$ on $\CH^*_{K,R}(X)$ in the Eilenberg--Moore homomorphism factors through the homomorphism
	\[
	\CH^*_{K}(\Spec k)[x]\cong \CH^*_{K,\Gmm}(\Spec k)\to \CH^*_{K,R}(\Spec k) \cong \CH^*_{K} (\Spec k)[b]
	\]
	which maps $x$ to $0$, since the right-hand side has no elements of degree $1$. Combining all the above we obtain
	\[
	\CH^*_{K} (X) [b]\cong \CH^*_{K,R} (X) \cong\CH^*_{K,R} (X) \otimes_{\CH^*_{K,\Gmm}(\Spec k)}  \CH^*_{K} (\Spec k) \cong \CH^*_{K,{}^2\Gmm} (X).
	\]
	
	(2) We have $\CH^*_{K}(\Spec k) = \FF_2$ or $\CH^*_{K}(\Spec k)=0$ by Lemma~\ref{lem:normed_coef}. The claim follows from Lemma~\ref{lem:trivial_coprime}.
	
	(3) Consider the short exact sequence of algebraic groups
	\[
	1\to \mut{2}{2m} \xrightarrow{f} {}^2\Gmm\times \Gmm \xrightarrow{g} R \to 1
	\]
	The Cartier dual to the short exact sequence of $\Gal(L/k)$-modules is
	\begin{gather*}
		0\to \Z\oplus \Z \xrightarrow{\hat{g}} \Z\oplus \Z \xrightarrow{\hat{f}} \Z/2m\Z \to 0, \\
		\hat{g}(x,y)=(mx-my,x+y), \quad \hat{f}(x,y)=x+my,
	\end{gather*}
	with the action of the nontrivial element $\tau\in \Gal(L/k)\cong C_2$ given by
	\[
	\tau(x,y)=(y,x) ,\quad \tau(x,y)=(-x,y) ,\quad \tau(x)=-x
	\]
	on the respective abelian groups. As in part (1) we obtain that the Eilenberg--Moore homomorphism~\refbr{eq:EM3}
	\begin{equation} \label{eq:is1}
		\CH^*_{K,{}^2\Gmm\times \Gmm} (X) \otimes_{\CH^*_{K,R}(\Spec k)}  \CH^*_{K} (\Spec k) \to \CH^*_{K,\mut{2}{2m}} (X)
	\end{equation}
	is an isomorphism. Proposition~\ref{prop:classifying_quasi-trivial} together with part (1) of the current lemma yields isomorphisms
	\begin{gather*}
		\CH^*_{K,{}^2\Gmm\times \Gmm} (X) \cong \CH^*_{K} (X)[x,b],\quad
		\CH^*_{K,R}(\Spec k)\cong \CH^*_{K}(\Spec k)[b'],\\ \CH^*_{K,{}^2\Gmm\times \Gmm} (\Spec k)\cong  \CH^*_{K} (\Spec k)[x,b]
	\end{gather*}
	with $b'\mapsto c_2(V_R)$, $x\mapsto c_1(V_{\Gmm})$, $b\mapsto c_2({}^2V_{\Gmm})$, and the $\CH^*_{K}(\Spec k)[b']$-module structure on $\CH^*_{K} (X)[x,b]$ factors through the homomorphism
	\[
	\CH^*_{K}(\Spec k)[b']\cong \CH^*_{K,R}(\Spec k)\xrightarrow{\Phi_g}  \CH^*_{K,{}^2\Gmm\times \Gmm} (\Spec k)\cong \CH^*_{K} (\Spec k)[x,b].
	\]
	In the representation ring $\mathrm{Rep}(R)\cong \mathbb{Z}[\mc{X}^*(R)]^{\Gal(L/k)}\cong \mathbb{Z}[\mathbb{Z}\oplus\mathbb{Z}]^{\Gal(K/k)}$ we have $[V_R]=x^{(1,0)}+x^{(0,1)}$. Then
	\[
	\Res_g([V_R])= \Res_g(x^{(1,0)}+x^{(0,1)})= x^{\hat{g}(1,0)}+ x^{\hat{g}(0,1)} = x^{(m,1)}+ x^{(-m,1)} = (x^{(m,0)}+x^{(-m,0)})\cdot x^{(0,1)}
	\]
	in $\mathrm{Rep}({}^2\Gmm\times \Gmm)\cong (\mathbb{Z}[\mc{X}^*({}^2\Gmm)]\otimes \mathbb{Z}[\mc{X}^*(\Gmm)])^{\Gal(L/k)}$ with $(1,0)$ and $(0,1)$ being the generators for $\mc{X}^*({}^2\Gmm)$ and $\mc{X}^*(\Gmm)$ respectively. Then 
	\[
	\Res_g([V_R])= \psi^m[{}^2V_{\Gmm}]\otimes [V_{\Gmm}],
	\]
	where $\psi^m$ is the $m$-th Adams operation. Using the standard properties of the Chern classes and that $c_1({}^2V_{\Gmm})=0$, since $\CH^1_{K,{}^2\Gmm}(\Spec k)=0$, we obtain
	\begin{multline*}
		\Phi_g(c_2(V_R))=c_2(\Res_g([V_R]))=c_2(\psi^m[{}^2V_{\Gmm}]\otimes [V_{\Gmm}]) =\\
		=c_1(V_{\Gmm})^2 + m\cdot c_1(V_{\Gmm})\cdot c_1({}^2V_{\Gmm})+ m^2 \cdot c_2({}^2V_{\Gmm})= c_1(V_{\Gmm})^2 + m^2 \cdot c_2({}^2V_{\Gmm}).
	\end{multline*}
	Putting everything together in isomorphism~\refbr{eq:is1} we obtain
	\[
	\CH^*_{K,\mut{2}{2m}} (X) \cong \CH^*_{K} (X)[x,b]/(x^2+m^2\cdot b),\quad x\mapsto 	\Phi_f(c_1(V_{\Gmm})),\, b\mapsto \Phi_f(c_2({}^2V_{\Gmm})).
	\]
	We have 
	\[
	\Phi_f(c_1(V_{\Gmm}))=c_1(\Res_f([V_{\Gmm}]))=c_1({}^2\Lambda_{\mu_{2m}}^\pm),\quad
	\Phi_f(c_2({}^2V_{\Gmm}))=c_2(\Res_f([{}^2V_{\Gmm}]))=c_2({}^2V_{\mu_{2m}}),
	\]
	thus $x$ maps to $c_1({}^2\Lambda_{\mu_{2m}}^\pm)$ and $b$ maps to $c_2({}^2V_{\mu_{2m}})$ as claimed.
	
	(4) Consider the standard embedding $f\colon \mut{2}{2,2}\to R$. As in part (1), the Eilenberg--Moore homomorphism~\refbr{eq:EM3}
	\begin{equation} \label{eq:eqpoint}
		\CH^*_{K,R} (X) \otimes_{\CH^*_{K,R}(\Spec k)}  \CH^*_{K} (\Spec k) \to \CH^*_{K,\mut{2}{2,2}} (X)
	\end{equation}
	associated with the short exact sequence of groups
	\[
	1\to \mut{2}{2,2}\xrightarrow{f} R\xrightarrow{g} R \to 1,\quad g(x)=x^2,
	\]
	is an isomorphism. Proposition~\ref{prop:classifying_quasi-trivial} yields
	\[
	\CH^*_{K,R} (X)\cong \CH^*_{K} (X) \otimes  \CH^*_{K,R} (\Spec k) \cong \CH^*_{K} (X)[b], \quad b\mapsto c_2(V_R),
	\]
	and the $\CH^*_{K,R}(\Spec k)$-module structure in isomorphism~\refbr{eq:eqpoint} is induced by the homomorphism
	\[
	\Phi_g\colon \CH^*_{K,R}(\Spec k) \to \CH^*_{K,R}(\Spec k).
	\]
	Computing the respective characters we obtain $\Res_g([V_R]) = \psi^2[V_R]$
	for the second Adams operation $\psi^2$, thus
	\[
	\Phi_g(c_2(V_R)) = c_2(\Res_g([V_R])) = c_2(\psi^2[V_R])=4c_2(V_R)=0
	\]
	with the last equality arising from $2=0$ in $\CH^*_{K} (\Spec k)$ by Lemma~\ref{lem:normed_coef}. It follows that 
	\[
	\CH^*_{K} (X)[b]\cong \CH^*_{K,R} (X)\xrightarrow{\Phi_{f}} \CH^*_{K,\mut{2}{2,2}} (X)
	\]
	is an isomorphism. We have $\Phi_{f}(c_2(V_R))=c_2(\Res_f([V_{R}])) = c_2({}^2V_{\mu_{2,2}})$, so $b$ maps to $c_2({}^2V_{\mu_{2,2}})$ as claimed.
\end{proof}

\begin{lemma} \label{lem:c2_mixed_rep}
	Let $K/L/k$ be separable field extensions with $[L:k]=2$, put $R:=R_{L/k} \Gmm$, $N:=\mut{2}{2m}$ or $N:=\mut{2}{2,2}$ and $V_N:= {}^2V_{\mu_{2m}}$ or $V_N:= {}^2V_{\mu_{2,2}}$ respectively. Let $W$ be a rank $2$ representation of $R\times N$ such that for its restrictions to the factors $\rho_1\colon R\to R\times N$ and $\rho_2\colon N\to R\times N$ we have 
	\[
	\Res_{\rho_1}([W])=[V_R],\quad \Res_{\rho_2}([W])=\psi^i [V_{N}]
	\]
	for the $i$-th Adams operation $\psi^i$. Then
	\[
	c_2(W)= c_2(V_R)\otimes 1 + i^2 \cdot 1\otimes c_2(V_N) \in \CH_{K,R}^*(\Spec k)\otimes\CH_{K,N}^*(\Spec k) \cong \CH_{K,R\times N}^*(\Spec k).
	\]
\end{lemma}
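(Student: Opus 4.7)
The plan is to exploit the stated K\"unneth isomorphism
\[
\CH^*_{K,R}(\Spec k)\otimes\CH^*_{K,N}(\Spec k)\cong\CH^*_{K,R\times N}(\Spec k)
\]
together with Proposition~\ref{prop:classifying_quasi-trivial}, which gives $\CH^*_{K,R}(\Spec k)\cong\CH^*_K(\Spec k)[b']$ with $\deg b'=2$ and in particular $\CH^1_{K,R}(\Spec k)=0$. Hence the degree-two part decomposes as
\[
\CH^2_{K,R\times N}(\Spec k)=\bigl(\CH^2_{K,R}(\Spec k)\otimes 1\bigr)\oplus\bigl(1\otimes\CH^2_{K,N}(\Spec k)\bigr),
\]
so there are unique scalars $\alpha,\beta\in\CH^0_K(\Spec k)$ with
\[
c_2(W)=\alpha\cdot(c_2(V_R)\otimes 1)+\beta\cdot(1\otimes c_2(V_N)),
\]
and it remains to pin $\alpha$ and $\beta$ down.

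To compute $\alpha$, I would apply the restriction $\Phi_{\rho_1}\colon\CH^*_{K,R\times N}(\Spec k)\to\CH^*_{K,R}(\Spec k)$. Under the K\"unneth decomposition this homomorphism sends $c_2(V_R)\otimes 1$ to $c_2(V_R)$, because the first projection composed with $\rho_1$ is $\id_R$, and it sends $1\otimes c_2(V_N)$ to zero, because the second projection composed with $\rho_1$ is the trivial homomorphism $R\to N$, so the pulled-back representation becomes trivial and its positive Chern classes vanish. Combined with naturality $\Phi_{\rho_1}(c_2(W))=c_2(\Res_{\rho_1}[W])=c_2(V_R)$, this forces $\alpha=1$. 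The symmetric argument with $\rho_2$ yields $\beta\cdot c_2(V_N)=\Phi_{\rho_2}(c_2(W))=c_2(\Res_{\rho_2}[W])=c_2(\psi^i[V_N])$.

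The final input is the Adams-operation-versus-Chern-class identity $c_2(\psi^i[V])=i^2\,c_2(V)$ for a rank~$2$ representation $V$: over the splitting field one has $V=\chi_1\oplus\chi_2$, the Adams operation $\psi^i$ sends $\chi_j$ to $\chi_j^i$, and $c_1(\chi_j^i)=i\,c_1(\chi_j)$, whence $c_2(\psi^i[V])=(i\,c_1(\chi_1))(i\,c_1(\chi_2))=i^2\,c_2(V)$. Applied to $V_N$ this gives $\beta=i^2$ and completes the argument.

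I do not anticipate a genuine obstacle: the compatibility of $\Phi_{\rho_j}$ with the K\"unneth decomposition is forced by the construction of restriction maps via compatible approximations $E_n(R\times N)=E_nR\times E_nN$ as in Definition~\ref{def:restriction}, and the Chern class identity for Adams operations is classical and follows formally from the splitting principle. The only thing that requires any thought is keeping track of which composition $p_j\circ \rho_k$ is trivial and which is the identity, so that the two diagonal components of $\Phi_{\rho_k}$ under the K\"unneth decomposition are correctly identified.
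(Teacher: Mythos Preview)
Your proposal is correct and follows essentially the same route as the paper: both arguments use the K\"unneth decomposition of $\CH^2_{K,R\times N}(\Spec k)$ and then determine the coefficients of $c_2(W)$ by applying the restrictions $\Phi_{\rho_1}$ and $\Phi_{\rho_2}$, together with the identity $c_2(\psi^i[V_N])=i^2c_2(V_N)$. The only minor addition you should make is to cite Proposition~\ref{prop:equivariant_coef_nl} (and Lemma~\ref{lem:normed_coef} for $2=0$) alongside Proposition~\ref{prop:classifying_quasi-trivial}, since that is what guarantees $\CH^2_{K,N}(\Spec k)$ is spanned by $c_2(V_N)$.
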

\begin{proof}
	By Propositions~\ref{prop:classifying_quasi-trivial} and~\ref{prop:equivariant_coef_nl} the degree $2$ part of $\CH_{K,R}^*(\Spec k)\otimes\CH_{K,N}^*(\Spec k)$ is spanned by $	c_2(V_R)\otimes 1$ and $1\otimes c_2(V_N)$ and since $2=0$ in $\CH_{K}^*(\Spec k)$ by Lemma~\ref{lem:normed_coef}, it follows that the map
	\[
	(\Phi_{\rho_1},\Phi_{\rho_2})\colon \CH_{K,R\times N}^2(\Spec k) \to \CH_{K,R}^2(\Spec k)\times\CH_{K,N}^2(\Spec k) 
	\]
	is bijective. We have 
	\begin{gather*}
		\Phi_{\rho_1}(c_2(W))=c_2(\Res_{\rho_1}([W]))=c_2(V_R)=\Phi_{\rho_1}(c_2(V_R)\otimes 1 + i^2 \cdot 1\otimes c_2(V_N)),\\
		\Phi_{\rho_2}(c_2(W))=c_2(\Res_{\rho_2}([W]))=c_2(\psi^i[V_N])=i^2\cdot c_2(V_N) = \Phi_{\rho_2}(c_2(V_R)\otimes 1 + i^2 \cdot 1\otimes c_2(V_N)).
	\end{gather*}
	The claim follows.
\end{proof}

\subsection{Conormed characteristic sequence for quasi-split simple groups}

In this section we introduce the main computational tools for the conormed Chow rings of a quasi-split simple group, namely the conormed characteristic sequence and the conormed extended characteristic sequence, and show that the latter one is exact while the former one is exact provided that the groups is either simply connected or adjoint.  We also show that $G$ satisfies the conormed K\"unneth formula for the direct product with an arbitrary $X\in \Smk$. Here and below we use some standard notation related to quasi-split algebraic groups, see Sections~\ref{sec:multiplicative_groups} and~\ref{sec:quasi-split_simple} for a recollection. 

\begin{lemma} \label{lem:Kunneth_for_pi1}
	Let $G$ be a product of a finite number of quasi-split simple groups over a field $k$ and let $K/k$ be a separable field extension such that $G_K$ is split. Then $B\pi_1(G)$ satisfies the K\"unneth formula for $K/k$ in the sense of Definition~\ref{def:Kunneth_formula_BN}.
\end{lemma}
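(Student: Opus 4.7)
The plan is first to reduce to a single quasi-split simple factor, and then to resolve $\pi_1(G)$ by quasi-trivial tori split by $K$ so that the Eilenberg--Moore argument of Proposition~\ref{prop:equivariant_coef_nl} applies. For the reduction, if $BN_1$ and $BN_2$ separately satisfy the K\"unneth formula for $K/k$, then so does $B(N_1\times N_2)$: using $E_n(N_1\times N_2)\simeq E_nN_1\times E_nN_2$ together with the identification
\[
(X\times E_n N_1\times E_n N_2)/(N_1\times N_2)\cong \bigl((X\times E_n N_1)/N_1\bigr)\times (E_n N_2/N_2)
\]
for $X$ with trivial action, one applies K\"unneth for $BN_2$ to the first factor (a smooth variety with trivial $N_2$-action) and then K\"unneth for $BN_1$. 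Since $\pi_1(\prod_i G_i)\cong\prod_i\pi_1(G_i)$, this reduces the claim to a single simple factor.

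Inspecting the classification of quasi-split simple groups, for $G$ with splitting field contained in $K$ the isomorphism classes of $\pi_1(G)$ that can occur are $1$, the split groups $\mu_l,\mu_2\times\mu_2,\mu_4$, and the non-split forms $\mut{2}{l},\mut{2}{2,2},\mut{2}{4},\mut{3}{2,2},\mut{6}{2,2}$. For all but the last two, one builds a short exact sequence $1\to\pi_1(G)\to T_1\to T_2\to 1$ with both $T_1$ and $T_2$ quasi-trivial tori split by $K$: for the split groups take $T_1=T_2=\Gmm^r$ with the appropriate power maps, and for $\mut{2}{l},\mut{2}{2,2},\mut{2}{4}$ use the sequences appearing in the proof of Proposition~\ref{prop:equivariant_coef_nl}. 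In each such case the Eilenberg--Moore homomorphism~\refbr{eq:EM3} is an isomorphism by Corollary~\ref{cor:EM_qt} applied to the quasi-trivial $T_2$, and Proposition~\ref{prop:classifying_quasi-trivial} identifies both $\CH^*_{K,T_1}(X)$ and $\CH^*_{K,T_1}(\Spec k)$ as polynomial extensions of their non-equivariant analogues; the K\"unneth formula for $B\pi_1(G)$ then follows by direct substitution.

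The two trialitarian cases require separate tricks. For $\mut{6}{2,2}$ the Galois group $\Gal(K/k)\cong S_3$ has order $6$, which is not a prime power, so Lemma~\ref{lem:normed_coef} gives $\CH^*_K(\Spec k)=0$; this forces $\CH^*_K(Y)=0$ for every smooth $Y$ over $k$, and the K\"unneth map reduces to $0\to 0$. For $\mut{3}{2,2}$ with $\Gal(K/k)\cong C_3$ the $\FF_2$-algebra decomposition $\FF_2[C_3]\cong\FF_2\oplus\FF_4$ yields, via Cartier duality, the identification $R_{K/k}\mu_2\cong\mu_2\times\mut{3}{2,2}$. The group $R_{K/k}\mu_2$ sits in the resolution $1\to R_{K/k}\mu_2\to R_{K/k}\Gmm\to R_{K/k}\Gmm\to 1$ by quasi-trivial tori split by $K$, so the previous step gives K\"unneth for $BR_{K/k}\mu_2$. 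Since $2$ is invertible in $\CH^*_K(\Spec k)=\FF_3$, Lemma~\ref{lem:trivial_coprime} shows $\CH^*_{K,\mu_2}(Y)\cong\CH^*_K(Y)$ whenever $\mu_2$ acts trivially; the two-stage identification $\CH^*_{K,\mu_2\times\mut{3}{2,2}}(X)\cong\CH^*_{K,\mu_2}(X\times B_n\mut{3}{2,2})\cong\CH^*_{K,\mut{3}{2,2}}(X)$ therefore transfers the K\"unneth decomposition from $B(\mu_2\times\mut{3}{2,2})$ to $B\mut{3}{2,2}$. The main obstacle is this $\mut{3}{2,2}$ case, where the relevant $C_3$-lattice is not stably permutation so that no direct resolution by quasi-trivial tori exists and Corollary~\ref{cor:EM_qt} is not directly available; the direct-factor trick using $R_{K/k}\mu_2$ circumvents this obstruction.
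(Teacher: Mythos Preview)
Your proof is correct, but you work much harder than necessary in the $\mut{3}{2,2}$ case. The paper simply observes that when the splitting field has degree $3$, the order of $\pi_1(G)$ (which is $1$ or $4$) is invertible in $\CH^*_K(\Spec k)\cong\FF_3$, so Lemma~\ref{lem:trivial_coprime} applies \emph{directly} to $\mut{3}{2,2}$, giving $\CH^*_{K,\mut{3}{2,2}}(X)\cong\CH^*_K(X)$ and making the K\"unneth statement vacuous. You already invoke this lemma for the auxiliary $\mu_2$ factor, yet the same reasoning disposes of $\mut{3}{2,2}$ itself in one line; the non-stably-permutation obstruction you highlight is therefore a red herring, and the direct-factor trick via $R_{K/k}\mu_2\cong\mu_2\times\mut{3}{2,2}$, while correct and pleasant, is unnecessary here.

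There is also a small imprecision in your description of the degree~$2$ cases: for $\mut{2}{2m}$ the sequence in the proof of Proposition~\ref{prop:equivariant_coef_nl} has $T_1={}^2\Gmm\times\Gmm$, and ${}^2\Gmm$ is \emph{not} quasi-trivial (its character lattice is the sign representation). The argument there is two-step, first establishing K\"unneth for $B({}^2\Gmm)$ via $1\to{}^2\Gmm\to R\to\Gmm\to 1$; since you cite that proof anyway, nothing is actually missing, but your summary ``both $T_1$ and $T_2$ quasi-trivial'' does not literally hold. For the split groups your resolution argument is a clean self-contained alternative to the paper's citation of~\cite{To14b}.
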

\begin{proof}
	We can assume that $G$ is a quasi-split simple group. The proof goes case by case on all the possible fundamental groups, see e.g. recollection~\refbr{expos:fund} for a list. 
	
	If $G$ is split, then $\pi_1(G)\cong \mu_l$ or $\pi_1(G)\cong \mu_2\times \mu_2$ and it is well known that the classifying spaces of such groups satisfy K\"unneth formula for the Chow groups (see, e.g., \cite[proof of Theorem~2.10]{To14b}), thus also for the conormed Chow groups. So we can assume that $G$ is not split.
	
	Let $L$ be the splitting field of $G$. Since $G_K$ is split, we can assume that $L\subseteq K$. The field extension $L/k$ is Galois and $[L:k]=2$, if $\Delta(G)=\Att_n$, ${}^2\mr{E}_6$ or ${}^2\mr{D}_n$, and $[L:k]=3$ or $6$, if $\Delta(G)={}^3\mr{D}_4$ or $\Delta(G)={}^6\mr{D}_4$ respectively.
	
	If $[L:k]=6$, then $\CH_{K}^*(\Spec k)=0$ by Lemma~\ref{lem:normed_coef} and there is nothing to prove.
	
	If $[L:k]=3$, then $\Delta(G)={}^3\mr{D}_4$ and $\pi_1(G)$ is either trivial or $\pi_1(G)\cong \mut{3}{2,2}$, so its order is invertible in $\CH_{K}^*(\Spec k)$ by Lemma~\ref{lem:normed_coef}. The claim follows from Lemma~\ref{lem:trivial_coprime}.
	
	If $[L:k]=2$, then $\pi_1(G)\cong \mut{2}{l}$ for some $l\in \NN$ or $\pi_1(G)\cong \mut{2}{2,2}$. The claim follows from Proposition~\ref{prop:equivariant_coef_nl}.
\end{proof}

\begin{remark}
	Let $G$ be a quasi-split semisimple group over a field $k$ and $K/k$ be a separable field extension $K/k$ such that $G_K$ is split. We do not know whether $B\pi_1(G)$ satisfies the K\"unneth formula for $K/k$, but is seems highly plausible.
\end{remark}

\begin{proposition} \label{prop:Kunneth_conormed}
	Let $G$ be a quasi-split simple group over a field $k$ and $K/k$ be a separable field extension such that $G_K$ is split. Then for $X\in\Smk$ the K\"unneth homomorphism
	\[
	\CH_{K}^*(X)\otimes \CH_{K}^*(G) \to \CH_{K}^*(X\times G),\quad x\otimes y \mapsto p_X^*(x)\cdot p_G^*(y),
	\]
	is an isomorphism. Here $p_X\colon X\times G\to X$, $p_G\colon X\times G\to G$ are the projections. In particular, 
	\[
	\Delta\colon  \CH_{K}^*(G)\xrightarrow{m^*}  \CH_{K}^*(G\times G) \cong \CH_{K}^*(G)\otimes \CH_{K}^*(G)
	\]
	for the multiplication $m\colon G\times G\to G$ equips $\CH_{K}^*(G)$ with the structure of a Hopf algebra.
\end{proposition}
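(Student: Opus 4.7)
The plan is to use Eilenberg--Moore for the maximal torus $\tilde T$ of the simply connected cover $\tilde G\to G$ to reduce the K\"unneth formula for $X\times G$ to a K\"unneth formula for $X\times G/B$, which in turn follows from the Artin--Tate structure of a quasi-split flag variety. Let $T\le B\le G$ be a maximal torus inside a Borel subgroup of $G$, and let $\tilde T\le \tilde G$ be its preimage under $\tilde G\to G$. By Lemma~\ref{lem:quasi-split_quasi-trivial} the torus $\tilde T$ is quasi-trivial, and since $G_K$ is split so is $\tilde T_K$. I equip $G$ and $X\times G$ with the $\tilde T$-action that is trivial on $X$ and acts on $G$ by left multiplication through $\tilde T\twoheadrightarrow T\hookrightarrow G$; its kernel is $\pi_1(G)$ and the induced $T$-action on $G$ is free.

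Since $\pi_1(G)\le \tilde T$ acts freely on each approximation $E_n\tilde T$, the quotient $E_n\tilde T/\pi_1(G)$ serves as an $n$-th approximation to $ET$. For any $Y$ on which the $\tilde T$-action factors through $T$ this yields $\CH_{K,\tilde T}^*(Y)\cong \CH_{K,T}^*(Y)$, and applying Lemma~\ref{lem:torsor} to the free $T$-action on $G$ and on $X\times G$ gives
\[
\CH_{K,\tilde T}^*(G)\cong \CH_K^*(G/T),\qquad \CH_{K,\tilde T}^*(X\times G)\cong \CH_K^*(X\times G/T).
\]
Corollary~\ref{cor:EM_qt} applied to $\tilde T$ acting on $G$ and on $X\times G$ then produces Eilenberg--Moore isomorphisms
\[
\CH_K^*(G/T)\otimes_{\CH_{K,\tilde T}^*(\Spec k)}\CH_K^*(\Spec k)\xrightarrow{\simeq}\CH_K^*(G),
\]
\[
\CH_K^*(X\times G/T)\otimes_{\CH_{K,\tilde T}^*(\Spec k)}\CH_K^*(\Spec k)\xrightarrow{\simeq}\CH_K^*(X\times G),
\]
where the $\CH_{K,\tilde T}^*(\Spec k)$-module structure on $\CH_K^*(X\times G/T)$ acts through the $G/T$-factor via the characteristic map $\CH_{K,\tilde T}^*(\Spec k)\to \CH_K^*(G/T)$.

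It now suffices to establish a K\"unneth isomorphism $\CH_K^*(X)\otimes \CH_K^*(G/T)\xrightarrow{\simeq}\CH_K^*(X\times G/T)$ of $\CH_{K,\tilde T}^*(\Spec k)$-modules; tensoring with $\CH_K^*(\Spec k)$ over $\CH_{K,\tilde T}^*(\Spec k)$ then yields the desired formula for $X\times G$. Homotopy invariance for the affine bundle $G/T\to G/B$ reduces the problem to K\"unneth for $X\times G/B$. As $G$ is quasi-split, \cite[Lemma~29]{CM06} asserts that the integral Chow motive of $G/B$ is Artin--Tate, i.e.\ a finite direct sum of motives $\mc{M}(\Spec F)(l)$ for intermediate subfields $k\subseteq F\subseteq K$. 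By Lemma~\ref{lem:trivial_normed_corestriction} the conormed motives $\mc{M}_K(\Spec F)$ vanish for $F\neq k$, so $\mc{M}_K(G/B)$ is a finite direct sum of Tate motives, and K\"unneth for $X\times G/B$ follows.

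The Hopf algebra structure on $\CH_K^*(G)$ is then an immediate consequence: the K\"unneth isomorphism converts the multiplication, inverse and unit of $G$ into coproduct, antipode and counit on $\CH_K^*(G)$. The main obstacle in the plan is the K\"unneth reduction to $G/B$, which rests on the Artin--Tate decomposition of a quasi-split projective homogeneous variety combined with the vanishing of conormed motives of intermediate field extensions; the identification of $\tilde T$-equivariant and $T$-equivariant conormed Chow groups of $G$ via the approximation $E_n\tilde T/\pi_1(G)$ is a minor technicality absorbed into the Edidin--Graham framework.
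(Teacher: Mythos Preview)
There is a genuine gap in your argument: the claim that $\CH_{K,\tilde T}^*(Y)\cong \CH_{K,T}^*(Y)$ whenever the $\tilde T$-action on $Y$ factors through $T=\tilde T/\pi_1(G)$ is false. The quotient $E_n\tilde T/\pi_1(G)$ is \emph{not} an $n$-th approximation to $ET$ in the sense of Definition~\ref{def:eq_conormed}: it carries a free $T$-action, but it is not open in a $T$-representation, and there is no way to compare it with a genuine approximation by the double-fibration trick (the fibre $E_n\tilde T/\pi_1(G)$ over $(Y\times E_nT)/T$ is not an open in an affine space). Concretely, take $G=\PGL_2$ over $k$ with $K=k$, so $\tilde T=\Gmm$, $\pi_1(G)=\mu_2$, and $T=\Gmm$ with $\tilde T\to T$ the squaring map. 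Your identification would give $\CH_{\tilde T}^*(G)\cong \CH^*(G/T)=\CH^*(\PP^1)$, and then Corollary~\ref{cor:EM_qt} together with the characteristic map $x\mapsto h$ would force $\CH^*(\PGL_2)\cong\Z$, contradicting $\Pic(\PGL_2)=\Z/2$.

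The paper avoids this by never letting $\tilde T$ act on $G$ directly. Instead it lets $\tilde T$ act freely on $\tilde G$, so that Lemma~\ref{lem:torsor} honestly gives $\CH_{K,\tilde T}^*(\tilde G)\cong\CH_K^*(\tilde G/\tilde B)$, and then passes from $\tilde G$ to $G$ via the $\pi_1(G)$-torsor $\tilde G\to G$. This last step is handled by Proposition~\ref{prop:EM_qt_extended}, which produces an extra tensor factor $\CH_{K,\pi_1(G)}^*(\Spec k)$ and requires as input the K\"unneth formula for $B\pi_1(G)$ established in Lemma~\ref{lem:Kunneth_for_pi1}. That lemma is precisely what accounts for the missing $2$-torsion in the $\PGL_2$ example, and it cannot be bypassed: your shortcut would, if valid, prove $\CH_K^*(G)\cong\mc{C}_K^*(G)$ for every quasi-split simple $G$, which Theorems~\ref{thm:conormed_group_answer} and~\ref{thm:cokernel_answer} show is false in general. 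The reduction to K\"unneth for $G/B$ via the Artin--Tate decomposition is correct and is exactly what the paper does; it is only the bridge between $G$ and $G/B$ that needs repair.
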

\begin{proof}
	Let $\tilde{G}\to G$ be the simply connected cover and $\tilde{T}\le \tilde{B}\le \tilde{G}$ be a maximal torus and a Borel subgroup. Lemma~\ref{lem:quasi-split_quasi-trivial} yields that $\tilde{T}$ is a quasi-trivial torus and Lemma~\ref{lem:Kunneth_for_pi1} yields that $B\pi_1(G)$ satisfies the K\"unneth formula for $K/k$. Then it follows from Proposition~\ref{prop:EM_qt_extended} that there are isomorphisms
	\begin{gather*}
		(\CH_{K,\tilde{T}}^*(\tilde{G}) \otimes \CH_{K,\pi_1(G)}^*(\Spec k)) \otimes_{\CH_{K,\tilde{T}}^* (\Spec k)} \CH_{K}^* (\Spec k) \xrightarrow{} \CH_{K,\pi_1(G)}^* (\tilde{G}),\\		
		(\CH_{K,\tilde{T}}^*(X\times \tilde{G}) \otimes \CH_{K,\pi_1(G)}^*(\Spec k)) \otimes_{\CH_{K,\tilde{T}}^* (\Spec k)} \CH_{K}^* (\Spec k) \xrightarrow{} \CH_{K,\pi_1(G)}^* (X\times \tilde{G})
	\end{gather*}
	with $X$ equipped with the trivial action. Lemma~\ref{lem:torsor} yields
	\[
	\CH_{K,\pi_1(G)}^* (\tilde{G})\cong \CH_{K}^* (G),\,\, \CH_{K,\pi_1(G)}^* (X\times \tilde{G})\cong \CH_{K}^* (X\times G),\,\, \CH_{K,\tilde{T}}^*(X\times \tilde{G})\cong \CH_{K}^* (X \times \tilde{G}/\tilde{T}).
	\]
	Note that
	\[
	\CH_{K}^* (\tilde{G}/\tilde{T}) \cong \CH_{K}^* (\tilde{G}/\tilde{B}),\quad
	\CH_{K}^* (X \times \tilde{G}/\tilde{T}) \cong \CH_{K}^* (X \times \tilde{G}/\tilde{B})
	\]
	by the homotopy invariance property. It follows from \cite[Lemma~29]{CM06} that the (integral) Chow motive $\mc{M}(\tilde{G}/\tilde{B})$ splits as a direct sum of $\mc{M}(\Spec F,l)$ for some $l\in\NN_0$ and field extensions $K/F/k$. Lemma~\ref{lem:trivial_normed_corestriction} yields $\mc{M}_K(\Spec F,l)=0$ for $K\supseteq F \supsetneq k$, so we obtain a decomposition
	\[
	\mc{M}_K(\tilde{G}/\tilde{B})\cong \bigoplus_{i\in I} \mc{M}_K(\Spec k,l_i)
	\]
	for some $l_i\in \NN_0$, yielding a K\"unneth isomorphism
	\[
	\CH_{K}^* (X) \otimes \CH_{K}^* (\tilde{G}/\tilde{B})\cong \CH_{K}^* (X \times \tilde{G}/\tilde{B}).
	\]
	Combining all the isomorphisms above we get the claim.
\end{proof}

\begin{definition} \label{def:exact_sequence}
	Consider a sequence 
	\[
	\mc{A}^* \xrightarrow{c} \mc{B}^* \xrightarrow{\phi^*} \mc{C}^* \to 0
	\]
	of $\Z_{\ge 0}$-graded commutative rings with $c$ and $\phi^*$ being graded degree $0$ homomorphisms. We say that this sequence is \textit{exact}, if $\phi^*$ is surjective and $c(\mc{A}^{>0})$ generates $\ker \phi^*$ as an ideal.
\end{definition}

\begin{definition} \label{def:characteristic_map}
	Let $K/k$ be a separable extension of fields, let $E$ be a $G$-torsor for a quasi-split group $G$ over $k$, and let $T\le B\le G$ be a maximal torus and a Borel subgroup. Then the \textit{characteristic map} is the composition
	\[
	c\colon \CH_{K,T}^*(\Spec k) \to \CH_{K,T}^*(E) \cong \CH_{K}^*(E/T) \cong \CH_{K}^*(E/B),
	\]
	where the first map is the pullback for the structure map, the middle isomorphism is given by Lemma~\ref{lem:torsor} and the last isomorphism is the homotopy invariance property for the vector bundle torsor $E/T \to E/B$. The \textit{characteristic sequence} is the sequence
	\[
	\CH_{K,T}^*(\Spec k) \xrightarrow{c} 	\CH_{K}^*(E/B) \xrightarrow{\phi^*} 	\CH_{K}^*(E) \to 0
	\]	
	where $\phi\colon E\to E/B$ is the projection. 
	
	Suppose that $G$ is a quasi-split semisimple group such that $B\pi_1(G)$ satisfies the K\"unneth formula for $K/k$, let $\tilde{G}\to G$ be its simply connected cover and $\tilde{T}\le \tilde{B} \le \tilde{G}$ be the maximal torus and the Borel subgroup lying over $T\le B$. Then the \textit{extended characteristic sequence} is the sequence
	\[
	\CH_{K,\tilde{T}}^*(\Spec k) \xrightarrow{\hat{c}} 	\CH_{K}^*(\tilde{G}/\tilde{B})\otimes \CH_{K,\pi_1(G)}^*(\Spec k) \xrightarrow{\hat{\phi}} 	\CH_{K}^*(G) \to 0
	\]
	where 
	\begin{itemize}
		\item 
		$\hat{c}$ is the composition
		\begin{multline*}
			\CH_{K,\tilde{T}}^*(\Spec k) \xrightarrow{\Phi_{\nu}} \CH_{K,\tilde{T}\times \pi_1(G)}^*(\Spec k)\cong \CH_{K,\tilde{T}}^*(\Spec k) \otimes \CH_{K,\pi_1(G)}^*(\Spec k) \xrightarrow{\tilde{c}\otimes \id} \\
			\xrightarrow{\tilde{c}\otimes \id} \CH_{K}^*(\tilde{G}/\tilde{B}) \otimes \CH_{K,\pi_1(G)}^*(\Spec k)
		\end{multline*}
		with $\nu\colon \tilde{T}\times \pi_1(G)\to \tilde{T}$ being the group homomorphisms given by $\nu(t,s)=t\cdot s^{-1}$, and  $\tilde{c}$ being the characteristic map for $\tilde{G}$,
		\item 
		$\hat{\phi}(x,\alpha) = \phi_G^*(x)\cdot p (\alpha)$ with $\phi_G\colon G\to G/B \cong \tilde{G}/\tilde{B}$ being the projection combined with the isomorphism $\tilde{G}/\tilde{B} \cong G/B$, and $p$ being the composition
		\[
		\CH_{K,\pi_1(G)}^*(\Spec k) \xrightarrow{p_{\tilde{G}}^*} \CH_{K,\pi_1(G)}^*(\tilde{G}) \cong \CH_{K}^*(G)
		\] 
		for the structure map $p_{\tilde{G}}\colon \tilde{G}\to \Spec k$ and the isomorphism given by Lemma~\ref{lem:torsor}.
	\end{itemize}
\end{definition}

\begin{theorem} \label{thm:char_qs}
	Let $G$ be a quasi-split algebraic group over a field $k$, let $T\le B\le G$ be a maximal torus and a Borel subgroup, and let $K/k$ be a separable extension of fields such that $T_K$ is split. 
	\begin{enumerate}
		\item If $T$ is quasi-trivial, then for a $G$-torsor $E$ the characteristic sequence
		\[
		\CH_{K,T}^*(\Spec k) \xrightarrow{c} 	\CH_{K}^*(E/B) \xrightarrow{\phi^*} 	\CH_{K}^*(E) \to 0
		\]		
		is exact. In particular, this applies to a split $G$ and, by Lemma~\ref{lem:quasi-split_quasi-trivial}, to a simply connected or an adjoint quasi-split semisimple group $G$.
		
		\item Let $G$ be a quasi-split semisimple group, $\tilde{G}\to G$ be its simply connected cover and $\tilde{T}\le \tilde{B}\le \tilde{G}$ be the maximal torus and the Borel subgroup lying over $T\le B$. Suppose that $B\pi_1(G)$ satisfies the K\"unneth formula for $K/k$. Then the extended characteristic sequence
		\[
		\CH_{K,\tilde{T}}^*(\Spec k) \xrightarrow{\hat{c}} 	\CH_{K}^*(\tilde{G}/\tilde{B})\otimes \CH_{K,\pi_1(G)}^*(\Spec k) \xrightarrow{\hat{\phi}} 	\CH_{K}^*(G) \to 0
		\]
		is exact. In particular, by Lemma~\ref{lem:Kunneth_for_pi1} this applies to $G$ being a product of quasi-split simple groups.
	\end{enumerate}
\end{theorem}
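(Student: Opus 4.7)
My strategy for both parts is to realize the (extended) characteristic sequence as the failure of an Eilenberg--Moore isomorphism to be purely tensor-theoretic, exploiting the structural results of the preceding subsections.

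For part (1), I will apply Corollary~\ref{cor:EM_qt} with $X=E$, viewed as a smooth $T$-variety via the inclusion $T\le G$; the hypotheses hold since $T$ is assumed quasi-trivial and $T_K$ is assumed split. This produces an isomorphism
\[
\CH_{K,T}^*(E)\otimes_{\CH_{K,T}^*(\Spec k)}\CH_K^*(\Spec k)\xrightarrow{\cong}\CH_K^*(E).
\]
Since $E$ is a $G$-torsor, the $T$-action on $E$ is free, so Lemma~\ref{lem:torsor} together with homotopy invariance along the affine bundle $E/T\to E/B$ (with fibre the unipotent radical of $B$) identifies $\CH_{K,T}^*(E)\cong \CH_K^*(E/B)$; under this identification the $\CH_{K,T}^*(\Spec k)$-module structure on $\CH_{K,T}^*(E)$ is precisely the characteristic map $c$, and $\Phi_T$ becomes $\phi^*$. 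By Proposition~\ref{prop:classifying_quasi-trivial} together with Lemma~\ref{lem:normed_coef}, $\CH_{K,T}^*(\Spec k)\cong \CH_K^*(\Spec k)[b_1,\dots,b_r]$ with $\CH_K^*(\Spec k)$ concentrated in degree zero, so $\CH_K^*(\Spec k)\cong \CH_{K,T}^*(\Spec k)/\CH_{K,T}^{>0}(\Spec k)$ and the tensor product above collapses to the quotient $\CH_K^*(E/B)/c(\CH_{K,T}^{>0}(\Spec k))\cdot \CH_K^*(E/B)$, which is exactly the exactness condition of Definition~\ref{def:exact_sequence}.

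For part (2), I will apply Proposition~\ref{prop:EM_qt_extended} with $T=\tilde T$, $N=\pi_1(G)\le \tilde T$, and $X=\tilde G$ equipped with the right-translation $\tilde T$-action. All the hypotheses are in place: $\tilde T$ is quasi-trivial by Lemma~\ref{lem:quasi-split_quasi-trivial} since $\tilde G$ is simply connected; $\tilde T_K$ is split because $\tilde T$ and $T$ have the same minimal splitting field; and $B\pi_1(G)$ satisfies K\"unneth by assumption. This yields an isomorphism
\[
(\CH_{K,\tilde T}^*(\tilde G)\otimes \CH_{K,\pi_1(G)}^*(\Spec k))\otimes_{\CH_{K,\tilde T}^*(\Spec k)}\CH_K^*(\Spec k)\xrightarrow{\cong}\CH_{K,\pi_1(G)}^*(\tilde G).
\]
Lemma~\ref{lem:torsor} together with homotopy invariance gives $\CH_{K,\tilde T}^*(\tilde G)\cong \CH_K^*(\tilde G/\tilde B)$, while $\tilde G\to G$ being a $\pi_1(G)$-torsor yields $\CH_{K,\pi_1(G)}^*(\tilde G)\cong \CH_K^*(G)$. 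The $\CH_{K,\tilde T}^*(\Spec k)$-module structure prescribed by Proposition~\ref{prop:EM_qt_extended} (namely $\Phi_{\nu}$ followed by componentwise multiplication) is by construction exactly the map $\hat c$ of Definition~\ref{def:characteristic_map}; and the Eilenberg--Moore map $(x,\alpha)\mapsto \alpha\cdot \Phi_\rho(x)$ for the embedding $\rho\colon \pi_1(G)\hookrightarrow \tilde T$ unwinds to $\hat\phi$, since $\Phi_\rho$ corresponds under the torsor identifications to $\phi_G^*$ and the $\pi_1(G)$-equivariant structure-map pullback to the map $p$. As in part (1), tensoring over $\CH_{K,\tilde T}^*(\Spec k)$ with the degree-zero quotient $\CH_K^*(\Spec k)$ amounts to killing the image of $\hat c(\CH_{K,\tilde T}^{>0}(\Spec k))$, which yields the claimed exactness.

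The main technical obstacle is verifying that the abstract morphisms coming from Proposition~\ref{prop:EM_qt_extended} literally coincide with the concrete maps $\hat c$ and $\hat\phi$ of Definition~\ref{def:characteristic_map}. This reduces to chasing pullbacks along a network of torsor and affine-bundle projections, and is essentially controlled by the same commutative diagram of approximation spaces that appears at the end of the proof of Proposition~\ref{prop:EM_qt_extended}; the computations are routine but notationally involved.
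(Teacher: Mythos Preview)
Your proposal is correct and follows essentially the same approach as the paper: for part (1) you apply Corollary~\ref{cor:EM_qt} to $X=E$ and identify $\CH_{K,T}^*(E)\cong \CH_K^*(E/B)$ via Lemma~\ref{lem:torsor} and homotopy invariance, and for part (2) you apply Proposition~\ref{prop:EM_qt_extended} to $X=\tilde G$ with $N=\pi_1(G)\le \tilde T$ and make the analogous identifications. The paper's proof is slightly terser (it does not spell out the reduction of the tensor product to a quotient via Proposition~\ref{prop:classifying_quasi-trivial}, nor the splitting-field argument for $\tilde T_K$), but the logical structure and the key inputs are identical.
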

\begin{proof}
	(1) Corollary~\ref{cor:EM_qt} yields an exact sequence
	\[
	\CH_{K,T}^*(\Spec k) \to 	\CH_{T, K}^*(E) \xrightarrow{\Phi_T} 	\CH_{K}^*(E) \to 0
	\]
	where the first morphism is the pullback for the projection $E\to \Spec k$. The claim follows via the identification $\CH_{K,T}^*(E)\cong  \CH_{K}^*(E/B)$ as in Definition~\ref{def:characteristic_map}.
	
	(2) Proposition~\ref{prop:EM_qt_extended} applied to $X:=\tilde{G}$ with the action of $\tilde{T}$ and the subgroup $\pi_1(G)\le \tilde{T}$ (we use the notation of Definition~\ref{def:characteristic_map} for the simply connected cover $\tilde{G}$ of $G$ and for its subgroups) yields an exact sequence
	\[
	\CH_{K,\tilde{T}}^*(\Spec k) \to	\CH_{K,\tilde{T}}^*(\tilde{G})\otimes \CH_{K,\pi_1(G)}^*(\Spec k) \to \CH_{K,\pi_1(G)}^*(\tilde{G}) \to 0
	\]
	where the first homomorphism is the composition
	\begin{multline*}
		\CH_{K,\tilde{T}}^*(\Spec k) \xrightarrow{\Phi_\nu} \CH_{K,\tilde{T}\times \pi_1(G)}^*(\Spec k) \cong \CH_{K,\tilde{T}}^*(\Spec k)\otimes \CH_{K,\pi_1(G)}^*(\Spec k) \to \\ 
		\xrightarrow{ p_{\tilde{G}}^*\otimes \id} \CH_{K,\tilde{T}}^*(\tilde{G})\otimes \CH_{K,\pi_1(G)}^*(\Spec k)
	\end{multline*}
	and the second homomorphism is given by $(x,\alpha)\mapsto p_{\tilde{G}}^*(\alpha)\cdot \Phi_{\rho}(x)$. Here $\nu\colon \tilde{T}\times \pi_1(G)\to \tilde{T}$ is the homomorphism given by $\nu(t,s)=ts^{-1}$, $\rho\colon \pi_1(G)\to \tilde{T}$ is the embedding, and $p_{\tilde{G}}\colon\tilde{G}\to \Spec k$ is the structure morphism. Identifying $\CH_{K,\tilde{T}}^*(\tilde{G})\cong  \CH_{K}^*(\tilde{G}/\tilde{B})$ and $\CH_{K,\pi_1(G)}^*(\tilde{G})\cong \CH_{K}^*(G)$ as in Definition~\ref{def:characteristic_map} we obtain the claim.
\end{proof}

\begin{remark}
	If $G$ is a split group, then Theorem~\ref{thm:char_qs} is well-known and goes back to \cite{Gro58}.
\end{remark}

\subsection{Conormed Chow ring of a quasi-split group $G$ via $G/B$}

In this section we use the conormed extended characteristic sequence to give a presentation for $\CH^*_K(G)$ as a quotient of $\CH^*_K(G/B)$ or $\CH^*_K(G/B)[x]$ modulo some explicit ideal. Here and below we use some standard notation for vector bundles over homogeneous spaces recalled in Section~\ref{sec:vector_bundles}, in particular, for the maximal torus $\tilde{T}$ in the simply connected cover $\tilde{G}\to G$ lying over a maximal torus $T\le B$ we have 
\begin{itemize}  \itemsep0em 
	\item line bundle $\mc{L}(\varpi)$ over $G/B$ associated with a Galois-invariant weight $\varpi \in \mc{X}^*(\tilde{T})$,
	\item vector bundle $\mc{V}(\mc{S})$ over $G/B$ associated with a Galois-invariant finite set of weights $\mc{S} \subseteq \mc{X}^*(\tilde{T})$,
	\item line bundle $\mc{L}(\bar{\varpi})$ over $G$ associated with a Galois-invariant element $\bar{\varpi} \in \mc{X}^*(\tilde{T})/\mc{X}^*(T)$.
\end{itemize}
Furthermore, we use the standard numbering for simple roots $\alpha_i$ and fundamental weights $\varpi_i$ recalled in Section~\ref{sec:quasi-split_simple}, and shorten the notation as $\mc{L}_i:=\mc{L}(\varpi_i)$, $\mc{V}_{i_1,i_2,\hdots,i_j}:=\mc{V}(\{\varpi_{i_1},\varpi_{i_2},\hdots,\varpi_{i_j}\})$.

\begin{lemma}[{see also \cite[Example~3.12]{KZ13}}] \label{lem:Chow_quasi-split_homogeneous}
	Let $G$ be a quasi-split reductive group over a field $k$, let $K/k$ be the splitting field of $G$ and let $P\le G$ be a parabolic subgroup. Then for every $n\in\Z$ and the canonical projection $\rho\colon (G/P)_K\to G/P$ the pullback homomorphism
	\[
	\rho^*\colon \CH^*(G/P)\otimes \Z/n\Z \to \CH^*((G/P)_K)\otimes \Z/n\Z
	\]
	is injective and $\rho^*(\CH^*(G/P)\otimes \Z/n\Z) = (\CH^*((G/P)_K)\otimes \Z/n\Z)^{\Gal(K/k)}$.
\end{lemma}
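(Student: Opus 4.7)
The plan is to reduce the statement to the case $G/P = \Spec F$ for a finite separable subextension $F/k$ of $K/k$, by invoking the integral motivic decomposition already used in the proof of Proposition~\ref{prop:Kunneth_conormed}. Since $G$ is quasi-split with splitting field $K$, \cite[Lemma~29]{CM06} provides an isomorphism of integral Chow motives
\[
\motive(G/P) \cong \bigoplus_{i \in I} \motive(\Spec F_i)(l_i)
\]
with $F_i\subseteq K$ finite separable extensions of $k$ and $l_i \in \NN_0$. This isomorphism is realised by a projector in $\CH^*((G/P)\times_k (G/P))$ defined over $k$, so it base changes to a $\Gal(K/k)$-equivariant decomposition of $\motive((G/P)_K)$.

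Applying $\CH^*(-)\otimes \Z/n\Z$ and observing that pullbacks, Galois invariants and the functor $-\otimes \Z/n\Z$ all respect direct sums of motives, it suffices to verify the statement for each summand. That is, I am reduced to showing that for every finite separable subextension $F/k$ of $K/k$ the pullback
\[
\rho_F^*\colon \CH^*(\Spec F)\otimes \Z/n\Z \to \CH^*((\Spec F)_K)\otimes \Z/n\Z
\]
is injective with image the $\Gal(K/k)$-invariants.

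For this I would use the standard identification $(\Spec F)_K = \Spec(F\otimes_k K) \cong \bigsqcup_\sigma \Spec K$ indexed by the transitive $\Gal(K/k)$-set $\Hom_k(F,K)\cong \Gal(K/k)/\Gal(K/F)$. Under this identification $\CH^*(\Spec F)\otimes\Z/n\Z = \Z/n\Z$ sits in degree $0$, $\CH^*((\Spec F)_K)\otimes\Z/n\Z = (\Z/n\Z)[\Gal(K/k)/\Gal(K/F)]$ as a permutation $\Gal(K/k)$-module, and $\rho_F^*$ is the diagonal embedding $1 \mapsto \sum_\sigma \sigma$. This embedding is visibly injective, and the $\Gal(K/k)$-invariants of a transitive permutation module over any commutative ring are precisely the constant (diagonal) elements, so $\Image \rho_F^*$ equals the invariants, as desired.

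The one delicate point will be to confirm that the motivic decomposition is genuinely $\Gal(K/k)$-equivariant after base change and that the induced Galois action on each summand $\motive((\Spec F_i)_K)(l_i)$ coincides with the natural permutation action on the components of $(\Spec F_i)_K$. Both are clear from the fact that the projectors defining the decomposition are $k$-rational cycles and therefore pull back to $\Gal(K/k)$-invariant projectors over $K$; once this is pinned down, the reduction to the case of $\Spec F$ is clean and the argument concludes.
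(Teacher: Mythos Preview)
Your proof is correct and follows essentially the same route as the paper's: both invoke the Artin--Tate motivic decomposition of $G/P$ from \cite[Lemma~29]{CM06}, reduce to the summands $\Spec F_i$, and then identify $\rho^*$ on each summand with the diagonal embedding into the permutation module $(\Z/n\Z)[\Gal(K/k)/\Gal(K/F_i)]$. Your discussion of the ``delicate point'' (that $k$-rational projectors yield a Galois-equivariant decomposition after base change) makes explicit a step the paper leaves implicit, but otherwise the arguments coincide.
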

\begin{proof}
	It follows from \cite[Lemma~29]{CM06} that there is a motivic decomposition
	\[
	\mc{M}(G/P) \cong \bigoplus_{i\in I} \mc{M}(\Spec F_i, l_i)
	\]
	for some $l_i\in \NN_0$ and field extensions $K/F_i/k$. At the same time we have
	\[
	\mc{M}((G/P)_K) \cong \bigoplus_{i\in I} \mc{M}(\Spec (F_i \otimes_k K), l_i)\cong \bigoplus_{i\in I} \mc{M}(\Spec K, l_i)^{\oplus [F_i:k]}.
	\]	
	The pullback $\rho^*$ is given by the direct sum of diagonal embeddings
	\[
	\rho^*\colon \CH^*(G/P)\otimes \Z/n\Z \cong \bigoplus_{i\in I} \Z/n\Z \xrightarrow{\oplus \mathrm{diag}}  \bigoplus_{i\in I} (\Z/n\Z)^{\oplus [F_i:k]}  \cong  \CH^*((G/P)_K)\otimes \Z/n\Z.
	\]
	The group $\Gal(K/k)$ acts on $\bigoplus_{i\in I} (\Z/n\Z)^{\oplus [F_i:k]}$ simultaneously permuting the factors of each summand $(\Z/n\Z)^{\oplus [F_i:k]}$, so the claim follows.
\end{proof}

\begin{lemma} \label{lem:full_flag_codim2}
	Let $G$ be a quasi-split simple group over a field $k$ of type $\Delta(G)={}^2\mathrm{A}_{2r-1}$, let $B\le G$ be a Borel subgroup and let $K/k$ be the splitting field of $G$. Then in $\CH^*_K(G/B)$ we have the following:
	\begin{enumerate}
		\item $\CH^1_K(G/B)=\FF_2 \cdot c_1(\mc{L}_r)$,
		\item $c_1(\mc{L}_r)^2=0$,
		\item $\CH^2_K(G/B)=\FF_2 \cdot c_2(\mc{V}_{1,2r-1})\oplus \FF_2 \cdot c_2(\mc{V}_{2,2r-2})\oplus\hdots\oplus \FF_2 \cdot c_2(\mc{V}_{r-1,r+1})$.
	\end{enumerate}
\end{lemma}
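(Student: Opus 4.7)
The plan is to identify $\CH^\ast_K(G/B)$ with the Tate cohomology of $\Gal(K/k)$ acting on $\Ch^\ast((G/B)_K)$ and then to read off the three assertions by explicit computation in the Borel presentation.

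Since $[K:k]=2$, the projection formula gives $2x=\pi_\ast\pi^\ast x$ for every $x\in\CH^\ast(G/B)$, so $\CH^\ast_K(G/B)$ is $2$-torsion and I may pass to $\FF_2$-coefficients. Combining Lemma~\ref{lem:Chow_quasi-split_homogeneous} (which identifies $\Ch^\ast(G/B)$ with the Galois invariants of $\Ch^\ast((G/B)_K)$) with the identity $\pi^\ast\pi_\ast(y)=(1+\tau)y$ for the generator $\tau$ of $\Gal(K/k)\cong C_2$, I obtain
\[
\Ch^\ast_K(G/B) \;\cong\; \Ch^\ast((G/B)_K)^{\tau}\bigm/(1+\tau)\Ch^\ast((G/B)_K).
\]
Over $K$ the flag variety is the full $\SL_{2r}$-flag variety with Borel presentation $\Ch^\ast((G/B)_K)=\FF_2[x_1,\ldots,x_{2r}]/(e_1,\ldots,e_{2r})$, where $x_i=c_1(\mc{L}(\epsilon_i))$. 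The outer diagram automorphism acts by $\tau(x_i)=x_{2r+1-i}$ (the sign from $\tau(\epsilon_i)=-\epsilon_{2r+1-i}$ disappears in $\FF_2$), whence $\tau(\varpi_i)=\varpi_{2r-i}$ after invoking $e_1=0$.

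Part~(1) is then immediate: under $\tau$ the $\FF_2$-basis $\{\varpi_1,\ldots,\varpi_{2r-1}\}$ of $\Ch^1$ splits as the fixed line $\FF_2\cdot\varpi_r$ and $r-1$ transposition pairs $\{\varpi_i,\varpi_{2r-i}\}$; the Tate cohomology of a transposition pair vanishes, so $\Ch^1_K(G/B)=\FF_2\cdot c_1(\mc{L}_r)$. For Part~(2), I take $y:=e_2(x_1,\ldots,x_r)$; then $\tau y = e_2(x_{r+1},\ldots,x_{2r})$, and from the identity
\[
e_2(x_1,\ldots,x_{2r}) = e_2(x_1,\ldots,x_r) + e_2(x_{r+1},\ldots,x_{2r}) + \varpi_r\cdot(x_{r+1}+\ldots+x_{2r})
\]
together with $e_2=0$ and $x_{r+1}+\ldots+x_{2r}=\varpi_r$ in $\FF_2$ (using $e_1=0$), I get $y+\tau y=\varpi_r^2$; hence $c_1(\mc{L}_r)^2=0$ in $\Ch^2_K(G/B)$.

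For Part~(3) I pass to the monomial presentation: the relations $e_1\cdot x_k=0$ eliminate every square $x_k^2$, leaving
\[
\Ch^2((G/B)_K)\;\cong\;\FF_2\{x_i x_j:1\le i<j\le 2r\}\bigm/\bigl({\textstyle\sum_{i<j}x_i x_j}\bigr).
\]
The involution sends $x_i x_j\mapsto x_{2r+1-j}x_{2r+1-i}$, so its fixed monomials are exactly $\{x_a x_{2r+1-a}\}_{a=1}^{r}$, while orbit sums $x_i x_j+x_{2r+1-j}x_{2r+1-i}$ vanish in $\hat H^0$; the lone relation reduces there to $\sum_{a=1}^{r} x_a x_{2r+1-a}=0$. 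Thus $\Ch^2_K(G/B)$ is the $(r-1)$-dimensional $\FF_2$-space generated by the classes $\{x_a x_{2r+1-a}\}_{a=1}^{r-1}$. Finally, using $e_1=0$,
\[
c_2(\mc{V}_{i,2r-i})=\varpi_i\varpi_{2r-i}=\Bigl(\sum_{a=1}^{i}x_a\Bigr)\Bigl(\sum_{b=2r-i+1}^{2r}x_b\Bigr),
\]
and separating the $\tau$-fixed monomials from the $\tau$-orbit pairs produces, modulo $(1+\tau)\Ch^2$, the class $\sum_{a=1}^{i}x_a x_{2r+1-a}$; for $i=1,\ldots,r-1$ this is a triangular change of basis of $\{x_a x_{2r+1-a}\}_{a=1}^{r-1}$, so the $r-1$ classes $c_2(\mc{V}_{i,2r-i})$ are $\FF_2$-linearly independent and span $\Ch^2_K(G/B)$.

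The main obstacle is the combinatorics of Part~(3): one has to organise the monomials of $\Ch^2((G/B)_K)$ into fixed ones and size-$2$ orbits so that the Tate cohomology calculation becomes transparent; once that decomposition is in place, the identification of $c_2(\mc{V}_{i,2r-i})$ with the explicit basis of $\Ch^2_K(G/B)$ is a direct expansion.
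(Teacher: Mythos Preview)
Your proof is correct and follows essentially the same route as the paper: identify $\CH^*_K(G/B)$ with $\Ch^*((G/B)_K)^{\tau}/(1+\tau)\Ch^*((G/B)_K)$ via Lemma~\ref{lem:Chow_quasi-split_homogeneous} and the Borel presentation, then decompose degrees $1$ and $2$ into fixed points and transposition pairs. Your organisation differs only cosmetically---you pass to $\FF_2$ at the outset and eliminate the squares $x_k^2$ via $x_k e_1=0$ before analysing the $\tau$-action, whereas the paper keeps the squares and writes down an explicit $\Gal(K/k)$-stable decomposition of the degree~$2$ polynomials; your identity $(1+\tau)e_2(x_1,\dots,x_r)=\varpi_r^2$ for part~(2) is a tidy variant of the paper's $(x_1+\cdots+x_r)^2=(x_1+\cdots+x_r)\sigma_1-\sigma_2+(1+\tau)(\alpha)$.
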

\begin{proof}
	We have $[K:k]=2$ and $\CH^*_K(\Spec k)\cong \FF_2$ by Lemma~\ref{lem:normed_coef}. Since $G_K$ is split, $(G/B)_K$ is isomorphic to the variety of full flags in a vector space of dimension $2r$, and
	\[
	\CH^*((G/B)_K) \cong \Z [x_1,x_2,\hdots,x_{2r}]/(\sigma_1,\sigma_2,\hdots,\sigma_{2r}),\quad x_i\mapsto c_1(\mc{T}_i),
	\]
	where $\sigma_i$ is the $i$-th elementary symmetric polynomial in $x_1,x_2,\hdots,x_{2r}$ and $\mc{T}_i$ is the $i$-th tautological line bundle over the full flag variety \cite[Example~3.3.5]{Fu98}. In the notation of~\refbr{expos:simple_groups} we have $\mc{L}(\epsilon_i)=\mc{T}_i$, thus the nontrivial element $\tau\in\Gal(K/k)$ acts on $\CH^*((G/B)_K)$ via 
	\[
	\tau(\bar{x}_i)=\tau(c_1(\mc{L}(\epsilon_i)))=c_1(\mc{L}(\tau(\epsilon_i)))=c_1(\mc{L}(-\epsilon_{2r+1-i})) =-\bar{x}_{2r+1-i}
	\]
	with $\bar{x}_i$ being the image of $x_i$ in the quotient ring. 	Lemma~\ref{lem:Chow_quasi-split_homogeneous} yields that the pullback homomorphism for the morphism $(G/B)_K\to G/B$ induces an isomorphism
	\[
	\CH^*(G/B)\cong (\Z [x_1,x_2,\hdots,x_{2r}]/(\sigma_1,\sigma_2,\hdots,\sigma_{2r}))^{\Gal(K/k)}.
	\]
	Hence
	\[
	\CH^*_K(G/B) \cong \left(\Z [x_1,x_2,\hdots,x_{2r}]/(\sigma_1,\sigma_2,\hdots,\sigma_{2r})\right)^{\Gal(K/k)} / (\id + \tau)(\Z [x_1,x_2,\hdots,x_{2r}]/(\sigma_1,\sigma_2,\hdots,\sigma_{2r})).
	\]
	Furthermore, it follows from the formulae in~\refbr{expos:simple_groups} for the fundamental weights that 
	\[
	(\mc{L}_r)_K\cong\bigotimes_{i=1}^r \mc{T}_i, \quad (\mc{V}_{i,2r-i})_K\cong \bigotimes_{j=1}^i \mc{T}_j \oplus \bigotimes_{j=1}^{2r-i} \mc{T}_j
	\]
	and in $\CH^*(G/B)\cong \CH^*((G/B)_K)^{\Gal(K/k)}$ we have
	\[
	c_1(\mc{L}_r)= \bar{x}_1+\bar{x}_2+\hdots+\bar{x}_r, \quad c_2(\mc{V}_{i,2r-i}) = (\bar{x}_1+\bar{x}_2+\hdots+\bar{x}_i)\cdot (\bar{x}_1+\bar{x}_2+\hdots+\bar{x}_{2r-i}).
	\]
	The claims of the lemma follow by straightforward calculations with polynomials which we provide below. 
	
	The first claim follows from the $\Gal(K/k)$-invariant decomposition
	\[
	\CH^1((G/B)_K)\cong\left(\bigoplus_{i=1}^{2r} \Z x_i\right)/\Z\sigma_1 = \Z\cdot (\bar{x}_1+\hdots+\bar{x}_r) \oplus \left(\bigoplus_{i=1}^{r-1}[\mathbb{Z}\bar{x}_i\oplus \mathbb{Z}\cdot (-\bar{x}_{2r-i})] \right),
	\]
	here the first summand is a rank $1$ submodule with the trivial action of $\Gal(K/k)$, and then there are $r-1$ rank $2$ submodules with the permutation action of $\Gal(K/k)$. Then $\CH^1_K(G/B)$ is of rank $1$ with the generator given by the image of $c_1(\mc{L}_r)=\bar{x}_1+\hdots+\bar{x}_r$.
	
	For the second claim note that
	\[
	(x_1+x_2+\hdots+x_r)^2 =(x_1+x_2+\hdots+x_r) \cdot \sigma_1 - \sigma_2 + (\id+\tau)(\alpha)
	\]
	for some $\alpha\in \Z [x_1,x_2,\hdots,x_{2r}]$, thus $c_1(\mc{L}_r)^2=0$ in $\CH^*_K(G/B)$.
	
	For the last claim consider the following decomposition of the degree $2$ part of $\Z [x_1,x_2,\hdots,x_{2r}]$ into $\Gal(K/k)$-invariant submodules:
	\begin{multline*}
		\Z [x_1,x_2,\hdots,x_{2r}]^{(2)} 
		= \left(\bigoplus_{i=1}^{r}\left[\Z x_i\sigma_1 \oplus \Z x_{2r-i+1}\sigma_1\right]\right)\oplus \left[\Z\sigma_2\right]\oplus \\ \oplus \left(\bigoplus_{i=1}^{r-1} \left[\Z x_ix_{2r-i+1}\right] \right) \oplus \left(\bigoplus_{1\le i<j<2r-i+1} \left[\Z x_ix_j\oplus \Z x_{2r-i+1}x_{2r-j+1}\right]  \right),
	\end{multline*}
	here the square brackets group the $\Gal(K/k)$-invariant submodules, i.e. we have $r$ submodules of rank $2$ with the permutation action of $\Gal(K/k)$, then one of rank $1$ with the trivial action of $\Gal(K/k)$, then $r-1$ submodules of rank $1$ again with the  trivial action, and then $r(r-1)$ submodules of rank $2$ with the permutation action. It follows that 
	\[
	\CH^2((G/B)_K) \cong \left(\bigoplus_{i=1}^{r-1} \left[\Z \bar{x}_i\bar{x}_{2r-i+1}\right] \right) \oplus \left(\bigoplus_{1\le i<j<2r-i+1} \left[\Z \bar{x}_i\bar{x}_j\oplus \Z \bar{x}_{2r-i+1}\bar{x}_{2r-j+1}\right]\right),
	\]
	and $\CH^2_K(G/B)$ has an additive basis given by the images in the respective quotient ring of $\bar{x}_i\bar{x}_{2r-i+1}$ for $1\le i\le r-1$. We have 
	\begin{multline*}
		c_2(\mc{V}_{i,2r-i}) = (\bar{x}_1+\bar{x}_2+\hdots+\bar{x}_i)\cdot (\bar{x}_1+\bar{x}_2+\hdots+\bar{x}_{2r-i})=(\bar{x}_1+\bar{x}_2+\hdots+\bar{x}_i)\cdot (\bar{x}_1+\bar{x}_2+\hdots+\bar{x}_{2r-i} - \bar{\sigma}_1)= \\
		=-(\bar{x}_1+\bar{x}_2+\hdots+\bar{x}_i)\cdot (\bar{x}_{2r-i+1}+\bar{x}_{2r-i+2}+\hdots+\bar{x}_{2r})
	\end{multline*}
	in $\CH^2(G/B)$ yielding that in $\CH^2_K(G/B)$ we have $c_2(\mc{V}_{i,2r-i})=\sum_{j=1}^i \bar{x}_j\bar{x}_{2r-j+1}$. The claim follows.
\end{proof}

\begin{theorem} \label{thm:group_via_cover} 
	Let $G$ be a non-split quasi-split simple algebraic group over a field $k$, let $T\le B\le G$ be a maximal torus and a Borel subgroup, and let $K/k$ be a separable field extension such that $T_K$ is split. Put $\mc{R}^*:=\CH_{K}^*(G/B)$. If $\Delta(G)={}^6\mr{D}_4$, then $\CH^*_K(G)=0$, otherwise, depending on $(\Delta(G),\pi_1(G))$, we have $\CH^*_K(G)\cong \mc{R}^*/I$ or $\CH^*_K(G)\cong \mc{R}^*[x]/I$ with the generators of the ideal $I$ given in the following table.
	\begin{center}
		\def\arraystretch{1.5}
		\begin{longtable*}{c|c|c|c}
			$\Delta(G)$ & $\pi_1(G)$ & $\mc{R}^*$ or $\mc{R}^*[x]$ & generators of $I$ \\
			\hline
			$\Att_{2r}$, $r\ge 1$  & $\mut{2}{l}$, $l\mid 2r+1$ & $\mc{R}^*$ & $\{c_2(\mc{V}_{i,2r+1-i})\}_{i=1}^{r}$\\
			$\Att_{2r-1}$, $r\ge 2$  & $\mut{2}{2m+1}$, $2m+1\mid 2r$ & $\mc{R}^*$ & $\{c_1(\mc{L}_r)\}\cup\{c_2(\mc{V}_{i,2r-i})\}_{i=1}^{r-1}$\\
			& $\mut{2}{2m}$, $m$ even, $\frac{r}{m}$ odd & $\mc{R}^*$ & $\mc{S}_{2r-1}$\\ 
			& $\mut{2}{2m}$, $m$ odd, $\frac{r}{m}$ odd & $\mc{R}^*$ & $\{c_2(\mc{V}_{i,2r-i})\}_{i=1}^{r-1}$\\ 
			& $\mut{2}{2m}$, $m$ even, $\frac{r}{m}$ even & $\mc{R}^*[x]$ & $\{c_1(\mc{L}_r), x^2\}\cup \mc{S}_{2r-1}$\\ 
			& $\mut{2}{2m}$, $m$ odd, $\frac{r}{m}$ even & $\mc{R}^*[x]$ & $\{c_1(\mc{L}_r), x^2+ c_2(\mc{V}_{1,2r-1})\}\cup \mc{S}_{2r-1}$\\ 
			\hline
			${}^2\mr{D}_{n}$, $n\ge 3$  & $1$ & $\mc{R}^*$ & $\{c_1(\mc{L}_i)\}_{i=1}^{n-2}\cup \{c_2(\mc{V}_{n-1,n})\}$\\
			& $\mu_2$ & $\mc{R}^*[x]$ & $\{c_1(\mc{L}_i)\}_{i=1}^{n-2}\cup \{x^2+c_2(\mc{V}_{n-1,n})\}$\\
			${}^2\mr{D}_{2r}$, $r\ge 2$  & $\mut{2}{2,2}$ & $\mc{R}^*$ & $\{c_1(\mc{L}_i)\}_{i=1}^{2r-2}$\\
			${}^2\mr{D}_{2r+1}$, $r\ge 1$  & $\mut{2}{4}$ & $\mc{R}^*$ & $\{c_1(\mc{L}_1)^2\}\cup \{c_1(\mc{L}_{2i})\}_{i=1}^{r-1} \cup$ \\
			& & & $\cup \{c_1(\mc{L}_{2i+1})+c_1(\mc{L}_1)\}_{i=1}^{r-1}$\\
			\hline
			${}^3\mr{D}_{4}$ & $1$, $\mut{3}{2,2}$ & $\mc{R}^*$ & $\{c_1(\mc{L}_2), c_3(\mc{V}_{1,3,4})\}$\\
			\hline
			${}^2\mr{E}_{6}$ & $1$, $\mut{2}{3}$ & $\mc{R}^*$ & $\{c_1(\mc{L}_2),c_1(\mc{L}_4),c_2(\mc{V}_{1,6}),c_2(\mc{V}_{3,5})\}$
		\end{longtable*}
	\end{center}       
	Here $\mc{S}_{2r-1}= \{c_2(\mc{V}_{2i,2r-2i})\}_{i=1}^{\lfloor(r-1)/2\rfloor} \cup 
	\{c_2(\mc{V}_{2i+1,2r-2i-1})+c_2(\mc{V}_{1,2r-1})\}_{i=1}^{\lfloor r/2\rfloor-1}$. 
	The isomorphisms are induced by the pullback homomorphisms $
	\mc{R}^*=\CH_{K}^*(G/B) \to \CH_{K}^*(G)
	$
	for the projections $G\to G/B$ and by $x\mapsto c_1(\mc{L}(\bar{\varpi}_m))$ and $x\mapsto c_1(\mc{L}(\bar{\varpi}_{n-1}))$ in the cases $(\Att_{2r-1},\mut{2}{2m})$ and $({}^2\mr{D}_{n},\mu_2)$ respectively.

\end{theorem}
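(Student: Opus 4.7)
The plan is to apply the exact extended characteristic sequence of Theorem~\ref{thm:char_qs}(2),
\[
\CH_{K,\tilde{T}}^*(\Spec k) \xrightarrow{\hat{c}} \CH_{K}^*(\tilde{G}/\tilde{B}) \otimes \CH_{K,\pi_1(G)}^*(\Spec k) \xrightarrow{\hat{\phi}} \CH_{K}^*(G) \to 0,
\]
and to identify the image of $\hat{c}$ in each case. The case $\Delta(G)={}^6\mr{D}_4$ is immediate: its splitting field has degree~$6$, so Lemma~\ref{lem:normed_coef} gives $\CH^*_K(\Spec k)=0$, forcing $\CH^*_K(G)=0$. For the remaining types $\tilde{G}/\tilde{B}\cong G/B$, so the middle term becomes $\mc{R}^*\otimes \CH^*_{K,\pi_1(G)}(\Spec k)$ and the theorem reduces to identifying the ideal generated by the image of $\hat{c}$.

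To compute the source, I would use Proposition~\ref{prop:classifying_quasi-trivial}, applicable because $\tilde{T}$ is quasi-trivial by Lemma~\ref{lem:quasi-split_quasi-trivial}. Decomposing $\tilde{T}\cong \prod_i R_{L_i/k}\Gmm$ according to the $\Gal(K/k)$-orbits of the fundamental weights of $\tilde{G}$, the algebra $\CH^*_{K,\tilde{T}}(\Spec k)$ is polynomial in generators $b_i:=c_{[L_i:k]}(V_i)$, which under the classical characteristic map $\tilde{c}$ go to the top Chern classes of the associated bundles $\mc{V}(\mc{S}_i)$ on $G/B$. This accounts for the candidate generators $c_1(\mc{L}_j)$ (fixed weights), $c_2(\mc{V}_{j,n-j})$ (size-$2$ orbits), and $c_3(\mc{V}_{1,3,4})$ (for ${}^3\mr{D}_4$) appearing in the table. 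Dually, Proposition~\ref{prop:equivariant_coef_nl} identifies $\CH^*_{K,\pi_1(G)}(\Spec k)$: it is just $\FF$ for $\pi_1(G)$ trivial, $\mut{2}{2m+1}$, or $\mut{3}{2,2}$ (the rows with no extra variable), it equals $\FF[x,b]/(x^2+m^2 b)$ for $\pi_1(G)=\mu_2$ or $\mut{2}{2m}$, and $\FF[b]$ for $\pi_1(G)=\mut{2}{2,2}$.

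The next step is to unwind $\hat{c}=(\tilde{c}\otimes\id)\circ\Phi_{\nu}$, where $\nu(t,s)=ts^{-1}$. For a line bundle $\mc{L}(\chi)$ associated with a Galois-invariant character $\chi$ one has
\[
\Phi_\nu(c_1(\mc{L}(\chi)))=c_1(\mc{L}(\chi))\otimes 1-1\otimes c_1(\chi|_{\pi_1(G)}),
\]
and higher Chern classes of the bundles $\mc{V}(\mc{S})$ are obtained from this by the splitting principle (with Lemma~\ref{lem:c2_mixed_rep} handling the mixed degree-$2$ case). Modding out $\mc{R}^*\otimes \CH^*_{K,\pi_1(G)}(\Spec k)$ by the resulting ``difference'' expressions produces $\CH^*_K(G)$. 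When $\pi_1(G)$ lies in the trivial-coefficient cases, the extended sequence degenerates to the ordinary characteristic sequence and the ideal is simply the image of $\tilde{c}$, matching the list in the table. When $\pi_1(G)=\mu_2$ or $\mut{2}{2m}$, the generator $x$ appears linearly in the $\Phi_\nu$-expression coming from the unique Galois-fixed fundamental weight $\varpi_*$ whose restriction to $\pi_1(G)$ is nontrivial; this identifies $x$ with $c_1(\mc{L}(\bar{\varpi}_*))$ as in the theorem, while the relation $x^2+m^2 b=0$ eliminates $b$. The case $\pi_1(G)=\mut{2}{2,2}$ is analogous, using Lemma~\ref{lem:c2_mixed_rep} to absorb the degree-$2$ generator $b$ into a Chern class on $G/B$.

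Finishing the theorem is then case-by-case bookkeeping: for each row of the table, list the $\Gal(K/k)$-orbits of the fundamental weights of $\tilde{G}$, compute the restrictions of the associated characters to $\pi_1(G)$, and read off the explicit generators of~$I$. The main obstacle is expected to be the $(\Att_{2r-1},\mut{2}{2m})$ family, where the parities of $m$ and of $r/m$ subtly change the outcome. For odd $m$, the restrictions of all odd-indexed fundamental weights to $\pi_1(G)$ coincide modulo the $\Phi_\nu$-image, producing the ``sum'' family $\mc{S}_{2r-1}$; the extra relation $x^2+c_2(\mc{V}_{1,2r-1})$ when $r/m$ is even is forced by matching the degree-$2$ part of $\Phi_\nu$ applied to the generator attached to the orbit $\{\varpi_1,\varpi_{2r-1}\}$ against the identity $x^2=-m^2 b$, via Lemma~\ref{lem:c2_mixed_rep}. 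Lemma~\ref{lem:full_flag_codim2} then supplies the explicit degree-$1$ and degree-$2$ basis of $\mc{R}^*$ needed to write these relations unambiguously.
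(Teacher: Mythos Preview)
Your overall approach matches the paper's: apply the extended characteristic sequence, identify $\CH^*_{K,\tilde{T}}(\Spec k)$ and $\CH^*_{K,\pi_1(G)}(\Spec k)$ via Propositions~\ref{prop:classifying_quasi-trivial} and~\ref{prop:equivariant_coef_nl}, compute $\Phi_\nu$ on the polynomial generators through the character lattice (with Lemma~\ref{lem:c2_mixed_rep} for the mixed rank-$2$ pieces), and simplify case by case.

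There is, however, one point where your reasoning is inverted. You say that in the $\mu_2$ and $\mut{2}{2m}$ cases ``the generator $x$ appears linearly in the $\Phi_\nu$-expression coming from the unique Galois-fixed fundamental weight $\varpi_*$ \ldots; this identifies $x$ with $c_1(\mc{L}(\bar{\varpi}_*))$.'' The actual mechanism is the opposite. In $({}^2\mr{D}_n,\mu_2)$, for instance, one has $\Phi_\nu(c_1(L_i))=c_1(L_i)\otimes 1$ for $1\le i\le n-2$ and $\Phi_\nu(c_2(V_{n-1,n}))=c_2(V_{n-1,n})\otimes 1+1\otimes x^2$: the element $x$ \emph{never} occurs linearly in the image of $\hat{c}$, and that is exactly why it survives as a free polynomial variable over $\mc{R}^*$. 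The same happens for $(\Att_{2r-1},\mut{2}{2m})$ with $r/m$ even, where $\Phi_\nu(c_1(L_r))=c_1(L_r)\otimes 1$. The identification $x\mapsto c_1(\mc{L}(\bar{\varpi}_*))$ is not produced by any relation in $I$; it comes from the map $\hat{\phi}$ in Definition~\ref{def:characteristic_map}, whose second component $p\colon \CH^*_{K,\pi_1(G)}(\Spec k)\to \CH^*_K(G)$ sends $c_1({}^2\Lambda^\pm)$ to $c_1(\mc{L}(\bar{\varpi}_*))$. Conversely, when $x$ \emph{does} appear linearly (e.g.\ $r/m$ odd, where $\Phi_\nu(c_1(L_r))=c_1(L_r)\otimes 1+1\otimes x$), it is eliminated and the quotient lives inside $\mc{R}^*$; Lemma~\ref{lem:full_flag_codim2}(2) is then used to kill the residual $c_1(\mc{L}_r)^2$ term. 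Your account of which parities yield $\mc{S}_{2r-1}$ versus $\{c_2(\mc{V}_{i,2r-i})\}$ is correspondingly off: $\mc{S}_{2r-1}$ appears as an intermediate in every $\mut{2}{2m}$ case and collapses to the full list only when both $m$ and $r/m$ are odd.
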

\begin{proof}
	Let $\tilde{G}\to G$ be the simply connected cover and $\tilde{T}\le \tilde{B}$ be the maximal torus and the Borel subgroups lying over $T\le B$. Theorem~\ref{thm:char_qs} yields an isomorphism
	\begin{equation} \label{eq:compute}
		\CH_{K}^*(G)\cong (\mc{R}^*\otimes \CH_{K,\pi_1(G)}^*(\Spec k))/ I
	\end{equation}
	where $I$ denotes the ideal generated by $\hat{c}(\CH_{K,\tilde{T}}^{>0}(\Spec k))$ for the extended characteristic map
	\begin{multline*}
		\hat{c}\colon \CH_{K,\tilde{T}}^{*}(\Spec k) \xrightarrow{\Phi_\nu} \CH_{K,\tilde{T}\times \pi_1(G)}^{*}(\Spec k) \xrightarrow{\simeq} \CH_{K,\tilde{T}}^{*}(\Spec k) \otimes \CH_{K,\pi_1(G)}^*(\Spec k) \to \\ \xrightarrow{\tilde{c}\otimes \id} \CH_{K}^*(\tilde{G}/\tilde{B})\otimes \CH_{K,\pi_1(G)}^*(\Spec k) = \mc{R}^* \otimes \CH_{K,\pi_1(G)}^*(\Spec k) ,
	\end{multline*}
	where $\nu\colon \tilde{T}\times \pi_1(G)\to \tilde{T}$ is the homomorphism given by $(t,s)\mapsto t\cdot s^{-1}$.	The torus $\tilde{T}$ is quasi-trivial and decomposes into a product of the Weil restrictions of tori corresponding to the orbits of the Galois action on $\Delta(G)$.
	The rings $\CH_{K,\tilde{T}}^{*}(\Spec k)$ and $\CH_{K,\pi_1(G)}^*(\Spec k)$ were computed in Propositions~\ref{prop:classifying_quasi-trivial} and~\ref{prop:equivariant_coef_nl} respectively. The claim of the theorem then follows from an explicit computation of the homomorphism $\hat{c}$ plugged into isomorphism~\refbr{eq:compute}, we provide below the details of these computations for each $(\Delta(G),\pi_1(G))$ case by case. Put
	\[
	L_i:=L(\varpi_i),\quad V_{i,j}:= V(\{\varpi_i,\varpi_j\}),\quad V_{i,j,r}:= V(\{\varpi_i,\varpi_j,\varpi_r\})
	\]
	for the respective representations of $\tilde{T}$ in the notation of~\refbr{expos:rep_of_weights} and note that $\tilde{c}(c_1(L_i)) = c_1(\mc{L}_i)$, $\tilde{c}(c_2(V_{i,j})) = c_2(\mc{V}_{i,j})$ and $\tilde{c}(c_3(V_{i,j,r})) = c_3(\mc{V}_{i,j,r})$, so it suffices to compute $\Phi_\nu$. In order to do this we use the formulae from~\refbr{expos:simple_groups} for the homomorphism of lattices $\mc{X}^*(\tilde{T})\to \mc{X}^*(\pi_1(G))$ dual to the embedding $\pi_1(G)\le \tilde{T}$. We denote by $L\subseteq K$ the splitting field of $\tilde{T}$ which is a Galois extension $L/k$ with $[L:k]=2$ for $\Delta(G)=\Att_n$, ${}^2\mr{D}_n$ or ${}^2\mr{E}_6$, $[L:k]=3$ for $\Delta(G)={}^3\mr{D}_4$ and $[L:k]=6$ for $\Delta(G)={}^6\mr{D}_4$.
	
	$\mathbf{{}^6{D}_4}$. We have $[L:k]=6$. Lemma~\ref{lem:normed_coef} yields $\CH_{K}^*(\Spec k)=0$ and $\CH_{K}^*(X)=0$ for every $X\in\Smk$.
	
	$\mathbf{{}^2A_{2r}}$. For $1\le i\le 2r$ we have $\tau(\varpi_{i})=\varpi_{2r+1-i}$ for the nontrivial element $\tau\in\Gal(L/k)$, and
	\begin{equation}
		\CH_{K,\tilde{T}}^{*}(\Spec k)\cong \CH_{K}^{*}(\Spec k)[b_1,b_2, \hdots, b_{r}],\quad b_i\mapsto c_2(V_{i,2r+1-i}).
	\end{equation}	
	The order of $\pi_1(G)$ is odd, thus $\CH_{K,\pi_1(G)}^*(\Spec k)\cong \CH_{K}^*(\Spec k)$ by Lemmas~\ref{lem:normed_coef} and~\ref{lem:trivial_coprime} and the claim follows.	
	
	$\mathbf{{}^2A_{2r-1}}$. For the nontrivial element $\tau\in\Gal(L/k)$ we have 
	\[
	\tau(\varpi_r)=\varpi_r,\quad \tau(\varpi_{i})=\varpi_{2r-i},\, 1\le i\le 2r-1,\, i\neq r.
	\]
	It follows that
	\begin{equation}\label{eq:torus_an}
		\CH_{K,\tilde{T}}^{*}(\Spec k)\cong \CH_{K}^{*}(\Spec k)[y,b_1,\hdots, b_{r-1}],\quad y\mapsto c_1(L_r),\, b_i\mapsto c_2(V_{i,2r-i}).
	\end{equation}
	If $\pi_1(G)=\mut{2}{2m+1}$, then $\CH_{K,\mut{2}{2m+1}}^*(\Spec k)\cong \CH_{K}^*(\Spec k)$ by Lemmas~\ref{lem:normed_coef} and~\ref{lem:trivial_coprime} and the claim follows.
	
	Now suppose $\pi_1(G)=\mut{2}{2m}$. We need to compute the homomorphism
	\[
	\Phi_{\nu}\colon \CH_{K,\tilde{T}}^*(\Spec k) \to  \CH_{K,\tilde{T}}^*(\Spec k)\otimes \CH_{K,\mut{2}{2m}}^*(\Spec k)
	\]
	with $\CH_{K,\tilde{T}}^*(\Spec k)$ given by isomorphism~\refbr{eq:torus_an} and
	\[
	\CH_{K,\mut{2}{2m}}^*(\Spec k) \cong   \CH_{K}^{*}(\Spec k)[x,b]/(x^2+m^2\cdot b),\quad x\mapsto c_1({}^2\Lambda_{\mu_{2m}}^\pm),\, b\mapsto c_2({}^2V_{\mu_{2m}}),
	\]
	by Proposition~\ref{prop:equivariant_coef_nl}. The homomorphism
	\[
	\Res_\nu\colon \mathrm{Rep}(\tilde{T})\cong \Z[\mc{X}^*(\tilde{T})]^{\Gal(L/k)} \to \Z[\mc{X}^*(\tilde{T})\oplus \mc{X}^*(\mut{2}{2m})]^{\Gal(L/k)} \cong \mathrm{Rep}(\tilde{T}\times \mut{2}{2m})
	\]
	is induced by the morphism of lattices
	\begin{gather*}
		\mc{X}^*(\nu)\colon \mc{X}^*(\tilde{T})= \bigoplus_{i=1}^{2r-1} \Z\cdot \varpi_i \to 
		\bigoplus_{i=1}^{2r-1} \Z\cdot \varpi_i \oplus \Z/2m\Z \cdot \bar{\varpi}_1 = \mc{X}^*(\tilde{T})\oplus \mc{X}^*(\mut{2}{2m}) ,\\
		\varpi_i\mapsto \varpi_i - i\bar{\varpi}_1,\, 1\le i\le 2r-1.
	\end{gather*}
	Then $\Res_\nu([L_r])=x^{\varpi_r-r\bar{\varpi}_1} = x^{\varpi_r}\cdot (x^{m\bar{\varpi}_1})^{\frac{r}{m}} =[L_r]\otimes [{}^2\Lambda_{\mu_{2m}}^\pm]^{\otimes \frac{r}{m}}$ and
	\[
	\Phi_\nu(y)=y\otimes 1+ \frac{r}{m}\cdot 1\otimes x.
	\]
	For $1\le i \le r-1$ we have
	\[
	\Res_\nu([V_{i,2r-i}])=\Res_\nu(x^{\varpi_i}+x^{\varpi_{2r-i}}) = x^{\varpi_i - i\bar{\varpi}_1}+x^{\varpi_{2r-i}-(2r-i)\bar{\varpi}_1}.
	\]
	This element corresponds to a rank $2$ representation $W$ of $R\times \mut{2}{2m}$, where $R:=R_{L/k} \Gmm$. Restricting $W$ to $R$ and $\mut{2}{2m}$ respectively we obtain $W|_R\cong V_{R}$ and $W|_{(\mut{2}{2m})}\cong \psi^i ({}^2V_{\mu_{2m}})$ for the $i$-th Adams operation $\psi^i$. Then Lemma~\ref{lem:c2_mixed_rep} yields
	\[
	c_2(W)= c_2(V_R)\otimes 1 + i^2\cdot 1\otimes c_2({}^2V_{\mu_{2m}}) \in \CH_{K,R}^*(\Spec k) \otimes \CH_{K,\mut{2}{2m}}^*(\Spec k) \cong \CH_{K,R\times \mut{2}{2m}}^*(\Spec k).
	\]
	Hence we have
	\[
	\Phi_\nu(b_i)=c_2(\Res_\nu([V_{i,2r-i}]))=c_2(V_{i,2r-i})\otimes 1 + i^2\cdot 1\otimes c_2({}^2V_{\mu_{2m}}) = b_i\otimes 1 + i^2\cdot 1\otimes b.
	\]
	Combining all the above we obtain $\CH^*_K(G)\cong \mc{R}^*[x,b]/I$ with 
	\[	
	I=I\left(\left\{x^2+m^2\cdot b,\,c_1(\mc{L}_r)+\frac{r}{m}\cdot x\right\}\cup \left\{c_2(\mc{V}_{2i,2r-2i})\right\}_{i=1}^{[(r-1)/2]} \cup \left\{b+c_2(\mc{V}_{2i+1,2r-2i-1})\right\}_{i=0}^{[r/2]-1}\right).
	\]
	Then $I=I(\left\{x^2+m^2\cdot b,\,c_1(\mc{L}_r)+\frac{r}{m}\cdot x,\,b+c_2(\mc{V}_{1,2r-1})\right\}\cup\mc{S}_{2r-1})$. If $\frac{r}{m}$ is even, then the claim follows immediately, and if $\frac{r}{m}$ is odd, then the claim follows from Lemma~{\hyperref[lem:full_flag_codim2]{\ref*{lem:full_flag_codim2}.(2)}}.
	
	$\mathbf{^2D_n}$.  For the nontrivial element $\tau\in\Gal(L/k)$ we have 
	\[
	\tau(\varpi_i)=\varpi_i,\, 1\le i \le n-2,\quad \tau(\varpi_{n-1})=\varpi_{n},\quad \tau(\varpi_{n})=\varpi_{n-1}.
	\]
	It follows that
	\begin{equation}\label{eq:torus_dn}
		\CH_{K,\tilde{T}}^{*}(\Spec k)\cong \CH_{K}^{*}(\Spec k)[x_1, \hdots, x_{n-2}, b],\quad x_i\mapsto c_1(L_i),\, b\mapsto c_2(V_{n-1,n}).
	\end{equation}
	
	If $\pi_1(G)=1$, then the claim follows immediately.
	
	Suppose $\pi_1(G)=\mu_2$. Then we need to compute the homomorphism
	\[
	\Phi_{\nu}\colon \CH_{K,\tilde{T}}^*(\Spec k) \to  \CH_{K,\tilde{T}}^*(\Spec k)\otimes \CH_{K,\mu_2}^*(\Spec k)
	\]
	with $\CH_{K,\tilde{T}}^*(\Spec k)$ given by isomorphism~\refbr{eq:torus_dn} and
	\[
	\CH_{K,\mu_2}^*(\Spec k) \cong   \CH_{K}^{*}(\Spec k)[x],\quad x\mapsto c_1(\Lambda_{\mu_2}^\pm),
	\]
	by Proposition~\ref{prop:equivariant_coef_nl} (we have $\mut{2}{2}=\mu_2$), where $\Lambda_{\mu_2}^\pm$ is the nontrivial linear representation of $\mu_2$. The restriction homomorphism
	\[
	\Res_\nu\colon \mathrm{Rep}(\tilde{T})\cong \Z[\mc{X}^*(\tilde{T})]^{\Gal(L/k)} \to \Z[\mc{X}^*(\tilde{T})\oplus \mc{X}^*(\mu_2)]^{\Gal(L/k)} \cong \mathrm{Rep}(\tilde{T}\times \mu_2)
	\]
	is induced by the morphism of lattices
	\begin{gather*}
		\mc{X}^*(\nu)\colon \mc{X}^*(\tilde{T})= \bigoplus_{i=1}^n \Z\cdot \varpi_i \to 
		\bigoplus_{i=1}^n \Z\cdot \varpi_i \oplus \Z/2\Z \cdot \bar{\varpi}_{n-1} = \mc{X}^*(\tilde{T})\oplus \mc{X}^*(\mu_2) ,\\
		\varpi_i\mapsto \varpi_i,\, 1\le i\le n-2,\quad \varpi_{n-1}\mapsto \varpi_{n-1}+\bar{\varpi}_{n-1},\quad \varpi_{n}\mapsto \varpi_{n}+\bar{\varpi}_{n-1}.
	\end{gather*}
	Then 
	\begin{gather*}
		\Res_\nu ([L_i]) = [L_i],\quad 1\le i \le n-2,\\
		\begin{multlined}
			\Res_\nu ([V_{n-1,n}]) = \Res_\nu (x^{\varpi_{n-1}}+x^{\varpi_{n}}) = x^{\varpi_{n-1}+\bar{\varpi}_{n-1}} + x^{\varpi_{n}+\bar{\varpi}_{n-1}} 
			= (x^{\varpi_{n-1}} + x^{\varpi_{n}})\cdot x^{\bar{\varpi}_{n-1}}=\\ = [V_{n-1,n}]\otimes [\Lambda_{\mu_2}^{\pm}].
		\end{multlined}
	\end{gather*}
	Hence $\Phi_{\nu}(x_i)=x_i\otimes 1$, $1\le i\le n-2$, and
	\begin{multline*}
		\Phi_{\nu}(b)=c_2(\Res_\nu([V_{n-1,n}]))= c_2(V_{n-1,n}\otimes \Lambda_{\mu_2}^{\pm}) = \\ =
		c_2(V_{n-1,n})\otimes 1 + c_1(V_{n-1,n})\otimes c_1(\Lambda_{\mu_2}^{\pm}) + 1\otimes c_1(\Lambda_{\mu_2}^{\pm})^2 = b\otimes 1 + 1\otimes x^2.
	\end{multline*}
	The claim follows.
	
	Now suppose $(\Delta(G),\pi_1(G))=({}^2\mr{D}_{2r},\mut{2}{2,2})$. We need to compute the homomorphism
	\[
	\Phi_{\nu}\colon \CH_{K,\tilde{T}}^*(\Spec k) \to  \CH_{K,\tilde{T}}^*(\Spec k)\otimes \CH_{K,\mut{2}{2,2}}^*(\Spec k)
	\]
	with $\CH_{K,\tilde{T}}^*(\Spec k)$ given by isomorphism~\refbr{eq:torus_dn} and
	\[
	\CH_{K,\mut{2}{2,2}}^*(\Spec k) \cong   \CH_{K}^{*}(\Spec k)[d],\quad d\mapsto c_2({}^2V_{\mu_{2,2}}),
	\]
	by Proposition~\ref{prop:equivariant_coef_nl}. The restriction homomorphism
	\[
	\Res_\nu\colon \mathrm{Rep}(\tilde{T})\cong \Z[\mc{X}^*(\tilde{T})]^{\Gal(L/k)} \to \Z[\mc{X}^*(\tilde{T})\oplus \mc{X}^*(\mut{2}{2,2})]^{\Gal(L/k)} \cong \mathrm{Rep}(\tilde{T}\times \mut{2}{2,2})
	\]
	is induced by the homomorphism of lattices
	\begin{gather*}
		\mc{X}^*(\nu)\colon \mc{X}^*(\tilde{T})= \bigoplus_{i=1}^{2r} \Z\cdot \varpi_i \to 
		\bigoplus_{i=1}^{2r} \Z\cdot \varpi_i \oplus \Z/2\Z \cdot \bar{\varpi}_{2r-1} \oplus \Z/2\Z \cdot \bar{\varpi}_{2r}= \mc{X}^*(\tilde{T})\oplus \mc{X}^*(\mut{2}{2,2}) ,\\
		\varpi_i\mapsto \varpi_i + i\cdot (\bar{\varpi}_{2r-1}+\bar{\varpi}_{2r}),\, 1\le i\le 2r-2,\quad \varpi_{2r-1}\mapsto \varpi_{2r-1}+\bar{\varpi}_{2r-1},\quad \varpi_{2r}\mapsto \varpi_{2r}+\bar{\varpi}_{2r},
	\end{gather*}
	Then 
	\[
	\Res_\nu ([L_{2i}]) = [L_{2i}],\quad \Res_\nu ([L_{2i+1}]) = [L_{2i+1}]\otimes [\det {}^2V_{\mu_{2,2}}],\quad 1\le 2i \le 2r-3.
	\]
	Hence 
	\[
	\Phi_{\nu}(x_{i})=x_{i}\otimes 1,\quad  1\le i\le 2r-2,
	\]
	since $c_1(\det {}^2V_{\mu_{2,2}})=0$ because $\CH_{K,\mut{2}{2,2}}^1(\Spec k)=0$.
	For $V_{2r-1,2r}$ we have
	\[
	\Res_\nu ([V_{2r-1,2r}]) = \Res_\nu (x^{\varpi_{2r-1}}+x^{\varpi_{2r}}) = x^{\varpi_{2r-1}+\bar{\varpi}_{2r-1}} + x^{\varpi_{2r}+\bar{\varpi}_{2r}}.
	\]
	This element corresponds to a rank $2$ representation $W$ of $R\times \mut{2}{2,2}$, where $R:=R_{L/k}\Gmm$. Restricting the representation $W$ to $R$ and $\mut{2}{2,2}$ respectively we obtain $W|_R\cong V_R$ and $W|_{(\mut{2}{2,2})}\cong {}^2V_{\mu_{2,2}}$. Then Lemma~\ref{lem:c2_mixed_rep} yields
	\[
	c_2(W)=c_2(V_R)\otimes 1 + 1\otimes c_2({}^2V_{\mu_{2,2}}) \in \CH_{K,R}^*(\Spec k)\otimes \CH_{K,\mut{2}{2,2}}^*(\Spec k) \cong \CH_{K,R\times \mut{2}{2,2}}^*(\Spec k).
	\]
	Hence we have
	\[
	\Phi_\nu(b)=c_2(\Res_\nu([V_{2r-1,2r}]))=c_2(V_{2r-1,2r})\otimes 1 + 1\otimes c_2({}^2V_{\mu_{2,2}}) = b\otimes 1 + 1\otimes d.
	\]
	The claim follows.
	
	Now suppose $(\Delta(G),\pi_1(G))=({}^2\mr{D}_{2r+1},\mut{2}{4})$. We need to compute the homomorphism
	\[
	\Phi_{\nu}\colon \CH_{K,\tilde{T}}^*(\Spec k) \to  \CH_{K,\tilde{T}}^*(\Spec k)\otimes \CH_{K,\mut{2}{4}}^*(\Spec k)
	\]
	with $\CH_{K,\tilde{T}}^*(\Spec k)$ given by isomorphism~\refbr{eq:torus_dn} and
	\[
	\CH_{K,\mut{2}{4}}^*(\Spec k) \cong   \CH_{K}^{*}(\Spec k)[x,d]/(x^2),\quad x\mapsto c_1({}^2\Lambda_{\mu_4}^\pm),\quad d\mapsto c_2({}^2V_{\mu_4})
	\]
	by Proposition~\ref{prop:equivariant_coef_nl}. The restriction homomorphism
	\[
	\Res_\nu\colon \mathrm{Rep}(\tilde{T})\cong \Z[\mc{X}^*(\tilde{T})]^{\Gal(L/k)} \to \Z[\mc{X}^*(\tilde{T})\oplus \mc{X}^*(\mut{2}{4})]^{\Gal(L/k)} \cong \mathrm{Rep}(\tilde{T}\times \mut{2}{4})
	\]
	is induced by the morphism of lattices
	\begin{gather*}
		\mc{X}^*(\nu)\colon \mc{X}^*(\tilde{T})= \bigoplus_{i=1}^{2r+1} \Z\cdot \varpi_i \to 
		\bigoplus_{i=1}^{2r+1} \Z\cdot \varpi_i \oplus \Z/4\Z \cdot \bar{\varpi}_{2r} = \mc{X}^*(\tilde{T})\oplus \mc{X}^*(\mut{2}{4}) ,\\
		\varpi_i\mapsto \varpi_i + 2i\bar{\varpi}_{2r},\, 1\le i\le 2r-1,\quad \varpi_{2r}\mapsto \varpi_{2r}-\bar{\varpi}_{2r},\quad \varpi_{2r+1}\mapsto \varpi_{2r+1}+\bar{\varpi}_{2r}.
	\end{gather*}
	Then 
	\[
	\Res_\nu ([L_{2i}]) = [L_{2i}],\quad \Res_\nu ([L_{2i+1}]) = [L_{2i+1}]\otimes [{}^2\Lambda_{\mu_4}^{\pm}],\quad 1\le 2i \le 2r-2,
	\]
	and it follows that
	\[
	\Phi_{\nu}(x_{2i})=x_{2i}\otimes 1,\quad
	\Phi_{\nu}(x_{2i+1})=x_{2i+1}\otimes 1+1\otimes x,\quad 1\le 2i\le 2r-2.
	\]
	For $V_{2r,2r+1}$ we have
	\[
	\Res_\nu ([V_{2r,2r+1}]) = \Res_\nu (x^{\varpi_{2r}}+x^{\varpi_{2r+1}}) = x^{\varpi_{2r}+3\bar{\varpi}_{2r}} + x^{\varpi_{2r+1}+\bar{\varpi}_{2r}}.
	\]
	This element corresponds to a rank $2$ representation $W$ of $R\times \mut{2}{4}$, where $R:=R_{L/k}\Gmm$. Restricting the representation $W$ to $R$ and $\mut{2}{4}$ respectively we obtain $W|_R\cong V_R$ and $W|_{(\mut{2}{4})}\cong {}^2V_{\mu_4}$. Then Lemma~\ref{lem:c2_mixed_rep} yields
	\[
	c_2(W)=c_2(V_R)\otimes 1 + 1\otimes c_2({}^2V_{\mu_4}) \in \CH_{K,R}^*(\Spec k)\otimes \CH_{K,\mut{2}{4}}^*(\Spec k) \cong \CH_{K,R\times \mut{2}{4}}^*(\Spec k).
	\]
	Hence we have
	\[
	\Phi_\nu(b)=c_2(\Res_\nu([V_{2r,2r+1}]))=c_2(V_{2r,2r+1})\otimes 1 + 1\otimes c_2({}^2V_{\mu_4}) = b\otimes 1 + 1\otimes d
	\]
	and the claim follows.	
	
	$\mathbf{^3D_4}$ and $\mathbf{^2E_6}$. The order of $\pi_1(G)$ is coprime to $[L:k]$ in both cases, and Lemmas~\ref{lem:normed_coef} and~\ref{lem:trivial_coprime} yield $\CH_{K,\pi_1(G)}^*(\Spec k)\cong \CH_{K}^*(\Spec k)$. Thus
	\[
	\CH_{K}^*(G)\cong \CH_{K}^*(\tilde{G})\cong  \mc{R}^*/(\tilde{c}(\CH_{K,\tilde{T}}^{>0}(\Spec k))).
	\]
	We have
	\[
	\CH_{K,\tilde{T}}^{*}(\Spec k)\cong \CH_{K}^{*}(\Spec k)[x,w],\quad  x\mapsto c_1(L_2),\, w\mapsto c_3(V_{1,3,4}),
	\]
	in the case of $^3\mr{D}_4$ while in the case of ${}^2\mr{E}_6$
	\begin{gather*}
		\CH_{K,\tilde{T}}^{*}(\Spec k)\cong \CH_{K}^{*}(\Spec k)[x_1,x_2,b_1,b_2],\\
		x_1\mapsto c_1(L_2),\,x_2\mapsto c_1(L_4),\, b_1\mapsto c_2(V_{1,6}),\, b_2\mapsto c_2(V_{3,5}).
	\end{gather*}
	The claim follows.
\end{proof}

\begin{remark}
	One can extend Theorem~\ref{thm:group_via_cover} also to the split case in a straightforward way, obtaining analogous formulae. We do not do this because in the split case the characteristic sequence is exact and supplies us with all the necessary information.
\end{remark}

\begin{remark} \label{rem:pullback_conormed_group_1}
	Let $G$ be a simply connected quasi-split simple group over a field $k$ of type $\Delta(G)=\Att_{2r-1}$ and let $K/k$ be the splitting field of $G$. Let $m_1\mid m_2\mid r$ be integers with $\frac{r}{m_2}$ and $\frac{m_2}{m_1}$ being even. Consider the pullback homomorphism
	\[
	f^*\colon \CH_{K}^*(G/\mut{2}{2m_2}) \to \CH_{K}^*(G/\mut{2}{2m_1})
	\]
	for the quotient morphism $f\colon G/\mut{2}{2m_1} \to G/\mut{2}{2m_2}$. Then although these rings may be isomorphic (i.e. if $m_1$ is even), $f^*$ fails to be an isomorphism, since $f^*(x)=0$ in the notation of Theorem~\ref{thm:group_via_cover} by Lemma~\ref{lem:Pic_pull_zero}. 
\end{remark}

\subsection{Conormed Chow ring of a quasi-split group $G$ via $\mc{C}^*_K(G)$}

In this section we show how to recover $\CH^*(G)$ out of the cokernel $\mc{C}^*_K(G)$ of the conormed characteristic map. If $G$ is either simply connected or adjoint, then the conormed characteristic sequence sequence is exact and $\CH^*(G)\cong \mc{C}^*_K(G)$, otherwise one has to add one generator to cover the conormed Picard group $\CH^1_K(G)$ and also one relation on the square of this element, with the answer being  $\CH^*(G)\cong \mc{C}^*_K(G)[x]/(x^2+a)$ for a certain element $a\in \mc{C}^*_K(G)$. We continue to use the standard notation for quasi-split groups and vector bundles recalled in Sections~\ref{sec:quasi-split_simple} and~\ref{sec:vector_bundles}.

\begin{definition} \label{def:Theta_coker}
	In the notation of Definition~\ref{def:characteristic_map} one has $\phi^*\circ c=0$, e.g. by the construction of the Eilenberg--Moore homomorphism~\refbr{eq:EM2}. We put
	\[
	\mc{C}^*_{K}(G):=\CH_{K}^*(G/B)/\left( c(\CH_{K,T}^{>0}(\Spec k))\cdot \CH_{K}^*(G/B) \right)
	\]
	and denote by
	\[
	\Theta_G\colon \mc{C}^*_{K}(G)\to \CH_{K}^*(G)
	\]
	the induced homomorphism.
\end{definition}

\begin{theorem} \label{thm:cokernel}
	Let $G$ be a quasi-split group over a field $k$, let $T\le B\le G$ be a maximal torus and a Borel subgroup, and let $K/k$ be a separable extension of fields such that $T_K$ is split. Then the following holds.
	\begin{enumerate}
		\item If $T$ is quasi-trivial, then 
		\[
		\Theta_G\colon \mc{C}^*_{K}(G)\to \CH_{K}^*(G)
		\]		
		is an isomorphism. In particular, this applies to a split $G$ and, by Lemma~\ref{lem:quasi-split_quasi-trivial}, to a simply connected or an adjoint quasi-split semisimple group $G$.
		
		\item If $(\Delta(G),\pi_1(G))=(\Att_n,\mut{2}{l})$ with $l$ or $\frac{n+1}{l}$ being odd, then 
		\[
		\Theta_G\colon \mc{C}^*_{K}(G)\to \CH_{K}^*(G)
		\]		
		is an isomorphism.
		
		\item If  $(\Delta(G),\pi_1(G))=(\Att_{2r-1},\mut{2}{2m})$ with $\frac{r}{m}$ being even, then  $\Theta_G$ induces an isomorphism
		\[
		\mc{C}^*_{K}(G)[x]/(x^2+m^2\cdot \bar{c}_2(\mc{V}_{1,2r-1}))\xrightarrow{\simeq} \CH_{K}^*(G),\quad x\mapsto c_1(\mc{L}(\bar{\varpi}_m)),
		\]		
		where $\bar{c}_2(\mc{V}_{1,2r-1}) \in \mc{C}^*_{K}(G)$ is the image of $c_2(\mc{V}_{1,2r-1})\in \CH_{K}^*(G/B)$ under the quotient homomorphism.
		
		\item If $(\Delta(G),\pi_1(G))=({}^2\mr{D}_n,\mu_2)$, then  $\Theta_G$ induces an isomorphism
		\[
		\mc{C}^*_{K}(G)[x]/(x^2+\bar{c}_2(\mc{V}_{n-1,n}))\xrightarrow{\simeq} \CH_{K}^*(G),\quad x\mapsto c_1(\mc{L}(\bar{\varpi}_{n-1})),
		\]
		where $\bar{c}_2(\mc{V}_{n-1,n})\in \mc{C}^*_{K}(G)$ is the image of $c_2(\mc{V}_{n-1,n})\in \CH_{K}^*(G/B)$ under the quotient homomorphism.
	\end{enumerate}
\end{theorem}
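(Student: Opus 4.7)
Part (1) is an immediate consequence of Theorem~\ref{thm:char_qs}(1) together with Lemma~\ref{lem:quasi-split_quasi-trivial}: when $T$ is quasi-trivial, the conormed characteristic sequence is exact in the sense of Definition~\ref{def:exact_sequence}, and this exactness is precisely the statement that $\phi^*\colon\mc{R}^*\to\CH_K^*(G)$ is surjective with kernel generated as an ideal by $c(\CH_{K,T}^{>0}(\Spec k))$, i.e., that $\Theta_G$ is an isomorphism.

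For parts (2)--(4) I bootstrap from Theorem~\ref{thm:group_via_cover}, which already provides a presentation $\CH_K^*(G)\cong\mc{R}^*/I$ in case (2) or $\CH_K^*(G)\cong\mc{R}^*[x]/I$ in cases (3), (4). Since $\phi^*\circ c=0$ by construction, the image of $c$ lies in $\ker\phi^*$, so $\Theta_G$ is well defined; and from the presentation $\ker(\phi^*\colon\mc{R}^*\to\CH_K^*(G))=I\cap\mc{R}^*$, which equals $I$ in part (2) and equals the sub-ideal $I'\subseteq\mc{R}^*$ generated by the generators of $I$ not involving $x$ in parts (3), (4). It therefore suffices to prove the reverse inclusion $I'\subseteq c(\CH_{K,T}^{>0}(\Spec k))\cdot\mc{R}^*$. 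Writing $c=\tilde c\circ\Phi_q$ for the restriction $\Phi_q\colon\CH_{K,T}^*(\Spec k)\to\CH_{K,\tilde T}^*(\Spec k)$ induced by $q\colon\tilde T\to T$, the image of $\Phi_q$ consists of Chern classes of $\tilde T$-representations whose weights vanish on $\pi_1(G)$, which can be read off case by case from the homomorphism $\mc{X}^*(\nu)$ computed in the proof of Theorem~\ref{thm:group_via_cover}.

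Concretely, the $c_1(\mc{L}_i)$ generators arise when the fundamental weight $\varpi_i$ descends to a character of $T$, while the $c_2(\mc{V}_{i,j})$ generators (and the combinations making up $\mc{S}_{2r-1}$) arise from rank-$2$ Galois-invariant representations that likewise descend; the explicit formulas for $\Phi_\nu$ from the proof of Theorem~\ref{thm:group_via_cover} (e.g.\ $\Phi_\nu(b_i)=b_i\otimes 1+i^2\cdot 1\otimes b$ in type $\Att_{2r-1}$) dictate which representations descend directly and which are only accessible modulo lower generators of $I'$ already handled. Having thereby established $\mc{C}_K^*(G)=\mc{R}^*/I'$, parts (3) and (4) follow by rewriting the presentation from Theorem~\ref{thm:group_via_cover} as $\CH_K^*(G)\cong(\mc{R}^*/I')[x]/(x^2+\bar a)\cong\mc{C}_K^*(G)[x]/(x^2+\bar a)$, with $\bar a=m^2\bar c_2(\mc{V}_{1,2r-1})$ in part (3) and $\bar a=\bar c_2(\mc{V}_{n-1,n})$ in part (4); the identification $x=c_1(\mc{L}(\bar\varpi))$ for the appropriate $\bar\varpi$ is already recorded in Theorem~\ref{thm:group_via_cover} and comes from the $\pi_1(G)$-torsor $\tilde G\to G$. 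The main technical obstacle is the case-by-case verification of $I'\subseteq\Image(c)$ in the settings where individual weights fail to descend, e.g.\ in the $\mc{S}_{2r-1}$-type combinations appearing for $(\Att_{2r-1},\mut{2}{2m})$ with $r/m$ odd, where the relations must be assembled from Galois-symmetric rank-$2$ representations of $T$ together with the auxiliary identity $c_1(\mc{L}_r)^2=0$ supplied by Lemma~\ref{lem:full_flag_codim2}(2).
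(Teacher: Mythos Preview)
Your treatment of part~(1) matches the paper. For parts~(2)--(4) your overall strategy---reduce to showing that the non-$x^2$ generators of the ideal $I$ from Theorem~\ref{thm:group_via_cover} lie in $I(c)$---is also the paper's. The gap is in how you propose to verify this inclusion.

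You claim that the image of $\Phi_q\colon\CH_{K,T}^*(\Spec k)\to\CH_{K,\tilde T}^*(\Spec k)$ ``consists of Chern classes of $\tilde T$-representations whose weights vanish on $\pi_1(G)$,'' and then attempt to realise each generator this way. But for an intermediate $G$ the torus $T$ is \emph{not} quasi-trivial, so you do not know $\CH_{K,T}^*(\Spec k)$ and your description of $\Image(\Phi_q)$ is at best a lower bound. Worse, this lower bound is insufficient: for example in case~(3) with $m>1$ you need $c_2(\mc{V}_{2,2r-2})\in I(c)$, yet the weights $\varpi_2,\varpi_{2r-2}$ map to $\bar 2,\overline{-2}\in\Z/2m\Z$ and do \emph{not} vanish on $\pi_1(G)$, so the representation does not descend and your descent argument produces nothing here. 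Your appeal to ``lower generators of $I'$ already handled'' does not help, since this is already the lowest-degree such generator; the formula $\Phi_\nu(b_i)=b_i\otimes1+i^2\cdot1\otimes b$ tells you how weights restrict to $\pi_1(G)$ but does not manufacture a preimage in $\CH_{K,T}^*(\Spec k)$.

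The paper avoids computing $\CH_{K,T}^*(\Spec k)$ altogether by interposing the \emph{adjoint} quotient $g\colon G\to\bar G$. Since $\bar T$ \emph{is} quasi-trivial, part~(1) applies to $\bar G$ and gives $I(\bar c)=\ker\bar\phi^*$; Theorem~\ref{thm:group_via_cover} for $\bar G$ identifies this kernel and in particular shows $\mc S_{2r-1}\subseteq I(\bar c)$. The commutative square
\[
\xymatrix{
\CH^*_{K,\bar T}(\Spec k)\ar[r]^{\Phi_g}\ar[d]_{\bar c} & \CH^*_{K,T}(\Spec k)\ar[d]^{c}\\
\CH^*_K(\bar G/\bar B)\ar[r]^{=} & \CH^*_K(G/B)
}
\]
then yields $I(\bar c)\subseteq I(c)$, delivering all the $\mc S_{2r-1}$-type generators at once. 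Only a handful of generators remain (e.g.\ $c_1(\mc L_r)$ in case~(3), or $c_2(\mc V_{2m+1,2r-2m-1})$ when $l=2m+1$), and for these the relevant weight genuinely lies in $\mc X^*(T)$, so your direct-descent argument works. You should reorganise your proof around this adjoint shortcut.
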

\begin{proof}
	(1) follows from Theorem~{\hyperref[thm:char_qs]{\ref*{thm:char_qs}.(1)}}.
	
	In cases (2)-(4) the considered homomorphisms are well-defined and surjective by the formulae obtained in Theorem~\ref{thm:group_via_cover}, and it is sufficient to show that the generators of the ideals given in Theorem~\ref{thm:group_via_cover} that do not contain terms with $x^2$, belong to the ideal $I(c)$ generated by the image of the characteristic map 
	\[
	c\colon \CH_{K,T}^{>0}(\Spec k) \to  \CH_{K}^*(G/B).
	\]
	Let $f\colon \tilde{G}\to G$ be the simply connected cover and $g\colon G\to \bar{G}$ be the adjoint quotient, put
	\[
	\tilde{T}:=f^{-1}(T),\quad \tilde{B}:=f^{-1}(B),\quad  \bar{T}:=g(T),\quad \bar{B}:=g(B).
	\]
	The following diagram commutes.
	\begin{equation} \begin{aligned}
			\xymatrix{ 
				\CH^*_{K,\bar{T}}(\Spec k) \ar[r]^{\Phi_g} \ar[d]_{\bar{c}} & \CH^*_{K,T}(\Spec k) \ar[r]^{\Phi_f} \ar[d]^{c} & \CH^*_{K,\tilde{T}}(\Spec k) \ar[d]^{\tilde{c}}\\
				\CH_{K}^*(\bar{G}/\bar{B}) \ar[r]^= & \CH_{K}^*(G/B) \ar[r]^= & \CH_{K}^*(\tilde{G}/\tilde{B})
			}\label{eq:diagram_adj}
		\end{aligned}
	\end{equation}
	Here $\bar{c}$, $c$ and $\tilde{c}$ are the characteristic maps, $\Phi_g$ and $\Phi_f$ are induced by the homomorphisms $g\colon T\to \bar{T}$ and  $f\colon \tilde{T}\to T$ respectively, and the bottom maps are induced by the isomorphism $\tilde{G}/\tilde{B}\xrightarrow{\simeq} G/B\xrightarrow{\simeq} \bar{G}/\bar{B}$. Below we use the same notation as in Theorem~\ref{thm:group_via_cover}.
	
	(2), $n=2r$. We need to show that $c_2(\mc{V}_{i,2r+1-i})\in I(c)$ for $1\le i\le r$. It follows from the formulae obtained in Theorem~\ref{thm:group_via_cover} in the adjoint case (i.e. $l=2r$) and by the exactness of the characteristic sequence in the adjoint case proved in Theorem~\ref{thm:char_qs} that $c_2(\mc{V}_{i,2r+1-i})
	\in I(\bar{c})$, $1\le i\le r$. We have $I(\bar{c})\subseteq I(c)$ by the commutativity of Diagram~\ref{eq:diagram_adj}. The claim follows.
	
	(2), $n=2r-1$, $l=2m$, $\frac{r}{m}$ is odd. The claim follows from the adjoint case as above.
	
	(2), $n=2r-1$, $l=2m+1$. We need to show that $c_1(\mc{L}_r)\in I(c)$ and $c_2(\mc{V}_{i,2r-i})\in I(c)$ for $1\le i\le r-1$. Using the adjoint case as above we obtain $c_2(\mc{V}_{2i,2r-2i})\in I(c)$ for $1\le i\le \lfloor \frac{r-1}{2}\rfloor$ and  $c_2(\mc{V}_{2i+1,2r-2i-1})+c_2(\mc{V}_{1,2r-1})\in I(c)$ for $1\le i\le \lfloor \frac{r}{2}\rfloor -1$, thus it suffices to show $c_1(\mc{L}_r), c_2(\mc{V}_{2m+1,2r-2m-1})\in I(c)$. Since $2m+1\mid 2r$, it follows that $2m+1\mid r$ and $\varpi_{2m+1},\varpi_r,\varpi_{2r-2m-1}\in \mc{X}^*(T)$, thus $c_1(L_r), c_2(V_{2m+1,2r-2m-1})\in \CH_{K,\tilde{T}}^*(\Spec k)$ belong to the image of $\Phi_f$. Then the claim follows from the commutativity of diagram~\refbr{eq:diagram_adj}, since $c_1(\mc{L}_r)= \tilde{c}(c_1(L_r))$ and $c_2(\mc{V}_{2m+1,2r-2m-1})= \tilde{c}(c_2(V_{2m+1,2r-2m-1}))$.
	
	(3) We need to show that $c_1(\mc{L}_r)\in I(c)$ and $\mc{S}_{2r-1}\subseteq I(c)$. It follows from the adjoint case as above that $\mc{S}_{2r-1}\subseteq I(c)$. Since $\frac{r}{m}$ is even, it follows that $2m\mid r$ and $\varpi_r\in \mc{X}^*(T)$, thus $c_1(\mc{L}_r)$ belongs to the image of $\Phi_f$. The claim follows as above.
	
	(4) We need to show that $c_1(\mc{L}_i)\in I(c)$, $1\le i \le 2r-2$. We have $\varpi_i\in \mc{X}^*(T)$, $1\le i\le n-2$, thus $c_1(L_i)$ belongs to the image of $\Phi_f$. The claim follows as above.
\end{proof}

\subsection{Conormed Chow ring of a quasi-split group: the answer}
In this section we combine the previous computations and results from \cite{SZ22} to compute the conormed Chow ring $\CH_K^*(G)$ of a non-split quasi-split simple group assuming $K$ to be the splitting field. If the type of $G$ is $\Delta(G)={}^6\mr{D}_4$, then $\CH^*_K(G)=0$, while in the other types it follows from Proposition~\ref{prop:Kunneth_conormed} and a Borel's result on the structure of Hopf algebras \cite[Theorem 7.11 and Proposition 7.8]{MM65} that 
\[
\CH_K^*(G)\cong \FF_p[e_1,e_2,\hdots,e_s]/(e_1^{p^{k_1}},e_2^{p^{k_2}},\hdots,e_s^{p^{k_s}}),\quad \deg e_i=d_i,\quad p=[K:k],
\]
for some parameters $s,d_i,k_i$, and we give a table for them depending on $(\Delta(G),\pi_1(G))$.

We continue to use the notation from Sections~\ref{sec:multiplicative_groups},~\ref{sec:quasi-split_simple} and~\ref{sec:vector_bundles}.

\begin{lemma} \label{lem:fullflag_cod2_D}
	Let $G$ be a quasi-split simple group over a field $k$ of type $\Delta(G)={}^2\mathrm{D}_n$, let $B\le G$ be a Borel subgroup and let $K/k$ be the splitting field of $G$. Then in $\CH^*_K(G/B)$ we have the following:
	\begin{itemize}
		\item 
		$\CH^1_K(G/B)=\bigoplus_{i=1}^{n-2} \FF_2 \cdot c_1(\mc{L}_i)$,
		\item
		$\CH^2_K(G/B)=\CH^1_K(G/B) \cdot \CH^1_K(G/B) + \FF_2 \cdot c_2(\mc{V}_{n-1,n})$.
	\end{itemize}
\end{lemma}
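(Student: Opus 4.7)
The plan is to imitate the proof of Lemma~\ref{lem:full_flag_codim2}, using a type-$D$ presentation in place of the type-$A$ one. Let $\tilde G\to G$ be the simply connected cover with maximal torus $\tilde T\le\tilde B\le \tilde G$ lying over $T\le B$. Since $G/B\cong \tilde G/\tilde B$, I work with $\mc{X}^*(\tilde T)=P=\bigoplus_{i=1}^n\Z\varpi_i$, on which the non-trivial $\tau\in\Gal(K/k)$ acts via the diagram automorphism from~\refbr{expos:simple_groups}: it fixes $\varpi_1,\dots,\varpi_{n-2}$ and interchanges $\varpi_{n-1}\leftrightarrow\varpi_n$. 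Set $\bar\varpi_i:=c_1(\mc{L}(\varpi_i))$. By the classical surjectivity of the characteristic map for the split simply connected group $\tilde G_K$ (\cite{Gro58}), the $\bar\varpi_i$ freely generate $\CH^1((G/B)_K)\otimes\FF_2$, and the monomials $\bar\varpi_i\bar\varpi_j$ with $1\le i\le j\le n$ additively generate $\CH^2((G/B)_K)\otimes\FF_2$. Lemma~\ref{lem:Chow_quasi-split_homogeneous} then identifies $\CH^*(G/B)\otimes\FF_2$ with the $\Gal(K/k)$-invariant part, and, as in the proof of Lemma~\ref{lem:full_flag_codim2}, $\CH^*_K(G/B)$ is the further quotient by the image of the trace $N:=\id+\tau$.

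In degree~$1$ the $\tau$-orbits of the basis $\{\bar\varpi_i\}$ are the singletons for $i\le n-2$ and the pair $\{\bar\varpi_{n-1},\bar\varpi_n\}$; the invariants are spanned by $\bar\varpi_1,\dots,\bar\varpi_{n-2}$ and $\bar\varpi_{n-1}+\bar\varpi_n$, while $N$ annihilates the singletons and carries both elements of the pair to $\bar\varpi_{n-1}+\bar\varpi_n$. Quotienting yields the first assertion $\CH^1_K(G/B)=\bigoplus_{i=1}^{n-2}\FF_2\cdot c_1(\mc{L}_i)$.

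In degree~$2$ the $\tau$-orbits on the monomials split as the fixed ones $\bar\varpi_i\bar\varpi_j$ with $i,j\le n-2$ together with $\bar\varpi_{n-1}\bar\varpi_n$, and the swap pairs $\{\bar\varpi_i\bar\varpi_{n-1},\bar\varpi_i\bar\varpi_n\}$ for $i\le n-2$ and $\{\bar\varpi_{n-1}^2,\bar\varpi_n^2\}$. The main technical step will be to show that every $\tau$-invariant class of $\CH^2((G/B)_K)\otimes\FF_2$ lifts to a $\tau$-invariant element on the polynomial level, i.e.~to a linear combination of fixed monomials and pair-orbit sums. Once this is established, the pair-orbit sums $\bar\varpi_i(\bar\varpi_{n-1}+\bar\varpi_n)$ and $\bar\varpi_{n-1}^2+\bar\varpi_n^2$ lie in $N(\CH^2((G/B)_K)\otimes\FF_2)$ and so vanish in $\CH^2_K(G/B)$, while the fixed-monomial contributions give exactly $\CH^1_K(G/B)\cdot\CH^1_K(G/B)+\FF_2\cdot c_2(\mc{V}_{n-1,n})$: indeed $\bar\varpi_i\bar\varpi_j$ for $i,j\le n-2$ is a product of degree-$1$ classes, and $c_2(\mc{V}_{n-1,n})=\bar\varpi_{n-1}\bar\varpi_n$ via the splitting $(\mc{V}_{n-1,n})_K\cong \mc{L}(\varpi_{n-1})\oplus\mc{L}(\varpi_n)$.

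The existence of $\tau$-invariant polynomial lifts is the only subtle point, and I expect to establish it by a direct analysis of the unique degree-$2$ relation $q:=\tfrac12\sum_{i=1}^n\epsilon_i^2$. Expanding $q$ in the $\varpi$-basis modulo~$2$ shows that its fixed-monomial part is non-trivial (it contains $\bar\varpi_{n-2}^2$, among others), whereas for any polynomial lift $\tilde x$ of a $\tau$-invariant class the defect $\tau\tilde x-\tilde x$ involves only pair-orbit differences $\bar\varpi_i\bar\varpi_n-\bar\varpi_i\bar\varpi_{n-1}$ and $\bar\varpi_n^2-\bar\varpi_{n-1}^2$, hence has no fixed-monomial component. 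Consequently the defect, lying simultaneously in $\FF_2\cdot q$ and in the pair part of $\CH^2((G/B)_K)\otimes\FF_2$, must be zero, producing the desired $\tau$-invariant lift.
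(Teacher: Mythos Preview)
Your approach is correct but genuinely different from the paper's. The paper works with the Schubert basis $\{Z_w\}$ of $\CH^*((G/B)_K)$, on which $\tau$ acts by the permutation $s_{n-1}\leftrightarrow s_n$ of simple reflections; since this is an honest basis permuted by $\tau$, the Tate-cohomology quotient $(\ )^\tau/N(\ )$ is read off directly from the orbit structure, and Chevalley's formula is then used to recognise the surviving length-$2$ cycles as the products $c_1(\mc{L}_i)c_1(\mc{L}_j)$ and $c_2(\mc{V}_{n-1,n})=Z_{s_{n-1}s_n}$. You instead use the polynomial generators $\bar\varpi_i$, which avoids Chevalley's formula but forces you to control the one degree-$2$ relation; the argument that the defect $\tau\tilde x+\tilde x$ lies in the span of pair-orbit sums while $q$ has non-trivial fixed-monomial support is correct and gives the invariant lift.

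One step needs sharpening. Your appeal to ``the classical surjectivity of the characteristic map for the split simply connected group $\tilde G_K$'' is not right as stated: for simply connected type $\mr{D}_n$ at $p=2$ the characteristic map is \emph{not} surjective, since its cokernel $\Ch^*(\tilde G_K)$ is non-trivial from degree~$3$ on (Table~\ref{tab:Kac}). What you need, and what is true, is surjectivity in degrees $\le 2$: this follows from the exactness of the characteristic sequence (Theorem~\ref{thm:char_qs}(1)) together with $\Ch^1(\tilde G_K)=\Ch^2(\tilde G_K)=0$, which one reads off from Table~\ref{tab:Kac}. The same input underlies your ``unique degree-$2$ relation'' assertion (it gives the rank count $\dim\mathrm{Sym}^2(P\otimes\FF_2)-\dim\Ch^2((G/B)_K)=1$), and it also makes $q=\tfrac12\sum\epsilon_i^2$ a primitive generator of the integral kernel. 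With this correction your argument is complete.
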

\begin{proof}
	We have $[K:k]=2$, thus $\CH^*_K(\Spec k)\cong \FF_2$ by Lemma~\ref{lem:normed_coef}. Lemma~\ref{lem:Chow_quasi-split_homogeneous} yields that 
	\[
	\CH^*_K(G/B) \cong (\CH^*((G/B)_K))^{\Gal(K/k)} / (\id+\tau)(\CH^*((G/B)_K))
	\]
	with $\tau\in \Gal(K/k)$ being the non-trivial element.	Since $G_K$ is split, $\CH^*((G/B)_K)$ has a basis given by the Schubert cycles $Z_{w}\in \CH^{l(w)}((G/B)_K)$, $w\in W(G_K)$, \cite[Proposition~8]{Che94}, here $l(w)$ is the length of $w$. Put $s_i$ to be the reflection for the simple root $\alpha_i$. The involution $\tau$ acts on the Weyl group $W(G_K)$ via the action on the generators $\tau(s_i)=s_i$, $1\le i \le n-2$, $\tau(s_{n-1})=s_n$, $\tau(s_{n})=s_{n-1}$, which yields a permutation action on the Schubert cycles compatible with the action of $\tau$ on $\CH^*_K(G/B)$. In particular, we have the following $\Gal(K/k)$-invariant decompositions
	\[
	\CH^1((G/B)_K) = \left(\bigoplus_{i=1}^{n-2} \Z \cdot Z_{s_i}\right) \oplus \left[ \Z\cdot Z_{s_{n-1}} \oplus \Z\cdot Z_{s_{n}} \right],
	\]
	\begin{multline*}	
		\CH^2((G/B)_K) = \left(\bigoplus_{1\le i<j\le n-2} \Z \cdot Z_{s_i s_j}\right) \oplus \left(\bigoplus_{i=1}^{n-3} \Z \cdot Z_{s_{i+1} s_i}\right) \oplus \left(\bigoplus_{i=1}^{n-2} \left[\Z \cdot Z_{s_i s_{n-1}}\oplus \Z \cdot Z_{s_i s_{n}}\right]\right) \oplus \\ \oplus \left[\Z \cdot Z_{s_{n-1}s_{n-2}}\oplus \Z \cdot Z_{s_{n}s_{n-2}}\right] \oplus \Z \cdot Z_{s_{n-1}s_{n}}.
	\end{multline*}
	Here $\tau$ acts on the modules in the square brackets permuting the elements of the bases, and $\tau$ acts trivially on all the other summands. Then 
	\begin{gather*}
		\CH^1_K(G/B)= \bigoplus_{i=1}^{n-2} \FF_2 \cdot \bar{Z}_{s_i},\\
		\CH^2_K(G/B) = \left(\bigoplus_{1\le i<j\le n-2} \FF_2 \cdot \bar{Z}_{s_i s_j}\right) \oplus \left(\bigoplus_{i=1}^{n-3} \FF_2 \cdot \bar{Z}_{s_{i+1} s_i}\right) \oplus \FF_2 \cdot \bar{Z}_{s_{n-1}s_{n}}.
	\end{gather*}	
	In $\CH^*((G/B)_K)$ we have $Z_{s_i}=c_1(\mc{L}_i)$, so the first claim of the lemma follows. For the second claim note that the Chevalley's formula \cite[Proposition~10]{Che94} yields
	\[
	Z_{s_i} \cdot Z_{s_j}=\begin{cases}
		Z_{s_is_j}, & \text{$i\neq j$ and $\alpha_i$ is not connected to $\alpha_j$ in the Dynkin diagram $\mr{D}_n$}\\
		Z_{s_is_j}+Z_{s_js_i}, & \text{$i\neq j$ and $\alpha_i$ is connected to $\alpha_j$ in the Dynkin diagram $\mr{D}_n$}\\
		\sum\limits_{(\alpha_l,\alpha_i)\neq 0} Z_{s_ls_i}, & \text{$i=j$, the sum is taken over all $l$ such that $\alpha_l$ is connected to $\alpha_i$.} 
	\end{cases}
	\]
	In particular, we have 
	\[
	Z_{s_{n-1}s_{n}}=Z_{s_{n-1}}\cdot Z_{s_{n}}= c_1(\mc{L}_{n-1})\cdot c_1(\mc{L}_{n}) = c_2(\mc{V}_{n-1,n})
	\]
	in $\CH^*((G/B)_K)$. Hence $c_2(\mc{V}_{n-1,n})$ coincides with $\bar{Z}_{s_{n-1}s_{n}}$ in $\CH^*_K(G/B)$. The claim follows.
\end{proof}

\begin{theorem} \label{thm:conormed_group_answer}
	Let $G$ be a non-split quasi-split simple group over a field $k$ and let $K/k$ be the splitting field of $G$. Suppose $\Delta(G)\neq {}^6\mathrm{D}_4$ and put $p:=[K:k]$. Then
	\[
	\CH_K^*(G)\cong \FF_p[e_1,e_2,\hdots,e_s]/(e_1^{p^{k_1}},e_2^{p^{k_2}},\hdots,e_s^{p^{k_s}}),\quad \deg e_i=d_i,
	\]
	with the parameters $s,d_i,k_i$ given by the following table.
	\begin{center}
		\def\arraystretch{1.5}
		\begin{longtable}{l|l|l|l|l} 
			\caption{$\CH^*_K(G)$ for a non-split quasi-split simple group $G$}
			\label{tab:conormed}\\
			$\Delta(G)$ & $\pi_1(G)$ & $s$ & $d_i$, $i=1,2,\ldots, s$ & $k_i$, $i=1,2,\ldots, s$ \\
			\hline
			$\Att_{n}$, $n\ge 2$ & $\mut{2}{l}$, $l\mid (n+1)$, $l$ is odd &  $[\frac{n}{2}]$ & $2i+1$ & $1$ \\
			
			$\Att_{2r-1}$, $r\ge 2$ & $\mut{2}{2m}$, $m\mid r$, $m$ is odd &  $r$ & $2i-1$ & $v_2(r)+1$, $i=1$ \\
			& & &  & $1$, $i\ge 2$ \\
			
			$\Att_{2r-1}$, $r\ge 2$ & $\mut{2}{2m}$, $m\mid r$, $m$ is even &  $r+1$ & $2$, $i=1$ & $v_2(r)$, $i=1$ \\
			& &  & $2i-3$, $i\ge 2$ & $1$, $i\ge 2$ \\		
			
			\hline
			${}^2\mr{D}_{n}$, $n\ge 3$ & $1$ &  $[\frac{n+1}{2}]-1$ & $2i+1$ & $[\log_2 \frac{2n}{2i+1}]$ \\						
			
			& $\mu_2$ &  $[\frac{n+1}{2}]$ & $1$, $i=1$ & $[\log_2 n]+1$ \\						
			&  &    & $2i-1$, $i\ge 2$ & $[\log_2 \frac{2n}{2i-1}]$ \\								
			
			${}^2\mr{D}_{2r+1}$, $r\ge 1$	& $\mut{2}{4}$ &  $r+2$ & $1$, $i=1$ & $1$, $i=1$ \\			
			& &  & $2$, $i=2$ & $[\log_2 (2r+1)]$, $i=2$ \\					
			&  &  & $2i-3$, $i\ge 3$ & $[\log_2 \frac{4r+2}{2i-3}]$, $i\ge 3$ 		\\
			
			${}^2\mr{D}_{2r}$, $r\ge 2$	& $\mut{2}{2,2}$ &  $r$ & $2$, $i=1$ & $[\log_2 2r]$, $i=1$ \\			
			&  &  & $2i-1$, $i\ge 2$ & $[\log_2 \frac{4r}{2i-1}]$, $i\ge 2$ 		\\
			
			\hline
			${}^3\mr{D}_4$ & $1$, $\mut{3}{2,2}$&  $1$ & $4$ & $1$\\
			\hline
			${}^2\mr{E}_6$ & $1$, $\mut{2}{3}$&  $3$ & $3,5,9$ & $1,1,1$ 
		\end{longtable}
	\end{center}	
	Here $v_2$ is the $2$-adic valuation.
\end{theorem}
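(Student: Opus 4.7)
My plan is to deduce the theorem from the Hopf-algebra structure supplied by Proposition \ref{prop:Kunneth_conormed}, combined with Borel's classification theorem and the explicit presentations of Theorems \ref{thm:group_via_cover} and \ref{thm:cokernel}, using the adjoint computations of \cite[Section~8]{SZ22} as the principal input.

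\textbf{Reduction via Borel's theorem.} Since $\Delta(G)\neq{}^6\mr{D}_4$, the splitting degree $p:=[K:k]$ is a prime ($2$ or $3$), and Lemma \ref{lem:normed_coef} gives $\CH^*_K(\Spec k)\cong\FF_p$. The K\"unneth property of Proposition \ref{prop:Kunneth_conormed} endows $\CH^*_K(G)$ with the structure of a connected, graded-commutative and graded-cocommutative Hopf algebra over $\FF_p$. Moreover, this Hopf algebra is finite-dimensional, because Theorem \ref{thm:group_via_cover} presents it as a quotient of $\CH^*_K(G/B)$ or $\CH^*_K(G/B)[x]/(x^2+\ast)$, and $\CH^*_K(G/B)$ is itself a finite-dimensional $\FF_p$-algebra as a quotient of the Chow ring of a smooth projective variety. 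Borel's classification \cite[Theorem~7.11 and Proposition~7.8]{MM65} then forces
\[
\CH^*_K(G)\cong \FF_p[e_1,\ldots,e_s]/(e_1^{p^{k_1}},\ldots,e_s^{p^{k_s}})
\]
for some integers $s$, $d_i=\deg e_i$, and $k_i\ge 1$, and the task reduces to identifying these parameters in each row of Table \ref{tab:conormed}.

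\textbf{Identification of parameters.} For $G$ simply connected or adjoint, Lemma \ref{lem:quasi-split_quasi-trivial} guarantees that the maximal torus is quasi-trivial, so Theorem \ref{thm:cokernel}(1) yields an isomorphism $\CH^*_K(G)\cong \mc{C}^*_K(G)$. In the adjoint case the cokernel $\mc{C}^*_K(\bar{G})$ has been computed explicitly in \cite[Section~8]{SZ22}, and comparing that presentation with the Borel form above produces the invariants listed in the adjoint rows of Table \ref{tab:conormed}; the simply connected rows are then extracted in parallel from Theorem \ref{thm:group_via_cover}, which in these cases coincides with \cite[Section~8]{SZ22} applied to the simply connected cover. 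For the genuinely intermediate fundamental groups, namely $(\Att_{2r-1},\mut{2}{2m})$ with $r/m$ even and $({}^2\mr{D}_n,\mu_2)$, Theorem \ref{thm:cokernel}(3)--(4) presents $\CH^*_K(G)$ as $\mc{C}^*_K(G)[x]/(x^2+a)$ with a specific element $a$ of degree $2$, and the table parameters are read off from this presentation.

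\textbf{Main obstacle.} The delicate point in the intermediate cases is to show that the class $a$ agrees up to sign with a $2^{k_1-1}$-th power of an existing Borel generator of $\mc{C}^*_K(G)$, so that adjoining $x$ raises the height of that generator by one rather than producing an independent truncated factor of height two. For $({}^2\mr{D}_n,\mu_2)$ I combine Lemma \ref{lem:fullflag_cod2_D} with the Chevalley formula to rewrite $\bar{c}_2(\mc{V}_{n-1,n})$ as a suitable $2$-power of an even-degree Schubert generator of $\mc{C}^*_K(\bar{G})$, yielding $k_1=[\log_2 n]+1$ for the new degree-$1$ generator $e_1=c_1(\mc{L}(\bar\varpi_{n-1}))$. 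Lemma \ref{lem:full_flag_codim2} together with the formulas in Theorem \ref{thm:group_via_cover} settles the analogous matching for $\Att_{2r-1}$, producing the heights $v_2(r)+1$ or $v_2(r)$ according to the parity of $m$. Once these matchings are in place, counting generators in each graded piece completes the determination of $s$, $d_i$ and $k_i$ in every row of Table \ref{tab:conormed}.
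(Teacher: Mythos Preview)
Your overall strategy matches the paper's: start from the adjoint computation in \cite{SZ22}, identify the relevant Chern classes inside that presentation via Lemmas~\ref{lem:full_flag_codim2} and~\ref{lem:fullflag_cod2_D}, and then pass to other isogeny classes. However, two points need correction.

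First, routing the intermediate cases through Theorem~\ref{thm:cokernel}(3)--(4) creates a gap: those statements express $\CH^*_K(G)$ in terms of $\mc{C}^*_K(G)$ for the \emph{intermediate} group $G$, but $\mc{C}^*_K(G)$ is not what \cite{SZ22} computes---that reference gives $\mc{C}^*_K(\bar G)$ for the adjoint $\bar G$ only. In the paper the relation between $\mc{C}^*_K(G)$ and $\CH^*_K(\bar G)$ is established only \emph{after} the present theorem (in Theorem~\ref{thm:cokernel_answer}), so your argument is circular as written. The paper avoids this by using Theorem~\ref{thm:group_via_cover} directly: since $G/B\cong\bar G/\bar B$, every case is presented as a quotient of $\mc R^*=\CH^*_K(\bar G/\bar B)$ or $\mc R^*[x]$, and the adjoint presentation $\CH^*_K(\bar G)\cong\mc R^*/I_{\mathrm{ad}}$ lets one compute the non-adjoint quotients by comparing the explicit generating sets of the ideals. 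Your treatment of the simply connected case has the same issue: \cite{SZ22} does not compute it, and the paper obtains it from the adjoint answer by killing the additional generators $c_1(\mc L_r)$ and $c_2(\mc V_{1,2r-1})$ (type $\Att$) or $c_2(\mc V_{n-1,n})$ (type ${}^2\mr D$) via Theorem~\ref{thm:group_via_cover}.

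Second, your description of the ``main obstacle'' is factually wrong. The element $a=\bar c_2(\mc V_{n-1,n})$ (resp.\ $\bar c_2(\mc V_{1,2r-1})$) is not a $2^{k_1-1}$-th power of a generator; Lemmas~\ref{lem:fullflag_cod2_D} and~\ref{lem:full_flag_codim2} show it is \emph{the} degree-$2$ generator $e_1$ of $\CH^*_K(\bar G)$ itself. The relation $x^2+e_1=0$ then eliminates $e_1$ in favor of a degree-$1$ generator $x$ with $x^{2^{k_1+1}}=e_1^{2^{k_1}}=0$, which is exactly how the height shifts by one. Writing $a$ as a proper power would give an extra independent factor of height $2$ and the wrong answer.
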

\begin{proof}
	Let $\tilde{G}\to G$ and $G\to \bar{G}$ be the simply connected cover and the adjoint quotient respectively. We have $\CH_{K}^*(\bar{G})\cong \mc{C}^*_K(\bar{G})$ by Theorem~\ref{thm:cokernel} and the last ring was determined in \cite[Section~7]{SZ22} (see especially Table~5 of loc. cit.). Furthermore, it follows from Theorem~\ref{thm:group_via_cover} that $\CH^*_K(G)\cong \CH^*_K(\bar{G})$, if $(\Delta(G),\pi_1(G))=(\Att_n,\mut{2}{l})$ with $\frac{n+1}{l}$ being odd, or $({}^3\mr{D}_4,\mut{3}{2,2})$, or $({}^2\mr{E}_6,\mut{2}{3})$. Then the remaining cases are $(\Att_{2r-1},\mut{2}{l})$ with $\frac{2r}{l}$ being even, $({}^2\mr{D}_n,1)$ and $({}^2\mr{D}_n,\mu_2)$. In each case we use Theorem~\ref{thm:group_via_cover} to derive the answers from the ones for the respective adjoint groups. Below we adopt the notation of Theorem~\ref{thm:group_via_cover} and repeatedly use its claims without additional references.
	
	\textbf{(1)} $(\Delta(G),\pi_1(G))=(\Att_{2r-1},\mut{2}{l})$ with $\frac{2r}{l}$ being even. Suppose that $r$ is odd. We have
	\begin{equation} \label{eq:conormed_ch_adj_a1}
		\mc{R}^*/I(\{c_2(\mc{V}_{i,2r-i})\}_{i=1}^{r-1})\cong \CH^*_K(\bar{G})\cong \FF_2 [e_2,\hdots,e_{r+1}]/(e_2^2,\hdots,e_{r+1}^2)
	\end{equation}
	with $\deg e_i=2i-3$, $i\ge 2$. The element $c_1(\mc{L}_r)\in \mc{R}^*$ is the generator of $\mc{R}^1$ by Lemma~\ref{lem:full_flag_codim2}, thus its image in the quotient ring on the left-hand side of the formula corresponds under isomorphisms~\refbr{eq:conormed_ch_adj_a1} to $e_2$ on the right-hand side, which is a unique non-trivial element of degree $1$. Since $\frac{2r}{l}$ is even, it follows that $l$ is odd and
	\begin{multline*}
		\CH^*_K(G)\cong (\mc{R}^*/I(\{c_2(\mc{V}_{i,2r-i})\}_{i=1}^{r-1}))/(\bar{c}_1(\mc{L}_r))\cong (\FF_2 [e_2,\hdots,e_{r+1}]/(e_2^2,\hdots,e_{r+1}^2))/(e_2)\cong \\ \cong \FF_2[e_3,\hdots,e_{r+1}]/(e_3^2,\hdots,e_{r+1}^2).
	\end{multline*}
	
	Suppose $r$ is even, then
	\begin{equation} \label{eq:conormed_ch_adj_a2}
		\mc{R}^*/I(\mc{S}_{2r-1})\cong \CH^*_K(\bar{G})\cong \FF_2 [e_1,e_2,\hdots,e_{r+1}]/(e_1^{2^{v_2(r)}},e_2^2,\hdots,e_{r+1}^2)
	\end{equation}
	with $\deg e_1=2$, $\deg e_i=2i-3$, $i\ge 2$. As above, the image of the element $c_1(\mc{L}_r)\in \mc{R}^*$ in the quotient ring on the left-hand side of the formula corresponds under isomorphisms~\refbr{eq:conormed_ch_adj_a2} to $e_2$ on the right-hand side. Furthermore, Lemma~\ref{lem:full_flag_codim2} yields that the image of $c_2(\mc{V}_{1,2r-1})$ in the quotient ring on the left-hand side of the formula is the generator of the degree $2$ part, thus it is mapped to $e_1$ under isomorphisms~\refbr{eq:conormed_ch_adj_a2}. If $l$ is odd, then
	\begin{multline*}
		\CH^*_K(G)\cong (\mc{R}^*/I(\mc{S}_{2r-1}))/(\bar{c}_1(\mc{L}_r),\bar{c}_2(\mc{V}_{1,2r-1}))\cong \\\cong (\FF_2 [e_1,e_2,\hdots,e_{r+1}]/(e_1^{2^{v_2(r)}},e_2^2,\hdots,e_{r+1}^2))/(e_1,e_2)\cong \FF_2 [e_3,\hdots,e_{r+1}]/(e_3^2,\hdots,e_{r+1}^2).
	\end{multline*}
	If $l=2m$ and $m$ is even, then
	\begin{multline*}
		\CH^*_K(G)\cong (\mc{R}^*/I(\mc{S}_{2r-1}))[x]/(\bar{c}_1(\mc{L}_r),x^2)\cong \\ \cong (\FF_2[e_1,\hdots,e_{r+1}]/(e_1^{2^{v_2(r)}},e_2^2,\hdots,e_{r+1}^2))[x]/(e_2,x^2)\cong  \FF_2 [e_1,\hdots,e_{r+1}]/(e_1^{2^{v_2(r)}},e_2^2,\hdots,e_{r+1}^2).
	\end{multline*}
	If $l=2m$ and $m$ is odd, then
	\begin{multline*}
		\CH^*_K(G)\cong (\mc{R}^*/I(\mc{S}_{2r-1}))[x]/(\bar{c}_1(\mc{L}_r),x^2+\bar{c}_2(\mc{V}_{1,2r-1}))\cong \\ \cong (\FF_2 [e_1,\hdots,e_{r+1}]/(e_1^{2^{v_2(r)}},e_2^2,\hdots,e_{r+1}^2))[x]/(e_2,x^2+e_1)\cong  \\
		\cong \FF_2 [x,e_3,\hdots,e_{r+1}]/(x^{2^{v_2(r)+1}},e_3^2\hdots,e_{r+1}^2),
	\end{multline*}
	where $\deg x=1$, $\deg e_i=2i-3$.
	
	\textbf{(2)} $\Delta(G)={}^2\mr{D}_{n}$. Suppose that $n=2r+1$ and put 
	\[
	I:=I(\{c_1(\mc{L}_1)^2\}\cup\{c_1(\mc{L}_{2i})\}_{i=1}^{r-1} \cup \{c_1(\mc{L}_{2i+1})+ c_1(\mc{L}_1)\}_{i=1}^{r-1}) \subseteq \mc{R}^*.
	\]
	We have
	\begin{equation}\label{eq:conormed_ch_adj_d1}
		\mc{R}^*/I
		\cong \CH^*_K(\bar{G})
		\cong \FF_2 [e_1,\hdots,e_{r+2}]/(e_1^{2},e_2^{2^{k_2}}\hdots,e_{r+2}^{2^{k_{r+2}}})
	\end{equation}
	with $\deg e_1=1$, $\deg e_2=2$, $\deg e_i=2i-3$, $i\ge 3$, and $k_i$ as in the statement of the theorem for the adjoint case. The image of the element $c_1(\mc{L}_1)\in \mc{R}^*$ in the quotient ring on the left-hand side of the formula is the generator in degree $1$ by Lemma~\ref{lem:fullflag_cod2_D}, thus it corresponds under isomorphisms~\refbr{eq:conormed_ch_adj_d1} to $e_1$ on the right-hand side, which is a unique non-trivial element of degree $1$. Then, since $e_1^2=0$ on the right-hand side, it follows that $\bar{c}_1(\mc{L}_1)^2=0$ on the left-hand side. Thus it follows from Lemma~\ref{lem:fullflag_cod2_D} and the above that the image of the element $c_2(\mc{V}_{2r,2r+1})\in \mc{R}^*$ in the quotient ring on the left-hand side of the formula is the generator in degree $2$, hence it corresponds under isomorphisms~\refbr{eq:conormed_ch_adj_d1} to $e_2$ on the right-hand side, which is a unique non-trivial element of degree $2$. Then we have
	\begin{multline*}
		\CH^*(\tilde{G}/\mu_2)\cong \left(\mc{R}^*/I\right)[x] /(c_1(\mc{L}_1), x^2+c_2(\mc{V}_{2r,2r+1})) \cong \\
		\cong (\FF_2 [e_1,\hdots,e_{r+2}]/(e_1^{2},e_2^{2^{k_2}}\hdots,e_{r+2}^{2^{k_{r+2}}}))[x]/(e_1, x^2+e_2) \cong \\
		\cong 
		\FF_2 [x,e_3,\hdots,e_{r+2}]/(x^{2^{k_2+1}},e_{3}^{2^{k_{3}}},\hdots,e_{r+2}^{2^{k_{r+2}}})
	\end{multline*}
	with $\deg x=1$. Similarly,
	\begin{multline*}
		\CH^*(\tilde{G})\cong \left(\mc{R}^*/I\right)/(c_1(\mc{L}_1), c_2(\mc{V}_{2r,2r+1})) \cong \\
		\cong (\FF_2 [e_1,\hdots,e_{r+2}]/(e_1^{2},e_2^{2^{k_2}}\hdots,e_{r+2}^{2^{k_{r+2}}}))/(e_1, e_2)
		\cong 
		\FF_2 [e_3,\hdots,e_{r+2}]/(e_{3}^{2^{k_{3}}},\hdots,e_{r+2}^{2^{k_{r+2}}}).
	\end{multline*}	
	
	Suppose $n=2r$. We have
	\begin{equation}\label{eq:conormed_ch_adj_d2}
		\mc{R}^*/I(\{c_1(\mc{L}_i)\}_{i=1}^{2r-2})
		\cong \CH^*_K(\bar{G})
		\cong \FF_2 [e_1,\hdots,e_r]/(e_1^{2^{k_1}},\hdots,e_{r}^{2^{k_{r}}})
	\end{equation}
	with $\deg e_1=2$, $\deg e_i=2i-1$, $i\ge 2$, and $k_i$ as in the statement of the theorem for the adjoint case. The degree $1$ part on the right-hand side of the formula is trivial, thus the same holds on the left-hand side. It follows from Lemma~\ref{lem:fullflag_cod2_D} that the image of the element $c_2(\mc{V}_{2r-1,2r})\in \mc{R}^*$ in the quotient ring on the left-hand side of the formula is the generator in degree $2$, so it corresponds under isomorphisms~\refbr{eq:conormed_ch_adj_d2} to $e_1$ on the right-hand side, which is a unique non-trivial element of degree $2$. Then we have
	\begin{multline*}
		\CH^*(\tilde{G}/\mu_2)\cong \left(\mc{R}^*/I(\{c_1(\mc{L}_i)\}_{i=1}^{2r-2})\right)[x] /(x^2+c_2(\mc{V}_{2r-1,2r})) \cong \\
		\cong (\FF_2 [e_1,\hdots,e_{r}]/(e_1^{2^{k_1}},\hdots,e_{r}^{2^{k_{r}}}))[x]/(x^2+e_1)
		\cong 
		\FF_2 [x,e_2,\hdots,e_{r}]/(x^{2^{k_1+1}},e_{2}^{2^{k_{2}}},\hdots,e_{r}^{2^{k_{r}}})
	\end{multline*}
	with $\deg x=1$. Similarly,
	\begin{multline*}
		\CH^*(\tilde{G})\cong \left(\mc{R}^*/I(\{c_1(\mc{L}_i)\}_{i=1}^{2r-2})\right)/(c_2(\mc{V}_{2r-1,2r})) \cong \\
		\cong (\FF_2 [e_1,\hdots,e_{r}]/(e_1^{2^{k_1}},\hdots,e_{r}^{2^{k_{r}}}))/(e_1)
		\cong 
		\FF_2 [e_2,e_3,\hdots,e_{r}]/(e_{2}^{2^{k_{2}}},\hdots,e_{r}^{2^{k_{r}}}).
	\end{multline*}	
\end{proof}

\begin{remark}
	Let $G$ be a simply connected quasi-split simple group over a field $k$ of type $\Delta(G)=\Att_{2r-1}$ and let $K/k$ be the splitting field of $G$. Let $m_1\mid m_2\mid r$ be integers. Consider the pullback homomorphism
	\[
	f^*\colon \CH_{K}^*(G/\mut{2}{2m_2}) \to \CH_{K}^*(G/\mut{2}{2m_1})
	\]
	for the quotient morphism $f\colon G/\mut{2}{2m_1} \to G/\mut{2}{2m_2}$. It follows from the proof of Theorem~\ref{thm:conormed_group_answer} that $f^*$ is an isomorphism, if $\frac{m_2}{m_1}$ is odd, while for $\frac{m_2}{m_1}$ even one has $f^*(e_2)=0$ for the degree $1$ generator $e_2$ by Lemma~\ref{lem:Pic_pull_zero}.
\end{remark}

\subsection{Cokernel in the conormed characteristic sequence}

In this section we complement the previous results with a computation of the cokernel $\mc{C}^*_K(G)$ in the conormed characteristic sequence. This cokernel coincides with $\CH^*_K(G)$ for the simply connected and adjoint groups, while for intermediate groups it is only a subring of $\CH^*_K(G)$ in general.  We continue to use the notation from Sections~\ref{sec:multiplicative_groups},~\ref{sec:quasi-split_simple} and~\ref{sec:vector_bundles}.

\begin{theorem} \label{thm:cokernel_answer}
	Let $G$ be a non-split quasi-split simple group over a field $k$ and let $K/k$ be the splitting field of $G$. Suppose $\Delta(G)\neq {}^6\mathrm{D}_4$ and put $p:=[K:k]$. Then
	\[
	\mc{C}_K^*(G)\cong \FF_p[e_1,e_2,\hdots,e_s]/(e_1^{p^{k_1}},e_2^{p^{k_2}},\hdots,e_s^{p^{k_s}}),\quad \deg e_i=d_i,
	\]
	with the parameters $s,d_i,k_i$ given by the following table.
	\begin{center}
		\def\arraystretch{1.5}
		\begin{longtable}{l|l|l|l|l} 
			\caption{$\mc{C}^*_K(G)$ for a non-split quasi-split simple group $G$}
			\label{tab:coker}\\
			$\Delta(G)$ & $\pi_1(G)$ & $s$ & $d_i$, $i=1,2,\ldots, s$ & $k_i$, $i=1,2,\ldots, s$ \\
			\hline
			$\Att_{n}$, $n\ge 2$ & $\mut{2}{l}$, $l\,|\, (n+1)$, $l$ is odd & $[\frac{n}{2}]$ & $2i+1$ & $1$ \\
			
			$\Att_{2r-1}$, $r\ge 2$ odd & $\mut{2}{2m}$, $m\mid r$ &  $r$ & $2i-1$ & $1$ \\
			
			$\Att_{2r-1}$, $r\ge 2$ even & $\mut{2}{2m}$, $m\mid r$, $\frac{r}{m}$ is even &  $r$ & $2$, $i=1$ & $v_2(r)$, $i=1$ \\
			& & & $2i-1$, $i\ge 2$ & $1$, $i\ge 2$ \\		
			
			$\Att_{2r-1}$, $r\ge 2$ even & $\mut{2}{2m}$, $m\mid r$, $\frac{r}{m}$ is odd &  $r+1$ & $2$, $i=1$ & $v_2(r)$, $i=1$ \\
			& & & $2i-3$, $i\ge 2$ & $1$, $i\ge 2$ \\		
			
			\hline
			${}^2\mr{D}_{n}$, $n\ge 3$ & $1$ &  $[\frac{n+1}{2}]-1$ & $2i+1$ & $[\log_2 \frac{2n}{2i+1}]$ \\						
			
			& $\mu_2$&  $[\frac{n+1}{2}]$ & $2$, $i=1$ & $[\log_2 n]$ \\						
			&  &   & $2i-1$, $i\ge 2$ & $[\log_2 \frac{2n}{2i-1}]$ \\								
			
			${}^2\mr{D}_{2r+1}$, $r\ge 1$	& $\mut{2}{4}$ &  $r+2$ & $1$, $i=1$ & $1$, $i=1$ \\			
			& & &  $2$, $i=2$ & $[\log_2 (2r+1)]$, $i=2$ \\					
			&  & & $2i-3$, $i\ge 3$ & $[\log_2 \frac{4r+2}{2i-3}]$, $i\ge 3$ 		\\
			
			${}^2\mr{D}_{2r}$, $r\ge 2$	& $\mut{2}{2,2}$ &  $r$ & $2$, $i=1$ & $[\log_2 2r]$, $i=1$ \\			
			&  & & $2i-1$, $i\ge 2$ & $[\log_2 \frac{4r}{2i-1}]$, $i\ge 2$ 		\\
			
			\hline
			${}^3\mr{D}_4$ & $1$, $\mut{3}{2,2}$&  $1$ & $4$ & $1$\\
			\hline
			${}^2\mr{E}_6$ & $1$, $\mut{2}{3}$& $3$ & $3,5,9$ & $1,1,1$ 
		\end{longtable}
	\end{center}	
\end{theorem}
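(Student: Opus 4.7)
The plan is to deduce Theorem~\ref{thm:cokernel_answer} by combining the already-established relation between $\mc{C}^*_K(G)$ and $\CH^*_K(G)$ from Theorem~\ref{thm:cokernel} with the explicit description of $\CH^*_K(G)$ from Theorem~\ref{thm:conormed_group_answer}. Since the former already tells us that in each row of Table~\ref{tab:coker} the natural map $\Theta_G\colon\mc{C}^*_K(G)\to\CH^*_K(G)$ is either an isomorphism or a quotient by a relation of the form $x^2+a=0$ for an explicit $a$, the remaining work is essentially bookkeeping.

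First I would pick out all the rows where $\Theta_G$ is an isomorphism and read the answer off from Table~\ref{tab:conormed}. By Lemma~\ref{lem:quasi-split_quasi-trivial} and Theorem~\ref{thm:cokernel}(1), $\Theta_G$ is an isomorphism whenever $G$ is simply connected or adjoint; this handles the rows for simply connected ${}^2\mr{D}_n$, adjoint ${}^2\mr{D}_{2r+1}$ (with $\pi_1(G)=\mut{2}{4}$), adjoint ${}^2\mr{D}_{2r}$ (with $\pi_1(G)=\mut{2}{2,2}$), both rows for ${}^3\mr{D}_4$, and both rows for ${}^2\mr{E}_6$. Theorem~\ref{thm:cokernel}(2) further handles the $(\Att_n,\mut{2}{l})$ row with $l$ odd, the $(\Att_{2r-1},\mut{2}{2m})$ row with $r$ odd (where $m\mid r$ forces $m$ odd and hence $r/m$ odd), and the $(\Att_{2r-1},\mut{2}{2m})$ row with $r$ even and $r/m$ odd. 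In each such row the entry of Table~\ref{tab:coker} coincides verbatim with the entry of Table~\ref{tab:conormed}.

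The remaining rows come from the two genuinely intermediate families, $(\Att_{2r-1},\mut{2}{2m})$ with $r$ even and $r/m$ even, and $({}^2\mr{D}_n,\mu_2)$, and here I would invoke Theorem~\ref{thm:cokernel}(3)-(4) to get
\[
\CH^*_K(G)\cong\mc{C}^*_K(G)[x]/(x^2+a),\qquad \deg x=1,
\]
with $a=m^2\,\bar c_2(\mc{V}_{1,2r-1})$ in the first family and $a=\bar c_2(\mc{V}_{n-1,n})$ in the second. Working in $\FF_2$ the coefficient $m^2$ is $1$ or $0$ according to the parity of $m$; the ${}^2\mr{D}_n$ family always gives $a\neq 0$ in $\mc{C}^*_K(G)$ since $\CH^*_K(G)$ contains a degree-$1$ element of order at least $4$ by Table~\ref{tab:conormed}. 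In the subcase $a\neq 0$, the class $a$ is the square of a chosen degree-$1$ generator of $\CH^*_K(G)$; passing from $\CH^*_K(G)$ to $\mc{C}^*_K(G)$ amounts to replacing that degree-$1$ generator with a fresh degree-$2$ generator (its square), which shifts the degree from $1$ to $2$ and decreases $k_1$ by one. In the subcase $a=0$ (occurring only in the first family when $m$ is even), the relation becomes $x^2=0$, so $\mc{C}^*_K(G)$ is obtained from $\CH^*_K(G)$ by discarding the square-zero degree-$1$ generator, decreasing $s$ by one. Matching these descriptions against Table~\ref{tab:conormed} produces the remaining rows of Table~\ref{tab:coker}.

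The only step requiring care is verifying the identification of $a$ as (the square of) the distinguished degree-$1$ generator of $\CH^*_K(G)$, rather than as an arbitrary degree-$2$ element or zero in $\mc{C}^*_K(G)$. This is where Lemmas~\ref{lem:full_flag_codim2} and~\ref{lem:fullflag_cod2_D} enter: they pin down $\bar c_2(\mc{V}_{1,2r-1})$ and $\bar c_2(\mc{V}_{n-1,n})$ as the unique nontrivial degree-$2$ classes in $\CH^2_K(G/B)$ modulo squares of degree-$1$ classes, and combined with the known presentation $\mc{C}^*_K(\bar G)\cong \CH^*_K(\bar G)$ coming from \cite[\S~7]{SZ22}, this confirms which exponents $k_i$ to record in Table~\ref{tab:coker}.
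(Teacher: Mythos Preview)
Your reduction to Theorems~\ref{thm:cokernel} and~\ref{thm:conormed_group_answer} is correct for all rows where $\Theta_G$ is an isomorphism, and also for the subcase $a=0$ (where $\mc{C}^*_K(G)\cong\CH^*_K(G)/(x)$ and $x$ is the unique nonzero class in degree~$1$). The gap is in the subcase $a\neq 0$, i.e.\ $({}^2\mr{D}_n,\mu_2)$ and $(\Att_{2r-1},\mut{2}{2m})$ with $m$ odd and $r/m$ even. There you assert that ``passing from $\CH^*_K(G)$ to $\mc{C}^*_K(G)$ amounts to replacing the degree-$1$ generator with its square'', but the isomorphism $\CH^*_K(G)\cong\mc{C}^*_K(G)[x]/(x^2+a)$ does \emph{not} determine $\mc{C}^*_K(G)$ up to isomorphism. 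For a concrete failure take $({}^2\mr{D}_4,\mu_2)$: here $\CH^*_K(G)\cong\FF_2[e_1,e_2]/(e_1^8,e_2^2)$ with $\deg e_1=1$, $\deg e_2=3$, and both $R_1=\FF_2\langle e_1^2,e_2\rangle$ and $R_2=\FF_2\langle e_1^2,e_2+e_1^3\rangle$ are subrings over which $\CH^*_K(G)$ is free of rank~$2$ with basis $\{1,e_1\}$ and $e_1^2\in R_i$; yet $R_1\cong\FF_2[f_1,f_2]/(f_1^4,f_2^2)$ while in $R_2$ the degree-$3$ generator squares to $f_1^3\neq 0$, so $R_1\not\cong R_2$. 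Your last paragraph invokes Lemmas~\ref{lem:full_flag_codim2} and~\ref{lem:fullflag_cod2_D}, but those only locate $a$ inside $\CH^2_K(G/B)$; they do not tell you which of the candidate subrings of $\CH^*_K(G)$ is the image of $\Theta_G$.

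The paper closes this gap by bringing in the adjoint quotient $g\colon G\to\bar G$. From the square
\[
\xymatrix{
\mc{C}^*_K(\bar G)\ar[r]^-{\Theta_{\bar G}}_-{\simeq}\ar[d]_{g^*_{\mc{C}}} & \CH^*_K(\bar G)\ar[d]^{g^*}\\
\mc{C}^*_K(G)\ar[r]^-{\Theta_G} & \CH^*_K(G)
}
\]
one gets $\Theta_G(\mc{C}^*_K(G))=g^*(\CH^*_K(\bar G))$ (using that $g^*_{\mc{C}}$ is surjective, $\Theta_{\bar G}$ is an isomorphism, and $\Theta_G$ is injective). Since the generators of $\CH^*_K(\bar G)$ are explicit from Theorem~\ref{thm:conormed_group_answer} and one knows exactly which of them $g^*$ kills (via Lemma~\ref{lem:Pic_pull_zero}) or maps injectively (via Theorem~\ref{thm:group_via_cover}), this pins down $\mc{C}^*_K(G)$ as a ring. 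That extra step through $\bar G$ is what your argument is missing.
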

\begin{proof}
	If $G$ is adjoint, then this was computed in \cite[Section~8]{SZ22} (see especially Table~5 of loc. cit.). If $G$ is simply connected, i.e. $\pi_1(G)=1$, or if $(\Delta(G),\pi_1(G))=(\Att_{n},\mut{2}{l})$ with $l\mid n+1$ and either $l$ or $\frac{n+1}{l}$ being odd, then the claim follows from Theorems~\ref{thm:cokernel} and~\ref{thm:conormed_group_answer}. Thus the remaining cases are $(\Delta(G),\pi_1(G))=(\Att_{2r-1},\mut{2}{2m})$ with $m\mid r$ and $\frac{r}{m}$ being even, and $(\Delta(G),\pi_1(G))=({}^2\mr{D}_{n},\mu_2)$. Let $g\colon G\to \bar{G}$ be the adjoint quotient and consider the following commutative diagram.
	\begin{equation} \label{eq:cokernel_group_diagram}
		\begin{gathered}
			\xymatrix{
				\mc{C}^*_K(\bar{G}) \ar[d]_{g_{\mc{C}}^*} \ar[r]_(0.45){\Theta_{\bar{G}}}^(0.45){\simeq} & \CH^*_K(\bar{G}) \ar[d]^{g^*} \\
				\mc{C}^*_K(G) \ar[r]^(0.45){\Theta_{G}} & \CH^*_K(G)
			}
		\end{gathered}
	\end{equation}
	Here the horizontal morphisms are the canonical ones from Definition~\ref{def:Theta_coker}, $g^*$ is the pullback, and $g_{\mc{C}}^*$ is induced by the isomorphism $\CH^*_K(\bar{G}/\bar{B})\xrightarrow{\simeq} \CH^*_K(G/B)$ given by the isomorphism $G/B\xrightarrow{\simeq}\bar{G}/\bar{B}$ with $\bar{B}:=g(B)$. The homomorphism $g_{\mc{C}}^*$ is clearly surjective while the homomorphism $\Theta_{\bar{G}}$ is an isomorphism by Theorem~\ref{thm:cokernel}. Furthermore, Theorem~\ref{thm:cokernel} yields that $\Theta_{G}$ is injective. It follows that 
	\[
	\mc{C}^*_K\cong \Theta_{G}(\mc{C}^*_K)= g^*(\CH_K^*(\bar{G})).
	\]
	
	$(\Att_{2r-1},\mut{2}{2m})$, $m\mid r$, $\frac{r}{m}$ is odd.  Theorem~\ref{thm:group_via_cover} yields that $g^*$ is an isomorphism, thus $g_{\mc{C}}^*$ is an isomorphism as well.
	
	$(\Att_{2r-1},\mut{2}{2m})$, $m\mid r$, $\frac{r}{m}$ is even. There is a unique nontrivial element $e_2=c_1(\mc{L}(\bar{\varpi}_r))\in \CH^1_K(\bar{G})\cong \FF_2$, and we have $g^*(e_2)=0$ by Lemma~\ref{lem:Pic_pull_zero}. In this case $r$ is even and Theorem~\ref{thm:group_via_cover} yields that $g^*$ induces a monomorphism $\CH_K^*(\bar{G})/(e_2)\to \CH_K^*(G)$. Thus $\mc{C}^*_K(G)\cong \CH_K^*(\bar{G})/(e_2)$ and the claim follows.
	
	$({}^2\mr{D}_{2r},\mu_2)$. The homomorphism $g^*$ is injective by Theorem~\ref{thm:group_via_cover}, thus $g_{\mc{C}}^*$ is an isomorphism.
	
	$({}^2\mr{D}_{2r+1},\mu_2)$. There is a unique nontrivial element $e_1=c_1(\mc{L}(\bar{\varpi}_1))\in \CH^1_K(\bar{G})\cong \FF_2$, we have $g^*(e_1)=0$ by Lemma~\ref{lem:Pic_pull_zero}, and Theorem~\ref{thm:group_via_cover} yields that $g^*$ induces a monomorphism $\CH_K^*(\bar{G})/(e_1)\to \CH_K^*(G)$. Thus $\mc{C}^*_K(G)\cong \CH_K^*(\bar{G})/(e_1)$ and the claim follows.
\end{proof}

\section{Chow ring of a quasi-split simple group}
In this part of the article we compute the Chow ring $\Ch^*(G)$ for a non-split quasi-split simple group $G$. Here and below we put $\Ch^*(-):=\CH^*(-)\otimes \FF_p$ for a given prime number $p$. The computations proceed differently depending on the relation of $p$ and $G$.

If $p=[K:k]$ for the splitting field $K/k$ of $G$, then we analyse the exact sequence
\[
\Ch^*(G_K)\xrightarrow{\pi_*} \Ch^*(G) \to \CH_K^*(G) \to 0,
\]
where $\pi\colon G_K\to G$ is the projection. We show that $\pi_*=0$ unless $(\Delta(G),\pi_1(G))=({}^2\mr{D}_{2r},\mut{2}{2,2})$, thus in all but one cases the computation follows from the computation of $\CH_K^*(G)$ carried out before. In the case of $({}^2\mr{D}_{2r},\mut{2}{2,2})$ we perform a detailed analysis of the Chow ring $\Ch^*(\OGr(2r-1;q))$ of a quasi-split submaximal isotropic Grassmannian which yields the computation of $\Ch^*(G)$.

If $p$ is coprime to $[K:k]$, then we show that $\Ch^*(G)\cong \Ch^*(G_K)^{\Gal(K/k)}$ and explicitly compute the action of the Galois group and the  subalgebra of invariant elements.

In the last remaining case of $\Delta(G)={}^6\mr{D}_4$ we combine both the above approaches and also use some explicit computer-assisted computations with Schubert classes.

\subsection{Quasi-split submaximal orthogonal Grassmannian}

In this section we compute the ring $\Ch^*(\OGr(n-1;q)):=\CH^*(\OGr(n-1;q))\otimes \FF_2$ for the Grassmanian of isotropic subspaces of dimension $n-1$ in a dimension $2n$ vector space equipped with a quadratic form corresponding to a quasi-split simple group $G$ of type $\Delta(G)={}^2\mr{D}_{n}$. For this we use the isomorphism $\Ch^*(\OGr(n-1;q))\cong \Ch^*(\OGr(n-1;q)_K)^{\Gal(K/k)}$, where $K$ is the splitting field of $G$, the calculations done in the split case in \cite[\S~2]{Vi07} and \cite[Chapter~XVI]{EKM08} and compute explicitly the action of the Galois group. We further use the identification of $\OGr(n-1;q)$ as a $G$-homogeneous variety to show that 
\[
c_1(\mc{L}(\bar{\varpi}_{n-1}+\bar{\varpi}_{n}))^{2^{v_2(n)}}=0
\]
in $\Ch^*(G)$ as in the split case.

We continue to use the notation of Sections~\ref{sec:quasi-split_simple} and~\ref{sec:vector_bundles}, and in this section we put 
\[
\Ch^*(-):=\CH^*(-)\otimes \FF_2.
\]

\begin{proposition} \label{prop:OGr_Chow_ring}
	Let $G$ be a simply connected quasi-split simple group of type $\Delta(G)=\mathrm{D}_{n}$ or $\Delta(G)={}^2\mathrm{D}_{n}$ over a field $k$ with the splitting field $K/k$, and let $T\le B\le G$ be a maximal torus and a Borel subgroup. Consider the parabolic subgroup $B\le P=P_{n-1,n}\le G$ in the notation of~\refbr{expos:parabolic} and the line bundle $\mc{L}:=\mc{L}_P(\varpi_{n-1}+\varpi_{n})$ over $G/P$. Then the following holds.
	\begin{enumerate}
		\item For the adjoint quotient $G\to \bar{G}$ and the projection $\varphi\colon \bar{G}\to G/P$ we have
		\[
		c_1(\mc{L}(\bar{\varpi}_{n-1}+\bar{\varpi}_{n}))^{2^{v_2(n)}} = c_1(\varphi^*\mc{L})^{2^{v_2(n)}}=0
		\]
		in $\Ch^*(\bar{G})$, where $v_2$ is the $2$-adic valuation. 
		\item Suppose $\Delta(G)={}^2\mathrm{D}_{n}$ and put $s:=[\frac{n}{2}]+1$, $k_i:=[\log_2 \frac{2n-1}{2i-3}]$, $2\le i \le s$. Then there is an isomorphism
		\[
		\Theta\colon \FF_2 [\varepsilon_1,\varepsilon_2,\hdots, \varepsilon_{s}]/(\varepsilon_1^n,\varepsilon_2^{2^{k_2}},\hdots, \varepsilon_{s}^{2^{k_{s}}})\xrightarrow{\simeq} \Ch^*((G/P)_K)
		\]
		such that $\Theta(\varepsilon_1)=c_1(\mc{L}_K)$ and for the nontrivial element $\tau\in \Gal(K/k)$ we have
		\[
		\tau(\Theta(\varepsilon_1))=\Theta(\varepsilon_1), \quad\tau(\Theta(\varepsilon_i))=\Theta(\varepsilon_i)+\Theta(\varepsilon_1)^{2i-3},\, 2\le i\le s.          
		\]
	\end{enumerate}
\end{proposition}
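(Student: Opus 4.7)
The proposition has two parts which I would prove separately.

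For Part (2), the starting point is that $(G/P)_K$ is the split orthogonal Grassmannian $\OGr(n-1;q_K)$ of isotropic $(n-1)$-planes in a $2n$-dimensional quadratic space over $K$. Its Chow ring with $\FF_2$-coefficients is computed in \cite[\S~2]{Vi07} and \cite[Chapter~XVI]{EKM08}, and generators can be chosen so that $\varepsilon_1 = c_1(\mc{L}_K)$ and $\varepsilon_i$ for $i\ge 2$ is a specific Schubert class (or polynomial in Chern classes of the tautological bundle) of degree $2i-3$. This yields the isomorphism $\Theta$ together with the stated relations.

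To compute the Galois action, I would use that since $\varpi_{n-1}+\varpi_n$ is $\Gal(K/k)$-invariant, the line bundle $\mc{L}$ is defined over $k$ and so $\tau(\varepsilon_1)=\varepsilon_1$. For $i\ge 2$, I would express each $\varepsilon_i$ in the Schubert basis of $\Ch^*((G/P)_K)$ (in the spirit of the proof of Lemma~\ref{lem:fullflag_cod2_D}), apply $\tau$ termwise using that $\tau$ acts on the Weyl group $W(G_K)$ by swapping the simple reflections $s_{n-1}\leftrightarrow s_n$, and identify $\tau(\varepsilon_i)-\varepsilon_i$ as a sum of Schubert classes matching $\varepsilon_1^{2i-3}$ via Chevalley's formula. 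The main obstacle I anticipate is the coherent choice of the lifts $\varepsilon_i$: naive Schubert class representatives are likely to produce additional lower-order corrections under $\tau$, so one must adjust the $\varepsilon_i$ recursively by subtracting appropriate polynomials in $\varepsilon_1,\ldots,\varepsilon_{i-1}$ to arrive at the clean transformation formula $\tau(\varepsilon_i)=\varepsilon_i+\varepsilon_1^{2i-3}$.

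For Part (1), the equality $c_1(\mc{L}(\bar{\varpi}_{n-1}+\bar{\varpi}_n))=c_1(\varphi^*\mc{L})$ is immediate from the definitions of the two line bundles via their associated weights. For the vanishing in $\Ch^*(\bar{G})$, I would first observe that when $n$ is odd one has $2^{v_2(n)}=1$, and in that case the class $\bar{\varpi}_{n-1}+\bar{\varpi}_n$ lies in $\mc{X}^*(\bar{T})\subseteq \mc{X}^*(\tilde{T})$ (the images of $\bar{\varpi}_{n-1}$ and $\bar{\varpi}_n$ in $\mc{X}^*(\pi_1(G))\cong \Z/4$ are additive inverses), so the line bundle $\mc{L}(\bar{\varpi}_{n-1}+\bar{\varpi}_n)$ is trivial on $\bar{G}$ and $c_1$ vanishes. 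When $n$ is even the claim is a non-trivial nilpotence statement, and I would reduce to the split case over $K$ where $\bar{G}_K$ is the split adjoint group of type $\mr{D}_n$ and the vanishing is a classical consequence of Kac's computation~\cite{Kac85} (the $\mr{D}_n$ adjoint entry in Table~\ref{tab:main}). To descend to $k$ I would invoke the exact characteristic sequence for the adjoint group (Theorem~{\hyperref[thm:char_qs]{\ref*{thm:char_qs}.(1)}}) which realizes $\Ch^*(\bar{G})$ as a quotient of $\Ch^*(\bar{G}/\bar{B})$ modulo the image of the characteristic map, lift the problematic element to $c_1(\mc{L}_{\bar{B}}(\bar{\varpi}_{n-1}+\bar{\varpi}_n))^{2^{v_2(n)}}\in\Ch^*(\bar{G}/\bar{B})$, and exhibit it as an element of this ideal using the structure of $\CH_{\bar{T}}^*(\Spec k)$ for the quasi-trivial torus $\bar{T}$ (Proposition~\ref{prop:classifying_quasi-trivial}). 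The explicit construction of the requisite characteristic class over $k$ via Galois-invariant Chern classes of representations of $\bar{T}$ is the technical core of the descent and the main obstacle in Part (1).
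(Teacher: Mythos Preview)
Your plan for Part~(2) could in principle be carried out, but it diverges substantially from the paper's argument and you correctly flag the awkward recursive adjustment of the $\varepsilon_i$ as the main difficulty. The paper avoids Schubert calculus on $G/P$ altogether: it works with the correspondence diagram
\[
\OGr(1;q)_K \xleftarrow{f_1} \mr{F}(1,n-1;q)_K \xrightarrow{f_2} \OGr(n-1;q)_K
\]
(and the analogous one through $\OGr(n;q)_K$), defines the classes $z_i = (g_2)_*g_1^*(l_{n-1-i})$ on $\OGr(n;q)_K$ and $y_i = (f_2)_*f_1^*(l_{n-1-i})$ on $\OGr(n-1;q)_K$ using the cycle classes $l_j$ of isotropic subspaces on the quadric, and applies the projective bundle formula for $\beta\colon \mr{F}(n-1,n;q)_K \cong \PP(\mc{E}^\vee)_K \to \OGr(n;q)_K$. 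The Galois action is then read off from the fact that $\tau$ swaps the two rulings $l_{n-1},l_{n-1}'$ while fixing $l_{n-1-i}$ for $i\ge 1$; a relation of Vishik \cite[Lemma~2.6]{Vi07} translates this into the stated formula for $\tau(\Theta(\varepsilon_i))$ with no recursive correction needed. So the paper's route is geometrically cleaner and gives canonical representatives from the start.

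For Part~(1) your plan has a genuine gap in the even case. You propose to deduce the vanishing over $k$ from Kac's split computation over $K$ by exhibiting $c_1(\mc{L}_{\bar{B}}(\varpi_{n-1}+\varpi_n))^{2^{v_2(n)}}$ as lying in the image of the characteristic map for the quasi-trivial torus $\bar{T}$; you call this ``the technical core'' but do not indicate how to construct the required class in $\CH_{\bar T}^*(\Spec k)$, and the pullback $\Ch^*(\bar G)\to\Ch^*(\bar G_K)$ is \emph{not} injective in the case $({}^2\mr{D}_{2r},\mut{2}{2,2})$ (this non-injectivity is precisely what forces the separate treatment in Theorem~\ref{thm:D2r_ad_answer}), so a naive descent is blocked. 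The paper bypasses this entirely with a uniform argument that makes no case distinction on the parity of $n$: it proves the identity $c_1(\mc{L})^N = c_N(\mc{T}_{G/P})$ in $\Ch^*(G/P)$ with $N=2^{v_2(n)}$, by checking it after base change to $K$ (legitimate since $\Ch^*(G/P)\hookrightarrow\Ch^*((G/P)_K)$ by Lemma~\ref{lem:Chow_quasi-split_homogeneous}) and there computing $c_t(\mc{T}_{\PP(\mc{E}^\vee)}) = c_t(\struct(-1)^{\oplus n})$ via the Euler sequence and the vanishing of $c_t(\mc{E}^\vee)$, $c_t(\mc{T}_{\OGr(n;q)})$ from \cite[86.13, 87.8]{EKM08}. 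Since $\mc{T}_{G/P}$ is $\bar{G}$-equivariant, $\varphi^*\mc{T}_{G/P}$ is trivial on $\bar{G}$, so $\varphi^*c_N(\mc{T}_{G/P})=0$. This tangent-bundle trick is the key idea your proposal is missing.
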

\begin{proof}
	First we recall the necessary geometric setting (see e.g. \cite[Chapter~XVI]{EKM08} and \cite[\S~2]{Vi07}). By \cite[Example~27.10]{KMRT98} we have $G\cong \Spin(V,q)$, where $(V,q)$ is a vector space of dimension $2n$ equipped with a non-degenerate quadratic form. For integers $1\le i_1<i_2<\hdots i_l\le n$ we denote by $\mr{F}(i_1,i_2,\hdots,i_l;q)$ the variety of totally isotropic flags in $(V,q)$ of the respective dimensions, and put $\OGr(i;q):=\mr{F}(i;q)$. The standard action of $G$ on $V$ yields an isomorphism 
	\[
	\OGr(n-1;q)\cong G/P.
	\]
	The projection 
	\[
	\alpha\colon \mr{F}(n-1,n;q) \to \OGr(n-1;q)
	\]
	is a finite morphism of degree $2$, and there is a $G_K$-equivariant isomorphism 
	\begin{equation} \label{eq:FK_splits}
		\mr{F}(n-1,n;q)_K\cong \OGr(n-1;q)_K \sqcup \OGr(n-1;q)_K \cong (G/P)_K \sqcup (G/P)_K
	\end{equation}
	such that $\alpha_K$ is the obvious projection. The projection 
	\[
	\beta\colon \mr{F}(n-1,n;q) \to \OGr(n;q)
	\]
	is canonically identified with the projective bundle 
	\[
	\beta\colon \PP(\mc{E}^{\vee})\to \OGr(n;q)
	\]
	for the tautological rank $n$ vector bundle $\mc{E}$ over $\OGr(n;q)$. For the tautological line bundle $\struct(-1)$ over $\PP(\mc{E}^{\vee})\cong \mr{F}(n-1,n;q)$ one has 
	\begin{equation} \label{eq:taut_over_Fl}
		\struct(-1)_K \cong \mc{L}_{P_K}(\varpi_n-\varpi_{n-1}) \sqcup \mc{L}_{P_K}(\varpi_{n-1}-\varpi_{n})
	\end{equation}
	under isomorphism~\refbr{eq:FK_splits}. This decomposition immediately follows from a straightforward computation of the action of the torus $T_K$ on the fibers of $\struct(-1)_K$ over the two $P_K$-stable points of $\mr{F}(n-1,n;q)_K$.
	
	\textbf{(1)}
	We claim that in $\Ch^*(G/P)$ for every $i\in \NN_0$ one has
	\begin{equation}\label{eq:tangent_valuation}
		c_i(\mc{L}^{\oplus n}) = c_i(\mc{T}_{G/P})
	\end{equation}
	for the tangent bundle $\mc{T}_{G/P}$. Suppose that equality~\refbr{eq:tangent_valuation} holds and put $N:=2^{v_2(n)}$. Then we have 
	\[
	c_N(\mc{T}_{G/P}) = c_N(\mc{L}^{\oplus n}) = {n \choose N} c_1(\mc{L})^N = c_1(\mc{L})^N
	\]
	with the last equality arising from the fact that ${n \choose N}$ is odd by the choice of $N$. Let $\bar{P}\le \bar{G}$ be the image of $P$ in the adjoint quotient, then $G/P\cong \bar{G}/\bar{P}$. The tangent bundle $T_{\bar{G}/\bar{P}}$ clearly admits the structure of a $\bar{G}$-equivariant vector bundle over $\bar{G}/\bar{P}$, thus for the projection $\varphi\colon \bar{G}\to \bar{G}/\bar{P}\cong G/P$ the vector bundle $\varphi^*(T_{G/P})$ is trivial. Then
	\[
	c_1(\varphi^*\mc{L})^{N} = \varphi^*(c_1(\mc{L})^{N}) = \varphi^*(c_N(\mc{T}_{G/P})) = c_N(\varphi^*\mc{T}_{G/P})=0.
	\]
	
	Now we turn to the proof of the equality~\refbr{eq:tangent_valuation}. The injectivity part of Lemma~\ref{lem:Chow_quasi-split_homogeneous} yields that it suffices to prove the equality after a base change to $K$, so till the end of the proof of claim (1) we assume that $G$ is split, i.e. $K=k$ and the quadratic form $q$ is hyperbolic. By \cite[Example~104.20]{EKM08} we have a short exact sequence of vector bundles
	\[
	0\to \struct_{\PP(\mc{E}^\vee)} \to \struct(-1)\otimes \beta^*\mc{E}^\vee \to \mc{T}_\beta \to 0,
	\]
	where $\beta\colon \PP(\mc{E}^\vee)\to \OGr(n;q)$ is the projection and $\mc{T}_\beta$ is the relative tangent bundle that is defined by the short exact sequence
	\[
	0 \to \mc{T}_\beta \to \mc{T}_{\PP(\mc{E}^\vee)} \to \beta^*\mc{T}_{\OGr(n;q)} \to 0.
	\]
	For a vector bundle $\mc{V}$ over $\PP(\mc{E}^\vee)$ put 
	\[
	c_t(\mc{V}):=1+c_1(\mc{V})t+c_2(\mc{V})t^2+\hdots \in \Ch^*(\PP(\mc{E}^\vee)) [[t]]
	\]
	to be the total Chern class mod $2$. Multiplicativity of the total Chern classes with respect to the short exact sequences applied to the sequences above yields
	\[
	c_t(\mc{T}_{\PP(\mc{E}^\vee)}) = c_t(\beta^*\mc{T}_{\OGr(n;q)}) \cdot c_t(\struct(-1)\otimes \beta^*\mc{E}^\vee).
	\]
	Furthermore, it follows from \cite[Proposition~86.13 and Exercise~87.8]{EKM08} that $c_t(\beta^*\mc{E}^\vee)=1$ and $ c_t(\beta^*\mc{T}_{\OGr(n;q)})=1$. Hence 
	\[
	c_t(\mc{T}_{\PP(\mc{E}^\vee)}) = c_t(\struct(-1)\otimes \beta^*\mc{E}^\vee)= c_t(\struct(-1)^{\oplus n}).
	\]
	Restricting to a connected component of $\PP(\mc{E}^\vee)$ via decomposition~\refbr{eq:FK_splits} and using~\refbr{eq:taut_over_Fl} we obtain
	\[
	c_i(\mc{T}_{G/P}) = c_i(\mc{L}_P(\varpi_n-\varpi_{n-1})^{\oplus n}).
	\]
	In the mod $2$ Chow groups we have $c_1(\mc{L}_P(\varpi_{n}-\varpi_{n-1}))=c_1(\mc{L}_P(\varpi_{n}+\varpi_{n-1}))=c_1(\mc{L})$, thus
	\[
	c_i(\mc{T}_{G/P}) = c_i(\mc{L}_P(\varpi_{n}-\varpi_{n-1})^{\oplus n}) = c_i(\mc{L}^{\oplus n}).
	\]
	
	\textbf{(2)} Consider the following diagram with all the morphisms being projections.
	\begin{equation}\label{eq:diagram_z}
		\begin{gathered}
			\xymatrix{
				& \mr{F}(1,n-1;q)_K \ar[dl]_{f_1} \ar[r]^{f_2} & \OGr(n-1;q)_K  &\\
				\OGr(1;q)_K & & & \mr{F}(n-1,n;q)_K \ar[lu]_{\alpha_K} \ar[ld]_{\beta_K} \\
				& \mr{F}(1,n;q)_K \ar[ul]_{g_1} \ar[r]^{g_2} & \OGr(n;q)_K& 
			}
		\end{gathered}
	\end{equation}
	The variety $\OGr(1;q)_K$ is a smooth projective hyperbolic quadric of dimension $2n-2$, thus \cite[Proposition~68.1]{EKM08} yields
	\[
	\Ch^i(\OGr(1;q)_K) = \begin{cases}
		\FF_2 \cdot h^i, & 0 \le i\le n-2,\\
		\FF_2 \cdot h^{n-1}\oplus \FF_2 \cdot l_{n-1}, & i=n-1,\\		 
		\FF_2 \cdot l_{2n-2-i}, & n \le i\le 2n-2,\\		 
	\end{cases}
	\]
	where $h\in \Ch^1(\OGr(1;q)_K)$ is the class of a hyperplane section and $l_{2n-2-i}\in \Ch^i(\OGr(1;q)_K)$ is the class of a totally isotropic projective subspace in $\OGr(1;q)_K$ of dimension $2n-2-i$. Note that there are two different classes $l_{n-1},l_{n-1}'\in \Ch^{n-1}(\OGr(1;q)_K)$ of totally isotropic projective subspaces in $\OGr(1;q)_K$ of dimension $n-1$ and one has $l_{n-1}+l_{n-1}' = h^{n-1}$. Put 
	\begin{gather*}
		z_0:=(g_2)_* g_1^* (l_{n-1})\in \Ch^{0}(\OGr(n;q)_K),\quad z'_0:=(g_2)_* g_1^* (l'_{n-1})\in \Ch^{0}(\OGr(n;q)_K),\\
		z_i:= (g_2)_* g_1^* (l_{n-1-i})\in \Ch^{i}(\OGr(n;q)_K),\quad 1\le i\le n-1,
	\end{gather*}
	then \cite[Proposition~86.16]{EKM08} yields that
	\begin{gather*}
		\FF_2[\varepsilon_0,\varepsilon'_0,\varepsilon_2,\varepsilon_3,\hdots,\varepsilon_{s}]/(\varepsilon_0^2-\varepsilon_0,(\varepsilon'_0)^2-\varepsilon'_0, \varepsilon_0\varepsilon'_0,\varepsilon_2^{2^{k_2}},\hdots,\varepsilon_{s}^{2^{k_{s}}})\xrightarrow{\simeq} \Ch^*(\OGr(n;q)_K),\\
		\varepsilon_0\mapsto z_0,\quad \varepsilon'_0\mapsto z'_0,\quad \varepsilon_i \mapsto z_{2i-3},\, 2\le i\le s,
	\end{gather*}
	is an isomorphism. Applying the projective bundle formula to 
	\[
	\beta\colon \mr{F}(n-1,n;q)_K\cong \PP(\mc{E}^\vee)_K\to \OGr(n;q)_K
	\]
	and using \cite[Proposition~86.13]{EKM08} we obtain an isomorphism
	\begin{gather*}
		\Psi\colon \FF_2[\varepsilon_0,\varepsilon'_0,\varepsilon_1,\varepsilon_1,\hdots,\varepsilon_{s}]/(\varepsilon_0^2-\varepsilon_0,(\varepsilon'_0)^2-\varepsilon'_0, \varepsilon_0\varepsilon'_0,\varepsilon_1^n,\varepsilon_2^{2^{k_2}},\hdots,\varepsilon_{s}^{2^{k_{s}}})\xrightarrow{\simeq} \Ch^*(\mr{F}(n-1,n;q)_K),\\
		\varepsilon_0\mapsto \beta_K^*(z_0),\quad \varepsilon'_0\mapsto \beta_K^*(z'_0),\quad \varepsilon_1\mapsto c_1(\struct(-1)_K),\quad \varepsilon_i \mapsto \beta_K^*(z_{2i-3}),\, 2\le i\le s,
	\end{gather*}		
	where $c_1(\struct(-1)_K)$ is the first Chern class of the tautological line bundle over $\mr{F}(n-1,n;q)_K$.
	
	In view of decomposition~\refbr{eq:FK_splits} the composition
	\begin{multline*}
		\Ch^*((G/P)_K)\cong \Ch^*(\OGr(n-1;q)_K)\xrightarrow{\alpha_K^*} \Ch^*(\mr{F}(n-1,n;q)_K) \xrightarrow{\Psi^{-1}}\\
		\xrightarrow{\Psi^{-1}}
		\FF_2[\varepsilon_0,\varepsilon'_0,\varepsilon_1,\hdots,\varepsilon_{s}]/(\varepsilon_0^2-\varepsilon_0,(\varepsilon'_0)^2-\varepsilon'_0, \varepsilon_0\varepsilon'_0,\varepsilon_1^n,\varepsilon_2^{2^{k_2}},\hdots,\varepsilon_{s}^{2^{k_{s}}}) \xrightarrow{\Pi} \\
		\xrightarrow{\Pi}
		\FF_2[\varepsilon_1,\varepsilon_2,\hdots,\varepsilon_{s}]/(\varepsilon_1^n,\varepsilon_2^{2^{k_2}},\hdots,\varepsilon_{s}^{2^{k_{s}}})
	\end{multline*}
	with the projection $\Pi$ given by $\Pi(e_0)=1$, $\Pi(e'_0)=0$, is an isomorphism. Denote by
	\[
	\Theta\colon \FF_2[\varepsilon_1,\varepsilon_2,\hdots,\varepsilon_{s}]/(\varepsilon_1^n,\varepsilon_2^{2^{k_2}},\hdots,\varepsilon_{s}^{2^{k_{s}}}) \to \Ch^*((G/P)_K)
	\]
	the inverse isomorphism. Since
	\[
	c_1(\mc{L}_K) = c_1(\mc{L}_{P_K}(\varpi_n-\varpi_{n-1})) =c_1(\mc{L}_{P_K}(\varpi_{n-1}-\varpi_{n}))
	\]
	in $\Ch^*((G/P)_K)$, it follows from  decomposition~\refbr{eq:taut_over_Fl} that $\alpha_K^*(c_1(\mc{L}_K))=c_1(\struct(-1)_K)$ and
	\[
	\Theta(\varepsilon_1)=c_1(\mc{L}_K).
	\]
	Furthermore, for the nontrivial element $\tau\in \Gal(K/k)$ we have
	\[
	\tau(\Theta(\varepsilon_1))=\tau(c_1(\mc{L}_K))=c_1(\mc{L}_K) = \Theta(\varepsilon_1),
	\]
	since $c_1(\mc{L}_K)$ is obtained by a base change from $k$ to $K$.
	
	For the computation of the action of $\Gal(K/k)$ on $\Theta(\varepsilon_i)$, $2\le i\le s$, note that all the morphisms in diagram~\refbr{eq:diagram_z} and the isomorphism $(G/P)_K\cong \OGr(n-1;q)_K$ are given by a base change of the corresponding morphisms over $k$, thus the action of $\Gal(K/k)$ commutes with the respective pullbacks and pushforwards. For the nontrivial element $\tau\in \Gal(K/k)$ we have
	\[
	\tau(l_{n-1}) = l'_{n-1},\quad \tau(l'_{n-1}) = l_{n-1},\quad \tau(l_{n-1-i}) = l_{n-1-i},\, 1\le i\le n-1,
	\]
	with the latter equality arising from the fact that $\Ch^{n-1+i}(\OGr(1;q))\cong \FF_2$, $1\le i\le n-1$, and $\FF_2$ admits only a trivial action of $\Gal(K/k)$. Thus
	\[
	\tau(\beta_K^*(z_0)) = \beta_K^*(z'_0),\quad \tau(\beta_K^*(z_i)) = \beta_K^*(z_i),\, 1\le i\le n-1.
	\]
	Furthermore, put $x:=c_1(\struct(-1)_K)\in \Ch^1(\mr{F}(n-1,n;q)_K)$, then we have $\tau(x)=x$, since $x$ is obtained by a base change from $k$. Consider the elements
	\begin{gather*}
		y_1:=(f_2)_* f_1^* (l_{n-1})\in \Ch^{1}(\OGr(n-1;q)_K),\quad y'_1:=(f_2)_* f_1^* (l'_{n-1})\in \Ch^{1}(\OGr(n-1;q)_K),\\
		y_i:= (f_2)_* f_1^* (l_{n-i})\in \Ch^{i}(\OGr(n-1;q)_K),\quad 2\le i\le n-1.
	\end{gather*}
	It follows from \cite[Lemma~2.6]{Vi07} (loc. cit. has a blanket assumption that $\chark k\neq 2$, but the same proof of the lemma works in general) that
	\[
	\alpha_K^*(y'_1) = \beta_K^* (z_1) + x \cdot \beta_K^*(z'_{0}),\quad \alpha_K^*(y_i) = \beta_K^* (z_i) + x\cdot \beta_K^*(z_{i-1}), \, 1\le i\le n-1.
	\]
	In particular, we have $x=\alpha_K^*(y_1+y'_1)$. Then
	\[
	\beta_K^*(z_i)+x^i\cdot \beta_K^*(z'_0) =  \sum_{j=0}^{i-2} x^j \cdot \alpha_K^*(y_{i-j}) + x^{i-1} \cdot \alpha_K^*(y'_1) = \alpha_K^*(w_i),\quad w_i:=\sum_{j=0}^{i-2} (y_1+y'_1)^j y_{i-j} + (y_1+y'_1)^{i-1} y'_1,
	\]
	where $w_i\in \Ch^*(\OGr(n-1;q)_K)$. Thus for $2\le i\le s$ we have
	\begin{gather*}
		\Theta^{-1}(w_{2i-3}) = \Pi \circ \Psi^{-1} \circ \alpha_K^*(w_{2i-3}) = \Pi \circ \Psi^{-1} (\beta_K^*(z_{2i-3})+x^{2i-3}\cdot \beta_K^*(z'_0)) = \varepsilon_{i},\\
		\begin{multlined}
			\Theta^{-1}(\tau(w_{2i-3})) = \Pi \circ \Psi^{-1} \circ \alpha_K^*(\tau(w_{2i-3})) = \Pi \circ \Psi^{-1} \circ \tau (\beta_K^*(z_{2i-3})+x^{2i-3}\cdot \beta_K^*(z'_0)) = \\ = \Pi \circ \Psi^{-1} (\beta_K^*(z_{2i-3})+x^{2i-3}\cdot \beta_K^*(z_0)) = \varepsilon_i + \varepsilon_1^{2i-3}.
		\end{multlined}
	\end{gather*}
	It follows that $\tau(\Theta(\varepsilon_i)) = \Theta(\varepsilon_i) + \Theta(\varepsilon_1)^{2i-3}$ for $2\le i\le s$.
\end{proof}

\begin{lemma} \label{lem:invariants_OGr}
	For $n\in \NN$ put 
	\[
	s:=\left[\frac{n}{2}\right]+1,\quad k_1:=v_2(n),\quad k_i:=\left[\log_2 \frac{2n-1}{2i-3}\right],\,2\le i \le s,
	\]
	where $v_2$ is the $2$-adic valuation. Consider the algebra
	\[
	\mc{P}:=\mc{P}_{n}:= \FF_2 [\varepsilon_1,\varepsilon_2,\hdots, \varepsilon_s]/(\varepsilon_1^n,\varepsilon_2^{2^{k_2}},\hdots, \varepsilon_s^{2^{k_s}}),
	\]
	denote by $\bar{\varepsilon}_i$ the image of $\varepsilon_{i}$ in the quotient algebra $\mc{P}$, let $\tau \colon \mc{P}\to \mc{P}$ be the involution given by $\tau(\bar{\varepsilon}_1)=\bar{\varepsilon}_1$, $\tau(\bar{\varepsilon}_i)=\bar{\varepsilon}_i+\bar{\varepsilon}_1^{2i-3}$, $2\le i\le s$, and put 
	\[
	\mc{P}^\tau :=\{x\in \mc{P}\mid \tau(x)=x\}\subseteq \mc{P}
	\]
	to be the subalgebra of $\tau$-invariant elements.     For $1\le m\le n$ consider the sequence
	\[
	\mc{P}/(\bar{\varepsilon}_1^m) \xrightarrow{\id + \tau} \mc{P}^\tau /(\bar{\varepsilon}_1^m)\xrightarrow{J} \mc{P}/(\bar{\varepsilon}_1^m),
	\]
	where $J$ is induced by the inclusion $\mc{P}^\tau \subseteq \mc{P}$. Then the following holds.
	\begin{enumerate}
		\item $(\id + \tau)(\mc{P}/(\bar{\varepsilon}_1^m))=\bar{\varepsilon}_1\cdot (\mc{P}^\tau /(\bar{\varepsilon}_1^m))$.		
		\item $J$ restricted to $(\id + \tau)(\mc{P}/(\bar{\varepsilon}_1^m))$ is injective.
		\item There is an isomorphism
		\begin{gather*}
			\mc{P}^\tau /(\bar{\varepsilon}_1^{m}) \cong \FF_2[e_1,e_2,\hdots,e_s,x]/(e_1^m, e_1\cdot x, x^2, h_2,h_3,\hdots,h_s),\\
			h_2:= e_1^{2^{k_2}-n}\cdot x+ \sum_{j=0}^{k_2-1} e_1^{2^{k_2}-2^{j+1}} \cdot e_2^{2^{j}},
			\\
			h_i:=e_i^{2^{k_i}} +  e_1^{(2i-3)\cdot 2^{k_i}-n}\cdot x+e_1^{(2i-4)\cdot 2^{k_i}}\cdot \left(\sum_{j=0}^{k_i-1} e_1^{2^{k_i}-2^{j+1}} \cdot e_2^{2^{j}}\right),\quad 3\le i\le s,
		\end{gather*}
		given by $e_1\mapsto \bar{\varepsilon}_1$, $x\mapsto \bar{\varepsilon}_1^{n-1}\bar{\varepsilon}_2$, $e_2\mapsto \bar{\varepsilon}_2^2+\bar{\varepsilon}_1\bar{\varepsilon}_2$, $e_i\mapsto \bar{\varepsilon}_i+\bar{\varepsilon}_1^{2i-4}\bar{\varepsilon}_2$, $3\le i\le s$.
		In particular, we have
		\[
		\mc{P}^\tau /(\bar{\varepsilon}_1^{2^{k_1}}) \cong
		\begin{cases}
			\FF_2[e_2,\hdots,e_s,x]/(e_2^{2^{k'_2}},\hdots, e_s^{2^{k'_s}}, x^2), & \text{$n$ odd},\\
			\FF_2[e_1,\hdots,e_s]/(e_1^{2^{k_1'}}, e_2^{2^{k'_2}},\hdots, e_s^{2^{k'_s}}, e_1\cdot e_l^{2^{k_l'-1}}), & \text{$n$ even, $n\neq 2^{k_1}$},\\
			\FF_2[e_1,\hdots,e_s]/\left(e_1^{2^{k_1'}}, e_2^{2^{k'_2}},\hdots, e_s^{2^{k'_s}}, e_1\cdot \left(\sum_{j=0}^{k'_2-1} e_1^{2^{k_2'}-2^{j+1}} \cdot e_2^{2^{j}}\right)\right), & n=2^{k_1},\\            
		\end{cases}
		\]
		where $l\in\NN$ is such that $n=2^k(2l-3)$ and
		\[
		k'_i=
		\begin{cases}
			k_i, & \text{$i\neq 2$, $l$},\\
			k_i-1, & i=2\neq l,\\
			k_i+1, & l=i\neq 1,\\
			k_i, & \text{$i=l=2$}.
		\end{cases}
		\]
		Note that $k_1'=v_2(n)$, $k'_2=[\log_2 n]$ and $k'_i=[\log_2 \frac{2n}{2i-3}]$, $3\le i\le s$.
	\end{enumerate}
\end{lemma}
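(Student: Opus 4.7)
I would begin by verifying directly that $e_1$, $x$, $e_2, \hdots, e_s$ are $\tau$-invariant, using $\tau(\bar\varepsilon_1) = \bar\varepsilon_1$, $\tau(\bar\varepsilon_i) = \bar\varepsilon_i + \bar\varepsilon_1^{2i-3}$ and $\bar\varepsilon_1^n = 0$. For instance, $\tau(\bar\varepsilon_1^{n-1}\bar\varepsilon_2) = \bar\varepsilon_1^{n-1}(\bar\varepsilon_2+\bar\varepsilon_1) = \bar\varepsilon_1^{n-1}\bar\varepsilon_2$, and the checks for $e_2$ and $e_i$ with $i\ge 3$ are analogous, with the two copies of $\bar\varepsilon_1^{2i-3}$ cancelling. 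The relations $e_1 x = 0$ and $x^2 = 0$ hold trivially in $\mc{P}/(\bar\varepsilon_1^m)$ whenever $m\le n$, and each $h_i = 0$ follows from the Frobenius expansion $e_i^{2^{k_i}} = \bar\varepsilon_i^{2^{k_i}} + \bar\varepsilon_1^{(2i-4) 2^{k_i}}\bar\varepsilon_2^{2^{k_i}}$, the vanishing $\bar\varepsilon_i^{2^{k_i}} = 0$ in $\mc{P}$, and the iterated identity $\bar\varepsilon_2^{2^{j+1}} = e_2^{2^j} + \bar\varepsilon_1^{2^j}\bar\varepsilon_2^{2^j}$ (which is itself the Frobenius of $e_2 = \bar\varepsilon_2^2 + \bar\varepsilon_1\bar\varepsilon_2$).

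The central tool for (1) and (2) is the operator $D := \id + \tau$. Since $\tau$ fixes $\bar\varepsilon_1$, $D$ is $A$-linear for $A = \FF_2[\bar\varepsilon_1]/\bar\varepsilon_1^n$, and $D^2 = 0$ in characteristic $2$, so $\mathrm{Im}\, D \subseteq \mc{P}^\tau$. The non-derivation Leibniz rule $D(uv) = u D(v) + v D(u) + D(u) D(v)$ together with $D(\bar\varepsilon_2) = \bar\varepsilon_1$ gives $D(\bar\varepsilon_2 z) = \bar\varepsilon_1 z$ for every $z \in \mc{P}^\tau$, producing $\bar\varepsilon_1 \mc{P}^\tau \subseteq \mathrm{Im}\, D$. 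For the reverse inclusion, I would induct on the $(\bar\varepsilon_2, \hdots, \bar\varepsilon_s)$-monomial degree, applying the Leibniz rule to $\bar\varepsilon_i y$ and rewriting the non-invariant summand $\bar\varepsilon_i D(y)$ via $\bar\varepsilon_i = e_i + \bar\varepsilon_1^{2i-4}\bar\varepsilon_2$ (or its analogue for $i = 2$) to absorb it into $\bar\varepsilon_1 \mc{P}^\tau$ modulo the inductive image. Part~(2) then amounts to the assertion that $z \in \mc{P}^\tau$ and $\bar\varepsilon_1 z \in \bar\varepsilon_1^m \mc{P}$ together force $\bar\varepsilon_1 z \in \bar\varepsilon_1^m \mc{P}^\tau$; I would prove this by writing $z = \bar\varepsilon_1^{m-1} w$ using $\mathrm{Ann}_A(\bar\varepsilon_1) = \bar\varepsilon_1^{n-1}A$ and iterating the lifting procedure of~(1) to produce a $\tau$-invariant $w'$ with $\bar\varepsilon_1^m w = \bar\varepsilon_1^m w'$.

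For part~(3), once the generators are shown to be $\tau$-invariant and the relations to hold, surjectivity of the natural map from the presented algebra onto $\mc{P}^\tau/(\bar\varepsilon_1^m)$ follows from the same inductive rewriting as in part~(1): any $\tau$-invariant monomial $\bar\varepsilon_1^j \bar\varepsilon_2^{a_2} \cdots \bar\varepsilon_s^{a_s}$ can be expressed modulo $\bar\varepsilon_1^m \mc{P}^\tau$ in terms of $e_1, e_2, \hdots, e_s$ and $x$. Injectivity is obtained by a dimension count: using~(1) and~(2) one computes $\dim_{\FF_2} \mc{P}^\tau/(\bar\varepsilon_1^m)$ via the $A$-module structure of $\mc{P}$ together with the dimensions of $\mc{P}/(\bar\varepsilon_1^m)$ and $\mc{P}^\tau/\bar\varepsilon_1 \mc{P}^\tau$, and compares with the dimension of the presented quotient algebra (computed via a monomial basis with the relations $h_i$ as leading reductions). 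The three simplified presentations for $m = 2^{k_1}$ follow by eliminating variables: $h_l$ with $l$ determined by $n = 2^{k_1}(2l-3)$ permits one to express either $x$ (in the odd-$n$ case, where $e_1^m$ already forces $x$ into the kernel) or $e_1$ (in the boundary case $n = 2^{k_1}$) in terms of the remaining generators, while in the remaining even case the relation $h_l$ reduces to the single equation $e_1 \cdot e_l^{2^{k_l-1}} = 0$ after absorbing $x$. The hardest step will be the careful induction in part~(1) establishing $\mathrm{Im}\, D \subseteq \bar\varepsilon_1 \mc{P}^\tau$ (rather than the strictly larger $\bar\varepsilon_1 \mc{P} \cap \mc{P}^\tau$, which one sees already in tiny examples such as $n=3$, $s=2$, $k_2=1$ where $\bar\varepsilon_1^2\bar\varepsilon_2 \in \bar\varepsilon_1\mc{P} \cap \mc{P}^\tau \setminus \bar\varepsilon_1\mc{P}^\tau$), together with the corresponding bookkeeping of correction terms in the elimination step of part~(3).
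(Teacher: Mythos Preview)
Your overall strategy is correct and close to the paper's, but the paper organises the argument around one structural fact that you only touch obliquely: the decomposition $\mc{P}=\mc{P}^\tau+\mc{P}^\tau\bar\varepsilon_2$. The paper proves this by induction on the lexicographic multidegree, using precisely your substitutions $\bar\varepsilon_i=e_i+\bar\varepsilon_1^{2i-4}\bar\varepsilon_2$ (for $i\ge3$) and $\bar\varepsilon_2^2=e_2+\bar\varepsilon_1\bar\varepsilon_2$. Once this is in hand, part~(1) is a one-liner: $D=\id+\tau$ is $\mc{P}^\tau$-linear, $D(1)=0$, $D(\bar\varepsilon_2)=\bar\varepsilon_1$, so $D(\mc{P})=\bar\varepsilon_1\mc{P}^\tau$. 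Your proposed Leibniz-rule induction for the inclusion $\mathrm{Im}\,D\subseteq\bar\varepsilon_1\mc{P}^\tau$ does not close as stated: after writing $D(\bar\varepsilon_i y)=\bar\varepsilon_i D(y)+\bar\varepsilon_1^{2i-3}\tau(y)$ and using $D(y)=\bar\varepsilon_1 z$ with $z\in\mc{P}^\tau$, you are left with a term $\bar\varepsilon_1^{2i-3}\bar\varepsilon_2 z$ (from $\bar\varepsilon_i=e_i+\bar\varepsilon_1^{2i-4}\bar\varepsilon_2$) plus $\bar\varepsilon_1^{2i-3}\tau(y)$, and neither lies in $\bar\varepsilon_1\mc{P}^\tau$ separately. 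To make the cancellation work you end up needing $y\in\mc{P}^\tau+\mc{P}^\tau\bar\varepsilon_2$, which is exactly the paper's decomposition. For part~(2), the paper avoids your module-theoretic lifting entirely: it writes down an explicit basis of $\mc{P}^\tau$ (the monomials $\bar e_1^{i_1}\cdots\bar e_s^{i_s}$ and $\bar e_2^{i_2}\cdots\bar e_s^{i_s}\bar x$ with the obvious ranges), observes that multiplication by $\bar\varepsilon_1$ sends each basis vector either to $0$ or to another basis vector, and reads off both a basis of $\mc{P}^\tau/(\bar\varepsilon_1^m)$ and the injectivity of $J$ on $\bar\varepsilon_1\cdot\mc{P}^\tau/(\bar\varepsilon_1^m)$. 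This explicit basis also gives the dimension count for part~(3) without further work.

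Your description of the final simplifications for $m=2^{k_1}$ has the cases slightly scrambled. For $n$ odd one has $l=(n+3)/2>s$, so there is no relation $h_l$ at all; setting $e_1=0$ reduces every $h_i$ to $e_i^{2^{k_i'}}$ and the generator $x$ survives (with $x^2=0$), matching the stated answer. For $n$ even (both $n=2^{k_1}$ and $n\neq 2^{k_1}$) the relation $h_l$ has the form $e_l^{2^{k_l}}+x+e_1\cdot(\cdots)$, so it is $x$ that gets eliminated, not $e_1$; substituting back into $e_1x=0$ and $x^2=0$ then produces the extra relation and the shift $k_l\mapsto k_l+1$.
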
	
\begin{proof}
	Put 
	\[
	\bar{e}_1:=\bar{\varepsilon}_1,\quad \bar{e}_2:=\bar{\varepsilon}_2^2+\bar{\varepsilon}_1 \bar{\varepsilon}_2\in \mc{P}^\tau,\quad \bar{e}_i:=\bar{\varepsilon}_i+\bar{\varepsilon}_1^{2i-4}\bar{\varepsilon}_2\in \mc{P}^\tau, \, 3\le i\le s.
	\]
	The monomials 
	\[
	\bar{\varepsilon}_1^{i_1}\bar{\varepsilon}_2^{i_2}\hdots \bar{\varepsilon}_s^{i_s},\quad 0\le i_1<n,\, 0\le i_j< 2^{k_j},\, 2\le j\le s,
	\]
	form a basis of $\mc{P}$. Inducting on the multidegree $(i_si_{s-1}\hdots i_1)$ of the monomial $\bar{\varepsilon}_1^{i_1}\bar{\varepsilon}_2^{i_2}\hdots \bar{\varepsilon}_s^{i_s}$ in the lexigraphical order it is straightforward to see that $\bar{\varepsilon}_1^{i_1}\bar{\varepsilon}_2^{i_2}\hdots \bar{\varepsilon}_s^{i_s} \in \mc{P}^\tau + \mc{P}^\tau \bar{\varepsilon}_2$. It follows that $1$ and $\bar{\varepsilon}_2$ generate $\mc{P}$ as a module over $\mc{P}^\tau$. The morphism $\id + \tau$ is clearly $\mc{P}^\tau$-linear and we have
	\[
	(\id+\tau)(1)=0,\quad (\id + \tau)(\bar{\varepsilon}_2)=\bar{\varepsilon}_1,
	\]
	so the first claim follows.
	
	Put $\bar{x}:=\bar{\varepsilon}_1^{n-1}\bar{\varepsilon}_2$. Again inducting on the multidegree one obtains that the set
	\begin{equation}\label{eq:basis_Ptau}
		\bar{e}_1^{i_1}\bar{e}_2^{i_2}\hdots \bar{e}_s^{i_s},\quad \bar{e}_2^{i_2}\hdots \bar{e}_s^{i_s} \bar{x},\quad 0\le i_1<n,\, 0\le i_2<2^{k_2-1},\, 0\le i_j< 2^{k_j},\, 3\le j\le s,
	\end{equation} 
	generates $\mc{P}^\tau$ as a vector space over $\FF_2$. Furthermore, these elements are linearly independent in $\mc{P}$, thus~\refbr{eq:basis_Ptau} is a basis of $\mc{P}^\tau$. We have $\bar{\varepsilon}_1\bar{x}=\bar{\varepsilon}_1^n\bar{\varepsilon}_2=0$, $\bar{\varepsilon}_1\bar{e}_1^{i_1}=\bar{e}_1^{i_1+1}$, $\bar{e}_1^n=0$, thus multiplication by $\bar{\varepsilon}_1$ takes the elements of this basis either to $0$ or to distinct elements of the basis. It follows that
	\begin{equation} \label{eq:basis_Ptau_modm}
		\bar{e}_1^{i_1}\bar{e}_2^{i_2}\hdots \bar{e}_s^{i_s},\quad \bar{e}_2^{i_2}\hdots \bar{e}_s^{i_s}\bar{x},\quad 0\le i_1<m,\, 0\le i_2<2^{k_2-1},\, 0\le i_j< 2^{k_j},\, 3\le j\le s,
	\end{equation}
	form a basis of $\mc{P}^\tau/(\bar{\varepsilon}_1^m)$. Then $(\id+\tau)(\mc{P}/(\bar{\varepsilon}_1^m)) = \bar{\varepsilon}_1\cdot (\mc{P}^\tau/(\bar{\varepsilon}_1^m))$ is generated by the elements
	\[
	\bar{e}_1^{i_1}\bar{e}_2^{i_2}\hdots \bar{e}_s^{i_s}, \quad 1\le i_1<m,\, 0\le i_2<2^{k_2-1},\, 0\le i_j< 2^{k_j},\, 3\le j\le s,
	\]
	whose images under $J$ are clearly linearly independent. This proves the second claim.
	
	For the third claim we first check the multiplicative relations between $\bar{e}_i$ and $\bar{x}$. The relation $\bar{e}_1^m=0$ is clear, the relations $\bar{e}_1\cdot \bar{x}=0$ and $\bar{x}^2=0$ hold in $\mc{P}$, thus also in $\mc{P}^\tau$ and $\mc{P}^\tau/(\bar{\varepsilon}_1^m)$. The relations $\bar{h}_i=0$, where $\bar{h}_i$ is the image of $h_i$ in $\mc{P}$, also hold already in $\mc{P}$, thus in $\mc{P}^\tau$ and $\mc{P}^\tau/(\bar{\varepsilon}_1^m)$, which could be seen expanding 
	\[
	\bar{e}_2^{2^{k_2-1}}=(\bar{\varepsilon}_2^2+\bar{\varepsilon}_1\bar{\varepsilon}_2)^{2^{k_2-1}}=\bar{\varepsilon}_1^{2^{k_2-1}}\bar{\varepsilon}_2^{2^{k_2-1}},\quad 
	\bar{e}_i^{2^{k_i}}=(\bar{\varepsilon}_i+\bar{\varepsilon}_1^{2i-4}\bar{\varepsilon}_2)^{2^{k_i}}=\bar{\varepsilon}_1^{(2i-4)\cdot{2^{k_i}}}\bar{\varepsilon}_2^{2^{k_i}},\quad 3\le i\le s,
	\]
	and substituting $\bar{e}_1=\bar{\varepsilon}_1$, $\bar{e}_2=\bar{\varepsilon}_2^2+\bar{\varepsilon}_1\bar{\varepsilon}_2$ and $\bar{x}=\bar{\varepsilon}_1^{n-1}\bar{\varepsilon}_2$ into the formula defining $\bar{h}_i$. Thus we have a homomorphism
	\[
	\FF_2[e_1,e_2,\hdots,e_s,x]/(e_1^m, e_1\cdot x, x^2, h_2,h_3,\hdots,h_s) \to \mc{P}^\tau /(\bar{\varepsilon}_1^{m})
	\]
	induced by $e_i\mapsto \bar{e}_i$ and $x\mapsto \bar{x}$. This homomorphism is clearly surjective, and is an isomorphism by the dimension count using the basis~\refbr{eq:basis_Ptau_modm} on the right-hand side.
	
	Finally, let $m=2^{k_1}$ and let $l\in\NN$ be such that $n=2^{k_1}(2l-3)$. Note that the integers $k_i$, $2\le i\le s$, are defined in such a way that the following inequalities hold:
	\[
	(2i-3)2^{k_i-1}< n \le (2i-3)2^{k_i}.
	\]
	
	First suppose $l\neq 2$. Then we have
	\begin{gather*}
		2^{k_2}-n\ge 2^{k_1},\quad 2^{k_2}-2^{j+1}\ge 2^{k_2-1} \ge 2^{k_1},\, 0\le j\le k_2-2,\\
		(2i-4)\cdot 2^{k_i}\ge 2^{k_1}, \, 3\le i\le s,\quad (2i-3)\cdot 2^{k_i}-n\ge 1,\, i\neq l,
	\end{gather*}
	and it follows that for some $h_i'\in \FF_2[e_1,e_2,\hdots,e_s,x]$ we have
	\[
	h_2=e_2^{2^{k_2-1}}+e_1^{2^{k_1}}\cdot h_2',\quad h_i=e_i^{2^{k_i}} + (e_1\cdot x) \cdot e_1^{(2i-3)\cdot 2^{k_i}-n-1}+ e_1^{2^{k_1}}\cdot h_i',\quad 3\le i \le s,\, i\neq l.
	\]
	If $n$ is odd, then $l>s$, so it follows from the above that
	\[
	\FF_2[e_1,\hdots,e_s,x]/(e_1^{2^{k_1}}, e_1\cdot x, x^2, h_2,h_3,\hdots,h_s) = 
	\FF_2[e_1,\hdots,e_s,x]/(e_1^{2^{k_1}}, e_1\cdot x, x^2, e_2^{2^{k_2-1}},e_3^{2^{k_3}},\hdots,e_s^{2^{k_s}}),
	\]
	since the ideals are the same. Furthermore, we have $k_1=0$ and
	\[
	\FF_2[e_1,\hdots,e_s,x]/(e_1^{2^{k_1}}, e_1\cdot x, x^2, e_2^{2^{k_2-1}},e_3^{2^{k_3}},\hdots,e_s^{2^{k_s}}) \cong 
	\FF_2[e_2,\hdots,e_s,x]/(e_2^{2^{k_2-1}},e_3^{2^{k_3}},\hdots,e_s^{2^{k_s}},x^2).
	\]
	If $n$ is even, then $l\le s$ and we have $h_l=e_l^{2^{k_l}} + x + e_1^{2^{k_1}}\cdot h_l'$, thus
	\begin{multline*}
		\FF_2[e_1,\hdots,e_s,x]/(e_1^{2^{k_1}}, e_1\cdot x, x^2, h_2,h_3,\hdots,h_s) = \\=\FF_2[e_1,\hdots,e_s,x]/(e_1^{2^{k_1}}, e_1\cdot x, x^2, e_2^{2^{k_2-1}},e_3^{2^{k_3}},\hdots, e_{l-1}^{2^{k_{l-1}}}, e_l^{2^{k_l}} + x, e_{l+1}^{2^{k_{l+1}}},\hdots,e_s^{2^{k_s}})\cong
		\\
		\cong 
		\FF_2[e_1,\hdots,e_s]/(e_1^{2^{k_1}}, e_2^{2^{k_2-1}},e_3^{2^{k_3'}},\hdots,e_s^{2^{k_s'}}, e_1\cdot e_l^{2^{k'_l-1}}),
	\end{multline*}
	where $k_i'=k_i$, if $i\neq l$ and $k_l'=k_l+1$. The claim follows.
	
	Now suppose $l=2$, i.e. $n=2^{k_1}$, then
	\[
	(2i-3)\cdot 2^{k_i}-n \ge 1,\quad (2i-4)\cdot 2^{k_i} \ge 2^{k_1}, \quad 3\le i\le s,
	\]
	and it follows that for some $h_i'\in \FF_2[e_1,e_2,\hdots,e_s,x]$ we have
	\[
	h_i=e_i^{2^{k_i}} + (e_1\cdot x) \cdot e_1^{(2i-3)\cdot 2^{k_i}-n-1}+ e_1^{2^{k_1}}\cdot h_i',\quad 3\le i \le s.
	\]
	Then we have
	\[
	\FF_2[e_1,\hdots,e_s,x]/(e_1^{2^{k_1}}, e_1\cdot x, x^2, h_2,h_3,\hdots,h_s) = 
	\FF_2[e_1,\hdots,e_s,x]/(e_1^{2^{k_1}}, e_1\cdot x, x^2, h_2, e_3^{2^{k_3}},\hdots,e_s^{2^{k_s}}),
	\]
	since the ideals are the same. For $h_2$ we have
	\[
	h_2=e_2^{2^{k_2-1}}+x+ \sum_{j=0}^{k_2-2} e_1^{2^{k_2}-2^{j+1}} \cdot e_2^{2^{j}}, \quad h^2_2=e_2^{2^{k_2}}+x^2+ \sum_{j=0}^{k_2-2} e_1^{2^{k_2+1}-2^{j+2}} \cdot e_2^{2^{j+1}} = e_2^{2^{k_2}}+x^2+ e_1^{2^{k_1}} \cdot h_2'
	\]
	for some $h_2'\in  \FF_2[e_1,e_2,\hdots,e_s,x]$, since
	\[
	{2^{k_2+1}-2^{j+2}} \ge 2^{k_2} = 2^{k_1},\quad 0\le j \le k_2-2.
	\]
	Then using the equality expressing $x$ via $h_2$, $e_1$ and $e_2$ we obtain
	\begin{multline*}
		\FF_2[e_1,\hdots,e_s,x]/\left(e_1^{2^{k_1}}, e_1\cdot x, x^2, h_2, e_3^{2^{k_3}},\hdots,e_s^{2^{k_s}}\right) \cong \\
		\cong 
		\FF_2[e_1,\hdots,e_s]/\left(e_1^{2^{k_1}},e_2^{2^{k_2}},,\hdots,e_s^{2^{k_s}}, e_1\cdot \left(\sum\nolimits_{j=0}^{k_2-1} e_1^{2^{k_2}-2^{j+1}} \cdot e_2^{2^{j}}\right)\right).
	\end{multline*}
\end{proof}

\subsection{Chow ring of an adjoint quasi-split simple group of type ${}^2\mr{D}_{2r}$}

In this section we compute the ring $\Ch^*(G):=\CH^*(G)\otimes \FF_2$ for an adjoint quasi-split simple group of type $\Delta(G)={}^2\mr{D}_{2r}$. For this we use the previous computations with the submaximal isotropic Grassmannian and the computation of the conormed Chow ring $\CH^*_K(G)$ for the splitting field $K$ of $G$. 

We continue to use the notation of Sections~\ref{sec:quasi-split_simple} and~\ref{sec:vector_bundles}. In this section we put
\[
\Ch^*(-):=\CH^*(-)\otimes \FF_2.
\]

\begin{lemma} \label{lem:surj_special}
	Let $G$ be a split simple group of type $\Delta(G)=\mathrm{D}_n$ over a field $k$ and let $T\le B\le G$ be a maximal torus and a Borel subgroup. Consider the parabolic subgroup $B\le P=P_{n-1,n}\le G$ in the notation of~\refbr{expos:parabolic}. Then the pullback homomorphism
	\[
	\varphi^*\colon \CH^*(G/P)\to \CH^*(G)
	\]
	for the projection $\varphi\colon G\to G/P$ is surjective.
\end{lemma}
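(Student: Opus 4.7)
The plan is to combine the characteristic sequence for $G\to G/B$ with a Leray--Hirsch decomposition of the flag bundle $\pi\colon G/B\to G/P$. Since $G$ is split, its maximal torus $T$ is quasi-trivial, so Theorem~\ref{thm:char_qs}(1) supplies the exact sequence
\[
\CH^*_T(\Spec k) \xrightarrow{c} \CH^*(G/B) \xrightarrow{\phi^*} \CH^*(G) \to 0.
\]
Writing $I:=c(\CH^*_T(\Spec k)^{>0})\cdot \CH^*(G/B)$ and factoring $\varphi=\pi\circ\phi$, surjectivity of $\varphi^*=\phi^*\circ\pi^*$ reduces to the identity $\pi^*(\CH^*(G/P))+I=\CH^*(G/B)$.

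Next, I would invoke Leray--Hirsch: the projection $\pi$ is a Zariski-locally trivial fibration whose fiber $P/B$ is the type-$\mr{A}_{n-2}$ full flag variety $\mathrm{Fl}_{n-1}$ (the Levi of $P=P_{n-1,n}$ is obtained by removing the two ``spin'' nodes from $\mr{D}_n$). Consequently $\CH^*(G/B)$ is a free $\CH^*(G/P)$-module with basis $\{Z_v\}_{v\in W_L}$ given by lifts of the Schubert classes of the fiber, where $W_L\cong S_{n-1}$ is the Weyl group of the Levi. Writing a general class as $x=\sum_{v\in W_L} f_v\, Z_v$ with $f_v\in \pi^*(\CH^*(G/P))$, the required identity would follow from the claim $Z_v\in I$ for every $v\in W_L\setminus\{1\}$, since then $x\equiv f_1\pmod I$ and $f_1\in\pi^*(\CH^*(G/P))$.

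The crux of the argument is therefore the assertion $Z_v\in I$ for $v\neq 1$ in $W_L$. For a simple Levi reflection $v=s_i$ with $1\le i\le n-2$, the Schubert divisor $Z_{s_i}$ is the first Chern class of the line bundle on $G/B$ associated to the fundamental weight $\varpi_i=\epsilon_1+\cdots+\epsilon_i$. A direct lattice check shows that this character lies in $\mathcal{X}^*(T)$ for the isogeny types of $\mr{D}_n$ relevant to the lemma, so $Z_{s_i}$ lies in the image of $c$ and hence in $I$. For longer $v\in W_L$, type-$\mr{A}$ Schubert calculus on the fiber $\mathrm{Fl}_{n-1}$ expresses $Z_v$ as a Schubert polynomial of positive degree $\ell(v)$ in the $Z_{s_i}$, and the same polynomial relation lifts to $\CH^*(G/B)$; consequently $Z_v$ belongs to the ideal generated by the $Z_{s_i}$ and therefore to $I$.

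The main obstacle is the lattice verification that $\varpi_i\in\mathcal{X}^*(T)$ for all $1\le i\le n-2$, which must be handled uniformly across the isogeny types of $\mr{D}_n$. The ``problematic'' half-integer fundamental weights are confined to $\varpi_{n-1}$ and $\varpi_n$, associated with the two spin nodes removed in the formation of $P$, and they therefore fall outside the Levi and do not obstruct the argument; the few remaining situations can be absorbed by enlarging $I$ by classes already lying in $\pi^*(\CH^*(G/P))$.
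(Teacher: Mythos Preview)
Your approach differs substantially from the paper's, which is a one-line argument: the derived subgroup of the Levi of $P$ is $\SL_{n-1}$, so $P$ is special (every $P$-torsor is Zariski-locally trivial), and \cite[Lemma~7.1]{PS17} then gives surjectivity of the pullback along the $P$-torsor $G\to G/P$. No lattice or Schubert computations are needed, and the same reasoning applies to any parabolic whose Levi has derived subgroup a product of special groups.

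Your route via the characteristic sequence and Leray--Hirsch can be completed, but the lattice step is wrong as written. For the adjoint group of type $\mr{D}_n$ the lattice $\mc{X}^*(T)$ is the root lattice, and $\varpi_i=e_1+\cdots+e_i$ lies in it only when $i$ is even; thus $Z_{s_1}\notin I$ already there, and the problematic weights are not confined to $\varpi_{n-1},\varpi_n$ as you suggest. What is true, and what suffices, is that $Z_{s_i}\in I+\pi^*(\CH^*(G/P))$ for every $1\le i\le n$, because the root lattice together with $\Z\varpi_{n-1}+\Z\varpi_n$ spans the full weight lattice for $\mr{D}_n$; you must therefore work with $I+\pi^*(\CH^*(G/P))$ throughout rather than with $I$ alone. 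The second step, that the type-$\mr{A}$ Schubert polynomial expression for $Z_v$ holds in $\CH^*(G/B)$ and not merely on the fiber, is asserted without proof; a priori the polynomial and $Z_v^{G/B}$ agree only modulo $\ker\bigl(\CH^*(G/B)\to\CH^*(P/B)\bigr)=\pi^*(\CH^{>0}(G/P))\cdot\CH^*(G/B)$, and closing this gap requires an induction on $\ell(v)$ using that the correction terms involve shorter $Z_{v'}$.
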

\begin{proof}
	The derived subgroup $[L(P),L(P)]\le L(P)$ of the Levi subgroup $L(P)\le P$ is isomorphic to $\SL_{n-1}$, thus $P$ is special and the claim follows from \cite[Lemma~7.1]{PS17}.
\end{proof}

\begin{theorem} \label{thm:D2r_ad_answer}
	Let $G$ be an adjoint quasi-split simple group over a field $k$ of type $\Delta(G)={}^2\mr{D}_{2r}$, $r\ge 2$.
	\begin{enumerate}
		\item Let $T\le B\le G$ be a maximal torus and a Borel subgroup, consider the parabolic subgroup $B\le P=P_{2r-1,2r}\le G$ in the notation of~\refbr{expos:parabolic} and put $\mc{L}:=\mc{L}_P(\varpi_{2r-1}+\varpi_{2r})$ for the respective line bundle over $G/P$. Then the pullback for the projection $\varphi\colon G\to G/P$ induces an isomorphism
		\[
		\tilde{\varphi}^*\colon \Ch^*(G/P)/(c_1(\mc{L})^{v_2(2r)}) \xrightarrow{\simeq} \Ch^*(G).
		\]
		In particular, if $2r$ is a power of $2$, then the pullback homomorphism
		\[
		\varphi^*\colon \Ch^*(G/P) \to \Ch^*(G)
		\]
		is an isomorphism.
		\item 
		There is an isomorphism
		\[
		\Ch^*(G) \cong
		\begin{cases}
			\FF_2[e_1,\hdots,e_s]/\left(e_1^{2^{k_1}}, \hdots, e_s^{2^{k_s}}, e_1\cdot \left(\sum_{j=0}^{k_1-1} e_1^{2^{k_1}-2^{j+1}} \cdot e_2^{2^{j}}\right)\right), & l=2,\\
			\FF_2[e_1,\hdots,e_s]/(e_1^{2^{k_1}}, \hdots, e_s^{2^{k_s}}, e_1\cdot e_l^{2^{k_l-1}}), & l\ge 3,
		\end{cases}
		\]
		where $l\in \NN$ is such that $2r=2^m(2l-3)$ and
		\begin{gather*}
			s=r+1,\quad \deg e_1=1,\quad \deg e_2=2,\quad k_1=v_2(2r), \quad k_2=[\log_2 2r],\\
			\deg e_i=2i-3,\quad k_i=\left[\log_2 \frac{4r}{2i-3}\right],\quad 3\le i\le s.
		\end{gather*}
	\end{enumerate}
	Here $v_2$ is the $2$-adic valuation.
\end{theorem}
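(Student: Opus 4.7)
Part~(2) follows immediately from part~(1) by combining Lemma~\ref{lem:Chow_quasi-split_homogeneous} (which identifies $\Ch^*(G/P)$ with the $\Gal(K/k)$-invariants $\mc{P}^\tau\subseteq\mc{P}:=\mc{P}_{2r}$), Proposition~\ref{prop:OGr_Chow_ring}(2) (which makes this identification and the $\tau$-action explicit), and Lemma~\ref{lem:invariants_OGr}(3) (which computes $\mc{P}^\tau/(\bar{\varepsilon}_1^{2^{v_2(2r)}})$ in the two cases appearing in part~(2)). Note also that when $2r$ is a power of $2$, the exponent $2^{v_2(2r)}$ equals $2r$ and the defining relation of the quotient is automatic, recovering the ``In particular'' clause.

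For part~(1), well-definedness of $\tilde{\varphi}^*$ is immediate from Proposition~\ref{prop:OGr_Chow_ring}(1): $\varphi^*(c_1(\mc{L})^{2^{v_2(2r)}})=c_1(\varphi^*\mc{L})^{2^{v_2(2r)}}=0$ in $\Ch^*(\bar{G})=\Ch^*(G)$. For surjectivity, I would apply proper base change to the Cartesian square formed by $\pi\colon G_K\to G$ and $\pi_{G/P}\colon(G/P)_K\to G/P$ with horizontals $\varphi,\varphi_K$, obtaining $\pi_*\circ\varphi_K^*=\varphi^*\circ(\pi_{G/P})_*$. Since $\varphi_K^*$ is surjective by Lemma~\ref{lem:surj_special} ($P_K$ being a special parabolic in the split group $G_K$), we conclude $\Image(\pi_*)\subseteq\Image(\varphi^*)$. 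The exact sequence $\Ch^*(G_K)\xrightarrow{\pi_*}\Ch^*(G)\to\Ch^*_K(G)\to 0$ then reduces surjectivity of $\varphi^*$ to surjectivity of the induced conormed map $\bar{\varphi}^*\colon\Ch^*_K(G/P)\to\Ch^*_K(G)$; under the identification $\Ch^*_K(G/P)\cong\mc{P}^\tau/\bar{\varepsilon}_1\mc{P}^\tau$ of Lemma~\ref{lem:invariants_OGr}(1) and the presentation of $\Ch^*_K(G)$ in Theorem~\ref{thm:conormed_group_answer}, this is a direct verification.

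For injectivity I would strengthen the previous step, showing that $\bar{\varphi}^*$ is in fact an \emph{isomorphism}, by matching the two presentations term by term. The snake lemma applied to
\[
\xymatrix@C=0.9em@R=1.4em{
0\ar[r] & \bar{\varepsilon}_1\cdot\mc{P}^\tau/(\bar{\varepsilon}_1^{2^{v_2(2r)}})\ar[r]\ar[d] & \mc{P}^\tau/(\bar{\varepsilon}_1^{2^{v_2(2r)}})\ar[r]\ar[d]^{\tilde{\varphi}^*} & \mc{P}^\tau/\bar{\varepsilon}_1\mc{P}^\tau\ar[r]\ar[d]^{\bar{\varphi}^*}_{\cong} & 0 \\
0\ar[r] & \Image(\pi_*)\ar[r] & \Ch^*(G)\ar[r] & \Ch^*_K(G)\ar[r] & 0
}
\]
reduces injectivity of $\tilde{\varphi}^*$ to that of the surjective left vertical map. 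To establish the latter, I would consider each basis element of $\bar{\varepsilon}_1\cdot\mc{P}^\tau/(\bar{\varepsilon}_1^{2^{v_2(2r)}})$ produced in the proof of Lemma~\ref{lem:invariants_OGr}(3), apply $\tilde{\varphi}^*$, and verify the linear independence of the images in $\Ch^*(G)$ using a combination of the pullback $\pi^*\colon\Ch^*(G)\to\Ch^*(G_K)$ and the conormed quotient $\Ch^*(G)\to\Ch^*_K(G)$.

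The main obstacle is expected to be establishing injectivity of the left vertical map, equivalently, ruling out any kernel of $\tilde{\varphi}^*$ beyond what is already killed by $c_1(\mc{L})^{2^{v_2(2r)}}$. The elements of the form $\bar{x}=\bar{\varepsilon}_1^{2r-1}\bar{\varepsilon}_2$ appearing in the basis of $\mc{P}^\tau$ of Lemma~\ref{lem:invariants_OGr} witness that $\mc{P}^\tau\cap(\bar{\varepsilon}_1^j\mc{P})$ strictly exceeds $\bar{\varepsilon}_1^j\mc{P}^\tau$ for some $j<2^{v_2(2r)}$, so pullback to $\Ch^*(G_K)$ alone does not detect all relations; one must use the conormed quotient to $\Ch^*_K(G)$ in tandem, and match the explicit presentations of Lemma~\ref{lem:invariants_OGr}(3) and Theorem~\ref{thm:conormed_group_answer} to confirm that no further relations appear.
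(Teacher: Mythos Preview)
Your overall architecture matches the paper's: identify $\Ch^*(G/P)\cong\mc{P}^\tau$ via Proposition~\ref{prop:OGr_Chow_ring}(2) and Lemma~\ref{lem:Chow_quasi-split_homogeneous}, show that $\bar\varphi^*\colon\Ch^*_K(G/P)\to\Ch^*_K(G)$ is an isomorphism (the paper cites \cite{SZ22} and Theorem~\ref{thm:cokernel}(1) rather than matching presentations, but the content is the same), and deduce part~(2) from part~(1) via Lemma~\ref{lem:invariants_OGr}(3). Your snake-lemma reformulation is equivalent to the paper's direct diagram chase.

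Where your plan is incomplete is the injectivity step. You propose to detect linear independence of the images of basis elements of $\bar\varepsilon_1\cdot\mc{P}^\tau/(\bar\varepsilon_1^{2^{v_2(2r)}})$ using \emph{both} $\pi^*$ and the conormed quotient; but every such element lies in $\Image(\pi_*)$ and hence dies in $\Ch^*_K(G)$, so the conormed quotient contributes nothing here. The only available detector is $\pi^*$, and to make it work you need two ingredients you have not named:
\begin{itemize}
\item the split analogue $\tilde\varphi^*_K\colon\mc{P}/(\bar\varepsilon_1^{2^{v_2(2r)}})\to\Ch^*(G_K)$ is an \emph{isomorphism} (surjective by Lemma~\ref{lem:surj_special}, then bijective by a dimension count against recollection~\refbr{expos:Chow_ring_split});
\item Lemma~\ref{lem:invariants_OGr}(2), which says precisely that the inclusion $J\colon\mc{P}^\tau/(\bar\varepsilon_1^{m})\to\mc{P}/(\bar\varepsilon_1^{m})$ is injective on $(\id+\tau)(\mc{P}/(\bar\varepsilon_1^m))=\bar\varepsilon_1\cdot\mc{P}^\tau/(\bar\varepsilon_1^m)$.
\end{itemize}
With these in hand, injectivity of the left vertical map is immediate: under the identifications one has $\pi^*\circ\tilde\varphi^*=\tilde\varphi^*_K\circ J$, and on $\bar\varepsilon_1\cdot\mc{P}^\tau/(\bar\varepsilon_1^{2^{v_2(2r)}})$ both factors are injective. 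Your final paragraph correctly senses that $J$ fails to be globally injective (the element $\bar x=\bar\varepsilon_1^{2r-1}\bar\varepsilon_2$ witnesses this), but your own snake-lemma reduction has already confined attention to the subspace where Lemma~\ref{lem:invariants_OGr}(2) applies, so the obstacle you describe has in fact been bypassed and no appeal to the conormed quotient is needed.
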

\begin{proof}
	Let $K/k$ be the splitting field of $G$, let $\mc{P}$ be the algebra with involution $\tau$ defined in Lemma~\ref{lem:invariants_OGr} and let the non-trivial element of $\Gal(K/k)$ act on $\mc{P}$ via the involution $\tau$. Then
	Proposition~{\hyperref[prop:OGr_Chow_ring]{\ref*{prop:OGr_Chow_ring}.(2)}} combined with Lemma~\ref{lem:Chow_quasi-split_homogeneous} yields isomorphisms 
	\[
	\Theta\colon \mc{P} \xrightarrow{\simeq} \Ch^*((G/P)_K),\quad \Theta^\tau\colon \mc{P}^{\tau} \xrightarrow{\simeq} \Ch^*(G/P).
	\]
	Furthermore, Lemma~{\hyperref[lem:invariants_OGr]{\ref*{lem:invariants_OGr}.(1)}} yields an isomorphism
	\[
	\Ch^*(G/P)/(c_1(\mc{L}))\xrightarrow{\simeq} \Ch_K^*(G/P).
	\]
	We have $\varphi^*(c_1(\mc{L})^{v_2(2r)})=0$ by Proposition~{\hyperref[prop:OGr_Chow_ring]{\ref*{prop:OGr_Chow_ring}.(1)}}, thus we have the following commutative diagram with exact rows.
	\[
	\xymatrix{
		\Ch^*(G_K) \ar[r]^{\pi_*} & \Ch^*(G) \ar[r] & \Ch_K^*(G) \ar[r] & 0\\
		\Ch^*((G/P)_K)/(c_1(\mc{L}_K)^{v_2(2r)}) \ar[u]^{\tilde{\varphi}_K^*} \ar[r]^(0.55){\tilde{\rho}_*} & \Ch^*(G/P)/(c_1(\mc{L})^{v_2(2r)})  \ar[r] \ar[u]^{\tilde{\varphi}^*} & \Ch_K^*(G/P) \ar[r] \ar[u]^{\bar{\varphi}^*} & 0\\
		\mc{P}/(\bar{\varepsilon}_1^{v_2(2r)}) \ar[u]_\cong^{\tilde{\Theta}} \ar[r]^{\id + \tau} & \mc{P}^\tau/(\bar{\varepsilon}_1^{v_2(2r)}) \ar[u]_\cong^{\tilde{\Theta}^\tau} \ar[r] & \mc{P}/(\bar{\varepsilon}_1) \ar[r] \ar[u]^\cong & 0\\
	}
	\]
	Here $\tilde{\rho}_*$ is induced by the projection $\rho\colon (G/P)_K\to G/P$, the isomorphisms $\tilde{\Theta}$ and $\tilde{\Theta}^\tau$ are induced by $\Theta$, the homomorphisms $\tilde{\varphi}^*,\tilde{\varphi}^*_K$, $\bar{\varphi}^*$ are induced by the pullback for the projection $\varphi\colon G\to G/P$, and the unlabelled morphisms are the quotient morphisms. The homomorphism $\bar{\varphi}^*$ is an isomorphism by \cite[case of orthogonal groups in Section~8]{SZ22} and Theorem~{\hyperref[thm:cokernel]{\ref*{thm:cokernel}.(1)}}. The homomorphism $\tilde{\varphi}_K^*$ is surjective by Lemma~\ref{lem:surj_special} and is an isomorphism by the dimension count using the explicit presentation for $\mc{P}$ and the computation of $\Ch^*(G_K)$ recalled in~\refbr{expos:Chow_ring_split}. Then $\tilde{\varphi}^*$ is surjective. Furthermore, a simple diagram chase yields that for injectivity of $\tilde{\varphi}^*$ it suffices to check that if $\pi_*\tilde{\varphi}^*_K\tilde{\Theta}(\alpha)=0$ for $\alpha \in \mc{P}/(\bar{\varepsilon}_1^{v_2(2r)})$, then $(\id +\tau)(\alpha)=0$. Consider the following commutative diagram:
	\[
	\xymatrix{
		\Ch^*(G_K) \ar[r]^{\pi_*} & \Ch^*(G) \ar[r]^{\pi^*} & \Ch^*(G_K) \\
		\mc{P}/(\bar{\varepsilon}_1^{v_2(2r)}) \ar[u]^{\tilde{\varphi}^*_K\circ \tilde{\Theta}} \ar[r]^{\id + \tau} & \mc{P}^\tau/(\bar{\varepsilon}_1^{v_2(2r)}) \ar[u]^{\tilde{\varphi}^*\circ \tilde{\Theta}^\tau} \ar[r]^J & \mc{P}/(\bar{\varepsilon}_1^{v_2(2r)}) \ar[u]^{\tilde{\varphi}^*_K\circ \tilde{\Theta}}  \\
	}
	\]
	Here $J$ is induced by the inclusion $\mc{P}^{\tau}\subseteq \mc{P}$. We have
	\[
	\tilde{\varphi}^*_K\circ \tilde{\Theta}\circ J \circ (\id+\tau)(\alpha)=\pi^*\circ \pi_*\circ \tilde{\varphi}^*_K\circ\tilde{\Theta}(\alpha)=0,
	\]
	thus $J \circ (\id+\tau)(\alpha)=0$, since $\tilde{\varphi}^*_K\circ \tilde{\Theta}$ is an isomorphism. Then Lemma~{\hyperref[lem:invariants_OGr]{\ref*{lem:invariants_OGr}.(2)}} yields $(\id+\tau)(\alpha)=0$. Thus $\tilde{\varphi}^*$ is an isomorphism. The remaining claims of the Theorem follow from this, the isomorphism $\Theta^\tau\colon \mc{P}^\tau\xrightarrow{\simeq}\Ch^*(G/P)$ and Lemma~{\hyperref[lem:invariants_OGr]{\ref*{lem:invariants_OGr}.(3)}}.
\end{proof}

\begin{corollary}
	Let $G$ be an adjoint quasi-split simple group over a field $k$ of type $\Delta(G)={}^2\mr{D}_{2r}$, $r\ge 2$. Then $\Ch^*(G)$ does not admit a structure of a Hopf algebra, in particular, the K\"unneth homomorphism
	\[
	\Ch^*(G) \otimes \Ch^*(G) \to \Ch^*(G\times G),\quad x\otimes y \mapsto p_1^*(x)\cdot p_2^*(y),
	\]
	fails to be an isomorphism.
\end{corollary}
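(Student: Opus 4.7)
The plan is to rule out a Hopf algebra structure on $\Ch^*(G)$ by a dimension count. Assume such a structure exists. Then $\Ch^*(G)$ is a finite-dimensional connected graded-commutative Hopf algebra over $\FF_2$, and Borel's structure theorem (\cite[Theorem~7.11 and Proposition~7.8]{MM65}) forces an algebra isomorphism
\[
\Ch^*(G)\cong \bigotimes_{i}\FF_2[x_i]/(x_i^{2^{n_i}})
\]
for suitable $n_i\in\NN$ and homogeneous generators $x_i$. In particular $\dim_{\FF_2}\Ch^*(G)$ is a power of $2$, so it suffices to exhibit a nontrivial odd factor in this dimension using the presentation of Theorem~\ref{thm:D2r_ad_answer}.

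In the case $l\ge 3$, the extra relation $e_1e_l^{2^{k_l-1}}=0$ is a single monomial. Inside the truncated polynomial ring $\FF_2[e_1,\ldots,e_s]/(e_1^{2^{k_1}},\ldots,e_s^{2^{k_s}})$ the ideal it generates has an obvious monomial basis, and a direct count yields
\[
\dim_{\FF_2}\Ch^*(G)=2^{k_l-1}(2^{k_1}+1)\prod_{i\ne 1,l}2^{k_i};
\]
since $k_1=v_2(2r)\ge 1$, the factor $2^{k_1}+1\ge 3$ is odd, which is the desired contradiction. In the case $l=2$, write $k:=k_1=k_2$. The relation is now a sum of monomials, and I would use a small Gr\"obner basis computation in $\FF_2[e_1,e_2]$ with lex order in which $e_1<e_2$, so that the leading term of the relation is $e_1e_2^{2^{k-1}}$. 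The three polynomials $e_1^{2^k}$, $e_2^{2^k}$ and the relation $R$ form a Gr\"obner basis: the S-polynomial against $e_1^{2^k}$ reduces to zero because every term of $R$ has $e_1$-exponent at least one, so that multiplying $R$ by $e_1^{2^k-1}$ sends all of its terms to $0$; the S-polynomial against $e_2^{2^k}$ reduces to zero by the same mechanism after a further iteration of the relation. Counting irreducible monomials one obtains
\[
\dim_{\FF_2}\bigl(\FF_2[e_1,e_2]/(e_1^{2^k},e_2^{2^k},R)\bigr)=2^{k-1}(2^k+1),
\]
so $\dim_{\FF_2}\Ch^*(G)=2^{k-1}(2^k+1)\prod_{i\ge 3}2^{k_i}$, again with the odd factor $2^k+1\ge 3$.

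In either case the dimension count contradicts the assumed Hopf structure, giving the first claim. For the K\"unneth assertion, an isomorphism $\Ch^*(G)\otimes\Ch^*(G)\xrightarrow{\simeq}\Ch^*(G\times G)$ would let us transport the pullback $m^*$ along the multiplication $m\colon G\times G\to G$ into a coproduct on $\Ch^*(G)$, and together with the pullbacks along the unit $\Spec k\to G$ and the inversion $G\to G$ this endows $\Ch^*(G)$ with a Hopf algebra structure, which has just been excluded. The main technical point is the Gr\"obner basis verification for $l=2$; the case $l\ge 3$ reduces to elementary monomial combinatorics.
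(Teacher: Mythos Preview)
Your proposal is correct and follows essentially the same approach as the paper: both compute $\dim_{\FF_2}\Ch^*(G)$ from the presentation in Theorem~\ref{thm:D2r_ad_answer} and observe that it has an odd factor $2^{k_1}+1\ge 3$, contradicting Borel's structure theorem. The only difference is in how the dimension is extracted. The paper writes down a uniform monomial basis (valid for all $l$, ultimately inherited from the basis~\eqref{eq:basis_Ptau_modm} of $\mc{P}^\tau/(\bar{\varepsilon}_1^{2^{k_1}})$ in Lemma~\ref{lem:invariants_OGr}) and reads off $\dim_{\FF_2}\Ch^*(G)=2^{N-1}+2^{N-1-k_1}=2^{N-1-k_1}(2^{k_1}+1)$, which is exactly your formula once one expands $N=\sum_i k_i$. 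You instead separate $l\ge 3$ (monomial ideal, immediate) from $l=2$ (Gr\"obner basis). Your Gr\"obner verification for $l=2$ is terse but correct: the S-polynomial against $e_2^{2^k}$ is $\sum_{j=0}^{k-2} e_1^{2^k-2^{j+1}+1}e_2^{2^j+2^{k-1}}$, each term has $e_1$-exponent $\ge 2^{k-1}+1$ and $e_2$-exponent $\ge 2^{k-1}$, so one reduction by $R$ raises every $e_1$-exponent by at least $2^{k-1}$ (since the non-leading terms of $R$ have $e_1$-exponent $\ge 2^{k-1}+1$), after which all terms are divisible by $e_1^{2^k}$. It would be worth spelling this out; the phrase ``by the same mechanism after a further iteration'' is doing real work.
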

\begin{proof}
	It follows from Theorem~{\hyperref[thm:D2r_ad_answer]{\ref*{thm:D2r_ad_answer}.(2)}} that $\Ch^*(G)$ admits a basis given by
	\begin{gather*}
		e_1^{i_1}e_2^{i_2}\hdots e_{s}^{i_{s}}, \quad 0\le i_j < 2^{k_j},\, 1\le j\le s,\,j\neq l,\, 0\le i_l < 2^{k_l-1},\\
		e_2^{i_2}e_3^{i_3}\hdots e_{s}^{i_{s}}, \quad 0\le i_j \le 2^{k_j},\, 2\le j\le s,\,j\neq l,\, 2^{k_l-1}\le i_l < 2^{k_l},
	\end{gather*}
	in the notation of loc. cit. Then
	\[
	\dim_{\FF_2} \Ch^*(G) = 2^{N-1} + 2^{N-1-k_1},\quad N:=\sum_{i=1}^s k_i.
	\]
	Since $k_1=v_2(2r)\ge 1$, the integer $ 2^{N-1} + 2^{N-1-k_1}$ is not a power of $2$, and it could not be the dimension of a Hopf algebra by a result of Borel on the structure of finite dimensional connected Hopf algebras  \cite[Theorem 7.11 and Proposition 7.8]{MM65}.
\end{proof}

\subsection{Chow ring of a quasi-split group at the splitting prime}

In this section we show that if $G$ is a non-split quasi-split simple group over a field $k$ such that $\Delta(G)\neq {}^6\mr{D}_4$ and $(\Delta(G),\pi_1(G))\neq({}^2\mr{D}_{2r},\mut{2}{2,2})$, then $\CH^*(G)\otimes \FF_p\cong \CH^*_K(G)$, where $K/k$ is the splitting field of $G$ and $p=[K:k]$. This is done analysing the pushforward homomorphism $\pi_*\colon \CH^*(G_K)\otimes \FF_p \to \CH^*(G)\otimes \FF_p$, which we show to be trivial in this case.

We continue to use the notation of Sections~\ref{sec:quasi-split_simple} and~\ref{sec:vector_bundles}. In this section we denote
\[
\Ch^*(-):=\CH^*(-)\otimes \FF_p
\]
for some given prime number $p$.

\begin{lemma} \label{lem:Pic_zero}
	Let $G$ be a non-split quasi-split simple algebraic group over a field $k$ with the splitting field $K/k$ and let $\pi\colon G_K\to G$ be the projection. Then the following holds.
	\begin{enumerate}
		\item If $(\Delta(G),\pi_1(G))\neq({}^2\mr{D}_{2r},\mut{2}{2,2})$, then $\pi_*\colon \Pic(G_K) \to \Pic(G)$ is the zero homomorphism.
		\item If $(\Delta(G),\pi_1(G))=({}^2\mr{D}_{2r},\mut{2}{2,2})$, then
		\[
		\pi_*([\mc{L}(\bar{\varpi}_{2r-1})])=\pi_*([\mc{L}(\bar{\varpi}_{2r})])=[\mc{L}(\bar{\varpi}_{1})],\quad \pi^*([\mc{L}(\bar{\varpi}_{1})]) = [\mc{L}(\bar{\varpi}_{2r-1})] + [\mc{L}(\bar{\varpi}_{2r})]
		\]
		in $\Pic(G)$ and $\Pic(G_K)$ respectively.
	\end{enumerate}
\end{lemma}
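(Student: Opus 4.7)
My approach is to identify both Picard groups with character modules of $\pi_1(G)$ and then recognize $\pi_*$ as a norm map that I can evaluate explicitly.

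The starting point is the identification $\Pic(G)\cong \mc{X}^*(\pi_1(G))^{\Gal(K/k)}$ together with $\Pic(G_K)\cong \mc{X}^*(\pi_1(G))$ (the latter as an abstract abelian group with trivial Galois action), obtained from the $\pi_1(G)$-torsor $\tilde{G}\to G$ using $\Pic(\tilde{G})=0$ for a quasi-split simply connected semisimple group (this can be cited from Sansuc's exact sequence, or deduced from our Theorem~\ref{thm:char_qs} in degree one). Under these identifications the class $\mc{L}(\bar{\varpi})$ corresponds to $\bar{\varpi}\in \mc{X}^*(\tilde{T})/\mc{X}^*(T)\cong \mc{X}^*(\pi_1(G))$, and $\pi^*$ becomes the inclusion of Galois invariants. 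Combined with the standard relations $\pi_*\pi^*=[K:k]\cdot\id$ and $\pi^*\pi_*=\sum_{\sigma\in\Gal(K/k)}\sigma\cdot$ valid for the finite \'etale Galois cover $\pi$, this identifies $\pi_*$ with the norm map $N_{K/k}:=\sum_{\sigma\in\Gal(K/k)}\sigma\cdot$ from $\mc{X}^*(\pi_1(G))$ into its Galois invariants.

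For part (1), I would then run through the possibilities for $\pi_1(G)$ listed in Section~\ref{sec:multiplicative_groups} and verify $N_{K/k}=0$ in each case. For $\pi_1(G)=1$ there is nothing to check. For split $\pi_1(G)=\mu_2$ (arising only for $\Delta(G)={}^2\mr{D}_n$), the action on $\Z/2\Z$ is trivial and $N_{K/k}=[K:k]=2\equiv 0 \pmod 2$. For $\pi_1(G)\in\{\mut{2}{l},\mut{2}{4},\mut{2}{3}\}$ the character module is cyclic and the nontrivial element of $\Gal(K/k)$ acts by $-1$, whence $N_{K/k}(x)=x+(-x)=0$. For $\pi_1(G)=\mut{3}{2,2}$ the $\Z/3\Z$-action cycles the three nonzero elements of $(\Z/2\Z)^2$ whose sum is zero. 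For $\pi_1(G)=\mut{6}{2,2}$ the Galois action is the full $S_3=\mathrm{GL}_2(\FF_2)$ on $(\Z/2\Z)^2$, and $N_{K/k}$ sums each nonzero vector twice, again zero modulo~$2$.

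For part (2), $\pi_1(G)=\mut{2}{2,2}$ and $(\Z/2\Z)^2$ carries the swap action of the nontrivial $\tau\in\Gal(K/k)$, so $N_{K/k}(\bar{\varpi}_{2r-1})=N_{K/k}(\bar{\varpi}_{2r})=\bar{\varpi}_{2r-1}+\bar{\varpi}_{2r}$. The explicit description of the embedding $\pi_1(G)\hookrightarrow \tilde{T}$ recalled in the proof of Theorem~\ref{thm:group_via_cover} in the $({}^2\mr{D}_{2r},\mut{2}{2,2})$ case yields the relation $\bar{\varpi}_1=\bar{\varpi}_{2r-1}+\bar{\varpi}_{2r}$ in $\mc{X}^*(\pi_1(G))$, giving the claimed formulas for $\pi_*$; the formula for $\pi^*$ follows from the same relation since $\pi^*$ is the inclusion of invariants. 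The main obstacle is the clean justification of the identification $\Pic(G)\cong \mc{X}^*(\pi_1(G))^{\Gal(K/k)}$ for a general quasi-split semisimple $G$; beyond this the argument reduces to elementary bookkeeping with small finite abelian groups.
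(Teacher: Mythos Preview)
Your approach is correct and is precisely what the paper intends: its proof reads in full ``Straightforward from the recollection in Section~\ref{sec:fundamental_and_Picard}'', and you have simply unpacked that recollection (the isomorphism $\Pic(G)\cong \mc{X}^*(\pi_1(G))^{\Gal(K/k)}$ from \cite[Corollary~18.26]{Mi17} recalled in~\refbr{expos:Picard}, together with the explicit Galois actions on $\mc{X}^*(\pi_1(G))$ from~\refbr{expos:simple_groups}). Your worry about justifying the identification is unfounded in context, since the paper already records it with a reference.
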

\begin{proof}
	Straightforward from the recollection in Section~\ref{sec:fundamental_and_Picard}.
\end{proof}

\begin{proposition} \label{prop:norm_zero}
	Let $G$ be a non-split quasi-split simple algebraic group over a field $k$ with the splitting field $K/k$ and let $\pi\colon G_K\to G$ be the projection. Suppose $\Delta(G)\neq {}^6\mr{D}_{4}$ and $(\Delta(G),\pi_1(G))\neq({}^2\mr{D}_{2r},\mut{2}{2,2})$ and put $p:=[K:k]$. Then $\pi_*\colon \Ch^*(G_K)\to \Ch^*(G)$ is the zero homomorphism.
\end{proposition}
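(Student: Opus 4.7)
The plan is to proceed by case analysis on $(\Delta(G),\pi_1(G))$, using Kac's theorem (recalled in Section~\ref{expos:Chow_ring_split}) to pin down a convenient generating set of $\Ch^*(G_K)$ and showing $\pi_*$ vanishes on it. Since $\pi\colon G_K\to G$ is finite \'etale of degree $p$, the main tool is the projection formula $\pi_*(\pi^*(x)\cdot y)=x\cdot \pi_*(y)$; combined with a lift of each generator either to an element of $\Image\pi^*$ or to a class on a suitable quasi-split homogeneous space, this lets one push vanishing through products.

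\textbf{Easy cases.} For $\Delta(G)={}^3\mr{D}_4$ we have $p=3$, and Kac's table for split $\mr{D}_4$ at $p=3$ gives $\Ch^*(G_K)\cong\FF_3$, so there is nothing to prove. For $\Delta(G)=\Att_n$ (necessarily with $p=2$ and, outside trivial cases, $\pi_1(G)=\mut{2}{2m}$ with $2m\mid n+1=2r$) the ring $\Ch^*(G_K)$ is monogenic on a degree-$1$ class $e_1=c_1(\mc{L}')$. Lemma~\ref{lem:Pic_zero} gives $\pi_*(e_1)=0$ in $\Pic(G)\otimes\FF_2$; to conclude I would exhibit a line bundle on $G$ whose pullback to $G_K$ is $\mc{L}'$ up to a nonzero scalar mod~$2$ (the relevant fundamental weight $\varpi_r$ is Galois-invariant and, since $m\mid r$, descends to a character of the maximal torus of $G$), placing $e_1\in\Image\pi^*$; induction on $k$ via the projection formula then yields $\pi_*(e_1^k)=0$.

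\textbf{Case $\Delta(G)={}^2\mr{D}_n$} (excluding the adjoint form of ${}^2\mr{D}_{2r}$). Here the idea is to use the quasi-split submaximal isotropic Grassmannian $\OGr(n-1;q)\cong G/P$ studied in Proposition~\ref{prop:OGr_Chow_ring}. The pullback $\alpha_K^*\colon \Ch^*((G/P)_K)\to\Ch^*(G_K)$ is surjective by Lemma~\ref{lem:surj_special} combined with the Bruhat decomposition of $G_K/P_K$. By base change, for any $y=\alpha_K^*(\tilde y)$ with lift $\tilde y\in\Ch^*((G/P)_K)$ we have
\[
\pi_*(y)=\pi_*\alpha_K^*(\tilde y)=\alpha^*\rho_*(\tilde y),
\]
where $\alpha\colon G\to G/P$ is the quotient and $\rho\colon (G/P)_K\to G/P$ the projection. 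Using the explicit description of $\Ch^*((G/P)_K)$ and the Galois action from Proposition~\ref{prop:OGr_Chow_ring}, together with the identification of $\Ch^*(G/P)$ with the invariant algebra of Lemma~\ref{lem:invariants_OGr} worked out in the proof of Theorem~\ref{thm:D2r_ad_answer}, one verifies that $\rho_*(\tilde y)$ lies in the image of the characteristic map for $G/P$. Since $\alpha^*$ annihilates that image, $\pi_*(y)=0$.

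\textbf{Case $\Delta(G)={}^2\mr{E}_6$.} Here $p=2$ and Kac's theorem gives $\Ch^*(G_K)\cong\FF_2[e_1]/(e_1^2)$ with $\deg e_1=3$, so it suffices to prove $\pi_*(e_1)=0$. The plan is to lift $e_1$ along the surjection $\phi_K^*\colon \Ch^*((G/B)_K)\twoheadrightarrow\Ch^*(G_K)$ to an explicit Schubert cycle $Z\in\Ch^3((G/B)_K)$, and then verify by computer-assisted computation with the Chevalley formula \cite[Proposition~10]{Che94} that $\rho_*(Z)\in\Ch^3(G/B)$ lies in the image of the characteristic map $c\colon \Ch^*_T(\Spec k)\to\Ch^*(G/B)$. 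Base change together with the exactness of the characteristic sequence (Theorem~\ref{thm:char_qs}) then yields $\pi_*(e_1)=\pi_*\phi_K^*(Z)=\phi^*\rho_*(Z)=0$. This Schubert calculation in the Weyl group of $\mr{E}_6$ is expected to be the main technical obstacle, and is where the computer assistance is essential.
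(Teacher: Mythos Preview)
Your overall plan---case analysis, lifting to a quasi-split homogeneous space, and the base-change square $\pi_*\circ\phi_K^*=\phi^*\circ\rho_*$---is exactly the paper's. The ${}^3\mr{D}_4$ and ${}^2\mr{E}_6$ cases match the paper (in the latter the computer check is that the norm of the chosen Schubert cycle equals $c_1(\mc{L}(\varpi_2))^3$, which dies on $G$ because $\varpi_2$ lies in the root lattice), and your ${}^2\mr{D}_n$ outline is in the right direction.

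There is, however, a genuine gap in your $\Att_{2r-1}$ argument: the claim $e_1\in\Image\pi^*$ is \emph{false} when $m$ is even. One has $\Pic(G)\cong\Z/2\Z\cdot[\mc{L}(\bar\varpi_m)]$ and $\Pic(G_K)\cong\Z/2m\Z\cdot[\mc{L}(\bar\varpi_1)]$, and $\pi^*$ sends the generator to $m\cdot[\mc{L}(\bar\varpi_1)]$; after $\otimes\,\FF_2$ the map $\pi^*\colon\Ch^1(G)\to\Ch^1(G_K)\cong\FF_2$ is multiplication by $m$, hence zero for $m$ even. (Your parenthetical about $\varpi_r$ is also off: $\varpi_r$ lies in $\mc{X}^*(T)$ only when $2m\mid r$, not $m\mid r$, and in that case the associated bundle on $G$ is \emph{trivial}, so it cannot pull back to the generator.) Thus for even $m$ the projection-formula induction cannot start. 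The paper circumvents this by working on $G/B$ rather than $G$: lifting $e_1^j$ to $c_1(\mc{L}(\varpi_1))^j\in\Ch^*((G/B)_K)$ one factors
\[
\rho^*\rho_*\bigl(c_1(\mc{L}(\varpi_1))^j\bigr)=c_1(\mc{L}(\varpi_1))^j+c_1(\mc{L}(\varpi_n))^j
=\bigl(c_1(\mc{L}(\varpi_1))+c_1(\mc{L}(\varpi_n))\bigr)\cdot\alpha
\]
with $\alpha$ a Galois-invariant symmetric sum; by Lemma~\ref{lem:Chow_quasi-split_homogeneous} this gives $\rho_*(c_1(\mc{L}(\varpi_1))^j)=\rho_*(c_1(\mc{L}(\varpi_1)))\cdot\alpha$ in $\Ch^*(G/B)$, and pulling back to $G$ yields $\pi_*(e_1^j)=\pi_*(e_1)\cdot\phi^*(\alpha)=0$ by Lemma~\ref{lem:Pic_zero}. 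The algebraic factorisation on the flag variety is what replaces your unavailable ``$e_1\in\Image\pi^*$''.

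For ${}^2\mr{D}_n$ the paper's argument is also sharper than what you outline and does not invoke a ``characteristic map for $G/P$'': Lemma~\ref{lem:invariants_OGr}(1) gives directly
\[
\rho_*\bigl(\Ch^*((G/P)_K)\bigr)=c_1(\mc{L})\cdot\Ch^*(G/P),\qquad \mc{L}=\mc{L}_P(\varpi_{n-1}+\varpi_n),
\]
and the hypothesis $(\Delta(G),\pi_1(G))\neq({}^2\mr{D}_{2r},\mut{2}{2,2})$ is precisely what forces $\bar\varpi_{n-1}+\bar\varpi_n=0$ in $\mc{X}^*(\pi_1(G))$, so that $\varphi^*c_1(\mc{L})=c_1(\mc{L}(\bar\varpi_{n-1}+\bar\varpi_n))=0$ on $G$.
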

\begin{proof}
	Let $T\le B\le G$ be a maximal torus and a Borel subgroup. We argue case by case on $\Delta(G)$.
	
	$\mathbf{{}^2A_n}$. We have $p=2$ and it follows from recollection~\refbr{expos:Chow_ring_split} that the ring $\Ch^*(G_K)$ is either trivial or $\Ch^*(G_K)\cong \FF_2 [e_1]/(e_1^{2^{v_2(n+1)}})$ with $\deg e_1=1$. Consider the following commutative diagram.
	\[
	\xymatrix{
		\Ch^*((G/B)_K) \ar[d]_{\phi_K^*}  \ar[r]^{\rho_*} & \Ch^*(G/B) \ar[d]^{\phi^*}\\	
		\Ch^*(G_K) \ar[r]^{\pi_*} &\Ch^*(G)
	}
	\]
	Here $\phi\colon G\to G/B$, $\phi_K\colon G_K\to (G/B)_K$ and $\rho \colon (G/B)_K\to G/B$ are the projections. For the line bundle $\mc{L}(\varpi_1)$ over $(G/B)_K$ we have $\phi_K^*(c_1(\mc{L}(\varpi_1)))=e_1$ by recollection~\refbr{expos:Picard}. Then for all $m\in \NN$ we have
	\[
	\pi_*(e_1^m)=\pi_*\phi^*_K (c_1(\mc{L}(\varpi_1))^m ) =  \phi^* \rho_* (c_1(\mc{L}(\varpi_1))^m ).
	\]
	Furthermore, for the non-trivial element $\tau\in \Gal(K/k)$ we have
	\begin{multline*}
		\rho^*\rho_* (c_1(\mc{L}(\varpi_1))^m ) = c_1(\mc{L}(\varpi_1))^m + \tau(c_1(\mc{L}(\varpi_1))^m) = c_1(\mc{L}(\varpi_1))^m + c_1(\mc{L}(\varpi_n))^m  =\\ = (c_1(\mc{L}(\varpi_1)) + c_1(\mc{L}(\varpi_n)))\cdot (\sum_{i=0}^{n-1} c_1(\mc{L}(\varpi_1))^ic_1(\mc{L}(\varpi_n))^{n-1-i})
	\end{multline*}
	with the last sum being $\Gal(K/k)$-invariant. Then Lemma~\ref{lem:Chow_quasi-split_homogeneous} yields that
	\[
	\rho_*(c_1(\mc{L}(\varpi_1))^m )=(c_1(\mc{L}(\varpi_1)) + c_1(\mc{L}(\varpi_n)))\cdot\alpha = \rho_*(c_1(\mc{L}(\varpi_1))) \cdot \alpha
	\]
	for some $\alpha \in \Ch^*(G/B)$. Thus
	\[
	\pi_*(e_1^m) = \phi^* \rho_* (c_1(\mc{L}(\varpi_1))^m ) = \phi^* \rho_*(c_1(\mc{L}(\varpi_1))) \cdot \phi^*\alpha = \pi_*(e_1) \cdot \phi^*\alpha =0
	\]
	with the last equality following from $\pi_*(e_1)=0$ by Lemma~\ref{lem:Pic_zero}.
	
	$\mathbf{{}^2D_n}$. We have $p=2$. Consider the parabolic subgroup $B\le P:=P_{n-1,n}\le G$ in the notation of~\refbr{expos:parabolic} and the following commutative diagram.
	\[
	\xymatrix{
		\Ch^*((G/P)_K) \ar[d]_{\varphi_K^*}  \ar[r]^{\rho_*} & \Ch^*(G/P) \ar[d]^{\varphi^*}\\	  
		\Ch^*(G_K) \ar[r]^{\pi_*} &\Ch^*(G)
	}
	\]
	Here $\varphi\colon G\to G/P$, $\varphi_K\colon G_K\to (G/P)_K$ and $\rho \colon (G/P)_K\to G/P$ are the projections. The homomorphism $\varphi^*_K$ is surjective by Lemma~\ref{lem:surj_special}, thus
	\[
	\pi_*(\Ch^*(G_K)) = \varphi^* \rho_* (\Ch^*(G/P)_K).
	\]
	Lemma~\ref{lem:Chow_quasi-split_homogeneous} combined with Proposition~{\hyperref[prop:OGr_Chow_ring]{\ref*{prop:OGr_Chow_ring}.(2)}} and Lemma~{\hyperref[lem:invariants_OGr]{\ref*{lem:invariants_OGr}.(1)}} yields
	\[
	\rho_* (\Ch^*(G/P)_K) = c_1(\mc{L}_P({\varpi}_{n-1}+{\varpi}_{n}))\cdot (\Ch^*(G/P)).
	\]
	Since $(\Delta(G),\pi_1(G))\neq({}^2\mr{D}_{2r},\mut{2}{2,2})$, it follows that $\bar{\varpi}_{n-1}+\bar{\varpi}_{n}=0$ in $\mc{X}^*(\pi_1(G))$ and
	\[
	\varphi^*c_1(\mc{L}_P({\varpi}_{n-1}+{\varpi}_{n})) = c_1(\varphi^*\mc{L}_P({\varpi}_{n-1}+{\varpi}_{n}))=c_1(\mc{L}(\bar{\varpi}_{n-1}+\bar{\varpi}_{n}))=c_1(\struct_G)=0.
	\]
	
	$\mathbf{{}^2E_6}$. We have $p=2$. Consider the following commutative diagram.
	\[
	\xymatrix{
		\Ch^*((G/B)_K) \ar[d]_{\phi_K^*} \ar[r]^{\rho_*} & \Ch^*(G/B) \ar[d]^{\phi^*}  \\	  
		\Ch^*(G_K) \ar[r]^{\pi_*} & \Ch^*(G) 
	}
	\]
	Here $\phi\colon G\to G/B$, $\phi_K\colon G_K\to (G/B)_K$ and $\rho\colon (G/B)_K\to G/B$ are the canonical morphisms. By recollection~\refbr{expos:Chow_ring_split} we have $\Ch^*(G_K)\cong \FF_2[e_1]/(e_1^2)$. A straightforward computer-assisted computation shows that for the Schubert cycle $Z_{s_3s_4s_2}\in \Ch^3((G/B)_K)$ one has $\phi_K^*(Z_{s_3s_4s_2})=e_1$, where $s_i$ is the reflection for the simple root $\alpha_i$. Then
	\[
	\pi_*(e_1) = \pi_* \phi_K^* (Z_{s_3s_4s_2}) = \phi^* \rho_* (Z_{s_3s_4s_2}).
	\]
	The Galois group $\Gal(K/k)$ acts on $\Ch^*((G/B)_K)$ permuting Schubert cycles accordingly to the permutation of simple roots, thus
	\[
	\rho^*\rho_*(Z_{s_3s_4s_2})=Z_{s_3s_4s_2}+Z_{s_5s_4s_2}.
	\]
	Another straightforward computer-assisted computation shows that
	\[
	c_1(\mc{L}(\varpi_2)_K)^3 = Z_{s_3s_4s_2}+Z_{s_5s_4s_2}.
	\]
	Then the injectivity part of Lemma~\ref{lem:Chow_quasi-split_homogeneous} yields
	\[
	c_1(\mc{L}(\varpi_2))^3=\rho_*(Z_{s_3s_4s_2}).
	\]
	Since the fundamental weight $\varpi_2$ belongs to the root lattice, it follows that $\bar{\varpi}_2=0$ in $\mc{X}^*(\pi_1(G))$ and
	\[
	\pi_*(e_1)=\phi^* \rho_* (Z_{s_3s_4s_2}) = \phi^* c_1(\mc{L}(\varpi_2))^3=c_1(\mc{L}(\bar{\varpi}_2))^3=0.
	\]
	
	$\mathbf{{}^3D_4}$. We have $p=3$ and the claim is trivial, since $\Ch^*(G_K)=\FF_3$ by recollection~\refbr{expos:Chow_ring_split}.
\end{proof}

\begin{theorem} \label{thm:answer_at_p}
	Let $G$ be a non-split quasi-split simple group over a field $k$ with the splitting field $K/k$ and suppose that $\Delta(G)\neq {}^6\mr{D}_{4}$ and $(\Delta(G),\pi_1(G))\neq({}^2\mr{D}_{2r},\mut{2}{2,2})$. Then for $p=[K:k]$ there is an isomorphism
	\[
	\Ch^*(G) \cong \FF_p[e_1,e_2,\hdots,e_s]/(e^{p^{k_1}}_1,e^{p^{k_2}}_2,\hdots, e^{p^{k_s}}_s),\quad \deg e_i=d_i,
	\]
	with the parameters $s,d_i,k_i$ given in the Table~\ref{tab:main}.
\end{theorem}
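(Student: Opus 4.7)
The plan is to deduce this theorem as a direct combination of Proposition~\ref{prop:norm_zero} and Theorem~\ref{thm:conormed_group_answer}, with essentially all the substantive work already in place. Under the hypothesis $\Delta(G)\neq {}^6\mr{D}_4$, the splitting field $K/k$ has prime degree $p\in\{2,3\}$, and since the only finite intermediate subfield of $K/k$ is $K$ itself, the definition of the conormed Chow ring collapses to the right exact sequence
\[
\CH^*(G_K)\xrightarrow{\pi_*}\CH^*(G)\to \CH^*_K(G)\to 0.
\]
By Lemma~\ref{lem:normed_coef} one has $\CH^*_K(\Spec k)\cong \FF_p$, so $\CH^*_K(G)$ is automatically an $\FF_p$-module and $\CH^*_K(G)\otimes\FF_p \cong \CH^*_K(G)$. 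Tensoring the sequence above with $\FF_p$ therefore yields
\[
\Ch^*(G_K)\xrightarrow{\pi_*}\Ch^*(G)\to \CH^*_K(G)\to 0.
\]

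The second step is to invoke Proposition~\ref{prop:norm_zero}: the additional hypothesis $(\Delta(G),\pi_1(G))\neq ({}^2\mr{D}_{2r},\mut{2}{2,2})$ guarantees that $\pi_*=0$ on $\Ch^*(G_K)$. Consequently the quotient homomorphism $\Ch^*(G)\to \CH^*_K(G)$ is an isomorphism of graded $\FF_p$-algebras.

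The third step is to read off the explicit presentation of $\CH^*_K(G)$ provided by Theorem~\ref{thm:conormed_group_answer}, which realizes it as $\FF_p[e_1,\ldots,e_s]/(e_1^{p^{k_1}},\ldots,e_s^{p^{k_s}})$ with degrees and exponents recorded in Table~\ref{tab:conormed}. A case-by-case inspection of Table~\ref{tab:conormed} against Table~\ref{tab:main} for the cases to be proved here (namely $\Delta(G)\in\{{}^2\mr{A}_n,{}^2\mr{D}_n,{}^3\mr{D}_4,{}^2\mr{E}_6\}$ with $\pi_1(G)$ as specified, excluding the two distinguished cases) shows that the rows coincide verbatim.

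There is no genuine obstacle at this stage: the substantial work lies in the previously established Proposition~\ref{prop:norm_zero} (whose proof requires separate arguments for each type, including explicit Schubert calculus for ${}^2\mr{E}_6$ and the analysis of the submaximal orthogonal Grassmannian for ${}^2\mr{D}_n$) and in the computation of $\CH^*_K(G)$ via the extended characteristic sequence carried out in Theorem~\ref{thm:conormed_group_answer}. The present theorem is a formal synthesis of these two inputs.
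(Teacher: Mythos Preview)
Your proposal is correct and follows exactly the same approach as the paper: invoke Proposition~\ref{prop:norm_zero} to conclude that the quotient map $\Ch^*(G)\to \CH^*_K(G)$ is an isomorphism, then read off the answer from Theorem~\ref{thm:conormed_group_answer}. The paper's proof is just a two-sentence version of what you wrote, so your additional explanation of the exact sequence and the tensoring step simply makes explicit what the paper leaves implicit.
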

\begin{proof}
	Proposition~\ref{prop:norm_zero} yields that the quotient morphism $\Ch^*(G)\to \CH_K^*(G)$ is an isomorphism. Thus the claim follows from Theorem~\ref{thm:conormed_group_answer}.
\end{proof}

\subsection{Chow ring of a quasi-split group away from the splitting primes}

In this section we compute the Chow ring $\Ch^*(G):=\CH^*(G)\otimes \FF_p$ for a non-split quasi-split simple group $G$ over a field $k$ assuming that $p$ is coprime to $[K:k]$ where $K$ is the splitting field of $G$. For this we show that for such $p$ one has $\Ch^*(G)\cong \Ch^*(G_K)^{\Gal(K/k)}$ and then we compute the Galois action and the subalgebra of invariant elements.

We continue to use the notation of Sections~\ref{sec:quasi-split_simple} and~\ref{sec:vector_bundles}. In this section we put
\[
\Ch^*(-):=\CH^*(-)\otimes \FF_p
\]
for some given prime number $p$.

\begin{lemma}\label{lem:pull_inj}
	Let $X\in \Smk$, $K/k$ be a field extension of finite degree and $p$ be a prime such that $p\nmid [K:k]$. Then for the projection $\pi\colon X_K\to X$ the pullback homomorphism
	\[
	\pi^*\colon \Ch^*(X) \to \Ch^*(X_K)
	\]
	is injective.
\end{lemma}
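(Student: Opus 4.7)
The plan is to exploit the standard identity $\pi_* \circ \pi^* = [K:k]\cdot \id$ on Chow groups, which comes from the projection formula combined with the fact that $\pi \colon X_K \to X$ is a finite flat morphism of degree $[K:k]$. This identity appears for instance as \cite[Example~1.7.4]{Fu98} in the form used earlier in the paper (see the proof of Lemma~\ref{lem:trivial_coprime}).

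First I would note that the pushforward $\pi_* \colon \CH^*(X_K) \to \CH^*(X)$ is well-defined since $\pi$ is proper (indeed finite), and that it commutes with tensoring by $\FF_p$, yielding a homomorphism $\pi_* \colon \Ch^*(X_K) \to \Ch^*(X)$. The composition $\pi_* \circ \pi^* \colon \Ch^*(X) \to \Ch^*(X)$ is then multiplication by $[K:k]$. Since by hypothesis $p$ does not divide $[K:k]$, the integer $[K:k]$ is a unit in $\FF_p$, and hence multiplication by $[K:k]$ acts as an isomorphism on $\Ch^*(X) = \CH^*(X) \otimes \FF_p$. Consequently $\pi^*$ admits a left inverse (up to a unit) and is therefore injective.

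There is no serious obstacle here; the only subtlety is to verify that the degree formula survives the passage to mod $p$ coefficients, which is immediate since both $\pi^*$ and $\pi_*$ are $\mathbb{Z}$-linear and the tensor product with $\FF_p$ preserves compositions. The statement and proof are entirely parallel to the analogous fact for conormed Chow groups that would be needed elsewhere, and the same argument would give $\pi^*$ injective with a functorial left inverse given by $\frac{1}{[K:k]}\pi_*$.
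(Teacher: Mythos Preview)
Your proof is correct and is essentially the same as the paper's; the paper merely cites \cite[Example~1.7.4]{Fu98} for the identity $\pi_*\circ\pi^* = [K:k]\cdot\id$, and you have spelled out the remaining one-line consequence that multiplication by $[K:k]$ is invertible on $\Ch^*(X)=\CH^*(X)\otimes\FF_p$ when $p\nmid[K:k]$.
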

\begin{proof}
	This is well-known and follows from \cite[Example~1.7.4]{Fu98}.
\end{proof}

\begin{lemma} \label{lem:ch_away_from_p}
	Let $G$ be a quasi-split group over a field $k$ with the splitting field $K/k$ and let $p\in \NN$ be a prime such that $p\nmid [K:k]$. Then for the projection $\pi\colon G_K \to G$ the pullback homomorphism
	\[
	\pi^*\colon \Ch^*(G) \to \Ch^*(G_K)
	\]
	is injective and $\pi^*(\Ch^*(G))=(\Ch^*(G_K))^{\Gal(K/k)}$.
\end{lemma}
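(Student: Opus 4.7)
The injectivity of $\pi^*$ is immediate from Lemma~\ref{lem:pull_inj} applied to $X:=G$: the variety $G$ is smooth, the extension $K/k$ is finite (the splitting field of a reductive group is finite over $k$), and $p\nmid [K:k]$ by assumption. One inclusion of the image description is formal: for every $\sigma\in\Gal(K/k)$, viewed as an automorphism of $G_K$ over $G$, one has $\pi\circ\sigma=\pi$, hence $\sigma^*\circ\pi^*=\pi^*$ and therefore $\pi^*(\Ch^*(G))\subseteq \Ch^*(G_K)^{\Gal(K/k)}$.

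For the reverse inclusion, the plan is to invoke the standard Galois averaging identity
\[
\pi^*\pi_*=\sum_{\sigma\in\Gal(K/k)} \sigma^*
\]
on $\Ch^*(G_K)$. This identity follows from flat base change applied to the fiber square
\[
\xymatrix{G_K\times_G G_K \ar[r]^(0.6){p_2} \ar[d]_{p_1} & G_K \ar[d]^\pi \\ G_K \ar[r]^\pi & G}
\]
combined with the decomposition $K\otimes_k K\cong \prod_{\sigma\in\Gal(K/k)} K$ (valid because $K/k$ is a finite Galois extension), which identifies $G_K\times_G G_K$ with $\bigsqcup_{\sigma} G_K$ in such a way that $p_1,p_2$ restrict to $\id$ and $\sigma$ on the $\sigma$-component. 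Note that $\pi$ is finite \etale{} as a base change of $\Spec K\to\Spec k$, so both $\pi^*$ and $\pi_*$ are defined on the integral Chow groups and commute with the reduction modulo $p$.

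Given $\alpha\in \Ch^*(G_K)^{\Gal(K/k)}$, applying the identity yields $\pi^*\pi_*(\alpha)=[K:k]\cdot\alpha$, and since $[K:k]$ is invertible in $\FF_p$ by hypothesis, we may set $\beta:=[K:k]^{-1}\pi_*(\alpha)\in\Ch^*(G)$ and conclude that $\alpha=\pi^*(\beta)$, so $\alpha\in\pi^*(\Ch^*(G))$ as required. There is no genuine obstacle here once one has the injectivity from Lemma~\ref{lem:pull_inj} and the Galois averaging formula at hand; the whole argument is a routine descent. The hypothesis $p\nmid [K:k]$ is used precisely twice, for the injectivity and for the invertibility of $[K:k]$ in $\FF_p$, which is exactly what prevents this clean statement from holding at the splitting primes treated separately in the previous sections.
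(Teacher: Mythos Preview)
Your proof is correct and is a genuinely different, more elementary route than the paper's. You use the standard Galois descent identity $\pi^*\pi_*=\sum_{\sigma\in\Gal(K/k)}\sigma^*$ (obtained from flat base change applied to the self-fiber square of $\pi$ together with $K\otimes_k K\cong\prod_\sigma K$), which immediately gives $\alpha=[K:k]^{-1}\pi^*\pi_*(\alpha)$ for any invariant class. This argument uses nothing about $G$ beyond smoothness and works verbatim for any $X\in\Smk$.

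The paper instead passes through the full flag variety: it lifts a Galois-invariant class $\alpha\in\Ch^*(G_K)$ to some $\beta\in\Ch^*((G/B)_K)$ via the surjection $\phi_K^*$ coming from the split characteristic sequence (Theorem~\ref{thm:char_qs}), averages $\beta$ over $\Gal(K/k)$, and then applies Lemma~\ref{lem:Chow_quasi-split_homogeneous} to descend the averaged class to $\Ch^*(G/B)$. This route stays within the toolkit developed in the paper (characteristic sequences and the motivic cellularity of $G/B$), but is longer and specific to the homogeneous setting. Your approach is cleaner; the paper's approach has the mild advantage of producing representatives via Schubert classes on $G/B$, though this is not actually exploited afterwards.
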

\begin{proof}
	The homomorphism $\pi^*$ is injective by Lemma~\ref{lem:pull_inj}. Furthermore, it is clear that
	\[
	\pi^*(\Ch^*(G))\subseteq (\Ch^*(G_K))^{{\Gal(K/k)}}.
	\]
	Let $B\le G$ be a Borel subgroup and consider the following commutative diagram.
	\[
	\xymatrix{
		\Ch^*(G/B) \ar[r]^{\rho^*} \ar[d]_{\phi^*} & \Ch^*((G/B)_K) \ar[d]^{\phi_K^*} \\
		\Ch^*(G) \ar[r]^{\pi^*} & \Ch^*(G_K)
	}  
	\]
	Here $\phi\colon G\to G/B$, $\phi_K\colon G_K\to (G/B)_K$ and $\rho\colon (G/B)_K\to G/B$ are the projections. Since $G_K$ is split, $\phi_K^*$ is surjective by Theorem~\ref{thm:char_qs}. Let $\alpha\in (\Ch^*(G_K))^{{\Gal(K/k)}}$ and pick some $\beta\in \Ch^*((G/B)_K) $ such that $\phi_K^*(\beta)=\alpha$. Then the element 
	\[
	\tilde{\beta} := \frac{1}{[K:k]}\sum_{\sigma\in \Gal(K/k)} \sigma(\beta) \in \Ch^*((G/B)_K)
	\]
	is $\Gal(K/k)$-invariant and Lemma~\ref{lem:Chow_quasi-split_homogeneous} yields that there exists $\gamma\in \Ch^*(G/B)$ such that $\rho^*(\gamma)=\tilde{\beta}$. Then
	\[
	\pi^*(\phi^*(\gamma))=\phi^*_K(\rho^*(\gamma)) = \frac{1}{[K:k]}\sum_{\sigma\in \Gal(K/k)} \sigma(\alpha) = \alpha.
	\]
	It follows that $ \pi^*(\Ch^*(G))= (\Ch^*(G_K))^{{\Gal(K/k)}}$ yielding the claim.
\end{proof}

\begin{theorem} \label{thm:answer_away_p}
	Let $G$ be a non-split quasi-split simple group over a field $k$ with the splitting field $K/k$ and let $p\in \NN$ be a prime such that $p\nmid [K:k]$. If $(\Delta(G_K),\pi_1(G_K),p)$ is not in Table~\ref{tab:Kac}, then $\Ch^*(G)\cong \FF_p$, otherwise $\Ch^*(G)$ is given by the following table.
	
	\begin{center}
		\def\arraystretch{1.5}
		\begin{longtable}{l|l|l|l|l}
			\caption{$\Ch^*(G)$ away from the splitting primes}\\
			\label{tab:away}
			$\Delta(G)$ & $\pi_1(G)$ & $p$ & $\Ch^*(G)$ & $\deg e_i$ \\
			\hline
			$\Att_n$  & $\mut{2}{l}$, $l\mid n+1$ & $2\neq p\mid l$ & $\FF_p[e_1]/(e_1^m)$, $m=\frac{p^{v_p(n+1)}+1}{2}$ & $2$\\
			${}^3\mr{D}_4$ & $1$ & $2$ & $\FF_2[e_1]/(e_1^2)$ & $3$\\
			& $\mut{3}{2,2}$ & $2$ & $\FF_2[e_1,e_2,e_3,e_4]/(e_1^4,e_2^2,e_3^2,e_4^2,e_1e_2,e_1e_3,e_1^3+e_2e_3)$ & $2,3,3,3$\\
			${}^2\mr{E}_6$ & $1$ & $3$ & $\FF_3[e_1]/(e_1^3)$ & $4$\\
			& $\mut{2}{3}$ & $3$ & $\FF_3[e_1,e_2]/(e_1^5,e_2^3)$ & $1,4$ 
		\end{longtable}
	\end{center}  
	Here $v_p$ is the $p$-adic valuation.
	
\end{theorem}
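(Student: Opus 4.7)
The plan is to invoke Lemma~\ref{lem:ch_away_from_p}, which under the assumption $p \nmid [K:k]$ reduces the computation to identifying the $\Gal(K/k)$-invariant subalgebra of the Kac ring $\Ch^*(G_K)$, whose presentation is given by Table~\ref{tab:Kac}. First I would enumerate the cases: going through each non-split quasi-split type $\Delta(G)$ together with each prime $p$ coprime to $[K:k]$, the triple $(\Delta(G_K), \pi_1(G_K), p)$ either fails to appear in Kac's table, in which case $\Ch^*(G_K) \cong \FF_p$ and the claim $\Ch^*(G) \cong \FF_p$ is immediate, or matches one of the five entries in Table~\ref{tab:away}.

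For the remaining five cases I would proceed as follows. In the family $(\Att_n, \mut{2}{l}, p)$ with $p$ odd dividing $l$, Kac's generator $e_1 \in \Ch^1(G_K) \cong \Pic(G_K) \otimes \FF_p$ corresponds to a class $\bar{\varpi}_1 \in \mc{X}^*(\pi_1(G_K)) \cong \Z/l\Z$, on which the non-trivial $\tau \in \Gal(K/k)$ acts by $\bar{\varpi}_1 \mapsto \bar{\varpi}_n = -\bar{\varpi}_1$; hence $\tau(e_1) = -e_1$. Since $p$ is odd, the invariant subalgebra is spanned by the even powers of $e_1$, truncated at $e_1^{p^{v_p(n+1)}-1}$, which on setting $e := e_1^2$ yields $\FF_p[e]/(e^m)$ with $\deg e = 2$ and $m = (p^{v_p(n+1)} + 1)/2$ as claimed. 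In the two cases $({}^3\mr{D}_4, 1, 2)$ and $({}^2\mr{E}_6, 1, 3)$ Kac's algebra is $\FF_p[e_1]/(e_1^{p^{k_1}})$, concentrated in degrees divisible by $d_1$ and each of $\FF_p$-dimension at most one. The only ring automorphism of such a graded $\FF_p$-algebra is the identity, so the order-prime-to-$p$ Galois action is trivial and $\Ch^*(G) = \Ch^*(G_K)$ unchanged.

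The main obstacle is the last two cases $({}^3\mr{D}_4, \mut{3}{2,2}, 2)$ and $({}^2\mr{E}_6, \mut{2}{3}, 3)$, where Kac's ring has multiple low-degree indecomposables and the Galois action can mix them non-trivially; in particular a purely $\Pic$-based argument is insufficient. Here I would imitate the strategy used at the end of the proof of Proposition~\ref{prop:norm_zero}: invoking the characteristic sequence for the split group $G_K$ (Theorem~\ref{thm:char_qs}), the pullback $\phi_K^*\colon \Ch^*(G_K/B_K) \twoheadrightarrow \Ch^*(G_K)$ is surjective, so each Kac generator admits an explicit lift to a polynomial in Schubert cycles $Z_w$, on which $\Gal(K/k)$ acts by the permutation induced by its action on the simple reflections. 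Working in this Schubert basis with the Chevalley product formula, one can extract the invariant subalgebra by direct, computer-assisted manipulation; this produces generators of the stated degrees, yields the lacunary relation $e_1^3 + e_2 e_3$ in the ${}^3\mr{D}_4$ case, and provides the truncation exponents underlying $e_1^5$ and $e_2^3$ in the ${}^2\mr{E}_6$ case. Combining the five cases with the trivial ones completes the proof.
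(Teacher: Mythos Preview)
Your overall plan via Lemma~\ref{lem:ch_away_from_p} matches the paper's, and your treatment of the $\Att_n$ family and of $({}^3\mr{D}_4,1,2)$ is correct and essentially identical to theirs.

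However, there is a genuine gap in the case $({}^2\mr{E}_6,1,3)$. You claim that because each graded piece of $\FF_p[e_1]/(e_1^{p^{k_1}})$ is at most one-dimensional, the only graded ring automorphism is the identity. This is true over $\FF_2$ (which is why your argument for $({}^3\mr{D}_4,1,2)$ works), but it fails over $\FF_3$: the map $e_1\mapsto -e_1$ is a nontrivial graded automorphism of $\FF_3[e_1]/(e_1^3)$, and it has order $2$, exactly the order of $\Gal(K/k)$ here. If the Galois involution did act by $e_1\mapsto -e_1$, the invariant subalgebra would be $\FF_3[e_1^2]/(e_1^4)\cong\FF_3[f]/(f^2)$ with $\deg f=8$, not the claimed $\FF_3[e_1]/(e_1^3)$. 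So you must actually prove that $\tau(e_1)=e_1$, and this is not formal. The paper does this by a computer-assisted Schubert computation: they lift the generator to $Z_{s_2s_4s_3s_1}\in\Ch^4((G/B)_K)$, observe $\tau(Z_{s_2s_4s_3s_1})=Z_{s_2s_4s_5s_6}$, and verify that the difference lies in the image of the characteristic map, hence dies in $\Ch^4(G_K)$.

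For the two remaining cases you label ``hard'', your proposal to work directly with Schubert cycles is viable in principle, but the paper proceeds differently and more economically. For $({}^3\mr{D}_4,\mut{3}{2,2},2)$ they determine the $C_3$-action on the two degree-$1$ generators via $\Pic$, then invoke Maschke's theorem to split off a $\sigma$-invariant complement $\tilde e_3$ in degree $3$; the rest is a short hand computation of invariants in the subalgebra $\FF_2[e_1,e_2]$ and its $\tilde e_3$-shift. For $({}^2\mr{E}_6,\mut{2}{3},3)$ they use $\tau(e_1)=-e_1$ from $\Pic$, then the simply connected case (just established) gives $\tau(e_2)=e_2+ae_1^4$ for some $a\in\FF_3$, and the relation $\tau^2=\id$ forces $a=0$. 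This avoids any further Schubert-level computation in the adjoint cases.
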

\begin{proof}
	The claim follows from Lemma~\ref{lem:ch_away_from_p}, the computation of $\Ch^*(G_K)$ recalled in~\refbr{expos:Chow_ring_split} and the description of the action of $\Gal(K/k)$ on $\Ch^*(G_K)$.
	
	If $(\Delta(G_K),\pi_1(G_K),p)$ is not in Table~\ref{tab:Kac}, then $\Ch^*(G_K)\cong \FF_p$ and the claim follows. Otherwise, since $p \nmid [K:k]$, one can easily check that the triple $(\Delta(G_K),\pi_1(G_K),p)$ is in Table~\ref{tab:away}, and we study them case by case.
	
	$\Delta(G)=\Att_n$. We have
	\[
	\Ch^*(G_K)\cong \FF_p[e]/(e^{p^{v_p(n+1)}}), \quad \deg e=1.
	\]
	In view of recollection~\refbr{expos:Picard} we can assume $e=c_1(\mc{L}(\bar{\varpi}_1))$. Thus for the nontrivial element $\tau\in \Gal(K/k)$ we have
	\[
	\tau(e)=\tau(c_1(\mc{L}(\bar{\varpi}_1))) = c_1(\mc{L}(\bar{\varpi}_{n})) = -e.
	\]
	Hence $\Ch^*(G_K)^{\Gal(K/k)}\subseteq \Ch^*(G_K)$ is the subalgebra generated by $e^2$ and the claim follows.
	
	$(\Delta(G),\pi_1(G))=({}^3\mr{D}_4,1)$. We have
	\[
	\Ch^*(G_K)\cong \FF_2[e]/(e^2), \quad \deg e=3.
	\]
	The vector space $\FF_2\cong \Ch^3(G_K)$ admits only the trivial action of $\Gal(K/k)$, thus $e$ is invariant and
	\[
	\Ch^*(G)\cong \Ch^*(G_K)^{\Gal(K/k)}\cong \FF_2[e]/(e^2).
	\]
	
	$(\Delta(G),\pi_1(G))=({}^3\mr{D}_4,\mut{3}{2,2})$.  We have
	\[
	\Ch^*(G_K)\cong \FF_2[e_1,e_2,e_3]/(e_1^4,e_2^4,e_3^2), \quad \deg e_1=\deg e_2 =1,\, \deg e_3=3.
	\]
	In view of recollection~\refbr{expos:Picard} we can assume $e_1=c_1(\mc{L}(\bar{\varpi}_3))$ and $e_2=c_1(\mc{L}(\bar{\varpi}_4))$. For a generator ${\sigma\in \Gal(K/k)\cong C_3}$ we have
	\[
	\sigma(e_1)=e_2,\quad \sigma(e_2)=e_1+e_2.
	\]
	Let $\mc{A}^*\subseteq \Ch^*(G_K)$ be the subalgebra generated by $e_1,e_2$. Then it is straightforward to check that $(\mc{A}^*)^{\Gal(K/k)}$ has a basis given by
	\[
	1,\quad e_1^2+e_1e_2+e_2^2,\quad e_1^2e_2+e_1e_2^2,\quad e_1^3+e_1^2e_2+e_2^3,\quad e_1^2e_2^2,\quad e_1^3e_2^3.
	\]
	We have $\mc{A}^3\subseteq \Ch^3(G_K)$ a codimension one $\Gal(K/k)$-invariant subspace. Maschke's theorem yields that 
	\[
	\Ch^3(G_K) = \mc{A}^3 \oplus \FF_2 \tilde{e}_3
	\]
	for some $\tilde{e}_3\in \Ch^3(G_K)$ with $\sigma(\tilde{e}_3)=\tilde{e}_3$. We have $\tilde{e}_3=e_3+p(e_1,e_2)$ for some degree $3$ homogeneous polynomial. Then we obtain a $\Gal(K/k)$-invariant decomposition
	\[
	\Ch^*(G_K) =  \mc{A}^* \oplus  \mc{A}^*\tilde{e}_3,
	\]
	and it follows that $\Ch^*(G_K)^{\Gal(K/k)}$ has a basis given by
	\begin{gather*}
		1,\quad e_1^2+e_1e_2+e_2^2,\quad e_1^2e_2+e_1e_2^2,\quad e_1^3+e_1^2e_2+e_2^3,\quad e_1^2e_2^2,\quad e_1^3e_2^3, \\
		\tilde{e}_3,\quad (e_1^2+e_1e_2+e_2^2)\tilde{e}_3,\quad (e_1^2e_2+e_1e_2^2)\tilde{e}_3,\quad (e_1^3+e_1^2e_2+e_2^3)\tilde{e}_3,\quad e_1^2e_2^2\tilde{e}_3,\quad e_1^3e_2^3\tilde{e}_3.
	\end{gather*}
	Thus $\Ch^*(G_K)^{\Gal(K/k)}$ is generated as an algebra by $e_1^2+e_1e_2+e_2^2$, $e_1^2e_2+e_1e_2^2$, $e_1^3+e_1^2e_2+e_2^3$ and $\tilde{e}_3$, and the multiplicative relations are straightforward.
	
	$(\Delta(G),\pi_1(G))=({}^2\mr{E}_6,1)$. We have
	\[
	\Ch^*(G_K)\cong \FF_3[e]/(e^3), \quad \deg e=4.
	\]
	Let $T\le B\le G$ be a maximal torus and a Borel subgroup and consider the characteristic sequence
	\[
	\Ch^*_{T_K}(\Spec K) \xrightarrow{c} \Ch^*(G_K/B_K) \xrightarrow{\phi^*_K} \Ch^*(G_K)\to 0.
	\]
	A straightforward computer-assisted computation shows that for the Schubert cycle 
	\[
	Z_{s_2s_4s_3s_1}\in \Ch^4(G_K/B_K)
	\]
	its image $ \phi^*_K(Z_{s_2s_4s_3s_1})$ is a generator of  $\Ch^4(G_K)$. We have $\tau(Z_{s_2s_4s_3s_1}) = Z_{s_2s_4s_5s_6}$ for the nontrivial element $\tau\in \Gal(K/k)$, and a further computer-assisted computation shows that $Z_{s_2s_4s_3s_1}-Z_{s_2s_4s_5s_6}$ lies in $c(\Ch^*_{T_K}(\Spec K))$. Then
	\[
	\tau(\phi^*_K(Z_{s_2s_4s_3s_1})) = \phi^*_K(\tau(Z_{s_2s_4s_3s_1})) = \phi^*_K(Z_{s_2s_4s_5s_6}) = \phi^*_K(Z_{s_2s_4s_3s_1}),
	\]
	so $\Gal(K/k)$ acts trivially on $\Ch^4(G_K)$ and
	\[
	\Ch^*(G) \cong \Ch^*(G_K)^{\Gal(K/k)}\cong \FF_3[e]/(e^3), \quad \deg e=4.
	\]
	
	$(\Delta(G),\pi_1(G))=({}^2\mr{E}_6,\mut{2}{3})$. We have
	\[
	\Ch^*(G_K)\cong \FF_3[e_1,e_2]/(e_1^9,e_2^3), \quad \deg e_1=1,\, \deg e_2=4.
	\]
	In view of recollection~\refbr{expos:Picard} we can assume $e_1=c_1(\mc{L}(\bar{\varpi}_1))$, thus for the nontrivial element $\tau\in \Gal(K/k)$ we have
	\[
	\tau(e_1)=\tau(c_1(\mc{L}(\bar{\varpi}_1))) = c_1(\mc{L}(\bar{\varpi}_6)) = c_1(\mc{L}(-\bar{\varpi}_1)) = -e_1.
	\]
	We have $\Ch^4(G_K)=\FF_3e_1^4\oplus \FF_3 e_2$, and it follows from the above that $\tau(e_1^4)=e_1^4$. Furthermore, the simply connected case yields $\tau(e_2)=e_2+ae_1^4$ for some $a\in \FF_3$. Since
	\[
	e_2=\tau^2(e_2) = \tau(e_2+ae_1^4) = e_2+2ae_1^4,
	\]
	we have $a=0$ and $\tau(e_2)=e_2$. Thus $\Ch^*(G_K)^{\Gal(K/k)}\subseteq \Ch^*(G_K)$ is the subalgebra generated by $e_1^2$ and $e_2$. The claim follows.
\end{proof}

\subsection{Chow ring of the group of type ${}^6\mr{D}_4$ at $p=2,3$}
In this section we compute the ring $\Ch^*(G):=\CH^*(G)\otimes \FF_p$ in the last remaining cases, namely $\Delta(G)={}^6\mr{D}_4$ and $p=2,3$. This is done partially splitting $G$ to $G_L$ with $\Delta(G_L)={}^3\mr{D}_4$ or $\Delta(G_L)={}^2\mr{D}_4$, and analyzing the Galois action on $\Ch^*(G_K)$, where $K$ is the splitting field of $G$.

We continue to use the notation of Sections~\ref{sec:quasi-split_simple} and~\ref{sec:vector_bundles}. In this section we put
\[
\Ch^*(-):=\CH^*(-)\otimes \FF_p
\]
for $p=2$ or $p=3$.

\begin{theorem} \label{thm:6D4}
	Let $G$ be a quasi-split simple group over a field $k$ of type $\Delta(G)={}^6\mr{D}_4$. Then for $p=2,3$ the algebra $\Ch^*(G)$ is given by the following table.
	
	\begin{center}
		\def\arraystretch{1.5}
		\begin{longtable*}{l|l|l|l|l}
			$\Delta(G)$ & $\pi_1(G)$ & $p$ & $\Ch^*(G)$ & $\deg e_i$ \\
			\hline
			${}^6\mr{D}_4$ & $1$ & $2$ & $\FF_2[e_1]/(e_1^2)$ & $3$\\
			& $\mut{6}{2,2}$ & $2$ & $\FF_2[e_1,e_2,e_3]/(e_1^4,e_2^2,e_1e_2,e_3^2)$ & $2,3,3$\\
			& $1$, $\mut{6}{2,2}$ & $3$ & $\FF_3[e_1]/(e_1^3)$ & $4$\\
		\end{longtable*}
	\end{center}  
	
\end{theorem}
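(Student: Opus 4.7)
The plan is to combine Galois descent from a suitable intermediate subextension of the splitting field $K/k$ (with $\Gal(K/k)\cong S_3$) with computer-assisted analysis of the Galois action on Schubert cycles. Let $L_2:=K^{A_3}$ be the unique quadratic subfield, and let $L_3:=K^{C_2}$ be a cubic subfield (for a choice of transposition $C_2\le S_3$); observe that $L_2/k$ is Galois while $L_3/k$ is not. Over $L_2$ the group $G$ is quasi-split of type ${}^3\mr{D}_4$ with splitting field $K/L_2$, and over $L_3$ it is quasi-split of type ${}^2\mr{D}_4$ with splitting field $K/L_3$; in both cases $\Ch^*(G_{L_i})$ is already known from Theorems~\ref{thm:answer_at_p} and~\ref{thm:D2r_ad_answer}.

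For $p=3$, since $\gcd(3,[L_2:k])=1$, Lemma~\ref{lem:ch_away_from_p} yields $\Ch^*(G)\cong\Ch^*(G_{L_2})^{\Gal(L_2/k)}$. From Table~\ref{tab:main} we have $\Ch^*(G_{L_2})\cong\FF_3[e_1]/(e_1^3)$ with $\deg e_1=4$, independently of $\pi_1(G)$. Since $\Ch^4(G_{L_2})\cong\FF_3$ is one-dimensional, the nontrivial element $\tau\in\Gal(L_2/k)$ acts on $e_1$ as either $+1$ or $-1$, and I expect the action to be trivial. To prove this I would lift $e_1$ to an explicit $\FF_3$-linear combination of degree $4$ Schubert cycles on $(G/B)_K$ using the exact characteristic sequence (Theorem~\ref{thm:char_qs}, applicable since the maximal torus of the simply connected cover of $G_{L_2}$ is quasi-trivial by Lemma~\ref{lem:quasi-split_quasi-trivial}), then invoke the permutation action of $S_3$ on the Schubert basis induced by its permutation of the three exterior simple roots $\{\alpha_1,\alpha_3,\alpha_4\}$ of $\mr{D}_4$, and verify by computer-assisted calculation that $\tau(e_1)\equiv e_1$ modulo the image of the characteristic map.

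For $p=2$ I would use the cubic subfield $L_3$. By Lemma~\ref{lem:pull_inj}, the pullback $\pi_{L_3}^*\colon\Ch^*(G)\hookrightarrow\Ch^*(G_{L_3})$ is injective because $[L_3:k]=3$ is coprime to $p=2$, and the transfer $(\pi_{L_3})_*$ splits it. The algebra $\Ch^*(G_{L_3})$ is $\FF_2[e]/(e^2)$ in degree $3$ when $\pi_1(G)=1$ by Theorem~\ref{thm:answer_at_p}, and is the $20$-dimensional algebra of case~2 of Theorem~\ref{thm:main} with $r=2$ when $\pi_1(G)=\mut{6}{2,2}$ (so $\pi_1(G_{L_3})=\mut{2}{2,2}$) by Theorem~\ref{thm:D2r_ad_answer}. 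A class $\alpha\in\Ch^*(G_{L_3})$ lies in the image of $\pi_{L_3}^*$ precisely when its further pullback to $\Ch^*(G_K)$ is $\Gal(K/k)$-invariant; this characterization follows by combining the projection-formula identity $(\pi_{L_3})_*\pi_{L_3}^*=3=1\in\FF_2$ with the splitting $L_3\otimes_k K\cong L_3\times K$ into Galois-conjugate pieces, which identifies the two projections from $(G_{L_3})_K\cong G_K\sqcup G_{K^2/\text{something}}$ under the diagonal pullback.

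The main obstacle is the explicit determination of the $S_3$-action on the generators of $\Ch^*(G_K)$ via their Schubert-cycle lifts in $\Ch^*((G/B)_K)$. Using software I would compute the $S_3$-permutation action on the Schubert basis and then identify, among polynomials in invariant Schubert cycles, the concrete generators matching the presentations claimed in the theorem. In the case $(\pi_1(G),p)=(\mut{6}{2,2},2)$ the lacunary relation $e_1e_2=0$ (together with the degree constraints producing three generators in degrees $2,3,3$) is forced by a particular invariant class in degree $5$ on $(G/B)_K$ being annihilated by the $S_3$-averaging process, and matching this with the presentation given in Theorem~\ref{thm:6D4} is the delicate part; the $p=3$ case, with only the single degree-$4$ class at stake, is considerably easier.
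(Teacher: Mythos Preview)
Your broad strategy---descend from an intermediate subextension of $K/k$ and compute Galois invariants via Schubert cycles---matches the paper's, but two genuine gaps would prevent the proof from going through as written.

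\textbf{For $p=3$.} Lemma~\ref{lem:ch_away_from_p} requires $p\nmid[K:k]$ where $K$ is the \emph{splitting field} of $G$; here $[K:k]=6$ and $p=3$ divides it, so the lemma does not apply. You are trying to use it with $L_2$ in place of $K$, but the proof of that lemma relies on (a) surjectivity of $\phi_K^*$, which needs $G_K$ split, and (b) Lemma~\ref{lem:Chow_quasi-split_homogeneous} for the splitting field. Neither is automatic for $L_2$. The paper instead works directly: it exhibits the $\Gal(K/k)$-invariant Schubert cycle $Z_{s_2s_1s_3s_4}\in\Ch^4((G/B)_K)$ (invariant because $s_1,s_3,s_4$ commute and are permuted by $S_3$), descends it to $Z'\in\Ch^4(G/B)$ over $k$, and checks via the conormed characteristic sequence that its image generates $\Ch^4(G_{L_2})$. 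This shows $\pi^*\colon\Ch^*(G)\to\Ch^*(G_{L_2})$ is surjective, hence an isomorphism (injectivity from Lemma~\ref{lem:pull_inj}).

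\textbf{For $p=2$.} Your characterisation ``$\alpha\in\Ch^*(G_{L_3})$ lies in $\mathrm{Im}(\pi_{L_3}^*)$ iff its pullback to $G_K$ is $\Gal(K/k)$-invariant'' is the right target, but the backward direction needs injectivity of $\pi_{K/L_3}^*\colon\Ch^*(G_{L_3})\to\Ch^*(G_K)$. Since $[K:L_3]=2=p$, Lemma~\ref{lem:pull_inj} does not give this, and your base-change sketch only yields $\pi_{K/L_3}^*\bigl(\alpha-\pi_{L_3}^*(\pi_{L_3})_*\alpha\bigr)=0$. (Also $L_3\otimes_kK\cong K^3$, not $L_3\times K$.) The paper closes exactly this gap: it factors $\pi_{K/L_3}^*$ through the submaximal orthogonal Grassmannian $G_{L_3}/P_{3,4}$, using that $\phi_L^*\circ\psi_L^*$ is an \emph{isomorphism} by Theorem~\ref{thm:D2r_ad_answer} (since $2r=4$ is a power of $2$) together with injectivity of $g^*$ from Lemma~\ref{lem:Chow_quasi-split_homogeneous}. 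Once injectivity of $\pi_{K/k}^*$ is established, the paper shows directly that its image is $\Ch^*(G_K)^{\Gal(K/k)}$ by a symmetrisation argument over the $3$-cycle in $S_3$ (no division needed since $p=2$), and then computes the $S_3$-invariants exactly as you propose.
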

\begin{proof}
	Let $K/k$ be the splitting field of $G$ and $T\le B\le G$ be a maximal torus and a Borel subgroup. We argue case by case on $(\pi_1(G),p)$.
	
	$(\pi_1(G),p)=(\mut{6}{2,2},2)$. Let $K/L/k$ be an intermediate extension with $[L:k]=3$ such that $\Gal(K/L)$ stabilizes the root $\alpha_1$ on the Dynkin diagram of $G_K$, then $\Delta({G}_L)={}^2\mr{D}_4$ with the nontrivial element of $\Gal(K/L)$ permuting the roots $\alpha_3$ and $\alpha_4$. Consider the parabolic subgroup $B\le P:={P}_{3,4}\le G_L$ in the notation of~\refbr{expos:parabolic} and consider the following commutative diagram.
	\[
	\xymatrix{
		& \Ch^*({G}_L/P) \ar[r]^{g^*} \ar[d]^{\psi_L^*} & \Ch^*({G}_K/P_K) \ar[d]^{\psi_K^*}\\
		\Ch^*({G}/{B}) \ar[r]^{\rho_{L/k}^*} \ar[d]_{\phi^*} & \Ch^*({G}_L/{B}_L) \ar[r]^{\rho_{K/L}^*} \ar[d]^{\phi_L^*} & \Ch^*({G}_K/{B}_K) \ar[d]^{\phi_K^*}\\
		\Ch^*({G}) \ar[r]^{\pi_{L/k}^*} & \Ch^*({G}_L) \ar[r]^{\pi_{K/L}^*} & \Ch^*({G}_K)
	}
	\]
	Here all the morphisms are pullbacks for the respective projections. We have the following:
	\begin{itemize}
		\item $g^*$, $\rho_{L/k}^*$, $\rho_{K/L}^*$ are injective by Lemma~\ref{lem:Chow_quasi-split_homogeneous},
		\item $\pi^*_{L/k}$ is injective by Lemma~\ref{lem:pull_inj},
		\item $\phi^*_L\circ \psi^*_L$ is an isomorphism by Theorem~{\hyperref[thm:D2r_ad_answer]{\ref*{thm:D2r_ad_answer}.(1)}},
		\item $\phi^*_K\circ \psi^*_K$ is an isomorphism, this is well-known, see e.g. the beginning of the proof of Theorem~\ref{thm:D2r_ad_answer}.
	\end{itemize}
	It follows that $\pi^*_{K/L} = \phi^*_K\circ \psi^*_K \circ g^*\circ (\phi^*_L\circ \psi^*_L)^{-1} $ is injective, so the composition
	\[
	\pi^*_{K/k}:=\pi^*_{K/L}\circ \pi^*_{L/k} \colon \Ch^*({G}) \to \Ch^*(G_K)
	\]
	is injective. It is clear that $\pi^*_{K/k}(\Ch^*(G))\subseteq \Ch^*(G_K)^{\Gal(K/k)}$, and we claim that this is in fact an equality. Indeed, pick $x\in \Ch^*(G_K)^{\Gal(K/k)}$ and consider 
	\[
	y' := \psi^*_K\circ (\phi^*_K\circ \psi^*_K)^{-1} (x),\quad y:=y'+\sigma(y')+\sigma^2(y'),
	\]
	where $\sigma\in \Gal(K/k)\cong S_3$ is an element of order $3$. It is clear that $\sigma(y)=y$. For the nontrivial element $\tau\in \Gal(K/L)\subseteq \Gal(K/k)$ we have $\tau(y')=y'$, since $\tau(x)=x$ by the assumption and $\psi_K$ and $\phi_K$ are obtained by a base change from $\Spec L$. Then
	\[
	\tau(y)=\tau(y')+\tau\sigma(y')+\tau\sigma^2(y')=\tau(y')+\sigma^2\tau(y')+\sigma\tau(y') = y'+\sigma^2(y')+\sigma(y') = y,
	\]
	and it follows that $y\in \Ch^*({G}_K/{B}_K)^{\Gal(K/k)}$. Lemma~\ref{lem:Chow_quasi-split_homogeneous} yields that there exists $z\in \Ch^*({G}/{B})$ such that 
	\[
	(\rho_{L/k}\circ \rho_{K/L})^*(z) = y.
	\]
	Therefore, we have
	\[
	\pi^*_{K/k}\phi^*(z) = \phi^*_K(y) = \phi^*_K(y'+\sigma(y')+\sigma^2(y')) = \phi^*_K(y')+\sigma \phi^*_K(y')+\sigma^2 \phi^*_K(y') = 3x= x,
	\]
	with the third equality following from the fact that $\phi_K$ is obtained by a base change from $\Spec k$. Then
	\[
	\pi^*_{K/k}(\Ch^*({G}))= \Ch^*({G}_K)^{\Gal(K/k)}
	\]
	as claimed, and it remains to compute the algebra of invariants.
	
	We have an isomorphism 
	\[
	\Theta\colon \Ch^*({G}_K) \xrightarrow{\simeq} \FF_2[e_1,e_2,e_3]/(e_1^4,e_2^4,e_3^2),\quad  \deg e_1 =\deg e_2 =1,\quad \deg e_3=3,
	\]
	by recollection~\refbr{expos:Chow_ring_split}.  A straightforward computer-assisted computation with the characteristic sequence
	\[
	\Ch^*_{{T}_K}(\Spec K) \to \Ch^*({G}_K/{B}_K) \to \Ch^*(G_K)\to 0
	\]
	yields that there is a choice of the isomorphism $\Theta$ such that
	\begin{gather*}
		\tau(e_1)=e_2, \quad\tau(e_2)=e_1,\quad \sigma(e_1)=e_2,\quad \sigma(e_2)=e_1+e_2,\\
		\tau(e_3) =  e_3+e_1^2e_2+e_1e_2^2,\quad \sigma(e_3)=e_3+e_1^2e_2+e_1e_2^2+e_2^3
	\end{gather*}
	for order $2$ and order $3$ elements $\tau,\sigma\in \Gal(K/k)\cong S_3$. As in the proof of Theorem~{\hyperref[thm:answer_away_p]{\ref*{thm:answer_away_p}.(3)}}, a basis of the $\sigma$-invariant subalgebra of $\Ch^*({G}_K)$ is given by
	\begin{equation}\label{eq:basis_6D4}
		\begin{gathered}
			1,\quad e_1^2+e_1e_2+e_2^2,\quad e_1^2e_2+e_1e_2^2,\quad e_1^3+e_1^2e_2+e_2^3,\quad e_1^2e_2^2,\quad e_1^3e_2^3, \\
			\tilde{e}_3,\quad (e_1^2+e_1e_2+e_2^2)\tilde{e}_3,\quad (e_1^2e_2+e_1e_2^2)\tilde{e}_3,\quad (e_1^3+e_1^2e_2+e_2^3)\tilde{e}_3,\quad e_1^2e_2^2\tilde{e}_3,\quad e_1^3e_2^3\tilde{e}_3.
		\end{gathered}
	\end{equation}
	for some $\tilde{e}_3=e_3+p(e_1,e_2)$ with $p$ being a homogeneous polynomial of degree $3$. Using the above description for the action we see that one can choose
	\[
	\tilde{e}_3 := e_3+e_1e_2^2.
	\]
	It turns out that all the elements of the basis~\refbr{eq:basis_6D4} except $e_1^3+e_1^2e_2+e_2^3$ and $ (e_1^3+e_1^2e_2+e_2^3)\tilde{e}_3$ are also $\tau$-invariant, while
	\begin{gather*}
		\tau(e_1^3+e_1^2e_2+e_2^3) = (e_1^3+e_1^2e_2+e_2^3)+(e_1^2e_2+e_1e_2^2),\\
		\tau((e_1^3+e_1^2e_2+e_2^3)\tilde{e}_3) = (e_1^3+e_1^2e_2+e_2^3)\tilde{e}_3+(e_1^2e_2+e_1e_2^2)\tilde{e}_3.
	\end{gather*}
	It follows that the remaining $10$ elements form a basis of $\Ch^*({G}_K)^{\Gal(K/k)}$. The claim of the theorem follows with $e_1^2+e_1e_2+e_2^2$, $e_1^2e_2+e_1e_2^2$ and $e_3+e_1e_2^2$ being the algebra generators of $\Ch^*({G}_K)^{\Gal(K/k)}\cong \Ch^*({G})$.
	
	$(\pi_1(G),p)=(1,2)$. Let $K/L/k$ be an intermediate extension with $[L:k]=3$ and $f\colon G\to \bar{G}$ be the adjoint quotient. Consider the following commutative diagram.
	\[
	\xymatrix{
		\Ch^*(\bar{G}) \ar[r]^{\bar{\pi}_{L/k}^*} \ar[d]^{f^*} &   \Ch^*(\bar{G}_L) \ar[d]^{f_L^*}\ar[r]^{\bar{\pi}_{K/L}^*} &   \Ch^*(\bar{G}_K) \ar[d]^{f_K^*} \\
		\Ch^*(G) \ar[r]^{\pi_{L/k}^*} &   \Ch^*(G_L) \ar[r]^{\pi_{K/L}^*}  &   \Ch^*(G_K) \\            
	}
	\]
	Here all the morphisms are pullbacks for the respective projections. By recollection~\refbr{expos:Chow_ring_split} we have
	\[
	\Ch^*(G_K)\cong \FF_2[e]/(e^2),\quad \Ch^*(\bar{G}_K)\cong \FF_2[e_1,e_2,e_3]/(e_1^4,e_2^4,e_3^2),
	\]
	where for the second isomorphism we choose the same one as above in the proof. We have $f^*_K(e_1)=f^*_K(e_2)=0$ by Lemma~\ref{lem:Pic_pull_zero}. Furthermore, $f^*_K$ is surjective by Theorem~\ref{thm:char_qs}, since $G_K/B_K\cong \bar{G}_K/\bar{B}_K$ for the respective Borel subgroups, and thus $f^*_K(e_3)=e$. Then by the adjoint case discussed above there exists an element $\tilde{e}\in \Ch^*(\bar{G})$ such that 
	\[
	f^*_K\bar{\pi}^*_{K/L}\bar{\pi}^*_{L/k}(\tilde{e}) = f^*_K(e_3+e_1e_2^2) =e.
	\]
	Hence the composition $f^*_K\bar{\pi}^*_{K/L}\bar{\pi}^*_{L/k}=\pi_{K/L}^*\pi_{L/k}^*f^*$ is surjective, and it follows that $\pi_{K/L}^*\pi_{L/k}^*$ is surjective as well. Moreover, the homomorphism ${\pi}^*_{L/k}$ is injective by Lemma~\ref{lem:pull_inj}, and $\Ch^*(G_L)\cong \Ch^*(G_K)$ by Theorem~\ref{thm:answer_at_p} and recollection~\refbr{expos:Chow_ring_split}, thus $\pi_{K/L}^*$ and $\pi_{L/k}^*$ are isomorphisms. The claim follows.
	
	$(\pi_1(G),p)=(\mut{6}{2,2},3)$. Let $K/L/k$ be the intermediate Galois extension with $[L:k]=2$. Then $\Delta(G_L)={}^3\mr{D}_4$. Consider the following commutative diagram.
	\[
	\xymatrix{
		& \Ch^*({G}_L/{B}_L) \ar[r]^{\phi_L^*}\ar[d]_{q_2} & \Ch^*({G}_L)\ar[d]^{q_1}\\
		\CH^*_{K,{T}_L}(\Spec L) \ar[r] & \CH^*_K({G}_L/{B}_L) \ar[r]^{\tilde{\phi}_L^*} & \CH^*_K({G}_L)\\
	}
	\]
	Here the bottom row is the conormed characteristic sequence from Definition~\ref{def:characteristic_map}, ${G}_L$ and ${G}_L/{B}_L$ are considered as varieties over $L$, the vertical homomorphisms are the quotient morphisms, and $\phi_L^*, \tilde{\phi}_L^*$ are the pullbacks for the projection $\phi_L\colon  {G}_L\to  {G}_L/ {B}_L$. The bottom sequence is exact in the sense of Definition~\ref{def:exact_sequence}. The algebra $\CH^*_{K,{T}_L}(\Spec L)$ was computed in Proposition~\ref{prop:classifying_quasi-trivial} and $\CH^*_K( {G}_L/ {B}_L)$ is an explicit quotient of $\Ch^*( {G}_L/ {B}_L)$ which in turn can be identified by Lemma~\ref{lem:Chow_quasi-split_homogeneous} with $\Ch^*( {G}_K/ {B}_K)^{\Gal(K/L)}$ and admits a description via Schubert cycles. Let $\rho_{K/L}\colon  {G}_K/ {B}_K\to  {G}_L/ {B}_L$ be the projection. We have $\CH^*_K( {G}_L)\cong \FF_3[e]/(e^3)$ by Theorem~\ref{thm:conormed_group_answer} and a straightforward computer-assisted computation shows that for the cycle $Z\in \Ch^*( {G}_L/ {B}_L)$ such that $\rho^*_{K/L}(Z) = Z_{s_2s_1s_3s_4}$ the element $\tilde{\phi}_L^*(Z)$ is a generator of $\CH^4_K( {G}_L)\cong \FF_3$. The homomorphism $q_1$ is an isomorphism 
	\[
	q_1\colon \Ch^*( {G}_L)\xrightarrow{\simeq} \CH^*_K( {G}_L) \cong  \FF_3[e]/(e^3)
	\]
	by Proposition~\ref{prop:norm_zero}. It follows that $\phi^*_L(Z)$ is a generator of $\Ch^4( {G}_L)\cong \FF_3$.
	
	Consider the following diagram.
	\[
	\xymatrix{
		\Ch^*( {G}/ {B}) \ar[d]^{\rho_{L/k}^*} \ar[r]^{\phi^*}& \Ch^*( {G}) \ar[d]^{\pi^*}\\
		\Ch^*( {G}_L/ {B}_L) \ar[r]^{\phi_L^*} \ar[d]^{\rho_{K/L}^*} & \Ch^*( {G}_L) \\
		\Ch^*( {G}_K/ {B}_K) & 
	}
	\]
	Here all the morphisms are pullbacks for the respective projections. Since $\rho_{K/L}^*(Z)=Z_{s_2s_1s_3s_4}$ is clearly $\Gal(K/k)$-invariant (the group permutes $\alpha_1,\alpha_3,\alpha_4$ and $s_1,s_3,s_4$ commute with each other), Lemma~\ref{lem:Chow_quasi-split_homogeneous} yields that there exists $Z'\in \Ch^4( {G}/ {B})$ such that $\rho^*_{L/k}(Z')=Z$. Then $\pi^*\phi^*(Z')$ is a generator of $\Ch^4_K( {G}_L)$ and it follows that $\pi^*$ is surjective. At the same time $\pi^*$ is injective by Lemma~\ref{lem:pull_inj}, thus it is an isomorphism. The claim follows.
	
	$(\pi_1(G),p)=(1,3)$. Let $K/L/k$ be the intermediate Galois extension with $[L:k]=2$, then $\Delta(G_L)={}^3\mr{D}_4$. Let $f\colon G\to \bar{G}$ be the adjoint quotient and consider the following diagram.
	\[
	\xymatrix{
		\Ch^*(\bar{G}) \ar[r]^{\bar{\pi}^*} \ar[d]_{f^*} & \Ch^*(\bar{G}_L)\ar[d]^{f_L^*}\\
		\Ch^*(G) \ar[r]^{\pi^*} & \Ch^*(G_L)
	}
	\]
	Here all the morphisms are pullbacks for the respective projections. The morphism $f^*_L$ is surjective by a combination of Proposition~\ref{prop:norm_zero} and Theorem~\ref{thm:char_qs}, thus an isomorphism by the dimension count and Theorem~\ref{thm:answer_at_p}. The morphism $\bar{\pi}^*$ is an isomorphism by the proof in the adjoint case above, and $\pi^*$ is injective by Lemma~\ref{lem:pull_inj}. Thus $f^*$ is an isomorphism yielding the claim.
\end{proof}

\appendix
\section{Recollection on algebraic groups} \label{sec:appendix}

\subsection{Some groups of multiplicative type and their representations} \label{sec:multiplicative_groups}
\begin{expos}
	An affine algebraic group $S$ over a field $k$ is of \textit{multiplicative type}, if $S_{k^{sep}}$ is diagonalizable \cite[Chapter~12]{Mi17}. Algebraic tori, their subgroups and quotient groups are of multiplicative type. Recall \cite[Theorem~12.23]{Mi17} that the category of groups of multiplicative type over a field $k$ is dual to the category of finitely generated $\Z$-modules equipped with a continuous action of $\Gal(k^{sep}/k)$ with the contravariant equivalence given by
	\[
	S\mapsto \mc{X}^*(S):=\Hom(S_{k^{sep}},\Gmm).
	\]
\end{expos}

\begin{expos}        \label{expos:tori}
	A \textit{quasi-trivial} (or \textit{induced}) torus $T$ over a field $k$ is the torus corresponding to a permutation $\Gal(k^{sep}/k)$-module. One can show \cite[Lemma~12.61 and below]{Mi17} that a quasi-trivial torus is a torus $T$ admitting an isomorphism
	\[
	T\cong R_{L_1/k} \Gmm \times \hdots \times R_{L_r/k} \Gmm
	\]
	for some finite separable field extensions $L_1,\hdots,L_r/k$, where $R_{L_i/k} \Gmm$, $1\le i\le r$, are the respective Weil restrictions of a one-dimensional split torus.
	
	Let $L/k$ be a finite separable extension of fields and let $R:=R_{L/k} \Gmm$ be the Weil restriction of a one-dimensional split torus. Put $V_R:= R_{L/k} \AAA^1_k$. Since Weil restrictions commute with products, the standard action $\Gmm\times \AAA^1_k\to \AAA^1_k$ gives rise to the action
	\[
	R\times V_R \cong R_{L/k} (\Gmm\times \AAA^1_k)\to R_{L/k} \AAA^1_k = V_R.
	\]
	Recall that $V_R= R_{L/k} \AAA^1_k\cong \AAA^n_k$ with $n=[L:k]$, and it is straightforward to see that the above action is linear.	We refer to it as the \textit{standard vector representation of $R$}. It is well-known that $R$ can be identified with the open subset of $V_R$ given by the condition $N_{L/k}\neq 0$, where $N_{L/k}$ is the norm form of the field extension $L/k$. We briefly recall some details. Let $\bar{L}/k$ be a Galois closure of $L/k$, which also coincides with the splitting field of $R$. A choice of a basis $\{e_1,e_2,\hdots, e_n\}$ for $L$ over $k$ gives rise to an isomorphism $V_R\cong \AAA^n_k=\Spec k[x_1,x_2,\hdots, x_n]$ and under this isomorphism the norm form can be described as
	\[
	N_{L/k}(x_1,x_2,\hdots, x_n) = \prod_{\sigma \in \mathcal{G}/\mathcal{G}'} \sigma (x_1e_1+ \hdots + x_ne_n) \in k[x_1, \hdots, x_n]\subseteq \bar{L}[x_1, \hdots, x_n],
	\]
	where $\mathcal{G}=\Gal(\bar{L}/k)$, $\mathcal{G}'=\Gal(\bar{L}/L)$, and the product is taken over some chosen set of representatives of the cosets $\mathcal{G}/\mathcal{G}'$. Note that the formula does not depend on the choice of the set of representatives of $\mathcal{G}/\mathcal{G}'$, since $e_i\in L$, so all $e_i$ are stable under the action of $\mathcal{G}'$.
	%The multiplication $R\times R \to R$ as well as the action $R \times V_R \to V_R$ are given by bilinear polynomials $f_1(\mathbf{x},\mathbf{y}), \hdots, f_n(\mathbf{x},\mathbf{y})\in k[x_1 \hdots,x_n,y_1, \hdots, y_n]$
	%satisfying
	%\[
	%(x_1e_1+ \hdots + x_ne_n)(y_1e_1+ \hdots + y_ne_n) =  f_1(\mathbf{x},\mathbf{y})e_1+  \hdots + f_n(\mathbf{x},\mathbf{y})e_n.
	%\]
\end{expos}  

\begin{expos}
	In the article we repeatedly use the following groups of multiplicative type over a field $k$ associated with a degree $2$ Galois field extension $L/k$. These groups are naturally closed subgroups of $R:=R_{L/k}\Gmm$. Recall that there is a canonical isomorphism $\mc{X}^*(R)\cong \Z\oplus\Z$ with the nontrivial element $\tau\in \Gal(L/k)$ permuting the elements of the basis, $\tau(x_1,x_2)=(x_2,x_1)$.
	\begin{itemize}\itemsep0pt
		\item 
		${}^2\Gmm$ with $\mc{X}^*({}^2\Gmm)=\Z$, $\tau(x)=-x$. The closed embedding ${}^2\Gmm \le R$ corresponds to the projection $\mc{X}^*(R)=\Z\oplus\Z \to \Z = \mc{X}^*({}^2\Gmm)$, $(x_1,x_2)\mapsto x_1-x_2$. The group ${}^2\Gmm$ is a unique non-trivial twisted form of $\Gmm$ split by $L$.
		\item
		$\mut{2}{l}$ with $\mc{X}^*(\mut{2}{l})=\Z/l\Z$, $\tau(\bar{x})=-\bar{x}$. The closed embedding $\mut{2}{l} \le R$ corresponds to the projection $\mc{X}^*(R)=\Z\oplus\Z \to \Z/l\Z = \mc{X}^*(\mut{2}{l})$, $(x_1,x_2)\mapsto \bar{x}_1-\bar{x}_2$. If $l\ge 3$, then the group $\mut{2}{l}$ is a unique non-trivial twisted form of the group of $l$-th roots of unity $\mu_l$ split by $L$. If $l=2$, then $\mut{2}{2}=\mu_2$.
		\item
		$\mut{2}{2,2}$ with $\mc{X}^*(\mut{2}{2,2})=\Z/2\Z\oplus \Z/2\Z$, $\tau(\bar{x}_1,\bar{x}_2)=(\bar{x}_2,\bar{x}_1)$. The closed embedding $\mut{2}{2,2} \le R$ corresponds to the projection $\mc{X}^*(R)=\Z\oplus\Z \to \Z/2\Z\oplus \Z/2\Z = \mc{X}^*(\mut{2}{2,2})$, $(x_1,x_2)\mapsto (\bar{x}_1, \bar{x}_2)$. The group $\mut{2}{2,2}$ is a unique non-trivial twisted form of the group $\mu_2\times \mu_2$ split by $L$. Moreover, one has $\mut{2}{2,2}\cong R_{L/k}\mu_2$ for the Weil restriction $R_{L/k}\mu_2$ of $\mu_2$.
	\end{itemize}
	Restricting the standard vector representation $V_R$ to the respective subgroups we obtain the following \textit{standard vector representations} of dimension $2$:
	\[
	{}^2V_{\Gmm} := V_R|_{({}^2\Gmm)},\quad {}^2V_{\mu_l} := V_R|_{(\mut{2}{l})}, \quad{}^2V_{\mu_{2,2}} := V_R|_{(\mut{2}{2,2})}.
	\]
	Furthermore, if $l=2m$ is even, then $\mut{2}{l}$ admits a linear \textit{sign representation} ${}^2\Lambda_{\mu_{2m}}^{\pm}$ given by the Galois-invariant character $m\in \Z/2m\Z= \mc{X}^*(\mut{2}{2m})$.
	
	We also have the following groups associated with a degree $3$ Galois field extension $L/k$ and to a degree $6$ Galois field extension with $\Gal(L/k)\cong S_3$.
	\begin{itemize}\itemsep0pt
		\item $\mut{3}{2,2}$ with $\mc{X}^*(\mut{3}{2,2})=\Z/2\Z\oplus \Z/2\Z$ and $\Gal(L/k)\cong C_3$ permuting the non-zero elements. The group $\mut{3}{2,2}$ is a unique non-trivial twisted form of the group $\mu_2\times \mu_2$ split by $L$. 
		
		\item $\mut{6}{2,2}$ with $\mc{X}^*(\mut{6}{2,2})=\Z/2\Z\oplus \Z/2\Z$ and $\Gal(L/k)\cong S_3\cong \operatorname{Aut}(\Z/2Z\oplus \Z/2\Z)$ acting as the whole group of automorphisms. The group $\mut{6}{2,2}$ is a unique non-trivial twisted form of the group $\mu_2\times \mu_2$ with the splitting field $L$.
	\end{itemize}
\end{expos}  

\begin{expos} \label{expos:rep_of_weights}
	Let $S$ be a group of multiplicative type over a field $k$. Then the category $\mc{R}ep(S)$ of representations of $S$ is a semisimple abelian category and the isomorphism classes of simple objects are classified by the orbits of $\Gal(k^{sep}/k)$ acting on $\mc{X}^*(S)$ \cite[Theorem~12.30]{Mi17}. In particular, for the representation ring $\mathrm{Rep}(S)$ there is a canonical isomorphism
	\begin{equation} \label{eq:rep_via_char}
		\mathrm{Rep}(S) \xrightarrow{\simeq} \mathbb{Z}[\mc{X}^*(S)]^{\Gal(k^{sep}/k)}.
	\end{equation}
	For $\lambda\in \mc{X}^*(S)$ we denote by $x^{\lambda}$ the corresponding element in $\mathbb{Z}[\mc{X}^*(S)]$, and for a $\Gal(k^{sep}/k)$-invariant subset $\mc{Q}\subseteq \mc{X}^*(S)$ the element $\sum_{\lambda\in \mc{Q}} x^\lambda$ corresponds to a representation of $S$ which we denote by $V(\mc{Q})$. The class of a representation $V\in \mc{R}ep(S)$ in $\mathrm{Rep}(S)$ is denoted by $[V]$ and we freely use the above isomorphism, in particular, we have $[V(\mc{Q})]=\sum_{\lambda\in \mc{Q}} x^\lambda$. For a homomorphism $\rho\colon S_1\to S_2$ we denote by $\Res_\rho\colon \mathrm{Rep}(S_2)\to \mathrm{Rep}(S_1)$ the induced homomorphism of the representation rings.  Isomorphism~\refbr{eq:rep_via_char} is functorial, in particular, the following diagram commutes.
	\[
	\xymatrix{ \mathrm{Rep}(S_2) \ar[r]^(0.35){\simeq} \ar[d]^{\Res_\rho}& \mathbb{Z}[\mc{X}^*(S_2)]^{\Gal(k^{sep}/k)} \ar[d]^{\Z[\mc{X}^*(\rho)]^{\Gal(k^{sep}/k)}} \\
		\mathrm{Rep}(S_1) \ar[r]^(0.35){\simeq} &  \mathbb{Z}[\mc{X}^*(S_1)]^{\Gal(k^{sep}/k)}
	}
	\]
	Let $L/k$ be a degree $2$ Galois field extension. Then we have the following formulae for the classes of the standard vector representations introduced above:
	\[
	[V_R]=x^{(1,0)}+x^{(0,1)},\quad [{}^2V_{\mu_l}]=x^{\bar{1}}+x^{-\bar{1}},\quad [{}^2V_{\mu_{2,2}}]=x^{(\bar{1},\bar{0})}+x^{(\bar{0},\bar{1})}, \quad [{}^2\Lambda^\pm_{\mu_{2m}}]=x^{\bar{m}}.
	\]
\end{expos}        

\subsection{Quasi-split simple groups} \label{sec:quasi-split_simple}

\begin{expos}
	Let $G$ be a reductive group over a field $k$.
	\begin{itemize} \itemsep0pt
		\item {\cite[Definition~17.67]{Mi17}} $G$ is \textit{quasi-split}, if there exists a Borel subgroup $B\le G$.
		\item {\cite[Definition~18.7]{Mi17}} Suppose that $G$ is semisimple. The \textit{fundamental group} of $G$ is the kernel $\pi_1(G):=\ker(\tilde{G}\to G)$ of the simply connected cover. The group $G$ is \textit{adjoint}, if its center is trivial, or, equivalently, if its fundamental group coincides with the center of the simply connected cover.
	\end{itemize}
\end{expos}        

\begin{expos}
	Let $G$ be a quasi-split semisimple group over a field $k$ and let $T\le B\le G$ be a maximal torus and a Borel subgroup. Then $T_{k^{sep}}\le B_{k^{sep}}\le G_{k^{sep}}$ gives rise to a pinned root datum and since $B$ is defined over $k$, the action of $\Gal(k^{sep}/k)$ on $\mc{X}^*(T)$ stabilizes the set of simple roots $\Pi\subseteq \mc{X}^*(T)$ yielding an action of $\Gal(k^{sep}/k)$ on the Dynkin diagram of $G_{k^{sep}}$. A simply connected quasi-split semisimple group over $k$ is determined up to an isomorphism by the action of $\Gal(k^{sep}/k)$ on the Dynkin diagram of $G_{k^{sep}}$ \cite[Proposition~27.8]{KMRT98}, see also \cite[Theorem~25.33]{Mi17} and \cite[Theorem~2]{Ti66}. It follows that a quasi-split semisimple group is determined by its fundamental group and the action of $\Gal(k^{sep}/k)$ on the Dynkin diagram of $G_{k^{sep}}$. We refer to the Dynkin diagram of $G_{k^{sep}}$ together with the action of $\Gal(k^{sep}/k)$ as the \textit{type of $G$} and denote it by $\Delta(G)$, so quasi-split semisimple groups are classified by the pairs $(\Delta(G),\pi_1(G))$ consisting of the type of $G$ and the fundamental group of $G$. Note that $\Delta(G_{k^{sep}})$ is just the Dynkin diagram of $G_{k^{sep}}$.
\end{expos}

\begin{lemma}[{\cite[Exercise~25-4]{Mi17}}] \label{lem:quasi-split_quasi-trivial}
	Let $G$ be a quasi-split semisimple group over a field $k$ and let $T\le B\le G$ be a maximal torus and a Borel subgroup. Suppose that $G$ is simply connected or adjoint. Then $T$ is quasi-trivial.
\end{lemma}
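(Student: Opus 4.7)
The plan is to exhibit an explicit $\Gal(k^{sep}/k)$-permutation basis of the character lattice $\mc{X}^*(T)$ in each of the two cases; by the equivalence between groups of multiplicative type and Galois lattices (Section~\ref{sec:multiplicative_groups}), this will identify $T$ with a product of Weil restrictions of $\Gmm$, i.e.\ with a quasi-trivial torus.

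First I would recall that, because $B$ is defined over $k$, the set $\Pi \subseteq \mc{X}^*(T)$ of simple roots associated with $T \le B \le G$ is stable (as a set) under the action of $\Gal(k^{sep}/k)$, and the Galois action permutes $\Pi$. This is exactly the statement that $\Gal(k^{sep}/k)$ acts on the Dynkin diagram $\Delta(G)$ of $G_{k^{sep}}$ by diagram automorphisms, which is built into the very definition of the type of a quasi-split group. Dually, $\Gal(k^{sep}/k)$ permutes the set $\Pi^\vee \subseteq \mc{X}_*(T)$ of simple coroots in the same way.

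In the adjoint case, $\mc{X}^*(T)$ is the root lattice $\Z\Pi$, for which $\Pi$ itself is a $\Z$-basis. Since the Galois action permutes $\Pi$, the lattice $\mc{X}^*(T)$ is a permutation module, so $T$ is quasi-trivial. In the simply connected case, $\mc{X}^*(T)$ is the weight lattice, which has the basis of fundamental weights $\{\varpi_i\}_{i \in I}$ dual to the basis $\Pi^\vee = \{\alpha_i^\vee\}_{i \in I}$ of simple coroots. Because the Galois action permutes $\Pi^\vee$, duality forces it to permute the $\varpi_i$ by the same permutation, so again $\mc{X}^*(T)$ is a permutation $\Gal(k^{sep}/k)$-module and $T$ is quasi-trivial.

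There is no real obstacle here: the only point of substance is the observation that, since $B$ is $k$-rational, the Galois action on $\mc{X}^*(T_{k^{sep}})$ preserves $\Pi$ (equivalently, preserves the positive Weyl chamber). Once this is in hand, the claim reduces to the standard fact that the root and weight lattices of a simply-laced or non-simply-laced root system have canonical $\Z$-bases indexed by the nodes of the Dynkin diagram, and diagram automorphisms act on these bases by permutation.
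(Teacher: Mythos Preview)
Your proof is correct and essentially identical to the paper's own argument: both observe that the Galois group permutes the simple roots (since $B$ is defined over $k$), so in the adjoint case the simple roots give a permutation basis of $\mc{X}^*(T)$, while in the simply connected case the fundamental weights, being dual to the simple (co)roots, are permuted accordingly and give a permutation basis.
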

\begin{proof}
	The set of simple roots associated with $T_{k^{sep}}\le B_{k^{sep}}\le G_{k^{sep}}$ is stable under the action of the Galois group $\Gal(k^{sep}/k)$ on the lattice of characters $\mc{X}^*(T)$. In the adjoint case the simple roots form a basis of $\mc{X}^*(T)$ giving a permutation basis under the action of the Galois group. In the simply connected case a permutation basis of $\mc{X}^*(T)$ is given by the set of fundamental weights, which are permuted by the Galois group, since they are dual to the simple roots.
\end{proof}

\begin{expos}
	By a \textit{simple group} in the present article we mean a geometrically almost-simple group in the sense of \cite[Definition~19.7]{Mi17}, i.e. a semisimple group with $\Delta(G_{k^{sep}})$ being a connected Dynkin diagram.
\end{expos}

\begin{expos} \label{expos:simple_groups}
	Let $G$	be a non-split quasi-split simple group over a field $k$ and let $T\le B\le G$ be a maximal torus and a Borel subgroup. Then $\Delta(G_{k^{sep}})=\mr{A}_n$, $n\ge 2$, $\mr{D}_n$, $n\ge 4$, or $\mr{E}_6$,
	since only these connected Dynkin diagrams admit non-trivial automorphisms. We recall some structural data in these cases.
	
	$\mathbf{A_n,\, n\ge 2.}$ The action of $\Gal(k^{sep}/k)$ on the Dynkin diagram  factors through an action of $\Gal(L/k)$ for a degree $2$ Galois field extension $L/k$ which is the splitting field of the maximal torus $T$. The type of $G$ in this case is denoted by $\Delta(G)=\Att_n$. The corresponding quasi-split simply connected group is denoted by $G^{sc}(\Att_n)$ and it is isomorphic to the special unitary group $\SU(V,h)$, where $(V,h)$ is a non-degenerate hermitian form over $L/k$ of dimension $n+1$ and of maximal Witt index \cite[Example~27.9]{KMRT98}. We use the following notation for roots and weights (see \cite[Example~22.34]{Mi17}):
	\begin{itemize} \itemsep0pt
		\item $\mc{X}_{\mathbb{R}}:= \left(\bigoplus_{i=1}^{n+1} \mathbb{R} e_i\right)/\mathbb{R}(e_1+e_2+\hdots +e_{n+1})$ and $\epsilon_i\in \mc{X}_{\mathbb{R}}$ denotes the image of $e_i$,
		\item simple roots: $\alpha_i := \epsilon_i-\epsilon_{i+1} \in \mc{X}_{\mathbb{R}} $, $1\le i \le n$,
		\item
		fundamental weights: $
		\varpi_i:=\epsilon_1+\epsilon_2+\hdots+\epsilon_i$, $1\le i \le n$.
	\end{itemize} 
	In the simply connected case we have $\mc{X}^*(T)=\bigoplus_{i=1}^n \mathbb{Z} \varpi_i = \bigoplus_{i=1}^n \mathbb{Z} \epsilon_i\le \mc{X}_{\mathbb{R}}$ with the action of the nontrivial element $\tau\in \Gal(L/k)$ given by $\tau(\epsilon_i)= -\epsilon_{n+2-i}$. For the center $Z\le G^{sc}(\Att_n)$ we have
	\[
	\mc{X}^*(Z)= \mc{X}^*(T) / \bigoplus_{i=1}^n \mathbb{Z}\alpha_i \cong \mathbb{Z}/(n+1)\mathbb{Z}
	\]
	and the embedding $Z\le T$ corresponds to the projection $ \mc{X}^*(T)\to	\mc{X}^*(Z)$ given by $\epsilon_i\mapsto \bar{1}\in \mathbb{Z}/(n+1)\mathbb{Z}$, $1\le i\le n$, so $\varpi_i \mapsto \bar{\imath}\in \mathbb{Z}/(n+1)\mathbb{Z}$. The action of $\Gal(L/k)$ on $\mc{X}^*(Z)$ is compatible with the projection, thus $\tau(x)= -x$ and $Z\cong \mut{2}{n+1}$ in the notation of Section~\ref{sec:multiplicative_groups}. In particular, for a quasi-split simple group $G$ of type $\Att_n$ one has $G\cong G^{sc}(\Att_n)/\mut{2}{l}$ for some $l \mid n+1$. We denote by $G^{ad}(\Att_n):=G^{sc}(\Att_n)/\mut{2}{n+1}$ the adjoint group.
	
	$\mathbf{D_n,\, n\ge 5.}$ The action of $\Gal(k^{sep}/k)$ on the Dynkin diagram factors through an action of $\Gal(L/k)$ for a degree $2$ Galois field extension $L/k$ which is the splitting field of the maximal torus $T$. The type of $G$ in this case is denoted by $\Delta(G)={}^2\mr{D}_n$. The corresponding quasi-split simply connected group is denoted by $G^{sc}({}^2\mr{D}_n)$ and it is isomorphic to $\Spin(V,q)$, where $(V,q)$ is a non-degenerate quadratic form of dimension $2n$, of Witt index $n-1$ and with the discriminant quadratic field extension being $L/k$ \cite[Example~27.10]{KMRT98}. We use the following notation for roots and weights (see \cite[Planche~IV]{Bou81}):
	\begin{itemize}\itemsep0pt
		\item $\mc{X}_{\mathbb{R}}:=\bigoplus_{i=1}^{n} \mathbb{R} e_i$,
		\item simple roots: $\alpha_i := e_i-e_{i+1}$, $1\le i \le n-1$, $\alpha_n:=e_{n-1}+e_n$,
		\item
		fundamental weights: $
		\varpi_i:=e_1+e_2+\hdots+e_i$, $1\le i \le n-2$, $\varpi_{n-1}:=\frac{1}{2}(e_1+e_2+\hdots+e_{n-1}-e_n)$, $\varpi_{n}:=\frac{1}{2}(e_1+e_2+\hdots+e_{n-1}+e_n)$.
	\end{itemize} 
	In the simply connected case we have $\mc{X}^*(T)=\bigoplus_{i=1}^n \mathbb{Z} \varpi_i\le \mc{X}_{\mathbb{R}}$ with the action of the nontrivial element $\tau\in \Gal(L/k)$ given by $\tau(\varpi_{n-1})= \varpi_{n}$, $\tau(\varpi_{n})= \varpi_{n-1}$ and $\tau(\varpi_{i})=\varpi_i$ for $1\le i\le n-2$. This action extends to $\mc{X}_{\mathbb{R}}$ via $\tau(e_n)=-e_n$ and $\tau(e_i)=e_i$ for $1\le i\le n-1$. For the center $Z\le G^{sc}(^2\mr{D}_n)$ we have
	\[
	\mc{X}^*(Z)= \mc{X}^*(T) / \bigoplus_{i=1}^n \mathbb{Z}\alpha_i \cong \begin{cases} \mathbb{Z}/4\mathbb{Z}, &\text{$n$ is odd}\\ \mathbb{Z}/2\mathbb{Z}\times \mathbb{Z}/2\mathbb{Z}, &\text{$n$ is even.}\end{cases}
	\]
	If $n$ is odd, then the embedding $Z\le T$ corresponds to the projection  $ \mc{X}^*(T)\to	\mc{X}^*(Z)$ given by 
	\[
	\varpi_i\mapsto \overline{2i}\in \mathbb{Z}/4\mathbb{Z},\, 1\le i\le n-2,  \quad \varpi_{n-1}\mapsto \bar{1}\in \mathbb{Z}/4\mathbb{Z}, \quad\varpi_{n}\mapsto \bar{3}\in \mathbb{Z}/4\mathbb{Z}.
	\]
	If $n$ is even, then the embedding $Z\le T$ corresponds to the projection $ \mc{X}^*(T)\to	\mc{X}^*(Z)$ given by 
	\[
	\varpi_i\mapsto (\bar{i},\bar{i})\in \mathbb{Z}/2\mathbb{Z}\times \mathbb{Z}/2\mathbb{Z},\, 1\le i\le n-2,\quad \varpi_{n-1}\mapsto (\bar{1},0), \quad \varpi_{n}\mapsto (0,\bar{1}).
	\]
	The action of $\Gal(L/k)$ on $\mc{X}^*(Z)$ is compatible with the projection, thus it is given by $x\mapsto -x$ on $\mathbb{Z}/4\mathbb{Z}$, if $n$ is odd and by $(x_1,x_2)\mapsto (x_2,x_1)$ on $\mathbb{Z}/2\mathbb{Z}\times \mathbb{Z}/2\mathbb{Z}$, if $n$ is even, and in the notation of Section~\ref{sec:multiplicative_groups} we have $Z\cong \mut{2}{4}$ for $n$ odd and $Z\cong \mut{2}{2,2}$ for $n$ even. In both cases $\mc{X}^*(Z)$ admits a unique non-trivial quotient Galois module and this module is isomorphic to $\mathbb{Z}/2\mathbb{Z}$ with the trivial action. In particular, for a quasi-split simple group $G$ with $\Delta(G)={}^2\mr{D}_n$, $n\ge 5$, one has $G\cong G^{sc}(^2\mr{D}_n)/\pi_1(G)$ with $\pi_1(G)= 1$, $\mu_2$ or $\mut{2}{4}$ for odd $n$ and $\pi_1(G)=1$, $\mu_2$ or $\mut{2}{2,2}$ for even $n$. We denote by $G^{ad}(^2\mr{D}_{2r+1}):=G^{sc}(^2\mr{D}_{2r+1})/\mut{2}{4}$ and by $G^{ad}(^2\mr{D}_{2r}):=G^{sc}(^2\mr{D}_{2r})/\mut{2}{2,2}$ the respective adjoint groups. Note that in contrast to the split situation in the non-split case there is a unique intermediate group between $G^{sc}(^2\mr{D}_{2r})$ and $G^{ad}(^2\mr{D}_{2r})$.
	
	$\mathbf{D_4.}$ The action of $\Gal(k^{sep}/k)$ on the Dynkin diagram factors through a subgroup of the automorphism group of the diagram which is isomorphic to $S_3$. If the action factors through a subgroup of order $2$, then the same reasoning (possibly renumbering the simple roots) as in the case $n\ge 5$ applies, yielding the groups  $G^{sc}(^2\mr{D}_4)$, $G^{sc}({}^2\mr{D}_4)/\mu_2$ and $G^{ad}({}^2\mr{D}_4)=G^{sc}({}^2\mr{D}_4)/\mut{2}{2,2}$. In the remaining cases the splitting field $L$ of the maximal torus $T$ is either of degree $3$ over $k$ or of degree $6$ over $k$ with the $\Gal(L/k)\cong S_3$ in the latter case. The type of $G$ in these cases is denoted by $\Delta(G)={}^3\mr{D}_4$ and by $\Delta(G)={}^6\mr{D}_4$ respectively, with the corresponding simply connected groups denoted by $G^{sc}({}^3\mr{D}_4)$ and by $G^{sc}({}^6\mr{D}_4)$. In both cases the Galois group acts irreducibly on $\mc{X}^*(Z)\cong \Z/2\Z\oplus \Z/2\Z$ for the center $Z\le G^{sc}({}^3\mr{D}_4)$ (resp. $Z\le G^{sc}({}^6\mr{D}_4)$), yielding $Z\cong \mut{3}{2,2}$ (resp. $Z\cong \mut{6}{2,2}$) and the non-split quasi-split groups are given by $G^{sc}({}^3\mr{D}_4)$ and by $G^{ad}(^3\mr{D}_4):=G^{sc}({}^3\mr{D}_4)/\mut{3}{2,2}$ (resp. $G^{sc}(^6\mr{D}_4)$ and $G^{ad}(^6\mr{D}_4):=G^{sc}({}^6\mr{D}_4)/\mut{6}{2,2}$) in the notation of Section~\ref{sec:multiplicative_groups}.
	
	$\mathbf{E_6.}$ The action of $\Gal(k^{sep}/k)$ on the Dynkin diagram factors through an action of $\Gal(L/k)$ for a degree $2$ Galois field extension $L/k$ which is the splitting field of the maximal torus $T$. The type of $G$ in this case is denoted by $\Delta(G)={}^2\mr{E}_6$, the corresponding quasi-split simply connected group is denoted by $G^{sc}({}^2\mr{E}_6)$. We use the following notation for roots and weights (see \cite[Planche~V]{Bou81}):
	\begin{itemize}
		\item $\mc{X}_{\mathbb{R}}:= \{x_1e_1+x_2e_2+\hdots+x_8e_8\in \mathbb{R}^{8}\,|\, x_6=x_7=-x_8 \} \le \mathbb{R}^8$,
		\item simple roots: $\alpha_1 := \frac{1}{2}(e_1-e_2-e_3-e_4-e_5-e_6-e_7+e_8)$, $\alpha_2:=e_1+e_2$, $\alpha_3:=e_2-e_1$, $\alpha_4:=e_3-e_2$, $\alpha_5:=e_4-e_3$, $\alpha_6:=e_5-e_4$,
		\item
		fundamental weights: 
		$\varpi_1:=\frac{2}{3}(e_8-e_7-e_6)$, $\varpi_2:=\frac{1}{2}(e_1+e_2+e_3+e_4+e_5-e_6-e_7+e_8)$, $\varpi_3:=\frac{1}{2}(-e_1+e_2+e_3+e_4+e_5)+\frac{5}{6}(e_8-e_7-e_6)$, $\varpi_4:=e_3+e_4+e_5-e_6-e_7+e_8$, $\varpi_5:=e_4+e_5+\frac{2}{3}(e_8-e_7-e_6)$, $\varpi_6:=e_5+\frac{1}{3}(e_8-e_7-e_6)$.
	\end{itemize} 
	In the simply connected case we have $\mc{X}^*(T)=\bigoplus_{i=1}^6 \mathbb{Z} \varpi_i$ with the action of the nontrivial element $\tau\in \Gal(L/k)$ given by
	\[
	\tau(\varpi_1)= \varpi_6,\quad \tau(\varpi_2)= \varpi_2,\quad \tau(\varpi_3)= \varpi_5,\quad \tau(\varpi_4)= \varpi_4,\quad \tau(\varpi_5)= \varpi_3,\quad \tau(\varpi_6)= \varpi_1.
	\]
	For the center $Z\le G^{sc}(^2\mr{E}_6)$ we have
	\[
	\mc{X}^*(Z)= \mc{X}^*(T) / \bigoplus_{i=1}^6 \mathbb{Z}\alpha_i \cong \mathbb{Z}/3\mathbb{Z}
	\]
	and the embedding $Z\le T$ corresponds to the projection $ \mc{X}^*(T)\to	\mc{X}^*(Z)$ given by
	\[
	\varpi_2,\varpi_4\mapsto \bar{0}\in \mathbb{Z}/3\mathbb{Z}, \quad \varpi_1,\varpi_5\mapsto \bar{1}\in \mathbb{Z}/3\mathbb{Z},\quad \varpi_3,\varpi_6\mapsto \bar{2}\in \mathbb{Z}/3\mathbb{Z}.
	\]
	The action of $\Gal(L/k)$ on $\mc{X}^*(Z)$ is compatible with the projection, thus it is given by $\tau(x)=-x$ and $Z\cong \mut{2}{3}$ in the notation of Section~\ref{sec:multiplicative_groups}. We denote by $G^{ad}(^2\mr{E}_6):=G^{sc}(^2\mr{E}_6)/\mut{2}{3}$ the corresponding adjoint group. In particular, for a quasi-split simple group $G$ with $\Delta(G)={}^2\mr{E}_6$ one has $G\cong G^{sc}(^2\mr{E}_6)$ or $G\cong G^{ad}(^2\mr{E}_6)$.
\end{expos}  

\begin{expos} \label{expos:parabolic}
	Let $G$ be a quasi-split semisimple group over a field $k$ with a maximal torus and a Borel subgroup $T\le B\le G$ and let $I\subseteq \Delta(G_{k^{sep}})$ be a subset of its Dynkin diagram stable under the action of $\Gal(k^{sep}/k)$. Then the corresponding parabolic subgroup of $G_{k^{sep}}$ \cite[Theorem~21.91]{Mi17} is defined over $k$, and we denote it by $P_I\le G$. In particular, $P_\emptyset = B$ while maximal proper $\Gal(k^{sep}/k)$-stable subsets $I\subseteq \Delta(G_{k^{sep}})$ correspond to maximal proper parabolic subgroups $P_I\le G$. For $1\le i_1<i_2<\hdots <i_j \le \rank (G_{k^{sep}})$ such that $J:=\{\alpha_{i_1},\alpha_{i_2},\hdots,\alpha_{i_j}\} \subseteq \Delta(G_{k^{sep}})$ is $\Gal(k^{sep}/k)$-stable we put
	\[
	P_{i_1,i_2,\hdots,i_j} := P_{\Delta(G_{k^{sep}})\setminus J}.
	\]
\end{expos}

\subsection{Vector bundles on homogeneous varieties} \label{sec:vector_bundles}
\begin{expos}
	Let $G$ be an algebraic group over a field $k$ and let $H\le G$ be a closed subgroup. Descent gives rise to an equivalence between the category of representations of $H$ and the category of $G$-equivariant vector bundles over $G/H$,
	\[
	\mc{R}ep(H)\xrightarrow{\simeq} \mc{V}ect_G(G/H),\quad V\mapsto (G\times V)/H,
	\]
	where $H$ acts on $G$ via $(h,g)\mapsto gh^{-1}$. Forgetting about the equivariant structure we obtain a vector bundle $\mc{V}$ over $G/H$.
\end{expos}

\begin{expos}
	Let $G$ be a quasi-split reductive group over a field $k$ with a maximal torus and a Borel subgroup $T\le B\le G$. According to~\refbr{expos:rep_of_weights}, a finite $\Gal(k^{sep}/k)$-invariant subset $\mc{Q}\subseteq \mc{X}^*(T)$ gives rise to $V(\mc{Q})\in \mc{R}ep(T)$ such that 
	\[
	[V(\mc{S})_{k^{sep}}] = \sum_{\lambda\in \mc{Q}} x^{\lambda}.
	\]
	Since the embedding $T\le B$ has a canonical splitting, it follows that $V(\mc{Q})$ can be viewed as a representation of $B$ in a canonical way. We denote by $\mc{V}(\mc{Q})$ the associated vector bundle over $G/B$,
	\[
	\mc{V}(\mc{Q}):= (G\times V(\mc{Q}))/B.
	\]
	If $\mc{Q}=\{\varpi\}$ consists of a single element, we usually denote by $L(\varpi):=V(\{\varpi\})$ the corresponding linear representation and by $\mc{L}(\varpi):=\mc{V}(\{\varpi\})$ the corresponding line bundle. 
	
	Suppose $G$ is semisimple of rank $n$, let $f\colon \tilde{G}\to G$ be the simply connected cover and put $\tilde{T}:=f^{-1}(T)$, $\tilde{B}:=f^{-1}(B)$. Then for a finite $\Gal(k^{sep}/k)$-invariant subset $\mc{Q}\subseteq \mc{X}^*(\tilde{T})$ the vector bundle $\mc{V}(\mc{Q})$ over $\tilde{G}/\tilde{B}$ can be considered as a vector bundle over $G/B$ via the isomorphism $\tilde{G}/\tilde{B}\xrightarrow{\simeq} G/B$ induced by $f$. For a $\Gal(k^{sep}/k)$-invariant fundamental weight $\varpi_i\in \mc{X}^*(\tilde{T})$ and a $\Gal(k^{sep}/k)$-invariant set  $\{\varpi_{i_1},\varpi_{i_2}, \hdots, \varpi_{i_j}\} \subseteq \{\varpi_{1},\varpi_{2},\hdots, \varpi_{n}\}\subseteq \mc{X}^*(\tilde{T})$ of fundamental weights we put
	\[
	\mc{L}_i:=\mc{L}(\varpi_i),\quad \mc{V}_{i_1,i_2,\hdots,i_j}:=\mc{V}(\{\varpi_{i_1},\varpi_{i_2},\hdots,\varpi_{i_j}\})
	\]
	viewed as vector bundles over $G/B$.
	
	Let $1\le i_1<i_2<\hdots <i_j \le n$ be such that $J:=\{\alpha_{i_1},\alpha_{i_2},\hdots,\alpha_{i_j}\} \subseteq \Delta(G_{k^{sep}})$ is $\Gal(k^{sep}/k)$-stable, put $P:=P_{i_1,i_2,\hdots,i_j}\le G$ in the notation of~\refbr{expos:parabolic} and $\tilde{P}:=f^{-1}(P)\le \tilde{G}$. For a $\Gal(k^{sep}/k)$-invariant $\varpi\in \mathbb{Z} \varpi_{i_1}\oplus \mathbb{Z} \varpi_{i_2}\oplus \hdots \oplus \mathbb{Z} \varpi_{i_j}\subseteq \mc{X}^*(\tilde{T})$  it follows from \cite[Theorem~21.91]{Mi17} that there exists a unique linear representation $V_{\tilde{P}}(\varpi)$ of $\tilde{P}$ such that $[V_{\tilde{P}}(\varpi)_{k^{sep}}|_{\tilde{T}_{k^{sep}}}] = x^{\varpi}$. We put
	\[
	\mc{L}_P(\varpi):= (\tilde{G}\times V_{\tilde{P}}(\varpi))/\tilde{P}
	\]
	for the associated line bundle over $\tilde{G}/\tilde{P}$ which may also be viewed as a line bundle over $G/P$ via the isomorphism $\tilde{G}/\tilde{P}\xrightarrow{\simeq} G/P$ induced by $f$. For the projection $g\colon G/B\to G/P$ one has $g^*\mc{L}_P(\varpi)\cong\mc{L}(\varpi)$.
\end{expos}  

\begin{expos}\label{expos:line_over_group}
	Let $G$ be a semisimple group over a field $k$. Then a $\Gal(k^{sep}/k)$-invariant character $\lambda\in \mc{X}^*(\pi_1(G))$ gives rise to a line bundle
	\[
	\mc{L}(\lambda) := (\tilde{G} \times V(\{\lambda\}))/\pi_1(G)
	\]
	over $G\cong \tilde{G}/\pi_1(G)$. Suppose $G$ is quasi-split and let $T\le B\le G$ be a maximal torus and a Borel subgroup, let $f\colon \tilde{G}\to G$ be the simply connected cover and put $\tilde{T}:=f^{-1}(T)$. Consider $\varpi\in \mc{X}^*(\tilde{T})$ such that its image $\bar{\varpi}\in \mc{X}^*(\pi_1(G)) \cong \mc{X}^*(\tilde{T})/\mc{X}^*(T)$ is  $\Gal(k^{sep}/k)$-invariant. Then we have a line bundle $\mc{L}(\bar{\varpi})$ over~$G$. 
\end{expos}

\subsection{Fundamental group and Picard group of a quasi-split simple group} \label{sec:fundamental_and_Picard}
\begin{expos} \label{expos:fund}
	Let $G$ be a quasi-split simple group over a field $k$. Then its fundamental group $\pi_1(G)$ depending on its type $\Delta(G)$ is one of the following.
	
	\begin{center}
		\def\arraystretch{1.5}
		\begin{tabular}{c|c|c|c|c|c|c}	
			$\Delta(G)$ &$\mathrm{A}_n$ & $\mathrm{B}_n, \mathrm{C}_n, \mathrm{E}_7$ &  $\mathrm{D}_{2r+1},\, r\ge 1$ & $\mathrm{D}_{2r},\, r\ge 2$ &  $\mathrm{F}_4, \mathrm{G}_2, \mathrm{E}_8$ & $\mathrm{E}_6$  \\
			\hline
			$\pi_1(G)$ & $\mu_l,\, l\mid (n+1)$ &  $1$, $\mu_2$ &  $1$, $\mu_2$, $\mu_4$ & $1$, $\mu_2^{so}$, $\mu_2^{hs}$, $\mu_2\times \mu_2$ &  $1$ &  $1$, $\mu_3$
			\vspace{10pt}			
		\end{tabular}			
		\begin{tabular}{c|c|c|c|c|c|c}	
			$\Delta(G)$ &  $\Att_n$, $n\ge 2$  & ${}^2\mathrm{D}_{2r+1},\, r\ge 1$  & ${}^2\mathrm{D}_{2r},\, r\ge 2$ & ${}^3\mathrm{D}_{4}$ & ${}^6\mathrm{D}_{4}$ & ${}^2\mathrm{E}_6$ \\
			\hline
			$\pi_1(G)$ & $\mut{2}{l}, \, l\mid (n+1)$  & $1$, $\mu_2$, $\mut{2}{4}$ & $1$, $\mu_2$, $\mut{2}{2,2}$ & $1$, $\mut{3}{2,2}$ & $1$, $\mut{6}{2,2}$ & $1$, $\mut{2}{3}$ 
			\vspace{10pt}			
		\end{tabular}
	\end{center}
	Here we use the notation of Sections~\ref{sec:multiplicative_groups} and~\ref{sec:quasi-split_simple}. The list arises from the fact that $\pi_1(G)$ is a subgroup of the center $Z(\tilde{G})$ of the simply connected cover $\tilde{G}\to G$, from the identification of $\mc{X}^*(Z(\tilde{G}))$ with the weight lattice of $\tilde{G}_{k^{sep}}$ modulo its root lattice \cite[Proposition~21.8, Proposition~23.59]{Mi17} and from the description of the corresponding lattices \cite[Planche~I-IX]{Bou81}. In the non-split quasi-split case see also the discussion  of the Galois action on $\mc{X}^*(Z(\tilde{G}))$ given in Section~\ref{sec:quasi-split_simple}.
	
	Let $G$ be a split simply connected simple group with $\Delta(G)=\mr{D}_{2r}$ and $T\le G$ be a split maximal torus. Then there are three embeddings $\mu_2\to \mu_2\times \mu_2 \cong  Z(G)\le T$ with the dual homomorphisms $\mc{X}^*(T)\to \mc{X}^*(\mu_2)\cong\Z/2\Z$ given by
	\begin{gather*}
		\varpi_i \mapsto \bar{0} \in \Z/2\Z,\quad 1\le i\le 2r-2, \qquad \varpi_{2r-1},\varpi_{2r}\mapsto \bar{1} \in \Z/2\Z,\\
		\varpi_{2i} \mapsto \bar{0} \in \Z/2\Z,\quad 1\le i\le r, \qquad \varpi_{2i-1} \mapsto \bar{1}\in\Z/2\Z,\quad 1\le i\le r,\\
		\varpi_{2i}, \varpi_{2r-1} \mapsto \bar{0} \in \Z/2\Z,\quad 1\le i\le r-1, \qquad \varpi_{2i-1},\varpi_{2r} \mapsto \bar{1}\in\Z/2\Z,\quad 1\le i\le r-1.
	\end{gather*}
	For the last two embeddings the quotient groups $G/\mu_2$ are isomorphic. We denote in the table the first and the second embeddings as $\mu_2^{so}\le T\le G$ and $\mu_2^{hs}\le T\le G$ respectively.
\end{expos}

\begin{expos} \label{expos:Picard}
	Let $G$ be a semisimple group over a field $k$ and adopt the notation of~\refbr{expos:line_over_group}. The homomorphism
	\[
	\mc{X}^*(\pi_1(G))^{\Gal(k^{sep}/k)} \xrightarrow{} \Pic(G),\quad \lambda\mapsto \mc{L}(\lambda),
	\]
	is an isomorphism \cite[Corollary~18.26]{Mi17}. Thus the Picard group of a quasi-split simple group $G$ is given by the following table, and if $G$ is not in the table, then $\Pic(G)=0$. 
	
	\begin{center}
		\def\arraystretch{1.5}
		\begin{longtable}{l|l|l}
			\caption{Picard group of a quasi-split simple algebraic group}\\			
			$\Delta(G)$ & $\pi_1(G)$ & $\Pic(G)$  \\
			\hline
			${}^{\hphantom{2}}\mr{A}_{n}$ & $\mu_l$, $l\mid (n+1)$ & $\Z/l\Z \cdot [\mc{L}(\bar{\varpi}_1)]$ \\
			
			$\Att_{2r-1}$ & $\mut{2}{2m}$, $m\mid r$ & $\Z/2\Z\cdot [\mc{L}(\bar{\varpi}_m)]$\\
			
			${}^{\hphantom{2}}\mr{B}_{n}$ & $\mu_2$& $\Z/2\Z\cdot [\mc{L}(\bar{\varpi}_n)]$  \\
			
			${}^{\hphantom{2}}\mr{C}_{n}$ & $\mu_2$ & $\Z/2\Z\cdot [\mc{L}(\bar{\varpi}_1)]$  \\
			
			${}^{\hphantom{2}}\mr{D}_{2r}$, $r\ge 2$ & $\mu_2^{so}$ & $\Z/2\Z \cdot [\mc{L}(\bar{\varpi}_{2r-1})]$ \\				
			
			${}^{\hphantom{2}}\mr{D}_{2r+1}$, $r\ge 1$ & $\mu_2$ & $\Z/2\Z \cdot [\mc{L}(\bar{\varpi}_{2r})]$ \\				
			
			${}^2\mr{D}_{n}$, $n\ge 3$ & $\mu_2$ & $\Z/2\Z \cdot [\mc{L}(\bar{\varpi}_{n-1})]$ \\
			${}^{\hphantom{2}}\mr{D}_{2r}$, $r\ge 2$	& $\mu_2^{hs}$ & $\Z/2\Z \cdot [\mc{L}(\bar{\varpi}_{1})]$ \\					
			${}^{\hphantom{2}}\mr{D}_{2r+1}$, $r\ge 1$ & $\mu_4$ & $\Z/4\Z\cdot [\mc{L}(\bar{\varpi}_{2r})]$ \\
			${}^{2}\mr{D}_{2r+1}$, $r\ge 1$ & $\mut{2}{4}$ & $\Z/2\Z\cdot [\mc{L}(\bar{\varpi}_{1})]$ \\			
			${}^{\hphantom{2}}\mr{D}_{2r}$, $r\ge 2$ & $\mu_2\times \mu_2$ & $\Z/2\Z\cdot [\mc{L}(\bar{\varpi}_{2r-1})]\oplus \Z/2\Z\cdot [\mc{L}(\bar{\varpi}_{2r})]$ \\
			
			${}^{2}\mr{D}_{2r}$, $r\ge 2$ & $\mut{2}{2,2}$ & $\Z/2\Z\cdot [\mc{L}(\bar{\varpi}_{1})]$ \\
			
			${}^{\hphantom{2}}\mr{E}_6$ & $\mu_3$ & $\Z/3\Z \cdot [\mc{L}(\bar{\varpi}_{1})]$   \\			
			
			${}^{\hphantom{2}}\mr{E}_7$ & $\mu_2$ & $\Z/2\Z \cdot [\mc{L}(\bar{\varpi}_{2})]$   \\
		\end{longtable}
	\end{center}
	Here we use the numbering of the fundamental weights as in \cite[Planche~I-IX]{Bou81}. Other possible generators for $\Pic(G)$ can be easily read off from the description of the projection $\mc{X}^*(\tilde{T})\to \mc{X}^*(\pi_1(G))$.
\end{expos}

\begin{lemma} \label{lem:Pic_pull_zero}
	Let $G$ be a simply connected quasi-split simple group over a field $k$ and let $N_1\le N_2 \le Z(G)$ be central subgroups with $N_1\neq N_2$. Then the pullback
	\[
	\pi^*\colon \Pic(G/N_2) \to \Pic(G/N_1)
	\]
	for the projection $\pi\colon G/N_1\to G/N_2$ is zero unless $(\Delta(G),N_1,N_2)$ is one of the following 
	\begin{itemize} \itemsep0pt
		\item 
		$(\mr{A}_n,\mu_{l_1},\mu_{l_2})$ with $1\neq l_1\mid l_2\mid n+1$,
		\item
		$(\Att_{2r-1},\mut{2}{2m_1},\mut{2}{2m_2})$ with $m_1\mid m_2 \mid r$ and $\frac{m_2}{m_1}$ being odd, 
		\item 
		$(\mr{D}_{2r},\mu^{so}_2,\mu_2\times \mu_2)$, $(\mr{D}_{2r},\mu_2^{hs},\mu_2\times \mu_2)$, $(\mr{D}_{2r+1},\mu_2,\mu_4)$.
	\end{itemize}
\end{lemma}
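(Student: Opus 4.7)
The plan is to reduce the statement to a purely character-theoretic computation and then run through the list of possible triples $(\Delta(G),N_1,N_2)$. Since $G$ is simply connected and $N_i\le Z(G)$ is central, the quotient map $G\to G/N_i$ is a central isogeny whose kernel is $N_i$, so $\pi_1(G/N_i)\cong N_i$. The isomorphism from~\refbr{expos:Picard}, $\Pic(G/N_i)\cong \mc{X}^*(N_i)^{\Gal(k^{sep}/k)}$, is natural in the closed subgroup, because the line bundle attached to a character $\bar\varpi$ is constructed via descent from the character viewed as a representation of $N_i$. Therefore the pullback $\pi^*$ is identified with the restriction of characters along $N_1\hookrightarrow N_2$, restricted to Galois-invariants:
\[
\pi^*\colon \mc{X}^*(N_2)^{\Gal(k^{sep}/k)} \to \mc{X}^*(N_1)^{\Gal(k^{sep}/k)}.
\]
Thus the problem becomes: for each possible pair $N_1\le N_2$ of Galois-stable subgroups of $Z(\tilde G)$, decide whether the dual of the inclusion carries Galois-invariants to Galois-invariants nontrivially.

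Next I would collect the data on $Z(\tilde G)$ and its Galois-stable subgroups from~\refbr{expos:fund} and~\refbr{expos:simple_groups}, and handle the cases in three groups. First, the types with $\pi_1(G)\in\{1,\mu_l\}$ having at most one nontrivial option ($\mr{B}_n$, $\mr{C}_n$, $\mr{E}_7$, $\mr{E}_6$, ${}^2\mr{E}_6$, ${}^3\mr{D}_4$, ${}^6\mr{D}_4$): here either $N_1=1$ makes the target zero, or the Galois invariants of the larger group are already trivial (as happens for $\mut{2}{3}$, $\mut{3}{2,2}$, $\mut{6}{2,2}$), so there is nothing to prove. Second, the types $\mr{A}_n$ and $\mr{D}_{2r+1}$ where the restriction $\Z/l_2\Z\twoheadrightarrow \Z/l_1\Z$ is simply reduction mod $l_1$ with trivial Galois action, and this is nonzero precisely when $l_1>1$, giving the first and third exceptional rows. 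Third, the more subtle twisted types $\Att_{2r-1}$ and ${}^2\mr{D}_n$: here $\mc{X}^*(\mut{2}{2m})^\tau=\{0,m\}\subset\Z/2m\Z$ (where $\tau$ acts by $x\mapsto -x$), and the relevant restriction $\Z/2m_2\Z\to\Z/2m_1\Z$ sends $m_2\mapsto ((m_2/m_1)\bmod 2)\cdot m_1$, which is nonzero iff $m_2/m_1$ is odd, giving the second exceptional row. For $\mut{2}{4}$ and $\mut{2}{2,2}$ inside ${}^2\mr{D}_n$ the unique invariant character (namely $2\in\Z/4\Z$, resp.\ $(1,1)\in (\Z/2\Z)^2$) always maps to $0$ in $\Z/2\Z$ under the inclusion of $\mu_2$, so no twisted $\mr{D}$-type contributes an exception.

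The only remaining case is the split $\mr{D}_{2r}$. Here $\mc{X}^*(\mu_2\times\mu_2)=(\Z/2\Z)^2$ with trivial Galois action, and each of $\mu_2^{so}$ and $\mu_2^{hs}$ is cut out by a nonzero linear functional (the sum map, respectively one of the projections, as can be read off from the formulas for the three embeddings in~\refbr{expos:fund}). Since a nonzero functional from $(\Z/2\Z)^2$ to $\Z/2\Z$ is automatically surjective, the restriction is nonzero in both cases, producing the last two exceptional triples. All pairs of the form $(1,N_2)$ have zero target, and the pair $(\mu_2^{so},\mu_2^{hs})$ does not satisfy $\mu_2^{so}\le \mu_2^{hs}$ so does not arise.

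The only obstacle is bookkeeping: the argument is a case analysis, and for $\mr{D}_n$ one must keep track of which embedding $\mu_2\hookrightarrow Z(\tilde G)$ corresponds to which line bundle generator of $\Pic(G/\mu_2)$. This is pure linear algebra over $\mathbb{F}_2$, guided by the explicit character data recalled in~\refbr{expos:simple_groups} and the weight-to-center projection formulas listed there, so no substantive new input is needed.
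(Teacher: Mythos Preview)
Your proposal is correct and is precisely the case-by-case verification that the paper leaves implicit: the paper's own proof reads in full ``Straightforward from the above,'' referring to the data collected in~\refbr{expos:fund}, \refbr{expos:simple_groups}, and~\refbr{expos:Picard}. Your reduction of $\pi^*$ to the restriction map $\mc{X}^*(N_2)^{\Gal(k^{sep}/k)}\to\mc{X}^*(N_1)^{\Gal(k^{sep}/k)}$ and the subsequent type-by-type check is exactly what that phrase unpacks to, and your computations in each case (in particular the parity condition on $m_2/m_1$ for $\Att_{2r-1}$ and the vanishing for $\mut{2}{4}$ and $\mut{2}{2,2}$) are accurate.
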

\begin{proof}
	Straightforward from the above.            
\end{proof}

\subsection{Chow ring of a split simple group} \label{sec:Chow_ring_classic}
\begin{expos} \label{expos:Chow_rational}
	Let $G$ be a reductive group over a field $k$. It is well-known that $\CH^*(G)\otimes \mathbb{Q} \cong \mathbb{Q}$. This could be seen via the the isomorphisms
	\[
	\mathbb{Q}\cong \CH^*(G_K)\otimes \mathbb{Q} \cong \CH^*(G)\otimes \mathbb{Q},
	\]
	with $K$ being the splitting field of $G$, where the first isomorphism is given by \cite[Remark~2 on p.~21]{Gro58} while the second one then follows from the standard
	pushforward-pullback argument (as in Lemma~\ref{lem:pull_inj}). Alternatively, one has isomorphisms
	\[
	\CH^*(G)\otimes \mathbb{Q}\cong K_0(G)\otimes \mathbb{Q} \cong \mathbb{Q}
	\]
	with the first one given by the Chern character \cite[Example~15.2.16]{Fu98} and the second one given by \cite[Corollary~7.8]{Me97}.
\end{expos}

\begin{expos} \label{expos:Chow_ring_split}
	Let $G$ be a split reductive group over a field $k$. It follows from the Bruhat decomposition and \cite[Proposition~1]{To14a} that the K\"unneth formula $\CH^*(G)\otimes \CH^*(G)\xrightarrow{\simeq} \CH^*(G\times G)$ holds, endowing $\CH^*(G)$ with the structure of a Hopf algebra. Let $p\in \NN$ be a prime and put $\Ch^*(G):=\CH^*(G)\otimes \FF_p$. Then it follows from a Borel's result on the structure of Hopf algebras \cite[Theorem 7.11 and Proposition 7.8]{MM65} that 
	\[
	\Ch^*(G) \cong \FF_p[e_1,e_2,\hdots,e_s]/(e^{p^{k_1}}_1,e^{p^{k_2}}_2,\hdots, e^{p^{k_s}}_s),\quad \deg e_i=d_i,
	\]
	for some integers $s,d_i,k_i\in \NN$.
	
	Suppose $G$ is a simple group. For $k=\mathbb{C}$ the integers $s,d_i,k_i$ were determined in \cite[Theorem~6]{Kac85}, and the answer does not depend on $k$ e.g. because of the combinatorial presentation for $\Ch^*(G)$ given by the characteristic sequence. One has $\Ch^*(G) \not\cong\FF_p$ only for a finite list of primes $p$. These primes and the respective parameters are summarized in the following table, where $v_p$ is the $p$-adic valuation. 
	
	\begin{center}
		\def\arraystretch{1.5}
		\begin{longtable}{l|l|l|l|l|l}
			\caption{$\Ch^*(G)$ for a split simple algebraic group $G$}\\
			\label{tab:Kac}
			$\Delta(G)$ & $\pi_1(G)$ & $p$ & $s$ & $d_i$, $i=1,2,\ldots, s$ & $k_i$, $i=1,2,\ldots, s$ \\
			\hline
			$\mr{A}_{n}$ & $\mu_l$, $l\,|\, (n+1)$ & $p\,|\, l$ & $1$ & $1$ & $v_p(n+1)$ \\
			\hline
			$\mr{B}_{n}$ & $1$ & $2$ & $[\frac{n-1}{2}]$ & $2i+1$ & $[\log_2 \frac{2n}{2i+1}]$ \\
			
			& $\mu_2$& $2$ & $[\frac{n+1}{2}]$ & $2i-1$ & $[\log_2 \frac{2n}{2i-1}]$ \\
			\hline
			$\mr{C}_{n}$ & $\mu_2$ & $2$ & $1$ & $1$ & $v_2(n)+1$ \\
			\hline
			$\mr{D}_{n}$, $n\ge 3$ & $1$ & $2$ & $[\frac{n}{2}]-1$ & $2i+1$ & $[\log_2 \frac{2n-1}{2i+1}]$ \\						
			$\mr{D}_{2r}$, $r\ge 2$	& $\mu_2^{so}$ & $2$ & $r$ & $2i-1$ & $[\log_2 \frac{4r-1}{2i-1}]$ \\	
			
			& $\mu_2^{hs}$ & $2$ & $r$ & $1$, $i=1$ & $v_2(r)+1$, $i=1$ \\			
			
			&  & & & $2i-1$, $i\ge 2$ & $[\log_2 \frac{4r-1}{2i-1}]$, $i\ge 2$ \\						
			& $\mu_2\times \mu_2$ & $2$ & $r+1$ & $1$, $i=1$& $v_2(r)+1$, $i=1$\\						
			&  & & & $2i-3$, $i\ge 2$ & $[\log_2 \frac{4r-1}{2i-3}]$, $i\ge 2$ \\						
			
			$\mr{D}_{2r+1}$, $r\ge 1$	& $\mu_2$, $\mu_4$ & $2$ & $r$ & $2i-1$ & $[\log_2 \frac{4r+1}{2i-1}]$ \\	
			
			\hline
			$\mr{E}_6$ & $1,\mu_3$& $2$ & $1$ & $3$ & $1$ \\			
			& $1$ & $3$ & $1$ & $4$ & $1$ \\	
			& $\mu_3$ & $3$ & $2$ & $1,4$ & $2,1$ \\			
			\hline
			$\mr{E}_7$ & $1$ & $2$ & $3$ & $3,5,9$ & $1,1,1$ \\
			
			& $\mu_2$ & $2$ & $4$ & $1,3,5,9$ & $1,1,1,1$ \\
			
			& $1,\mu_2$ & $3$ & $1$ & $4$ & $1$ \\
			\hline
			$\mr{E}_8$& $1$  & $2$ & $4$ & $3,5,9,15$ & $3,2,1,1$  \\
			
			& & $3$ & $2$ & $4,10$ & $1,1$ \\
			
			& & $5$ & $1$ & $6$ & $1$ \\
			\hline
			$\mr{F}_4$ & $1$ &  $2$ & $1$ & $3$ & $1$ \\
			& &  $3$ & $1$ & $4$ & $1$ \\			
			\hline
			
			$\mr{G}_2$ & $1$ & $2$ & $1$ & $3$ & $1$ \\
		\end{longtable}
	\end{center}
\end{expos}

\Addresses

\end{document}